\def\A{\mathbb A}
\def\Z{\mathbb Z}
\def\P{\mathbb P}
\def\G{\mathbb G}
\def\F{\mathcal F}
\def\O{\mathcal O}
\def\U{\mathcal U}
\def\m{\mathfrak m}
\def\M{\mathcal M}
\def\X{\mathcal X}
\def\Y{\mathcal Y}
\newcommand{\cactus}[2]{\mathfrak{K}_{#1}\left( #2 \right)}
\DeclareMathOperator{\Mod}{Mod}
\DeclareMathOperator{\coker}{coker}
\DeclareMathOperator{\Tor}{Tor}
\DeclareMathOperator{\mcG}{\mathcal G}
\DeclareMathOperator{\Proj}{Proj}
\DeclareMathOperator{\mcB}{\mathcal B}
\DeclareMathOperator{\Art}{Art}
\DeclareMathOperator{\Set}{Set}
\DeclareMathOperator{\Grp}{Grp}
\DeclareMathOperator{\Spec}{Spec}
\newcommand{\Aut}{ \operatorname{Aut}}
\newcommand{\Hom}{ \operatorname{Hom}}
\DeclareMathOperator{\Hilb}{Hilb}
\newcommand{\HG}[2]{{\Hilb}^{Gor}_{#1}(\A^{#2})}
\DeclareMathOperator{\chr}{char}
\DeclareMathOperator{\Ext}{Ext}
\DeclareMathOperator{\Apolar}{Apolar}
\DeclareMathOperator{\Ann}{Ann}
\newcommand{\Def}{ \operatorname{Def}}
\newcommand{\Der}{ \operatorname{Der}}
\newcommand{\kk}{\Bbbk}
\newcommand{\eps}{\varepsilon}
\newtheorem{lem}{Lemma}[section]
\newtheorem{defi}[lem]{Definition}
\newtheorem{set}[lem]{Setting}
\newtheorem{defi-not}[lem]{Definition/Notation}
\newtheorem{twr}[lem]{Theorem}
\newtheorem{stwr}[lem]{Corollary}
\newtheorem{exm}[lem]{Example}
\newtheorem{remark}[lem]{Remark}
\newtheorem{prop}[lem]{Proposition}
\newtheorem*{twri}{Theorem}
\newtheorem*{notation}{Notation}
\numberwithin{equation}{section}
\newcounter{problem}
\title{Deformations of the connected sum of Gorenstein algebras}
\author{Piotr Oszer}	
\address{Institute of Mathematics, University of Warsaw, Banacha 2, 02-097
	Warsaw, Poland}
\email{po394403@mimuw.edu.pl}
\thanks{The author is partially supported by National Science Centre grant 2023/50/E/ST1/00336. The author would also like to thank Jarosław Buczyński, Joachim Jelisiejew, Feliks Rączka, Andrea Ricolfi and Michał Szachniewicz for helpful discussions and suggestions
}
\begin{document}
\maketitle
\begin{abstract}
	We prove that the Gorenstein locus of the Hilbert scheme of points on $\A^n$ is non-reduced for $n\geq 12$;
	we construct examples of non-reduced points that come from apolar algebras of the sum of general cubics. As a corollary, we get a non-reducedness result for the cactus scheme. We generalise the Białynicki-Birula decomposition to abstract deformation functors, providing a new method of studying deformation theory. Our construction gives us fractal structures on the nested Hilbert scheme.
\end{abstract}
\section{Introduction}
The Hilbert scheme $\Hilb_{m}(\A^n)$ of $m$-points on the affine space $\A^n$ is the moduli space of closed subschemes of length $m$ lying on the affine space. Outside of algebraic geometry, it has various applications, for example, to combinatorics, see \cite{haiman}. The Gorenstein locus $\HG{m}{n}$ is the open locus parameterising the Gorenstein finite subschemes, see \cite[Proposition 2.5]{CGN-07}. 
The global structure of $\HG{m}{n}$ has been the object of intensive study. Irreducibility and smoothness for $n\leq 3$ is part of the folklore, see \cite{kleppe-miro-roig}. It is known that the Gorenstein locus is irreducible 
for $m\leq 13$ and for $m=14$ and $n\leq 5$, see \cite[Theorem A]{CGJ}, which completes a series of papers on this topic \cite{CGN-07,CGN-09,CGN-14,CGN-Gorenstein,CGN-16}.  
In particular, the Gorenstein locus is not dense, since the Hilbert scheme is reducible for $n=3$ and $m\geq 78$, as well as for $n\geq 4$ and $m\geq 8$. This shows that the Gorenstein locus is better behaved than the whole Hilbert scheme.

 There has also been an interest in the question of the existence of nonsmoothable components.
 It has been proven, \cite[Lemma 6.21]{Iarrobino1999}, that $\HG{m}{n}$ is reducible for $n\geq 6$ and $d\geq 14$.
 By a parameter count, it is known that the reducibility also holds for $n=4$, $m\geq 140$ and $n=5$, $m\geq 42$, see \cite[Proposition 6.2]{BuBu-cactus}. \vskip 0.2 cm
 In \cite[Lemma 6.21]{Iarrobino1999} new smooth components of $\HG{2n+2}{n}$ have been constructed for $6\leq n\leq 13$ and not $7$. 
 A recent study \cite[Theorem 1]{Robert-Gorenstein} completed that work by constructing explicit new smooth components of $\HG{2n+2}{n}$ for $n\geq 6$ except for $n=7$. This particular construction fails for $n\leq 5$ and for $n=7$, see \cite{CGN-Gorenstein} and \cite{Bertone-smoohtable}. \vskip 0.2 cm
In this article, we focus on the local structure, especially on the singularity type. Our main result is that the Gorenstein locus is not reduced.
\begin{twr}[{Corollary~\ref{main-result}}] \label{main-result-intro}
		For every $n=12$ or greater than $13$, the Gorenstein locus $\HG{2n+2}{n}$ is not reduced.
	\end{twr}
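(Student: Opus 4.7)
The plan is to exhibit, for each $m \geq 10$, an explicit Gorenstein algebra $A$ of length $2m+2$ in $m$ variables such that the corresponding point $[A] \in \HG{2m+2}{m}$ has tangent space strictly larger than the maximum local dimension of any irreducible component through it; equivalently, to produce an obstructed first-order deformation of $A$.

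I would take $A = \Apolar(f)$ where $f = g_1 + g_2$ is a sum of two general cubics in disjoint sets of variables $x_1, \ldots, x_{m_1}$ and $y_1, \ldots, y_{m_2}$, with $m_1 + m_2 = m$ and both $m_i \geq 5$ (so $m \geq 10$). The apolar algebra $\Apolar(g_i)$ of a general cubic in $m_i$ variables has Hilbert function $(1, m_i, m_i, 1)$; gluing through disjoint variables identifies $A$ with the connected sum of $\Apolar(g_1)$ and $\Apolar(g_2)$ along their socles, and direct counting yields $\dim A = 2m+2$.

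To detect non-reducedness, I would exploit the natural $\G_m$-action scaling all variables with weight one, and apply the paper's generalisation of the Bia\l{}ynicki-Birula decomposition to the deformation functor $\Def_A$. This decomposes the tangent space $T^1(A)$ and obstruction space $T^2(A)$ into weight pieces. The smoothable component through $[A]$ is tangent to a positive-weight subspace of $T^1$, the dimension of which can be bounded above by a parameter count (essentially the orbit dimension of cubics under $\mathrm{GL}_m$). I would then pin down a concrete first-order deformation $\xi$ in a complementary weight piece of $T^1$, and show that its obstruction in $T^2$ is nonzero by a calculation using the explicit connected-sum presentation of $A$.

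The main obstacle is precisely the nonvanishing of this obstruction: if $\xi$ lifted to every order, the tangent vector would integrate to an honest curve through $[A]$ and would fail to witness non-reducedness. Here the genericity of $g_1, g_2$ and the bound $m_i \geq 5$ are essential, as they guarantee certain cohomological comparisons and make the obstruction computation via apolarity explicit. The role of the generalised BB decomposition is to reduce this computation from the a priori unwieldy $T^2(A)$ to a small, computable weight piece, thereby making the global argument tractable.
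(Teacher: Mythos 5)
Your choice of examples matches the paper exactly: $A = \Apolar(g_1 + g_2)$ with $g_1, g_2$ general cubics in disjoint sets of $m_1, m_2 \geq 5$ variables, so that $A$ is the connected sum of two $(1,m_i,m_i,1)$ apolar algebras. You also correctly identify the generalised Bia\l ynicki-Birula decomposition as the main organising device. However, there is a genuine gap in the mechanism you propose for detecting non-reducedness, and a subsidiary error in where the weight-graded information lives.

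The gap: you propose to ``produce an obstructed first-order deformation'' and argue that the tangent space exceeds the local dimension of every component through $[A]$. Neither of these facts implies non-reducedness. A reduced singular point (a node on a curve, say) already has tangent dimension strictly larger than local dimension, and almost every singular point has obstructed first-order deformations. Obstruction calculus alone cannot distinguish a reduced singularity from an embedded or generically non-reduced point. The paper's argument is structurally different: it shows that the \emph{negative spike} $\Hilb^{<0}_{[X\#Y]}$ is smoothly equivalent to the concrete Artinian scheme $\Spec\bigl(\O_X\otimes\O_Y/(f-g)\bigr)$ (Theorem~\ref{thm-negative} and Corollary~\ref{app3}), which is visibly non-reduced, and then propagates this non-reducedness to $\Hilb^{\leq 0}$ and ultimately to $\HG{2m+2}{m}$ via the retraction $\Hilb^{\leq 0}\to\Hilb^0$ together with Lemma~\ref{lemma-section}. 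The retraction is the precise reason non-reducedness of the fibre (the spike) forces non-reducedness of the total space; nothing in your outline plays that role.

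A second, more localised issue: you say the smoothable component is ``tangent to a positive-weight subspace of $T^1$'' and propose to bound it via a $\mathrm{GL}_m$-orbit count. For algebras with Hilbert function $(1,m,m,1)$ the positive-weight tangent space vanishes altogether (Lemma~\ref{cubic-positive}), so the entire local Hilbert scheme is captured by the non-positive weights after the BB identification (Lemma~\ref{abb-smooth}). The action is concentrated in weights $\leq 0$, and the whole content of the proof is a precise description of the weight $<0$ part as a hypersurface $\{f-g=0\}$ in $X\times Y$; there is no positive-weight bookkeeping to do.
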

	It is useful to look at the Gorenstein locus through the lenses of Macaulay duality, hence focusing on apolar algebras, see Definition \ref{def-apolar}.
Our construction is explicit, and the exhibited examples of non-reduced points arise as apolar algebras of the sum of very general cubics of $n_1$ and $n_2$ variables with $n_1+n_2=n$, see Definition~\ref{cubic-very}. The examples provide us embedded points rather than generically non-reduced component, since the locus of algebras with the Hilbert function $(1,2n,2n,1)$ is generically smooth, see Proposition \ref{cubic-component}. Any apolar algebra constructed by taking the sum of two polynomials in disjoint sets of variables is an example of the connected sum of Gorenstein algebras, see Definition~\ref{conn-defi}, it was first introduced in \cite[Section 2. Connected sums]{MR2929675}, the apolar description of the connected sum was developed in \cite{IARROBINO-connected}.
 An immediate corollary is that $\HG{m}{n}$ is non-reduced for $n>10$ for $m$ large enough. The case of $4\leq n\leq 12$ remains open.
\subsection{Application to study of tensors} 
The local structure of the Gorenstein locus is related to the theory of tensors. The main goal of the theory of tensors is to understand the ranks and border ranks of tensors, and
the analogous notions for symmetric tensors. The results in this area have a large variety of applications outside of algebraic geometry, for example, to algebraic statistics \cite{sturm-stat} and complexity theory, see \cite{land-book}, where it was famously applied by Strassen to study bounds on the exponent of the matrix multiplication \cite{strassen-matrix}. The notion of border rank is closely related to the study of secant varieties.
Let $Y\subset \P^N$ be a smooth projective scheme and let $r$ be a fixed positive integer.
We define the secant variety as
$$ \sigma_r(Y):= \overline{\bigcup_{p_1,\ldots, p_r\in Y} \{\langle p_1\ldots p_r \rangle\}} \subset \P^N,$$ 
where $\langle p_1,\ldots, p_r \rangle$ is the linear span. 
In particular, if the scheme $Y$ is the Veronese embedding of $\P^n$, the $\kk$-points of $Y$ represent powers of linear forms. Then, the $\kk$-points of $\sigma_r(Y)$ represent sums of $r$-powers of such linear forms and their limits.
Since it is notoriously difficult to study equations of secant varieties, the cactus varieties $\cactus{r}{Y}$ were introduced as a remedy in a seminal work \cite[Section 2]{BuBu-cactus}. They are defined as
$$ \cactus{r}{Y}:= \overline{\bigcup\{\langle R \rangle |R\subset Y,\ R \text{ is a subscheme of degree at most $r$ }\}} \subset \P^N.$$
The cactus variety carries a natural scheme structure given by minors of catalecticant matrices, see \cite[Theorem 1.5]{BuBu-cactus}. This structure was used in \cite{choi2025singularitiessyzygiessecantvarieties} and in \cite{jabuhanieh-familiar} to show that singularities of the Gorenstein locus of the Hilbert scheme all yield singularities of the cactus scheme.
 Let $\nu_d$ be the $d$-th Veronese embedding.  
In particular, our main result, Corollary \ref{main-result}, implies the following.
\begin{stwr} [{Corollary~\ref{final-app}}]
	Let $n\geq 12$, but not $13$, and $d\geq 4n+4$. The cactus scheme $\cactus{2n+2}{\nu_d(\P^{n})}$ is non-reduced.
\end{stwr}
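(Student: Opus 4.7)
The plan is to deduce this corollary by transferring the non-reducedness of the Gorenstein Hilbert scheme, established in Corollary~\ref{main-result}, to the cactus scheme via the singularity-transfer principle developed in \cite{choi2025singularitiessyzygiessecantvarieties} and \cite{jabuhanieh-familiar}.

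First I would invoke Corollary~\ref{main-result} to obtain, for each $m \geq 10$, a non-reduced (embedded) point of $\HG{2m+2}{m}$. As explained in the introduction, this point is realized by the apolar algebra $A$ of a sum $F_1+F_2$ of two very general cubics in disjoint variable sets with $m_1+m_2=m$; in particular, $A$ has Hilbert function $(1,2m,2m,1)$ and socle degree $3$, so in particular $d \gg 3$.

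Next, I would use the comparison morphism from a neighborhood of $[R]$ in $\HG{r}{m}$ to the cactus scheme $\cactus{r}{\nu_d(\P^m)}$ that sends a length-$r$ Gorenstein subscheme to its linear span under the Veronese embedding. The main technical input is the fact, proven in \cite{jabuhanieh-familiar}, that near any Gorenstein point this morphism is a scheme-theoretic locally closed immersion once the Veronese degree satisfies $d \geq 2r$. Applied to $r = 2m+2$, this is precisely the hypothesis $d \geq 4m+4$, and the non-reduced point in the Hilbert scheme therefore yields a non-reduced point in the cactus scheme.

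The main obstacle is verifying the precise form of the cited transfer theorem. One must check that the statement in \cite{jabuhanieh-familiar} is genuinely scheme-theoretic, so that embedded non-reduced points (and not only generically non-reduced components) are preserved by the comparison map, and that the numerical threshold is indeed $d \geq 2r$ rather than some strictly larger bound. Both points are essentially bookkeeping in the cited references, after which the argument reduces to composing Corollary~\ref{main-result} with the transfer statement.
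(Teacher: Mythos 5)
The overall plan — compose Corollary~\ref{main-result} with the transfer result of \cite{jabuhanieh-familiar} — is exactly what the paper does, and your numerical bookkeeping ($r=2m+2$, so $d\geq 2r=4m+4$) is right. However, the mechanism you describe does not actually transfer non-reducedness, and the cited theorem does not have the shape you guessed.

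You posit a locally closed immersion from a neighborhood of $[R]$ in $\HG{2m+2}{m}$ into $\cactus{2m+2}{\nu_d(\P^m)}$, ``sending a subscheme to its linear span.'' Two problems. First, $\langle\nu_d(R)\rangle$ is a linear subspace of $\P^N$, not a point of $\P^N$, so ``send $R$ to its span'' is not a morphism to the cactus scheme; a point of the cactus scheme is a choice of $[F]\in\langle\nu_d(R)\rangle$, and that extra choice is exactly why \cite[Theorem 2]{jabuhanieh-familiar} produces a roof $Z$ rather than a direct map. Second, and more seriously, even if you had a locally closed immersion $U\hookrightarrow\cactus{r}{\nu_d(\P^m)}$ with $U\subset\HG{r}{m}$, this would not propagate non-reducedness in the direction you need: a locally closed immersion gives a \emph{surjection} of local rings $\O_{\cactus{}{},i(x)}\twoheadrightarrow\O_{U,x}$, and a surjection of local rings can have a reduced source and a non-reduced target (e.g.\ $\kk[x]\twoheadrightarrow\kk[x]/(x^2)$). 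So your argument, as written, does not conclude.

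The correct picture from \cite[Theorem 2]{jabuhanieh-familiar} is the roof
$$Z \longrightarrow \HG{r}{n}\ (\text{smooth, surjective}), \qquad Z \hookrightarrow \cactus{r}{\nu_d(\P^n)}\ (\text{open embedding}),$$
valid for $d\geq 2r$. Smoothness gives flat, hence \emph{injective}, maps of local rings $\O_{\HG{r}{n},[X\#Y]}\hookrightarrow\O_{Z,z}$, so a nilpotent at the non-reduced point of the Hilbert scheme pulls back to a nilpotent on $Z$; the open embedding then identifies $\O_{Z,z}$ with a local ring of the cactus scheme. That is the one-line proof the paper gives. Your argument would be repaired by invoking exactly this diagram (or by choosing a local section of $Z\to\HG{r}{n}$, but then you must argue via the smooth surjection, not via the section alone).
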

\subsection{Non-reducedness of the Hilbert scheme}
 The non-reducedness of the Hilbert scheme was exhibited for the Hilbert scheme of a large number of points on $\A^{16}$ in \cite[Corollary 5.1]{JJ-pathologies}, for the Hilbert scheme of $13$ points on $\A^6$ in \cite[Theorem 1.4]{szachniewicz-nonreduce}, for at least $21$ points on $\A^4$ in \cite[Theorem 1.1]{JJA4}  and  for the pointed Hilbert scheme $\Hilb_{1,d}(\A^n)$, for $d\geq 2$ and $n\geq 2$ in \cite[Theorem B]{Unexpected-GGGL}. 

 Let us reformulate, in a uniformed setting, the fundamental technique yielding statements on non-reducedness; this method was developed in \cite{JJ-elementry}, and then applied in \cite{JJ-pathologies,szachniewicz-nonreduce,JJA4,Unexpected-GGGL}.\\
 Let $\M$ be a $\G_m$-scheme and $y\in \M$ a $\G_m$-fixed point. By the Białynicki-Birula decomposition \cite[Theorem 1.1]{ABB_JJLS} there exists a Cartesian diagram
 	$$\begin{tikzcd}
 		\M^{>0} \arrow[d] \arrow[r] & \M^{\geq0} \arrow[d] \arrow[r,"i"]   & \M \\
 		\{y\} \arrow[r]                 & \M^{\G_m}, \arrow[u, bend right] &  
 	\end{tikzcd}$$
 	where $\M_{>0}$ is called the positive spike at the point $y$.
 	Since $y$ is a $\G_m$-fixed point, its tangent space is $\Z$-graded, consequently it admits the weight decomposition
 	$$ T_y\M= (T_y\M)_{<0} \oplus (T_y\M)_{0}\oplus (T_y\M)_{>0}.$$ 
 \begin{twri}[{\cite[Proposition 1.6]{ABB_JJLS}}]
 	Assume that the negative tangent space $(T_y\M)_{<0}$ vanishes.
 	Then the map
 $i \colon \M^{\geq 0} \to \M$ 
 is an open immersion near $i(y) \in \M$.
\end{twri}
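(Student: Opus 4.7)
The plan is to reduce the statement to an explicit calculation on a $\G_m$-equivariant linear model for $\M$ near $y$, by equivariantly embedding a neighborhood of $y$ into the tangent representation $T_y\M$.

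First, I would invoke a Sumihiro-type equivariant local structure theorem for $\G_m$-actions (of the kind used in the proof of \cite[Theorem 1.1]{ABB_JJLS}) to produce a $\G_m$-stable affine open neighborhood $U = \Spec A$ of $y$ in $\M$, together with a $\G_m$-equivariant closed immersion $U \hookrightarrow V$, where $V := T_y\M$ is regarded as an affine scheme with its natural linear $\G_m$-action coming from the weight decomposition $V = \bigoplus_n V_n$ with $V_n = (T_y\M)_n$, and $y$ maps to the origin. This step uses that at the fixed point $y$ the completed local ring is naturally $\Z$-graded and that the surjection $\Sym V^* \twoheadrightarrow \hat A$ admits a $\G_m$-equivariant presentation that can be spread out from the formal neighborhood.

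Next, I would compute directly the attractor of the representation $V$ itself: a vector $v = \sum v_n \in V$ extends to a $\G_m$-equivariant morphism $\A^1 \to V$ if and only if $v_n = 0$ for every $n<0$, since the action scales the component $v_n$ by $t^n$ and only non-negative powers of $t$ extend to $t=0$. Hence the attractor is the linear subspace $V^{\geq 0} = \bigoplus_{n\geq 0} V_n \subseteq V$, cut out precisely by the coordinates dual to $V_{<0}$. By the functoriality of the attractor construction with respect to $\G_m$-equivariant closed immersions, one obtains $U^{\geq 0} = U \times_V V^{\geq 0}$ as a closed subscheme of $U$.

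The hypothesis $(T_y\M)_{<0}=0$ says exactly that $V_n = 0$ for every $n<0$, so $V^{\geq 0} = V$ and the fiber product collapses to $U^{\geq 0}=U$. The map $i$ therefore restricts, on this open neighborhood of $y$ in $\M^{\geq 0}$, to the open immersion $U \hookrightarrow \M$, which is what we need.

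The main obstacle is the initial linearization step: producing a $\G_m$-equivariant closed embedding of an open neighborhood of $y$ into the linear representation $T_y\M$. This rests on the equivariant local structure theory for $\G_m$-actions on possibly singular or non-reduced schemes, as developed in the machinery underlying \cite[Theorem 1.1]{ABB_JJLS}. Once this is available, the rest of the argument is routine: the attractor of the ambient linear representation is computed by a one-parameter-subgroup calculation, and the conclusion follows by functoriality together with the observation that when no negative-weight directions are present, the attractor condition imposes no equations at all.
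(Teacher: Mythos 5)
The paper does not prove this statement; it is quoted verbatim from \cite[Proposition 1.6]{ABB_JJLS} as a known black box, so there is no internal proof to compare against. I will therefore evaluate the proposal on its own terms.

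The overall strategy (compare the attractor of $\M$ near $y$ with the attractor of a linear $\G_m$-model via functoriality) is the right instinct, but the linearization step you flag as the ``main obstacle'' is in fact a genuine gap, and it does not follow from the cited machinery. A $\G_m$-equivariant \emph{closed} immersion of a $\G_m$-stable affine open $U\ni y$ into the specific representation $V=T_y\M$ need not exist, even after shrinking $U$. Take $\M=\A^2\setminus\{(0,1)\}$ with $t\cdot(a,b)=(ta,b)$ and $y=(0,0)$; then $T_y\M$ has weights $\{1,0\}$, so the hypothesis holds and the conclusion is true (indeed $\M^{\geq 0}=\A^2\setminus V(b-1)$ is open in $\M$). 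But every $\G_m$-stable affine open of $\M$ containing $y$ is contained in $\A^2\setminus V(b-1)$, whose coordinate ring $k[a,b,(b-1)^{-1}]$ admits no surjection from $k[a,b]=\Sym V^*$ — such a surjection onto a two-dimensional domain would be an isomorphism, which is impossible since $\A^2\setminus V(b-1)\not\cong\A^2$. Spreading out the surjection $\Sym V^*\twoheadrightarrow\hat{\O}_{\M,y}$ to a Zariski neighbourhood, as you propose, simply does not go through in general. Without the closed immersion into $T_y\M$ itself, the collapse $V^{\geq 0}=V$ tells you nothing about $U$.

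The fix is to allow an equivariant closed immersion $U\hookrightarrow W$ into \emph{any} finite-dimensional $\G_m$-representation $W$ (this always exists after shrinking $U$, by choosing homogeneous generators of $A=\O(U)$), and then add a Nakayama argument in place of the ``no equations'' collapse. Concretely, $U^{\geq 0}=\Spec\bigl(A/(A_{>0})\bigr)$, where $(A_{>0})$ is the ideal generated by the positive-weight part of $A$. The hypothesis $(T_y\M)_{<0}=0$ translates into $(\m_y/\m_y^2)_{>0}=0$, i.e.\ $A_{>0}\subseteq(\m_y^2)_{>0}$. Since any element of $\m_y^2$ of positive weight is a sum of products each having at least one factor in $A_{>0}$, one gets $A_{>0}\subseteq\m_y A_{>0}$; as $A_{>0}$ is a finitely generated ideal, graded Nakayama gives $(A_{>0})_{\m_y}=0$. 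The support of the graded ideal $(A_{>0})$ is then a $\G_m$-stable closed set missing $y$, so after one further equivariant shrinking $A_{>0}=0$, hence $U^{\geq 0}=U$, and $i$ is an open immersion near $y$. Your proposal supplies the correct skeleton (attractors are functorial, and the hypothesis means the attractor imposes no conditions near $y$) but omits exactly the Nakayama step that makes this work without the unavailable minimal linear embedding.
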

 In particular, if the negative tangents vanish at the chosen $\G_m$-fixed point $y$, then the non-reducedness of $\M^{\geq 0}$ yields a non-reduced structure of $\M$.
 Since the map $\M^{\geq 0}\to \M^{\G_m}$ admits a retraction, the non-reducedness of the positive spike at $y$ implies non-reducedness of $\M^{\geq 0}$, for details see Lemma \ref{lemma-section2}. \vskip 0.2 cm
 However, in the Gorenstein case, the vanishing of the negative tangents rarely happens. For example it is open if there is a Gorenstein local Artinian algebra in four variables with only trivial negative tangents, see \cite[Question 1.7]{Staal}. In the known cases, apolar algebras of cubics or quartics of maximal Hilbert function, this condition implies smoothness of the positive spike \cite[Lemma 6.21]{Iarrobino1999}, \cite[Theorem 3.1]{EliasRossi15},  \cite[Theorem 3.1]{EliasRossi12}, \cite[Proposition 1.1]{JJ-class-gor}. Hence the methods cannot be applied to prove non-reducedness. \vskip 0.2 cm 
 To overcome this problem, we develop a new method which refines the one described above. We introduce the Białynicki-Birula decomposition for good deformation functors, which can be thought as the infinitesimal version of the Białynicki-Birula decomposition for algebraic stacks, see Definition \ref{defi-good}.
 The main reference for deformation theory is \cite{sernesi_Deformations_of_algebraic_schemes} and a foundational paper \cite{Fantechi-Manetti}.
  Let $X$ be an affine $\G_m$-scheme.
  The functor $\Def(X)$ admits an associated triple 
   	$$T^{0}_X, \ T^{1}_X, \ T^{2}_X,$$
   	consisting of the infinitesimal automorphism group, the tangent space, and an obstruction space.
   	The vector spaces $T^i_X$ are Lichtenbaum-Schlessinger functors, defined in {\cite[3.~The~$T^i$~Functors]{hardef}}. The $\G_m$-action on $X$ induces an action on the functor and yields graded structures on each of $T^i_X$ vector spaces.
   Then $\Def^{\geq 0}(X)$ parametrises deformations of $X$ together with a filtration on the structural algebra, see Definition \ref{defi-filt-fun}.
 The filtered deformation functors have formal properties, analogous to the Białynicki-Birula decomposition, for example the following.
 \begin{twr} [{Theorem \ref{+obs}}]
 	Let $X$ be an affine $\G_m$-scheme
 	The functor $\Def^{\geq 0}(X)$ is a good deformation functor with the associated triple (infinitesimal automorphisms, tangent space, obstruction space)
 	$$(T^{0}_X)_{\geq 0}, \ (T^{1}_X)_{\geq 0}, \ (T^{2}_X)_{\geq 0}.$$
 \end{twr}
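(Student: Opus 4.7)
The plan is to verify, in the order they appear in the definition of a good deformation functor, the Schlessinger-type gluing axioms for $\Def^{\geq 0}(X)$, followed by the identification of the infinitesimal automorphism group, the tangent space, and the obstruction space with the non-negative-weight parts of $T^0_X, T^1_X, T^2_X$. My unifying tool is the Rees construction, which converts a filtration on the structure algebra of a deformation into a $\G_m$-equivariant flat family over $\A^1$; under this dictionary, $\Def^{\geq 0}(X)$ becomes the $\G_m$-equivariant deformation functor of a $\G_m$-equivariant family, and the $\G_m$-action on each $T^i_X$ (already graded because $X$ is a $\G_m$-scheme) distinguishes the non-negative weight part.

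First I would verify the Schlessinger-type conditions. For a fiber product $A' \times_A B$ in $\Art_\kk$, the forgetful map to $\Def(X)$ together with the uniqueness of filtrations under module-theoretic fiber products yields that the natural comparison
$$\Def^{\geq 0}(X)(A' \times_A B) \longrightarrow \Def^{\geq 0}(X)(A') \times_{\Def^{\geq 0}(X)(A)} \Def^{\geq 0}(X)(B)$$
inherits its bijectivity/surjectivity properties from the corresponding statements for $\Def(X)$, since a compatible pair of filtrations glues uniquely to a filtration on the fiber product.

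Next I would compute the tangent and automorphism spaces. A first-order filtered deformation is an element of $T^1_X = \Def(X)(\kk[\varepsilon])$ admitting a lift of the filtration on $\O_X$; using the $\G_m$-equivariance of the identification $T^1_X \cong \Ext^1$ and the Rees dictionary, such a lift exists precisely when the underlying class has all $\G_m$-weights $\geq 0$, yielding $(T^1_X)_{\geq 0}$. An identical weight argument applied to infinitesimal automorphisms of the trivial filtered deformation over $\kk[\varepsilon]$ gives $(T^0_X)_{\geq 0}$: an infinitesimal automorphism respects the filtration exactly when its weight components are all non-negative.

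The hard part will be the obstruction theory. Given a small extension $0 \to (t) \to A' \to A \to 0$ and a filtered deformation over $A$, I must exhibit an obstruction class in $(T^2_X)_{\geq 0}$ which vanishes exactly when a filtered lift to $A'$ exists. The natural candidate is the image of the Lichtenbaum--Schlessinger obstruction for the underlying deformation; the $\G_m$-equivariance of the obstruction map, combined with the non-negativity of the input, forces this class to land in degree $\geq 0$. The delicate point is the converse: showing that the vanishing of this non-negative class already allows simultaneous lifting of both the deformation and its filtration, and moreover that every element of $(T^2_X)_{\geq 0}$ arises this way, i.e.\ that no smaller obstruction space suffices. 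My plan here is a term-by-term weight decomposition of the obstruction cocycle, using $\G_m$-equivariance and the Rees picture to cancel negative-weight components of any chosen lift by modifying it within the filtered category, while faithfully preserving the non-negative part as a genuine obstruction.
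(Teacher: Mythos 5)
Your plan correctly identifies the Rees construction as the pivot of the argument, and this is the same key tool the paper uses: forming $\tilde B=\bigoplus_{i}t^{-i}F_i\subset B[t,t^{-1}]$ and observing that filtered deformations of $B$ are the same thing as $\G_m$-equivariant deformations of $\tilde B$ over $\kk[t^{-1}]$. However, the paper exploits this more systematically than your sketch does, and there are two points where your proposal either under-specifies or misfires.

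First, for the Schlessinger and tangent-space verification, the paper does not argue weight by weight at the level of $\Def(X)$; instead it first establishes an isomorphism of functors of Artin rings $\Def^{\geq 0}(B)\cong\Def^0(\tilde B/\kk[t^{-1}])$ (Theorem~\ref{filt-good}), at which point goodness and the existence of a triple follow at once from the equivariant theory (\cite[Theorem 2.3]{Kleppe} plus the standard results of \cite{hardef}). Your proposed direct verification that filtrations glue along fiber products and your weight argument for $(T^0_X)_{\geq 0}$ and $(T^1_X)_{\geq 0}$ would likely go through, but they duplicate work that the functorial isomorphism gives for free; moreover, your phrase ``the $\G_m$-equivariance of the identification $T^1_X\cong\Ext^1$'' is not immediately applicable, because a filtered deformation over $\kk[\eps]$ is \emph{not} $\G_m$-equivariant as a deformation of $X$ --- only its Rees family is equivariant. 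Keeping all weight arguments on the Rees side, where equivariance is literal, is what makes the paper's argument airtight, and your sketch slides between the two sides without flagging this.

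Second, and more seriously, your treatment of the obstruction space contains a misconception. You write that ``the delicate point'' includes showing ``that every element of $(T^2_X)_{\geq 0}$ arises this way, i.e.\ that no smaller obstruction space suffices.'' This is not required: an obstruction space for a good deformation functor is not unique and need not be minimal (the paper itself emphasizes this in the remarks following Definition~\ref{unob}). What \emph{is} required, and what your sketch does not really address, is the identification of $(T^2_B)_{\geq 0}$ with $T^2_{\kk[t^{-1}]}(\tilde B,\tilde B)_0$. The paper proves this via Lemma~\ref{nat} (an equivalence between filtered $B$-modules and graded $\tilde B$-modules), building the graded naive cotangent complex of $\tilde B$, and using flatness of $\tilde B$ over $\kk[t^{-1}]$ to compare it, after reducing mod $(t-1)$, with the cotangent complex of $B$. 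Your proposed ``term-by-term weight decomposition of the obstruction cocycle'' is aimed at the wrong difficulty; the genuine work is the cotangent-complex comparison, which you have not sketched and which cannot be replaced by cancelling negative-weight components of a single cocycle, because obstruction classes live in a cohomology of the whole complex, not in a single term.

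So: same guiding idea (Rees dictionary), but the argument as proposed has a conceptual confusion about what an obstruction theory must satisfy, and omits the crucial step of showing that the weight-$0$ part of the equivariant $T^i$ of the Rees algebra coincides with $(T^i_B)_{\geq 0}$.
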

 
 The natural morphism $\Def^{\geq 0}(X)\to \Def^{0}(X)$, induced by taking the associated graded algebra given by the filtration, admits a section $\Def^0(X)\to \Def^{\geq 0}$. We also define an abstract equivalent of the positive spike at a $\G_m$-fixed point $\Hilb^{>0}(X)$. This is the functor of strictly negative deformations, see Definition \ref{abs-fibre}, given by the Cartesian diagram
	$$\begin{tikzcd}
	\Def^{>0}(X) \arrow[d] \arrow[r] & \Def^{\geq0}(X) \arrow[d] \arrow[r] & \Def(X) \\
	* \arrow[r]                      & \Def^{0}(X) \arrow[u, bend right].            
\end{tikzcd}$$
The key technical part of the results is using the abstract Białynicki-Birula decomposition combined with
the theory of so-called Braids of $T^i$-functors, first introduced by Buchweitz {\cite[2.4.3.2 page~69]{Buch}}.
Our techniques do not use the assumption of finiteness of the scheme, nor that it is Gorenstein; instead, we rely on purely cohomological invariants,  where the crucial condition is the vanishing of the tangent space in degrees~$<-1$. Henceforth, they should be useful in other contexts.
 \subsection{Fractal family}
 Apart from proving non-reducedness, we describe the structure of the completed local ring at the described non-reduced points of the Gorenstein locus. The heart of the description lies in the notion of \textit{a fractal family}, which we introduce in Definition \ref{fractal}. The \textit{fractalness} is heavily influenced by fractal-like families introduced in \cite[Section 3.3]{szachniewicz-nonreduce}.
A pointed family of schemes is called \emph{a fractal family} if a local ring at the distinguished point is smoothly equivalent to a deformation of the special fibre.
Intuitively, this means that the base of the pointed family naturally contains a copy of the special fibre, yielding a non-trivial deformation of the fibre over itself. 
 	\begin{lem}[Lemma {\ref{frac-nonred}}]
 	Let $\U\to \M$ be a fractal family at a $\kk$-point $x \in \M$. Then the $\kk$-point $x$ is a non-reduced point of $\M$.
 \end{lem}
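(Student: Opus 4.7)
\begin{hproof}
The plan is to translate the fractal hypothesis into a concrete statement about the completed local ring $\widehat{\mathcal{O}}_{\M,x}$ and then use the fact that smooth equivalence preserves reducedness.

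I first reduce to the completed local ring: the point $x \in \M$ is reduced if and only if $\widehat{\mathcal{O}}_{\M,x}$ is, so it suffices to exhibit a nilpotent there. The fractal hypothesis identifies $\widehat{\mathcal{O}}_{\M,x}$, up to smooth equivalence, with a ring $R$ arising from a non-trivial deformation of $\U_x$; unpacked concretely, this is an isomorphism of the form
\[
\widehat{\mathcal{O}}_{\M,x}[[t_1,\ldots,t_m]] \;\cong\; R[[s_1,\ldots,s_n]].
\]
Because formal power series factors preserve and reflect reducedness over Noetherian local rings, the problem reduces to producing a nilpotent in $R$.

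The nilpotent is then extracted from the non-triviality clause. Via the copy of $\U_x$ sitting inside $\M$ provided by the fractal structure, the family $\U \to \M$ pulls back to a non-trivial deformation of $\U_x$ over $\U_x$. Passing to a suitable Artinian truncation of this base isolates a non-zero first-order deformation class; the parameter in $R$ witnessing this class must be nilpotent, since otherwise the corresponding thickening would be trivial, contradicting non-triviality. Transporting this nilpotent back across the smooth equivalence yields a nilpotent element of $\widehat{\mathcal{O}}_{\M,x}$, which is the desired contradiction to reducedness at $x$.

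The main obstacle I foresee is this last extraction step: one has to argue cleanly that the non-triviality of the self-deformation of $\U_x$ really forces an honest nilpotent in $R$, rather than merely a non-zero element of $\mathfrak{m}_R$. I expect this to be handled by a standard first-order deformation argument on a small Artinian quotient of $R$, where every non-trivial class is carried by a genuinely nilpotent parameter, and then lifting this back to $R$ through the smooth equivalence.
\end{hproof}
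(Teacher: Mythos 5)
Your proposal misidentifies where the nilpotent comes from. Definition~\ref{fractal} never asserts that the pulled-back deformation is ``non-trivial''; what it asserts is that the special fibre $\U_{|\{y\}}$ is a connected \emph{non-reduced} scheme. That is the hypothesis the paper actually uses, and it is essential: a non-trivial first-order deformation class in $T^1$ is perfectly compatible with a smooth reduced base (indeed, every smooth point of a positive-dimensional moduli space carries non-trivial first-order deformations), so the step ``the parameter witnessing this class must be nilpotent, otherwise the thickening would be trivial'' does not hold. Non-triviality of a deformation lives in the tangent/obstruction calculus and says nothing about nilpotents in the base ring.

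The paper's argument is more direct and goes the other way. Restrict the family $\U\to\M$ to $Z$ and pass to completed local rings at the point of the section: one obtains $R=\widehat\O_{Z,y}\to S=\widehat\O_{\U_{|Z},\sigma(y)}$ with a retraction $\psi\colon S\to R$ (from the section), and $S\otimes_R\kk$ is the completed local ring of the special fibre $\U_{|\{y\}}$, which is non-reduced by hypothesis. Lemma~\ref{lemma-section} then applies: the retraction splits $S\cong R\oplus I$ as an $R$-module (with $I=\ker\psi$ an ideal), so the nilpotent in the fibre ring propagates to $S$, and $S$ is non-reduced. Finally, the smooth equivalence $(\U_{|Z},y)\sim(\M,y)$ from Definition~\ref{fractal} preserves (non-)reducedness, giving the conclusion for $\M$. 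So the correct mechanism is: nilpotents of the fibre survive into the total space because of the section, not because the family is a ``non-trivial'' deformation. Your reduction steps (passing to the completed local ring, and using that smooth equivalence preserves reducedness) are fine; the extraction step in the middle needs to be replaced by this splitting argument.
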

Using this notion, we generalise \cite[Theorem 2.8]{Unexpected-GGGL}, where the authors have shown that the pointed Hilbert scheme $\Hilb_{1,m}(U)$ is non-reduced for $U$ a smooth quasi-projective variety and $m\geq 2$ a positive integer.
\begin{twr} [{Theorem~\ref{fractal1}}]
		Let $U$ be a smooth quasi-projective variety and let $m\geq 2$ be a positive integer. Then, the nested Hilbert scheme $\Hilb_{1,m}(U)$ admits a fractal structure at any point ${[*,X]\in \Hilb_{1,m}(U)}$, such that $[X]\in \Hilb_m(U)$ and $X$ is uncleavable.
\end{twr}
In Lemma \ref{fractal2} we show that the nested Hilbert scheme $\Hilb_{m-1,m}(U)$ admits a fractal structure at pairs $(X\#\emptyset, X)$, where $X$ is the spectrum of a local Gorenstein algebra with vanishing $(T^1_X)_{<-1}$.
The subscheme $X\#\emptyset$ is given as the vanishing locus of the socle of $\O_X$. This way, in Corollary~\ref{szach-theorem}, we reprove the result \cite[Theorem 1.5]{szachniewicz-nonreduce} describing the deformations of $X\#\emptyset$.
As showcased above, this article unifies the known methods employed in the study of local properties of the Hilbert scheme and develops them further.
\subsection{Overview of the main points of argument}

The pointed schemes studied in Section \ref{sec-pointed} serve as building blocks for further constructions. In Section \ref{sec-glued}, we glue the schemes along the distinguished points
$$\begin{tikzcd}
	* \arrow[d] \arrow[r] & X \arrow[d] \\
	Y \arrow[r]           & X\cup_{*} Y,
\end{tikzcd}$$
and obtain an intermediate result, of independent interest, on the negative deformations of the union along the point.
\begin{twr} [{Theorem \ref{step2-main}}]
	Assume that $X,Y$ are pointed finite $\G_m$-schemes which are spectra of $d$-level algebras, see Definition \ref{defi-level}, and assume that neither is a reduced point.
	Then the morphism
	$$ \Def^{<0}(* \to X) \times \Def^{<0}(* \to Y) \to \Def^{<0}(X\cup_* Y)$$
	is smooth and induces an isomorphism on the tangent spaces.	
\end{twr}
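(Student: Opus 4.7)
The plan is to apply a Mayer--Vietoris style long exact sequence of Lichtenbaum--Schlessinger $T^i$-functors to the cocartesian square
\[
\begin{tikzcd}
    * \arrow[r] \arrow[d] & X \arrow[d] \\
    Y \arrow[r] & X \cup_* Y,
\end{tikzcd}
\]
which dualises to the fibre product of augmented algebras $\O_X \times_\kk \O_Y$. Since every arrow is $\G_m$-equivariant and the action on $*$ is trivial, the induced braid of $T^i$-functors (in the sense of Buchweitz) is $\Z$-graded. Restricting to strictly negative degrees lets us compare the deformation functors $\Def^{<0}$, which by the negative analogue of Theorem~\ref{+obs} are good in the sense of Definition~\ref{defi-good}. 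For such functors it suffices to check the induced maps on the associated tangent and obstruction spaces.

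First I would verify the tangent isomorphism. Since $*$ is a reduced point with trivial $\G_m$-action, $(T^i_*)_{<0}=0$ for all $i$, so the strictly negative part of the Mayer--Vietoris sequence collapses and yields
\[
(T^1_{(X,*)})_{<0}\oplus (T^1_{(Y,*)})_{<0}\xrightarrow{\sim} (T^1_{X\cup_* Y})_{<0},
\]
where $T^1_{(-,*)}$ denotes the tangent space of the corresponding pointed deformation functor, which coincides with the tangent space of $\Def^{<0}(*\to -)$.

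For smoothness I would apply the standard criterion that a morphism of good deformation functors is smooth provided it is surjective on tangent spaces and injective on obstruction spaces. The same braid produces, in strictly negative degrees, an injection
\[
(T^2_{(X,*)})_{<0}\oplus (T^2_{(Y,*)})_{<0}\hookrightarrow (T^2_{X\cup_* Y})_{<0},
\]
again by the vanishing at the gluing point. The hypothesis that $X$ and $Y$ are spectra of $d$-level algebras, and that neither is a reduced point, enters here: the socle being concentrated in a single positive degree forbids spurious low-degree classes on $*$ from being fed back through the pointed $T^i$-modules, so the gluing along the augmentation ideals does not spoil the graded Mayer--Vietoris sequence.

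The main obstacle I foresee is formalising the pointed Mayer--Vietoris/braid of $T^i$-functors compatibly with the $\Z$-grading and with the good-deformation-functor framework developed earlier in the paper. Passing from Buchweitz's original braid to the pointed setting requires tracking the additional flat-section data, and one must verify that the filtration defining strictly negative deformations glues consistently along the augmentations of $\O_X$ and $\O_Y$. Once this framework is established, the vanishing $(T^i_*)_{<0}=0$ makes both the tangent and the obstruction comparisons essentially formal, and the smoothness together with the tangent isomorphism follow.
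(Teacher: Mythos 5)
Your plan rests on a Mayer--Vietoris long exact sequence of $T^i$-functors for the cocartesian square
\[
\begin{tikzcd}
* \arrow[r]\arrow[d] & X \arrow[d]\\ Y \arrow[r] & X\cup_* Y,
\end{tikzcd}
\]
but no such sequence is established in the paper and it is not a standard tool: Buchweitz's braid, which is what the paper actually uses (Braid~\eqref{braid3}), is attached to a \emph{single} morphism (here $X\to X\cup_* Y$), not to a pushout square. Even if one could cook up a pointed Mayer--Vietoris sequence, your argument that ``$(T^i_*)_{<0}=0$ makes it collapse'' would only control the pure $x$-degree and pure $y$-degree parts of $T^1_{X\cup_* Y}$ and $T^2_{X\cup_* Y}$. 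The genuine difficulty — and the heart of the paper's proof — lies in the \emph{mixed} bidegree part under the $\G_m^2$-action, which does not appear anywhere in a naive gluing sequence. The paper isolates this in Lemma~\ref{mixed}, where the $d$-level hypothesis enters in an essential and concrete way: a mixed-degree homomorphism $\phi\colon J/J^2\to\O_{X\cup_* Y}$ of bidegree $(k,-l)$ would have to send a pure $y$-degree generator into the socle of $\O_X$, forcing $k=d$ and $l<d$, hence $k-l>0$, ruling out negative mixed tangents.

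You have also misattributed the role of the hypothesis that neither $X$ nor $Y$ is a reduced point. In the paper this is used in Lemma~\ref{step2x} (and flagged in Remarks~\ref{single-rmk1} and~\ref{single-rmk2}): it is precisely what guarantees that $\Def(X/X\cup_* Y)$ has no tangents or obstructions of pure $y$-degree, so that the contribution from $\Def^{<0}(*\to Y)$ can be propagated through the composite $\Def(X\backslash X\cup_* Y)\to\Def(X\to X\cup_* Y)\to\Def(X\cup_* Y)$ without cancellation. Your suggestion that the $d$-level condition ``forbids spurious low-degree classes on $*$'' conflates these two distinct hypotheses and mechanisms. In short, the formal outline (reduce to tangent surjectivity and obstruction injectivity for good deformation functors) is right, but the substantive content — the braid for the inclusion $X\hookrightarrow X\cup_* Y$, the pure-$y$-degree analysis of Lemmas~\ref{step2y}--\ref{step2}, and above all the elimination of mixed negative tangents via the $d$-level structure in Lemma~\ref{mixed} — is absent from your proposal.
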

Next, we assume that both $X$ and $Y$ are Gorenstein. We take the union of $X$ and $Y$ along the point, then identify the socle of $X$ with the socle of $Y$. This way, we obtain the connected sum $X\#Y$, see Definition \ref{conn-defi}.
Such a scheme is Gorenstein and leads up to the main result, Theorem \ref{main-result-intro}.

\section{Preliminaries}
Throughout this paper, we fix a positive integer $m$ and an algebraically closed field $\kk$ of characteristic either $0$ or co-prime with $m$.
\subsection{Macaulay's inverse systems} \label{apolar-section}
To fix notation, we introduce the basics of apolar algebras; for a more thorough exposition, see~{\cite[Section 2.3]{JJ-class-gor}}.
\vskip 0.2 cm
Let us take a pair of polynomial algebras $S_x=\kk[{x_1},\ldots ,x_{n_1}]$, $S_y=\kk[{y_1},\ldots, {y_{n_2}}]$ and their tensor product $S=\kk[{x_1},\ldots ,{x_{n_1}},{y_1},\ldots, {y_{n_2}}]$. \\
Denote by $P_x$ (respectively $P_y$, $P$) a divided power series ring in variables $X_1,\ldots, X_{n_1}$ (resp. in variables $Y_1,\ldots, Y_{n_2}$, and $X_1,\ldots, X_n,Y_1,\ldots, Y_{n_2}$).
There is an action of $S_x$ on $P_x$ by \textit{contraction}:
$$x^{\mathbf a}\circ X^{\mathbf b} = \begin{cases}
 X^{\mathbf b -\mathbf a} \text{ if }  \mathbf b \geq \mathbf a, \text{ that is } b_i\geq a_i\text{ for } i=1,\ldots m;\\
 0 \ \text{otherwise.}
\end{cases}$$
Here $\mathbf a = (a_1,\ldots, a_{n_1})$ and $x^{\mathbf a} = x_1^{a_1}\ldots x_n^{a_{n_1}}$.
This action is extended by $\kk$-linearity on $S_x$ and on $P$.
\begin{defi}\label{def-soc}
	Let $(A,\m)$ be a local $\kk$-algebra, then \emph{the socle of $A$} is the annihilator of $\m$.
\end{defi}
\begin{defi}\label{def-apolar}
	Let $F\in P$ be a polynomial, then we define the apolar algebra to be:
	$$\Apolar(F):=\frac{S}{\Ann(F)}=\frac{S}{(s\in S: s\circ F=0)}.$$
	The socle of $\Apolar(F)$ is the ideal generated by an element $f\in S$, such that $f\circ F=1$.
\end{defi}
	Let us take a Gorenstein zero-dimensional subscheme $Y$ of $\A^n=\Spec(S)$ supported at the origin, then $Y=\Spec(S/I)$.
	Such algebras $S/I$ are characterised by the existence of a polynomial $F \in P$, such that $I=\Ann(F)$ and $S/I=\Apolar(F)$. Such a polynomial $F$ is not unique. For details, see \cite[Chapter 21]{eisenbud}.
	\begin{exm}
		Assume that the characteristic of the field $\kk$ is not equal to $2$.
		Let \[F=X_1X_2X_3+X_1X_2^2\in P_x, \] then $\Ann(F)=(x_3^2,x_2^2-x_2x_3,x_1^2).$
	\end{exm}
	
	\begin{defi}\label{HGor-defi}
		Let $\HG{m}{n}$ denote the Gorenstein locus of the Hilbert scheme of $m$-points on $\A^n$. 
	\end{defi}
	The Gorenstein locus is open in the Hilbert scheme $\Hilb_{m}(\A^n)$, see \cite[Proposition 2.5]{CGN-07}.
	However, in general it is not dense for example for $n=3$ and $m\geq 78$, see \cite[Example 4.3]{Iar-Compressed}.

\subsection{Deformation theory}

In this subsection, we introduce fundamental notions of the classical deformation theory.
 The main reference for this section is \cite{sernesi_Deformations_of_algebraic_schemes} and \cite{Fantechi-Manetti}. 
 
The category of local Artinian $\kk$-algebras with the residue field $\kk$ is denoted by $\Art_\kk$.
Let us take a complete local ring $R$, then we can define a functor $h_R\colon \Art_\kk \to \Set$ by setting
$$ h_{R}(A)= \Hom(R,A).$$
Note that we recover the tangent space to $\Spec(R)$ by taking the value of $h_R$ on $\kk[\eps]/\eps^2$. In general values of the functor are thought to be \textit{infinitesimal deformations}.
Any functor $\Art_\kk\to \Set$ isomorphic to $h_{R}$ for some complete local ring $R$ is called \emph{prorepresentable}, for a reference see \cite[2.2 Functors of Artin rings]{sernesi_Deformations_of_algebraic_schemes}. \\
A \emph{small extension} is a surjection in $\Art_\kk$
$$ A'' \xrightarrow{f} A,$$  
such that the kernel of $f$ is annihilated by the maximal ideal of $A''$.
It can be seen that $h_R$ can be recovered from values on small extensions. Deformation theory is the study of functors arising from a geometrical context and \textit{mimicking} the behaviour of the prorepresentable one, where the value on $A\in \Art_{\kk}$ is the set of deformations over $A$.
 \vskip 0.2 cm
\begin{defi-not}
	A functor of Artin rings is a covariant functor $F\colon \Art_\kk \to \Set$, such that $F(\kk)=*$.
\end{defi-not}

\begin{defi}[{\cite[Schlessinger conditions, Theorem 2.11]{schlessinger}}]
	Let $F$ be a functor of Artin rings. 
	Let $A'\to A$ and $A''\to A$ be a pair of homomorphisms in $\Art_\kk$, and consider the natural map
	\begin{equation} \label{schless-eq}
		F(A'\times_A A'' ) \to F(A')\times_{F(A)} F(A'')
	\end{equation}
	Then the following conditions are the Schlessinger conditions:
	\begin{enumerate}[label=(H\arabic*)]
		\item\label{H1} if $A''\to A$ is a small extension then the map \eqref{schless-eq} is surjective.
		\item\label{H2}  whenever $A=\kk$ and $A''=\kk[\eps]/\eps^2$, then the map \eqref{schless-eq} is bijective.
		\item\label{H3}  The conditions \ref{H1} and \ref{H2} are satisfied, and $F(\kk[\eps]/\eps^2)$ is a finite dimensional vector space, see Remark \ref{rem-h3}.
		\item\label{H4}  if $A''\to A$ is a small extension then the map \eqref{schless-eq} is bijective.
	\end{enumerate}
\end{defi}

\begin{remark}\label{rem-h3}
	By \cite[Lemma 2.12]{Fantechi-Manetti} the conditions \ref{H1}-\ref{H2} imply that $F(\kk[\eps]/\eps^2)$ is a $\kk$-vector space, called the tangent space to $F$ (and denoted $t_F$).
\end{remark}

\begin{defi}[{\cite[Definition 2.2.4]{sernesi_Deformations_of_algebraic_schemes}}]\label{defi-smooth}
	We say that a morphism of $F\to G$ of functors of Artin rings is \emph{smooth} if for every surjection $A'\to A$ in $\Art_\kk$ the natural map:
	$$ F(A')\to F(A)\times_{G(A)} G(A')$$
	induced by the commutative diagram
	$$ \begin{tikzcd}
		F(A') \arrow[d] \arrow[r] & G(A') \arrow[d] \\
		F(A) \arrow[r]            & G(A)           
	\end{tikzcd}
	$$
	is surjective.
\end{defi}

\begin{defi}[{\cite[Definition 2.2.6]{sernesi_Deformations_of_algebraic_schemes}}]\label{defi-semi}
	Let $F$ be a functor of Artin rings. We say $F$ admits \emph{a semi-universal family} if there is a smooth morphism $H\to F$ from a prorepresentable functor $H$, such that the induced map of the tangent spaces is bijective.
\end{defi}

\begin{twr}[{\cite[Theorem 2.11]{schlessinger}}] \label{schless-twr}
	If $F$ satisfies the conditions \ref{H1},\ref{H2},\ref{H3}, then it admits a semi-universal family. Furthermore, if it satisfies \ref{H4} then $F$ is prorepresentable.
\end{twr}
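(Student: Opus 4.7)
The plan is to construct a prorepresentable hull for $F$ as the inverse limit of a carefully built tower of Artin-local quotients. First, using \ref{H2} and Remark \ref{rem-h3}, the set $F(\kk[\eps]/\eps^2)$ carries a natural $\kk$-vector space structure $t_F$, which is finite-dimensional by \ref{H3}; set $d := \dim_{\kk} t_F$ and $P := \kk[[x_1,\ldots,x_d]]$ with maximal ideal $\mathfrak{m}_P$. The goal is to build an ideal $I \subset \mathfrak{m}_P^2$ and a compatible system $\xi \in \varprojlim_n F(P/(I + \mathfrak{m}_P^n))$ so that $R := P/I$ with $\xi$ represents the hull.

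The inductive construction produces a descending chain of quotients $P/\mathfrak{m}_P^n \twoheadrightarrow R_n$ together with compatible $u_n \in F(R_n)$, starting with $R_2 := P/\mathfrak{m}_P^2$ and $u_2 \in F(R_2)$ chosen via \ref{H2} so that the induced map $t_{R_2} \to t_F$ is the identity. Given $R_n$ and $u_n$, I would define $R_{n+1}$ to be the largest quotient of $P/\mathfrak{m}_P^{n+1}$ sitting in a small extension $R_{n+1} \twoheadrightarrow R_n$ admitting a lift of $u_n$. The key technical input is that condition \ref{H1}, applied to fiber product diagrams of such quotients, ensures that the family of "extendable" quotients is stable under joins, so a largest element exists. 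Passing to the inverse limit $R := \varprojlim R_n$ yields a complete local Noetherian ring (Noetherianity because the embedding dimension is bounded by $d < \infty$), together with the pro-element $\xi := (u_n)$.

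It remains to verify that the induced natural transformation $h_R \to F$ is smooth with bijective tangent map; this is essentially tautological from the choice of $R_2$ (tangent bijection via \ref{H2}) and the maximality in the inductive construction, since any lift problem over a small extension $A'' \twoheadrightarrow A$ with datum $R \to A$ and prescribed lift in $F(A'')$ factors through some $R_n \to A$ and can be extended to $R_{n+1} \to A''$ by the very definition of $R_{n+1}$. For the second assertion, under \ref{H4} the map \eqref{schless-eq} is bijective on small extensions, so lifts are unique when they exist; an induction on the length of $A$ then upgrades smoothness of $h_R \to F$ to bijectivity on every $F(A)$, yielding prorepresentability. The main obstacle of the plan is the inductive step: verifying that "the largest quotient admitting a lift" is well defined requires a delicate use of \ref{H1} on pairs of candidate quotients $S_1, S_2$ to produce a simultaneous lift over $S_1 \times_{R_n} S_2$, and to push this down so that the join within $P/\mathfrak{m}_P^{n+1}$ again carries a lift.
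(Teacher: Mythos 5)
The paper does not prove this theorem; it is stated purely as a citation to Schlessinger, so there is no internal argument to compare against. Your outline is Schlessinger's original construction of the hull (Noetherian quotient tower of $P=\kk[[x_1,\dots,x_d]]$, maximal liftable quotient at each stage, inverse limit), which is the standard and correct route. There are, however, two genuine gaps in the way you have written the two hard steps.

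First, the closure-under-joins argument is stated with the wrong fibre product. Writing $S_1=P/I$, $S_2=P/J$ with $\mathfrak m_P I_n\subseteq I,J\subseteq I_n$, one always has an injection $P/(I\cap J)\hookrightarrow S_1\times_{R_n}S_2$, but this is an isomorphism only when $I+J=I_n$. In general $S_1\times_{R_n}S_2$ is strictly larger, and since the arrow goes \emph{into} the fibre product rather than out of it, there is no way to ``push'' the class you get from \ref{H1} onto $P/(I\cap J)$; a covariant functor only pushes along surjections. Schlessinger's fix is to first replace $J$ by a larger ideal $J'\supseteq J$ with $I\cap J'=I\cap J$ and $I+J'=I_n$ (such a $J'$ exists by a linear-algebra argument in $I_n/\mathfrak m_P I_n$); then $P/(I\cap J)\cong (P/I)\times_{R_n}(P/J')$ exactly, $P/J'\to R_n$ is an iterated small extension, and \ref{H1} produces the lift. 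Without this adjustment your inductive step does not close.

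Second, the claim that smoothness of $h_R\to F$ is ``essentially tautological'' from the maximality of the $R_n$ seriously undersells what is the heart of the theorem. The maximality of $R_{n+1}$ controls liftings of $u_n$ to \emph{quotients of} $P/\mathfrak m_P^{n+1}$; it says nothing directly about lifting a map $R_n\to A$ along an arbitrary small extension $A''\twoheadrightarrow A$, and a map ``$R_{n+1}\to A''$'' is not supplied by the definition. Schlessinger's actual argument forms the auxiliary ring $R_n\times_A A''$ (which lies in $\Art_\kk$), applies \ref{H1} there to lift $(\xi_n,\eta)$, lifts $P\to R_n\to A$ to $P\to A''$ to get a map $P\to R_n\times_A A''$ with image a small extension of $R_n$, and only then invokes maximality for \emph{that} quotient of $P$, using \ref{H2} to correct the ambiguity in the choice of lift. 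You have named the right ingredients (maximality, \ref{H1}, \ref{H2}), but as written the verification would not carry through. The final paragraph on \ref{H4} and prorepresentability is fine.
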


\begin{defi}\label{defi-good}
	A functor of Artin rings $F$ is \emph{a good deformation functor} if it satisfies the conditions \ref{H1} and \ref{H2}, and also the condition of linearity of obstructions, see \cite[Definition~2.9]{Fantechi-Manetti}.
\end{defi}

We now summarise the crucial properties of the good deformation functors.

\begin{twr}[{\cite[Lemma 2.12, Theorem 6.11]{Fantechi-Manetti}}]
	Let $F$ be a good deformation functor. Then $F$ admits a natural tangent-obstruction theory, that is, a pair of $\kk$-vector spaces $(t_F,o_F)$ with the obstruction space $o_F$.
	If $F\to G$ is a map of good deformation functors, then it induces the linear maps on the tangent and obstruction spaces.
\end{twr}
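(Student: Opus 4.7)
The plan is to handle the tangent space, the obstruction space, and the functoriality claim separately, viewing the theorem as a packaging statement that assembles known infinitesimal-deformation calculus.

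For the tangent space, I would set $t_F := F(\kk[\eps]/\eps^2)$ and invoke Remark~\ref{rem-h3}: conditions \ref{H1} and \ref{H2} give it a canonical $\kk$-vector space structure. For a morphism $\phi \colon F \to G$ of good deformation functors, the induced map $t_F \to t_G$ is automatically linear, because both addition and scalar multiplication on the tangent space are defined purely in terms of the functor-of-Artin-rings structure. Addition comes from the bijection $F(\kk[\eps]/\eps^2 \times_\kk \kk[\eps]/\eps^2) \xrightarrow{\sim} F(\kk[\eps]/\eps^2) \times F(\kk[\eps]/\eps^2)$ provided by \ref{H2} composed with the sum map in $\Art_\kk$, while scalar multiplication is induced by the $\kk^*$-action on $\kk[\eps]/\eps^2$. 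Both are preserved by any natural transformation.

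Next I would construct $o_F$. For every small extension $e \colon 0 \to I \to A'' \to A \to 0$ and every $\xi \in F(A)$ there is a set-theoretic obstruction to lifting $\xi$ along $e$: it vanishes iff the fibre of $F(A'') \to F(A)$ over $\xi$ is nonempty. The linearity-of-obstructions axiom bundled into Definition~\ref{defi-good} is precisely what is needed to upgrade this to a vector-space value. I would define $o_F$ as the universal $\kk$-vector space equipped with a class $v_e(\xi) \in o_F \otimes_\kk I$ that vanishes exactly when $\xi$ lifts and is functorial in morphisms of small extensions; concretely, one can take the free $\kk$-vector space on pairs $(e,\xi)$ modulo the relations forced by the linearity axiom on pushouts of small extensions, following \cite[Theorem 6.11]{Fantechi-Manetti}. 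Given $\phi \colon F \to G$, naturality of this construction shows that the obstruction of $\phi(\xi)$ in $o_G \otimes I$ is the image of the obstruction of $\xi$, and varying $I$ (rescaling by $\lambda \in \kk$ via the tensor-product compatibility) proves that the induced map $o_F \to o_G$ is $\kk$-linear.

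The main obstacle is the passage from set-theoretic obstructions to a genuine vector space $o_F$: without the linearity axiom, the obstruction assignment is only a functor valued in pointed sets, and one could not extract linear maps between such obstruction theories. One must verify that pushouts of small extensions induce the expected sums on obstructions, which is exactly what the axiom in Definition~\ref{defi-good} guarantees. The remaining bookkeeping — checking well-definedness of $o_F$, the vanishing criterion, and the functoriality under small-extension morphisms — is routine but voluminous, which is why the argument is delegated to \cite[Lemma 2.12, Theorem 6.11]{Fantechi-Manetti}.
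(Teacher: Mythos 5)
The paper does not supply its own proof of this statement: it is cited verbatim from Fantechi--Manetti, with Lemma 2.12 providing the tangent-space structure and Theorem 6.11 the universal obstruction theory together with its functoriality. So there is no in-paper argument to compare against; the relevant question is whether your sketch is a faithful reconstruction of the cited source, and on the whole it is. The tangent-space part is exactly right: $t_F=F(\kk[\eps]/\eps^2)$ inherits addition from \ref{H2} applied to $\kk[\eps]/\eps^2\times_\kk\kk[\eps]/\eps^2$ and scaling from the $\kk$-action $\eps\mapsto\lambda\eps$ (together with $\kk\to\kk[\eps]/\eps^2$ for $\lambda=0$), and since both operations are assembled from morphisms in $\Art_\kk$, any natural transformation $F\to G$ respects them, giving a $\kk$-linear map $t_F\to t_G$ with no further input needed. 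For the obstruction space you correctly put your finger on the crux: set-theoretic lifting obstructions define only a pointed-set-valued assignment, and it is the linearity-of-obstructions axiom bundled into Definition~\ref{defi-good} that lets one manufacture a genuine vector space $o_F$ carrying a universal (initial) obstruction theory; the induced linear map $o_F\to o_G$ then follows from that universal property, because pushing the obstruction assignment for $F$ forward along $\phi\colon F\to G$ exhibits $o_G$ as an obstruction theory for $F$.

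Two minor caveats. First, the explicit presentation of $o_F$ as the free $\kk$-vector space on pairs $(e,\xi)$ modulo linearity relations is a plausible model but not the construction Fantechi--Manetti spell out, and as written it is slightly imprecise because obstruction classes live in $o_F\otimes_\kk I$, not in $o_F$ itself, so one should either restrict to small extensions with kernel $\kk$ or keep track of the $I$-twist when imposing the relations; since the universal obstruction theory is determined up to unique isomorphism this does not affect the conclusion, but it is worth being careful. Second, as the paper's own Example following Definition~\ref{unob} emphasises, the obstruction space attached to a good deformation functor is not unique, so the content of the theorem is really that there is a canonical choice (the universal one) for which the induced maps are well defined and linear; your argument implicitly uses this, and it would be cleaner to make the role of universality explicit rather than phrasing functoriality as mere naturality of a construction.
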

For a more comprehensive exposition of obstruction theories, see \cite[2.1 Obstructions]{sernesi_Deformations_of_algebraic_schemes}. 
\begin{defi}\label{unob}
	Let $f\colon F\to G$ be a morphism of good deformation functors. We say that $f$ is \emph{unobstructed} if the induced map between obstruction spaces is injective.
\end{defi}	
\begin{remark}
	The choice of linear obstruction space for a good deformation functor is not unique. Unobstructness should be understood as follows: there is a choice of obstruction spaces such that the induced linear map between them is injective. 
\end{remark}

\begin{exm}
	Let us consider a trivial deformation functor $*$ prorepresented by $\kk$
	\begin{align*}
		*:=h_{\kk}\colon \Art_\kk \to \Set,\\
		A \mapsto \Hom_{\kk}(A,\kk).
	\end{align*}
	The tangent space is zero \-dimensional. The minimal obstruction space is $0$. 
	 But we can choose any other vector space $V$ as the obstruction space.
	We take a good deformation functor $F$ with a chosen tangent-obstruction theory $(t_F,o_F)$.
	Let us take a map $F\xrightarrow{\phi} *$, then the map of obstruction spaces is $o_F \to V$ is $0$.
	Then $\phi$ is unobstructed if and only if we can choose $o_F$ to be $0$, then we say that $F$ is a smooth functor. 
	 On the other hand, any map $*\to F$ is always unobstructed, even though $V\to o_F$ is a zero map.
\end{exm}

\begin{twr}[{\label{fantechi-manetti}\cite[Proposition 2.3.6]{sernesi_Deformations_of_algebraic_schemes}, \cite[Lemma 6.1]{Fantechi-Manetti}}]
	Let $f\colon F\to G$ be an unobstructed morphism of good deformation functors. Assume that the induced map of the tangent spaces is surjective. Then $f$ is smooth.
\end{twr}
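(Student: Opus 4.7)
The plan is to verify Definition~\ref{defi-smooth} directly: for every surjection $A' \twoheadrightarrow A$ in $\Art_\kk$ I need to show that the natural map $F(A') \to F(A) \times_{G(A)} G(A')$ is surjective. Since any surjection in $\Art_\kk$ factors as a composition of small extensions, and the surjectivity condition is preserved under composing the target arrows, I first reduce to the case where $A' \to A$ is itself a small extension with kernel $I$ satisfying $\m_{A'} \cdot I = 0$. Hence $I$ is a finite-dimensional $\kk$-vector space, and the standard infinitesimal apparatus for good deformation functors is available: obstructions live in $o_{(\cdot)} \otimes I$ and torsors of lifts are governed by $t_{(\cdot)} \otimes I$.

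Next, I take a pair $(\eta, \xi') \in F(A) \times_{G(A)} G(A')$, so that $\xi'|_A = f(\eta)$. The obstruction theory attached to a good deformation functor produces a class $\mathrm{ob}_F(\eta) \in o_F \otimes I$ whose vanishing is equivalent to the existence of a lift of $\eta$ to $F(A')$, and similarly $\mathrm{ob}_G(f(\eta)) \in o_G \otimes I$. Naturality of the induced obstruction map, a part of the data of $f$ being a morphism of good deformation functors, yields that $\mathrm{ob}_F(\eta)$ is sent to $\mathrm{ob}_G(f(\eta))$ under $o_F \otimes I \to o_G \otimes I$. Since $\xi'$ is an actual lift of $f(\eta)$, we have $\mathrm{ob}_G(f(\eta)) = 0$. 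The unobstructedness hypothesis, Definition~\ref{unob}, forces $o_F \to o_G$ to be injective, so $\mathrm{ob}_F(\eta) = 0$ as well, and therefore some $\tilde\eta \in F(A')$ lifts $\eta$.

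The element $\tilde\eta$ need not map to $\xi'$; this is where surjectivity on tangent spaces enters. Both $\xi'$ and $f(\tilde\eta)$ lift $f(\eta)$, so by condition~\ref{H2} the fibre of $G(A') \to G(A)$ over $f(\eta)$ is a torsor under $t_G \otimes I$, giving some $\delta \in t_G \otimes I$ with $\xi' = f(\tilde\eta) + \delta$. Using surjectivity of $t_F \to t_G$, I lift $\delta$ to $\tilde\delta \in t_F \otimes I$ and apply the torsor action of $t_F \otimes I$ on the fibre of $F(A') \to F(A)$ over $\eta$ to produce $\tilde\eta' := \tilde\eta + \tilde\delta \in F(A')$. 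Equivariance of $f$ with respect to these two torsor actions, again part of being a morphism of good deformation functors, gives $f(\tilde\eta') = f(\tilde\eta) + \delta = \xi'$. Hence $(\tilde\eta', \xi')$ realises $(\eta, \xi')$ in the image, so $f$ is smooth.

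The conceptual pivot, and the main obstacle to making the sketch rigorous, is the double compatibility invoked above: naturality of the obstruction classes and equivariance of $f$ with respect to the torsor actions on the fibres of lifts. Both are formal consequences of the axioms of a good deformation functor combined with the stipulation that $f$ is a morphism of such; however, without them neither the application of unobstructedness in the lifting step nor the application of tangent surjectivity in the correction step would be justified.
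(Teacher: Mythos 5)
The paper does not actually prove this statement; it cites it directly from Sernesi (Proposition 2.3.6) and Fantechi--Manetti (Lemma 6.1). Your proof is correct and is essentially the standard argument found in those references, so it serves as a valid reconstruction of the cited result.

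Two small points of precision. First, the reduction to small extensions is correct, and the intermediate step you suppress (that surjectivity of $F(A')\to F(A)\times_{G(A)}G(A')$ is stable under composing small extensions) goes through exactly as you'd expect: lift first over the inner extension to obtain $\eta_B\in F(B)$ compatible with both $\eta$ and $\xi'|_B$, then lift over the outer one. Second, the appeal to ``\ref{H2}'' for the torsor structure is slightly misattributed: it is \ref{H1} that makes $F(A'\times_A A')\to F(A')\times_{F(A)}F(A')$ surjective and hence makes the action of $t_F\otimes I$ transitive on fibres, while \ref{H2} supplies the vector-space structure on $t_F$ and the identification $F(A'\times_\kk \kk[I])\cong F(A')\times(t_F\otimes I)$. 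Moreover, without \ref{H4} the action is guaranteed transitive but not free, so ``torsor'' overstates what is available --- but you only use transitivity in the correction step, so the argument is unaffected. The two compatibilities you flag at the end (naturality of obstruction classes along $f$, and $f$-equivariance of the tangent-space action) are indeed the ingredients one must supply, and both are built into the Fantechi--Manetti framework of linear obstruction theories as the paper itself records in the theorem immediately preceding this one.
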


	In the latter parts of the paper, we will employ smooth equivalence of deformation functors, see \cite{vakil-law}, \cite[Pointed schemes and smooth equivalence]{JJ-pathologies}. 
	
\begin{defi}
	We say that functors $G$ and $F$ are smoothly equivalent if there exists a functor of Artin rings $H$ with smooth maps
	$$\begin{tikzcd}
		& H \arrow[ld, "sm"'] \arrow[rd, "sm"] &   \\
		F &                                             & G.
	\end{tikzcd}$$
\end{defi}

\begin{defi}
	Let $F\to G$ be a map of good deformation functors. Then \emph{a relative tangent-obstruction theory} is a pair of vector spaces $t_r$ and $o_r$ together with maps forming an exact sequence of linear spaces
	$$0 \to t_r\to t_F \to t_G \to o_r \to o_F \to o_G.$$
\end{defi}

\begin{lem} \label{technical-deformation} 
	Let us take a diagram of deformation functors
	$$\begin{tikzcd}
		F \arrow[d] \arrow[r] & G \arrow[d] \\
		* \arrow[r]           & H .         
	\end{tikzcd}$$
	Assume that the diagram above induces the following exact sequence of vector spaces,
	$$t_F\to t_G \to t_H \xrightarrow{0} o_F \to o_G \to o_H. $$
	Then the induced map $F\xrightarrow{\phi} P:= G\times_{H} * $ is smooth.
\end{lem}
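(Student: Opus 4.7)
The plan is to apply Theorem \ref{fantechi-manetti} to the induced morphism $\phi \colon F \to P$. Thus I need to verify two things: that $\phi$ is surjective on tangent spaces, and that some compatible choice of obstruction theories makes $\phi$ unobstructed in the sense of Definition \ref{unob}.

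First I would analyse the tangent and obstruction spaces of the fibre product $P = G \times_H \ast$. Since $\ast$ has trivial tangent space, condition \ref{H2} (applied to $P$ as a fibre product of functors satisfying \ref{H1}, \ref{H2}) gives $t_P = \ker(t_G \to t_H)$. For obstructions, the standard lifting argument—given a small extension $A' \twoheadrightarrow A$ with kernel $I$ and a class $(y,\ast) \in P(A)$, first obstruct a lift $y' \in G(A')$ by a class in $o_G \otimes I$, then modify $y'$ by an element of $t_G \otimes I$ to try to kill the image in $H(A') \otimes I$—produces an obstruction space $o_P$ fitting into an exact sequence
\[
t_G \to t_H \to o_P \to o_G \to o_H.
\]
This construction comes with naturality maps from any obstruction theory of a functor mapping to $P$; in particular the factorisation $F \to P \to G$ yields a commutative triangle $o_F \to o_P \to o_G$ whose outer edge equals the natural map $o_F \to o_G$ induced by $F \to G$.

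Tangent surjectivity of $\phi$ is then immediate from the hypothesis: the composite $t_F \to t_G$ lands in $t_P = \ker(t_G \to t_H)$ because the square commutes and $t_\ast = 0$, and exactness of the assumed sequence at $t_G$ says precisely that the image of $t_F \to t_G$ equals $t_P$. For unobstructedness, the hypothesis that the connecting map $t_H \to o_F$ vanishes, combined with exactness at $o_F$, gives that $o_F \to o_G$ is injective; since this map factors as $o_F \to o_P \to o_G$, the first arrow $o_F \to o_P$ is injective as well. Theorem \ref{fantechi-manetti} then concludes that $\phi$ is smooth.

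The main obstacle is the careful construction of $o_P$ as a \emph{linear} obstruction space for the fibre-product functor, and the verification that the natural maps $o_F \to o_P$ and $o_P \to o_G$ arising from $F \to P \to G$ are genuinely compatible with $o_F \to o_G$. This is a standard but slightly delicate piece of homological bookkeeping which is best phrased inside the braid-of-$T^i$-functors formalism highlighted in the introduction; the Mayer--Vietoris style exact sequence $t_G \to t_H \to o_P \to o_G \to o_H$ should fall out of that formalism applied to the Cartesian square defining $P$.
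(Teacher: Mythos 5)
Your argument is correct and follows essentially the same route as the paper's own (very terse) proof: both verify tangent surjectivity of $\phi$ via the exactness at $t_G$ and the identification $t_P = \ker(t_G \to t_H)$, and both deduce unobstructedness from the injectivity of $o_F \to o_G$ (forced by the zero connecting map) together with the factorisation through $o_P$, then invoke Theorem~\ref{fantechi-manetti}. The extra detail you supply — the explicit construction of an obstruction theory for the fibre product and the compatibility of the induced maps — is exactly what the paper leaves implicit, so your version is a faithful expansion rather than a different proof.
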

\begin{proof}
	The exactness of the sequence implies that $\phi$ is surjective on the tangent spaces. The map
	$$o_F \to o_G$$ is injective and factors through $o_P$, thus we get that the map $\phi$ is unobstructed. By Theorem~\ref{fantechi-manetti}, we get the conclusion.
\end{proof}
Now we state two very useful lemmas providing us a simple technique in proving non-reducedness.
\begin{lem}\label{lemma-section}
	Let us take a Noetherian $\kk$-algebras $R$ and $S$.
	Let $\phi\colon R \to S$ be a homomorphism. Assume that the induced morphism of schemes $\phi^\#\colon \Spec S\to \Spec R$ has all fibres irreducible and that $\phi^\#$ admits a section $\psi^\#$.
	Let us take a $\kk$-point of $\Spec R $, then we get a Cartesian diagram
	$$\begin{tikzcd}
		\Spec S\otimes_R \kk \arrow[d] \arrow[r] & \Spec S \arrow[d, "\phi^\#"']             \\
		\Spec \kk \arrow[r]                      & \Spec R. \arrow[u, "\psi^\#"', bend right]
	\end{tikzcd}$$
	Assume that the fibre $S\otimes_R \kk$ is finite, then either $\phi$ is locally an isomorphism around the chosen point or $S$ is non-reduced. 
\end{lem}
\begin{proof}
	First, we prove that the homomorphism $\phi$ induces a bijection between the irreducible components. 
	Let us take the generic point $\eta$ of an irreducible component $V$ of $\Spec(R)$. The fibre of $\phi$ over $\eta$ contains of at least one generic point of $\Spec(S)$ since $\phi$ is surjective. And at most one since any two components that are mapped to $V$ intersect along the image $\psi^{\#}(V)$. 
	\vskip 0.2 cm
	Since the statement has a local character we can localize at the chosen closed point and assume that $S$ is local. Let $\mathfrak{p} \subset S$ be a minimal prime ideal. As shown above the homomorphism $\phi$ induces a morphism of irreducible components
	$\phi_{\mathfrak p} \colon R/ \mathfrak p R \to S/\mathfrak p $, which is a section of 
	$\psi_{\mathfrak p}  \colon S/\mathfrak p \to R/\mathfrak p R$.
	By \cite[Theorem 10.10]{eisenbud} the homomorphism $\phi_{\mathfrak p}$ induces the inequality
	$$\dim S/\mathfrak p \leq \dim R/\mathfrak p R + \dim S/\mathfrak p\otimes_R\kk= \dim R/\mathfrak p R.$$
	Since $\psi_{\mathfrak p}$ admits a section, it is surjective, therefore $\dim R /\mathfrak p R \leq \dim S /\mathfrak p$. Hence $\dim R/\mathfrak p R =\dim S/\mathfrak p$.
	Since $S/\mathfrak p$ is a domain, this implies that the kernel of $\psi_{\mathfrak p}$ is the zero ideal.
	Now let us take $I$ to be the kernel of $\psi$. We have shown that $I\subset \mathfrak p$ for any minimal prime ideal $\mathfrak p$, hence $I$ is contained in the nilradical of $S$.
	If $S$ is reduced then $\psi$ is an isomorphism, and $\phi$ as an inverse is also an isomorphism.
\end{proof}
	Now we replicate the proof of Hedgehog point theorem \cite[Theorem 3.1]{szachniewicz-nonreduce}.
	\begin{lem}\label{lemma-section2}
		Let $X$ be a $\G_m$-scheme.
		Assume that there is a $\G_m$-equivariant affine morphism ${\pi \colon X\to X^{\G_m}}$.
		Let $x\in X^{\G_m}$ be a closed point, and assume that fibre of $\pi$ over $y$ is finite and non-reduced. Then $X$ is non-reduced.
	\end{lem}
	\begin{proof}
		Since $\pi$ is $\G_m$-equivariant, then the natural embedding $X^{\G_m} \subset X$ is a section of $\pi$.
		Since $\pi$ is affine, we get $X$ as a relative spectrum of graded algebra $R$ over $\O_{X^{\G_m}}$.
		We can consider projectivization and get a projective morphism ${\Proj_{\O_{X^{\G_m}}}(R) \to {X^{\G_m}}}$.
		Since the fibre over $y$ is finite, by the semicontinuity of dimension of fibres  \cite[\href{https://stacks.math.columbia.edu/tag/0D4I}{Tag 0D4I}]{stacks-project}  the fibres of $\phi$ are finite in the open neighbourhood of $x$ in ${X^{\G_m}}$. Any such fibre is finite $\G_m$-scheme with a unique fixed point hence it is local, in particular irreducible. Hence by Lemma \ref{lemma-section} we get non-reduceness.
	\end{proof}

 	The technique above provides us with results on reducedness of complete local algebras, equivalently on prorepresentable deformation functors. In general, we do not assume as much about deformation functors. That motivates the next definition.
 \begin{defi}
 	Let $F$ be a good deformation functor satisfying condition \ref{H3}. Then $F$ admits a semi-universal family. We say that $F$ is non-reduced, if its semi-universal family is prorepresented by a non-reduced algebra.
 \end{defi}
 Apart from the tangent space and obstruction space, in the case of deformation functors "coming from geometry", we get the third important space, the one controlling automorphisms of deformations.

 \begin{defi}[{\cite[Example 7.2]{Fantechi-Manetti}}]\label{aut}
 	Let $X$ be a scheme.
 	We define \emph{the automorphism functor} by setting:
 	\begin{align*}
 		&\Aut_{X}\colon \Art_{\kk} \to \Grp \\
 		&\Aut_{X}(A) =  \text{the group of automorphisms of deformation $X_A:=X\times \Spec(A)$} ,
 	\end{align*}
 	where an automorphism of a deformation is an automorphism of $\Spec(A)$-scheme $X_A$ over the identity on $X$.
 \end{defi} 
 
 \begin{lem}[{\cite[3.11]{schlessinger}}, {\cite[Example 7.2]{Fantechi-Manetti}}]\label{defi-t0}
 	The automorphism functor is a good deformation functor.
 	The tangent space to $\Aut_X$ is called \emph{the group of infinitesimal automorphisms}.
 \end{lem}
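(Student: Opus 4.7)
The plan is to verify that $\Aut_X$ is a functor of Artin rings satisfying the Schlessinger conditions \ref{H1}, \ref{H2}, and admitting a linear obstruction theory, which together make it a good deformation functor per Definition \ref{defi-good}; the notation $H^0(T^0_X)$ is then naming convention once the tangent space has been identified with the global derivations.

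For the Schlessinger conditions, consider a pair of maps $A' \to A$ and $A'' \to A$ in $\Art_\kk$ with $A'' \to A$ a small extension. Flatness of $\kk \to A'$ and $\kk \to A''$ ensures that $X \times \Spec(A' \times_A A'') \cong X_{A'} \times_{X_A} X_{A''}$ as $\Spec(A' \times_A A'')$-schemes (checkable on an affine cover, since base change over $\kk$ preserves the fiber product), so by the universal property of the fiber product, an automorphism of $X_{A' \times_A A''}$ over the identity on $X$ is uniquely determined by a compatible pair of automorphisms of $X_{A'}$ and $X_{A''}$. The map in \eqref{schless-eq} is thus a bijection, giving \ref{H1} (indeed the stronger \ref{H4}) and \ref{H2} simultaneously.

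For the tangent space, a $\kk[\eps]/\eps^2$-algebra automorphism of $\O_X \otimes_\kk \kk[\eps]/\eps^2$ reducing to the identity modulo $\eps$ has the unique form $s + \eps t \mapsto s + \eps(t + D(s))$ with $D \colon \O_X \to \O_X$ a $\kk$-derivation, and composition of two such automorphisms corresponds to addition of the associated derivations. This yields a canonical isomorphism of abelian groups $\Aut_X(\kk[\eps]/\eps^2) \cong \Der_\kk(\O_X, \O_X)$, which is in particular a $\kk$-vector space; by definition we denote it $H^0(T^0_X)$.

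Finally, for linearity of obstructions at a small extension $0 \to V \to A' \to A \to 0$ and $\phi \in \Aut_X(A)$, I would cover $X$ by affine opens $\{U_i\}$, lift each $\phi|_{U_i \otimes A}$ to an automorphism over $A'$ (always possible on affines, where automorphisms extend), and form the discrepancies on the double intersections $U_{ij}$. Since $V \cdot \m_{A'} = 0$, the same calculation as in the tangent space step identifies these discrepancies with a \v{C}ech $1$-cocycle valued in $T^0_X \otimes_\kk V$, whose cohomology class $o(\phi) \in H^1(X, T^0_X) \otimes_\kk V$ is the obstruction; it vanishes exactly when a global lift exists. The main subtle point is checking that this obstruction is $V$-linear and independent of the local choices, which again hinges on $V \cdot \m_{A'} = 0$ linearising the multiplicative structure of automorphisms reducing to the identity modulo $V$ into the additive structure of derivations with coefficients in $V$.
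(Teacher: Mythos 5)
The paper states this lemma purely as a citation to Schlessinger and Fantechi--Manetti and supplies no proof of its own, so there is no in-house argument to compare against; your proposal reconstructs the standard one from those references. Your identification of the tangent space via the first-order expansion $s\mapsto s+\eps D(s)$, and your \v{C}ech construction of the obstruction class in $H^1(X,T^0_X)\otimes V$ from discrepancies of local lifts, are exactly the arguments of {\cite[Example 7.2]{Fantechi-Manetti}}, and they are correct.

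One step deserves to be made explicit. After observing that $\O_X\otimes(A'\times_A A'')\cong(\O_X\otimes A')\times_{\O_X\otimes A}(\O_X\otimes A'')$, you invoke ``the universal property of the fiber product'' to identify an automorphism of $X_{A'\times_A A''}$ with a compatible pair. But the universal property only converts maps \emph{into} the fiber product into pairs; to get an automorphism \emph{of each factor}, you still need to check that an $\Spec(A'\times_A A'')$-automorphism $\phi$ of $X_{A'\times_A A''}$ preserves the closed subschemes $X_{A'}$ and $X_{A''}$. This holds because each is the preimage under the structure morphism of a closed subscheme of $\Spec(A'\times_A A'')$, and $\phi$ commutes with the structure morphism --- a one-line observation, but it is the actual content of the bijection in \ref{H4} for this functor. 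With that filled in, the proof is complete; the remaining loose end (that the \v{C}ech class is independent of the choices and natural in small extensions) is the same $V\cdot\m_{A'}=0$ bookkeeping you already flag, and is standard.
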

 
 \begin{exm}[{\cite[Lemma 1.2.6]{sernesi_Deformations_of_algebraic_schemes}}]
 	In the case of $X$ being an affine scheme $\Spec(B)$, its group of infinitesimal automorphisms is canonically isomorphic to the module of derivations $\Der_\kk(B,B)$.
 \end{exm}
 	From now on, instead of a pair consisting of the tangent and an obstruction space, for each local deformation problem we are considering, we are going to associate a triple.
 \begin{defi}\label{triple}
 	 For each local deformation problem with a good deformation functor $F$ \emph{an associated a triple $(\mathfrak a_F, t_F,o_F)$}, consists of the group of infinitesimal deformations, the tangent space, and \textbf{an} obstruction space.
 \end{defi}
 
 \begin{remark}
 	Note that in Notation \ref{triple}, a triple is not associated to a functor but to a deformation problem. A functor of Artin rings takes values in sets, hence the functor does not provide any intrinsic information on automorphisms of parametrised objects. Such data is typically encoded in a stack, roughly by taking values in the category of groupoids. However, in this article, we are not using the language of stacks and stick to the classical notions. This is a deliberate choice made for the sake of clarity of the exposition. 
 \end{remark}

 \subsection{Buschewitz braid}\hfill\\
 The crucial technique of this paper is a method that enables us to compare varying deformation functors coming from a morphism of schemes $X\to Y$. The main content of this subsection is the statement of a result of Buchweitz \cite[2.4.3.2, Page 69]{Buch}, which shows interdependencies of several deformation functors coming from a given morphism. \vskip 0.2 cm
 
 Let $Y$ be a scheme and $\mathcal F$ a quasi-coherent sheaf on $X$ and $\O_X$ a $\O_Y$-algebra.
 Recall Lichtenbaum-Schlessinger functors $T^i_{\O_Y}(\O_X,F)$ defined for $i=0,1,2$, see \cite[Construction~3.1]{hardef}.
 \begin{defi}[{\cite[Section 3.4]{sernesi_Deformations_of_algebraic_schemes}, \cite[Appendix C.5]{Greuel-book}}] \label{defi-XY}
 	Let $f\colon X\to Y$ be a morphism of schemes.
 	Let $A\in \Art_\kk$. We define the following good deformation functors.
	 \begin{itemize}
	 	\item \cite[Definition 3.4.1]{sernesi_Deformations_of_algebraic_schemes} Deformations of a morphism. \vskip 0.2 cm
	 	$\Def(X\to Y)(A)$ is a set of isomorphism classes of Cartesian diagrams 
	 \begin{equation}	 \label{mor-diag}	
	\begin{tikzcd}
	 		X \arrow[d, "f"] \arrow[r] & \X \arrow[d, "\tilde f"]    \\
	 		Y \arrow[d] \arrow[r]      & \Y \arrow[d, "\psi"] \\
	 		* \arrow[r]                & \Spec(A),           
	 	\end{tikzcd}	 \end{equation}	
	 	where $\psi$ and $\psi\circ \tilde f$ are flat morphisms.
	 	\vskip 0.2 cm
	 	The natural associated triple for that problem is denoted by $T^i_{X\to Y}$, for $i=0,1,2$. For a reference see \cite[Theorem C.3.6.]{alper-notes}.
	 		 	\vskip 0.2 cm
	 	\item \cite[Subsection 3.4.2]{sernesi_Deformations_of_algebraic_schemes} Deformations of a morphism with target fixed. \vskip 0.2 cm $\Def(X/ Y)(A)$ is the set of isomorphism classes of Cartesian diagrams as in Diagram~\eqref{mor-diag} but $\Y=Y\times \Spec(A)$. \vskip 0.2 cm
	 	The natural associated triple for that problem is $$T^i_{X/ Y}:=T^i_{\O_Y}(\O_X,\O_X),$$ for $i=0,1,2$.
	 		 	\vskip 0.2 cm
	 	\item Deformations of a morphism with source fixed. \vskip 0.2 cm $\Def(X\backslash Y)(A)$ is the set of isomorphism classes of Cartesian diagrams as in Diagram~\eqref{mor-diag} but $\X=X\times \Spec(A)$.
	 	The natural associated triple for that problem is denoted ${T^i_{X\backslash Y}}.$
	 	In case of the map $f$ being a closed embedding, we get
	 	$${T^i_{X\backslash Y}=T^i_\kk(\O_Y,I)},$$ for $i=0,1,2$, where $I$ is the kernel of the induced map $\O_Y\to f_*\O_X$.
	 	\vskip 0.2 cm
	 	\item \cite[Subsection 3.4.1]{sernesi_Deformations_of_algebraic_schemes} Deformations of a morphism with target and source fixed. \vskip 0.2 cm
	 	 $\Def(X\backslash X \to Y/ Y)(A)$ is a set of isomorphism classes of Cartesian diagrams as in Diagram \eqref{mor-diag} but $\X=X\times \Spec(A)$ and $\Y=Y\times \Spec(A)$.
	 	 The natural associated triple for that problem is 
	 	 $$(0, \ T^1_{X\backslash X \to Y/ Y,} \ T^2_{X\backslash X \to Y/ Y}),$$
	 	 where $${T^i_{X\backslash X \to Y/ Y}:=T^{i-1}_\kk(\O_Y,\O_X)},$$ for $i=1,2$.
	 \end{itemize} \end{defi}
 \begin{defi}\label{rmk-loc-hilb}
 	If $X\to Y$ is a closed embedding then, the functor of deformations of $X\to Y$ with the fixed target is also known as the local Hilbert functor.
 \end{defi}
 There are several maps between the deformation functors of maps, as presented in the following diagram
$$\begin{tikzcd}
	&                                          & \Def(X\backslash X \to Y/Y) \arrow[ld] \arrow[rd]             &                                &         \\
	\Def(Y) & \Def(X\backslash Y) \arrow[rd] \arrow[l] &                                                               & \Def(X/Y) \arrow[ld] \arrow[r] & \Def(X) \\
	&                                          & \Def(X \to Y) \arrow[rru, bend right] \arrow[llu, bend left] &                                &        
\end{tikzcd}$$
 Those maps induce several long exact sequences that are interdependent. This was first observed by Illusie in \cite{Illusie} and elegantly put into a commutative diagram (later called a Braid) by Buchweitz in \cite[2.4.3.2 page 69]{Buch}, for further reference, see \cite[Appendix C.5]{Greuel-book}.
 \begin{equation}\label{braid0}	\tag{$\spadesuit$}
 	\begin{tikzcd}[sep=small]
 			0 \arrow[dd, bend right=49, Rightarrow ]  & & 0 \arrow[dd, bend left=49, dotted ]  \\
 		& 0 \arrow[ld, dashed] \arrow[rd] & \\
 		T^0_{X\backslash Y} \arrow[rd, Rightarrow] \arrow[dd, bend right=49, dashed ]                      &                                                           & T^0_{X / Y} \arrow[ld, dotted] \arrow[dd, bend left=49] \\
 		& T^0_{X\to Y} \arrow[rd, Rightarrow] \arrow[ld, dotted]    &                                                                              \\
 		T^0_Y \arrow[dd, bend right=49,dotted] \arrow[rd, dashed]                       &                                                           & T^0_X \arrow[dd,Rightarrow, bend left=49] \arrow[ld]         \\
 		& T^1_{X\backslash X \to Y/Y} \arrow[rd, dashed] \arrow[ld] &                                                                   \\
 		T^1_{X/ Y} \arrow[rd, dotted] \arrow[dd, bend right=49]             &                                                           & T^1_{X \backslash Y} \arrow[ld, Rightarrow] \arrow[dd, dashed, bend left=49] \\
 		& T^1_{X\to Y} \arrow[rd, dotted] \arrow[ld, Rightarrow]    &                                                                   \\
 		T^1_X \arrow[dd, Rightarrow, bend right=49] \arrow[rd]                       &                                                           & T^1_Y \arrow[dd, dotted, bend left=49] \arrow[ld, dashed]         \\
 		& T^2_{X\backslash X \to Y/Y} \arrow[ld, dashed] \arrow[rd] &                                                                   \\
 		T^2_{X\backslash Y} \arrow[rd, Rightarrow] \arrow[dd, dashed, bend right=49] &                                                           & T^2_{X/Y} \arrow[ld, dotted] \arrow[dd, bend left=49]             \\
 		& T^2_{X\to Y} \arrow[ld, dotted] \arrow[rd, Rightarrow]    &                                                                   \\
 		{T^2_Y}                                                                           &                                                           & {T^2_X}.                                                               
 	\end{tikzcd}
 \end{equation}

\section{Filtered deformations}\label{sec-3}
	In this section, we construct a generalisation of Białynicki-Birula decomposition for (good) deformation functors. The Białynicki-Birula decomposition for a scheme is a fruitful tool in study of the Hilbert scheme, see \cite{ES1,JJ-elementry,JJ-pathologies}.
	First, we construct the local functor, Definition~\ref{defi-filt-fun}, of filtered deformations of graded algebra, then establish that it is a good deformation functor Theorem \ref{filt-good} and identify its associated triple, Theorem \ref{+obs}. Next, we compare the functor with the Hilbert functor, Lemma \ref{smooth-nonstrict}, Lemma \ref{abb-smooth}. Finally, we define another functor, analogous to the fibre of the Białynicki-Birula decomposition over a fixed point, Definition \ref{abs-fibre}, Corollary~\ref{fund-negative-def}, Corollary~\ref{smooth-strict}. We end the section with comments on the various variants of functors of filtered deformations of maps of schemes.\vskip 0.2 cm

	Let us fix $X$ to be an affine scheme $\Spec B$.
	
	\begin{defi}\label{filt-alg-defi}
		Let $A$ be a $\kk$-algebra.
		A \emph{filtered $A$-algebra} $C$ is a pair $(C,F)$, where $C$ is an $A$-algebra and
		$F$ is a filtration by $A$ sub-modules of $C$
		$$\ldots \supset\ldots \supset F_{-1} \supset F_0 \supset F_1 \supset \ldots, $$ such that for every $i,j\in \Z$, we have $F_iF_j \subset F_{i+j}.$ 	\vskip 0.2 cm
		\emph{A homomorphism of filtered algebras }$(C,F)$ and $(C',F')$ is a homomorphism of algebras
		$\phi \colon C\to C'$ such that $\phi(F_i)\subset F'_{i}$.
	\end{defi}
	
	\begin{defi}\label{two-filtrations}
		Let $X=\Spec B$ be an affine $\kk$-scheme with a $\G_m$-action, that is, $B$ is $\Z$-graded. 
	We can consider two $\Z$-grading structures on $B=\bigoplus B_i$, the given one, and the opposite one $B=\bigoplus B_{-i}$. \vskip 0.2 cm
	Then, $B$ admits { the filtration by vanishing order}
	$$F_i= \bigoplus_{j\geq i} B_{j} $$ 
	and { the filtration by degree}
	$$F_i= \bigoplus_{j\geq i} B_{-j}. $$ 
	In this way we have two natural filtered algebras associated to any graded algebra $B$.
	\end{defi}
	
	
	\begin{defi}\label{filt-alg-flat-defi}
		A filtered $A$-algebra $(C,F)$ is flat if all elements of the filtration are flat $A$-modules.
	\end{defi}

	\begin{defi}\label{defi-filt-fun}
		Let $X$ be a spectrum of $\Z$-graded $\kk$-algebra $(B,F)$ of finite type.
		We define a functor of \emph{filtered deformations of $X$} to be the functor of Artin rings 
		$$\Def^{\geq 0}(X)\colon \Art_\kk\to \Set$$
		which sends $(A,\m)\in \Art_\kk$ to the set of isomorphism classes of pairs, a deformation o $X$ over $\Spec(A)$ 
		$$\begin{tikzcd}
			X \arrow[d] \arrow[r] & \mathcal X \arrow[d] \\
			* \arrow[r]  & \Spec(A)            
		\end{tikzcd}$$
		such that $\mathcal X = \Spec \mathcal B$, where $(\mathcal B,\F)$ is a flat filtered $A$-algebra, and an isomorphism of filtered $\kk$-algebras $B \to \mathcal B \otimes A/\m$.
	\end{defi}
	
	
		The main result of this section is the description of the associated triple (the group of infinitesimal automorphisms, the tangent space, an obstruction space) to the filtered functor $\Def^{\geq 0}(B)$; this is the content of Theorem \ref{+obs}. Later sections heavily rely on this result.
	
	\begin{remark}
		Since $X$ is affine, we abuse the notation and denote $$\Def(B)=\Def(X),$$
		the same for variants with subscript.
		In general, we can define the same functor by saying that $X=\Spec_X(\O_X)$ with the structural sheaf being filtered of $\mathcal X$ by flat subsheaves over the base. The results in this section remain unchanged.
	\end{remark}
	There is a natural transformation $\Def^{\geq 0}(X) \to \Def(X)$ given by forgetting the filtration. 
	Now we prove that the filtered functor is indeed a good deformation functor. The idea of the proof is based on \cite[Proposition 3.1]{JJ-elementry}. We show an isomorphism of the filtered functor with an analogue of the multigraded Hilbert scheme.
	\begin{twr} \label{filt-good}
		Let $X=\Spec B$ be an affine $\G_m$-scheme. Then the functor
		$\Def^{\geq 0}(X)$ is a good deformation functor.
	\end{twr}
	\begin{proof}
		First, we build from $(B,F)$ a Rees algebra $\tilde B= \oplus_{i\in \Z} t^{-i}F_{i} \subset B[t,t^{-1}]$. It is a graded algebra flat over $\kk[t^{-1}]$. Note that the degree of $t^{-1}$ is $-1$, and $\kk[t^{-1}] \to \tilde B$ is a morphism of graded algebras. We consider the deformation functor
		$$ \Def^0(\tilde B/\kk[t^{-1}]),$$
		the functor of abstract equivariant deformations over $\kk[t^{-1}]$, see \cite[1.5]{Pinkham-Gm}
		\vskip 0.2 cm
		We define a natural transformation of functors of Artin rings:
		\begin{align}
			\Def^{\geq 0}(B)(A) &\to \Def^0(\tilde B/\kk[t^{-1}])(A), \\
			(\mcB, \F) &\mapsto \tilde \mcB =(\oplus_{i\in \Z} t^{-i}\F_{-i}).
		\end{align}
		where $A\in \Art_\kk$.
		It is well defined; the algebra $\tilde \mcB$ is graded and flat over the Artinian algebra $A[t^{-1}]$.
		Conversely we define 
		\begin{align}
			\Def^0(\tilde B/\kk[t^{-1}])(A)  &\to \Def^{\geq 0}(B)(A) ,\\
			(\tilde \mcB = \bigoplus \tilde \mcB_i) &\mapsto (\frac{\tilde \mcB }{(t-1)}, \frac{\tilde \mcB_i}{(t-1)})
		\end{align}
		We have shown an isomorphism of the functors, since the equivariant functor is a good deformation functor, the statement holds.
	\end{proof}
	
	\begin{notation}
		For the opposite filtration on $B$ in Definition \ref{defi-filt-fun}, we get $\Def^{\leq 0}(X)$.
	\end{notation}	
	
	\begin{twr} \label{+obs}
		Let $X$ an affine $\G_m$-scheme of finite type. The functor
		$\Def^{\geq 0}(X)$ is a good deformation functor with an associated triple
		$$(T^{0}_X)_{\geq 0}, \ (T^{1}_X)_{\geq 0}, \ (T^{2}_X)_{\geq 0}.$$
	\end{twr}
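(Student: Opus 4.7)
The plan is to use the isomorphism $\Def^{\geq 0}(B) \cong \Def^0(\tilde B/\kk[t^{-1}])$ from the proof of Theorem \ref{filt-good}, which reduces the problem to computing the triple of the functor of equivariant deformations of the Rees algebra $\tilde B$ over the graded base $\kk[t^{-1}]$. For a graded algebra over a graded base, equivariant deformations are classified by the degree-zero part (with respect to the total grading) of the triple for unrestricted relative deformations: equivariance amounts, at the level of cocycles representing classes in $T^i$, to selecting the degree-zero component. Hence the task reduces to showing that $(T^i(\tilde B/\kk[t^{-1}]))_0 \cong (T^i_B)_{\geq 0}$ for $i=0,1,2$.

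To identify this degree-zero piece, I would exploit the two boundary fibres of $\tilde B$ over $\Spec\kk[t^{-1}]$: inverting $t^{-1}$ recovers $B[t,t^{-1}]$, while quotienting by $t^{-1}$ yields the associated graded, which is canonically $B$ because the filtration is the one induced from the grading. Flat base change along $\kk[t^{-1}]\hookrightarrow \kk[t,t^{-1}]$ yields a $\G_m$-equivariant isomorphism between the localisation of $T^i(\tilde B/\kk[t^{-1}])$ at $t^{-1}$ and $T^i_B\otimes_\kk \kk[t,t^{-1}]$, where, under the convention that $t^{-1}$ carries weight $-1$, the element $\xi\otimes t^{-d}$ with $\xi\in(T^i_B)_d$ is homogeneous of weight $0$. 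The degree-zero piece of $T^i(\tilde B/\kk[t^{-1}])$ then sits inside this localisation as those elements that extend regularly across $t^{-1}=0$, i.e.\ involve only non-negative powers of $t^{-1}$. Combined with the weight bookkeeping, this forces $d\geq 0$.

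A more concrete cross-check, for the tangent space, goes as follows. A first-order filtered deformation over $\kk[\eps]$ is classified by a Hochschild $2$-cocycle $\mu$ representing a class in $T^1_B$ subject to $\mu(F_aB,F_bB)\subset F_{a+b}B$. Decomposing $\mu$ into homogeneous components, a pure degree-$d$ part sends $B_a\otimes B_b$ into $B_{a+b+d}$, and compatibility with the filtration is equivalent to $d\geq 0$. The same degree analysis applies to infinitesimal automorphisms $\Der_\kk(B,B)$ and to obstructions, yielding $(T^0_B)_{\geq 0}$, $(T^1_B)_{\geq 0}$ and $(T^2_B)_{\geq 0}$ respectively. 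The main obstacle is aligning grading conventions on $\tilde B$, on $\kk[t^{-1}]$ and on the Lichtenbaum--Schlessinger functors, and in particular verifying that $T^i$ behaves well under the flat base change $\kk[t^{-1}]\hookrightarrow\kk[t,t^{-1}]$; once these conventions are fixed and compatibility of $T^i$ with localisation is invoked, the identification is essentially formal.
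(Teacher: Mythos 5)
Your proposal follows the paper through the first half: both proofs begin by invoking the isomorphism $\Def^{\geq 0}(B) \cong \Def^0(\tilde B/\kk[t^{-1}])$ from Theorem~\ref{filt-good}, and both reduce to identifying the degree-zero piece of the triple for the equivariant deformations of the Rees algebra $\tilde B$ over $\kk[t^{-1}]$. Where the two proofs diverge is in how that degree-zero piece is matched with $(T^i_B)_{\geq 0}$. The paper passes through Lemma~\ref{nat} (the equivalence between filtered $B$-modules and graded $\tilde B$-modules), compares naive cotangent complexes after dehomogenising at $t^{-1}=1$, and reads off the identification directly. You instead localise at $t^{-1}$, invoke flat base change to get $T^i(\tilde B/\kk[t^{-1}])\bigl[(t^{-1})^{-1}\bigr]\cong T^i_B\otimes_\kk\kk[t,t^{-1}]$, and then assert that the degree-zero part of $T^i(\tilde B/\kk[t^{-1}])$ ``sits inside this localisation as those elements that extend regularly across $t^{-1}=0$.''

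That last assertion is a genuine gap. For a general Rees degeneration, $T^i(\tilde B/\kk[t^{-1}])$ is not $t^{-1}$-torsion-free: torsion supported over $t^{-1}=0$ is exactly how the cotangent cohomology can jump at the special fibre, and such torsion would not inject into the localisation, so the ``regular at $t^{-1}=0$'' characterisation would fail. In the specific situation of Theorem~\ref{+obs} the assertion does happen to hold, but for a reason you do not state: $B$ is already $\Z$-graded and the filtration is the one induced from the grading, so $\gr_F B = B$ and the Rees family is in fact trivial, $\tilde B \cong B\otimes_\kk\kk[t^{-1}]$ as graded $\kk[t^{-1}]$-algebras; therefore $T^i(\tilde B/\kk[t^{-1}])\cong T^i_B\otimes_\kk\kk[t^{-1}]$ is free. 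Adding that observation would close the gap and actually make your route arguably slicker than the paper's, but without it the argument is incomplete. Your ``concrete cross-check'' is fine for $T^0$ (filtered derivations) and $T^1$ (filtered cocycles), but the sentence ``the same degree analysis applies \dots\ to obstructions'' is where the real work lives: one has to produce a representative for the obstruction class of a filtered deformation and show it sits in non-negative degree, which is exactly what the paper's graded naive cotangent complex and Lemma~\ref{nat} are there to do cleanly.
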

	As the most technical part of this section the proof of the Theorem \ref{+obs} is delegated to a separate Subsection \ref{proof+}. \vskip 0.2 cm
	\begin{stwr} \label{prorep-lem}
		If $(T^1_{B})_{\geq 0}$ is finitely dimensional, then $\Def^{\geq 0}(B)$ admits a semi-universal family.
		Moreover, if the group of non-negative infinitesimal automorphisms $(T^0_B)_{\geq 0}$ vanishes, then $\Def^{\geq 0}(B)$ is prorepresentable. 
	\end{stwr}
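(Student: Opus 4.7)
The plan is a direct application of Schlessinger's criterion, Theorem \ref{schless-twr}, relying on the two preceding structural results. By Theorem \ref{filt-good}, $\Def^{\geq 0}(B)$ is a good deformation functor, so the Schlessinger conditions \ref{H1} and \ref{H2} hold automatically. By Theorem \ref{+obs}, its tangent space is $(T^1_B)_{\geq 0}$, and the finiteness hypothesis on this space is precisely condition \ref{H3}. Schlessinger's theorem then produces a semi-universal family, which settles the first assertion.

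For prorepresentability, I would show that the vanishing $(T^0_B)_{\geq 0}=0$ upgrades \ref{H1} to \ref{H4}. By Theorem \ref{+obs}, this vector space is the tangent space to the filtered automorphism functor $\Aut^{\geq 0}_X$, which is itself a good deformation functor (of groups) by Lemma \ref{defi-t0} together with Theorem \ref{+obs}. A standard induction on the length of Artinian algebras, using the linear obstruction theory supplied by Theorem \ref{+obs}, shows that $\Aut^{\geq 0}_X(A)$ is trivial for every $A \in \Art_\kk$: any automorphism of a filtered deformation over $A''$ restricting to the identity along a small extension $A'' \twoheadrightarrow A$ defines a class in the tangent space of $\Aut^{\geq 0}_X$, which must vanish. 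Given this rigidity, any two lifts in $\Def^{\geq 0}(B)(A' \times_A A'')$ of a pair in $\Def^{\geq 0}(B)(A') \times_{\Def^{\geq 0}(B)(A)} \Def^{\geq 0}(B)(A'')$ differ by an automorphism of their common restriction, which is forced to be the identity. This promotes the surjectivity of \ref{H1} to the bijectivity of \ref{H4}, and a second application of Schlessinger's theorem yields prorepresentability.

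The only delicate ingredient is the rigidity step for $\Aut^{\geq 0}_X$, namely propagating the vanishing of its tangent space to triviality at every Artinian level; everything else follows mechanically from Theorems \ref{filt-good} and \ref{+obs} together with the Schlessinger criterion.
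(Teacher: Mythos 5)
Your argument is correct and rests on the same core principle as the paper's proof: Schlessinger's criterion plus the fact that vanishing of infinitesimal automorphisms upgrades \ref{H1} to \ref{H4}. The difference is one of packaging. The paper's proof does not argue directly on $\Def^{\geq 0}(B)$; instead it reuses the Rees-algebra isomorphism $\Def^{\geq 0}(B)\cong\Def^0(\tilde B/\kk[t^{-1}])$ from Theorem~\ref{filt-good} and then cites the classical prorepresentability criterion (Sernesi, Theorem~2.6.1) for the equivariant functor, ``keeping in mind that all morphisms have to respect the $\G_m$-action.'' This sidesteps the need to introduce a filtered automorphism functor $\Aut^{\geq 0}_X$, which the paper never defines. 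Your approach instead unpacks the mechanism of that classical theorem directly on the filtered side: you introduce $\Aut^{\geq 0}_X$, observe from Theorem~\ref{+obs} that its tangent space is $(T^0_B)_{\geq 0}$, and propagate its vanishing through small extensions to get rigidity of automorphisms, from which \ref{H4} follows. This is sound, and makes the mechanism explicit rather than delegated to a citation. The one place to be careful is the assertion that $\Aut^{\geq 0}_X$ is a good deformation functor ``by Lemma~\ref{defi-t0} together with Theorem~\ref{+obs}'': Theorem~\ref{+obs} is a statement about $\Def^{\geq 0}(X)$, not about a filtered automorphism functor, so strictly speaking you would need to re-run the Rees-algebra translation of Theorem~\ref{filt-good} at the level of the automorphism functor (or, equivalently, observe that filtered automorphisms of a filtered deformation correspond to graded automorphisms of the corresponding graded deformation of $\tilde B$ over $\kk[t^{-1}]$, to which Lemma~\ref{defi-t0} applies). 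With that filled in, the two proofs are equivalent in content; the paper's is shorter because it reuses the Rees isomorphism already established, while yours is more self-contained.
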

	\begin{proof}
		The first part of the statement is a direct consequence of Theorem \ref{+obs}, together with Theorem \ref{schless-twr}.
		For the second part, it is the classical result on prorepresentability \cite[Theorem 2.6.1]{sernesi_Deformations_of_algebraic_schemes} applied to the equivariant functor $\Def^0(\tilde B/\kk[t^{-1}])$, while keeping in mind that we are only concerned with $\G_m$-equivariant deformations.
	\end{proof}
	As $B$ is a graded algebra of finite type, there is an equivariant surjection $$S=\kk[{x_1},\ldots, x_n]\to B,$$ we consider the local Hilbert functor $\Hilb_{[B]}$, Definition~\ref{rmk-loc-hilb}. It is prorepresentable, when the "big" functor $\Hilb(\A^n)$ is representable. That is when the tangent space is finitely dimensional \cite[Corollary 3.2.2]{sernesi_Deformations_of_algebraic_schemes}.
	 We look at the functor of filtered deformations $\Hilb_{[B]}^{\geq 0}$,
	 it can be defined by taking the Białynicki-Birula decomposition of $\Hilb_d(\A^n)$ and take the deformation functor associated to completed local ring at the point associated to the algebra $B$.
	   This is also isomorphic to $\Def^{\geq 0}(X\subset \A^n)$, see Remark \ref{rmk-pairs-filt}.
	 \begin{lem} \label{smooth-nonstrict}
	 	There is a natural smooth morphism of good deformation functors ${\Hilb^{\geq 0}_{[B]} \to \Def^{\geq 0}(B)}$.
	 \end{lem}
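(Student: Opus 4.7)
The plan is to verify the hypotheses of Theorem~\ref{fantechi-manetti}: it suffices to show that the forgetful map $\phi\colon \Hilb^{\geq 0}_{[B]} \to \Def^{\geq 0}(B)$ is surjective on tangent spaces and unobstructed. The argument proceeds in two phases: first I identify the associated triple of $\Hilb^{\geq 0}_{[B]}$, and then I compare it with the triple of $\Def^{\geq 0}(B)$ from Theorem~\ref{+obs} using the graded version of the Buchweitz braid \eqref{braid0}.

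For the first phase, I would mimic the Rees algebra argument of Theorem~\ref{filt-good}, but now applied to the ideal $I \subset S$ defining $X = \Spec B$ rather than to $B$ itself. The functor $\Hilb^{\geq 0}_{[B]}$ is isomorphic to the functor of $\G_m$-equivariant local Hilbert deformations of $\tilde I \subset S[t^{-1}]$ over $\kk[t^{-1}]$, and taking non-negative graded parts of the classical triple $(0, T^1_{X/Y}, T^2_{X/Y})$ yields the triple
\[
\bigl(0,\ (T^1_{X/Y})_{\geq 0},\ (T^2_{X/Y})_{\geq 0}\bigr),
\]
where $Y = \A^n$. Moreover, the transformation $\phi$ is induced on associated triples by the degree-$\geq 0$ part of the standard forgetful maps $T^i_{X/Y} \to T^i_X$.

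For the second phase, I would apply the Buchweitz braid \eqref{braid0} to the $\G_m$-equivariant closed immersion $X \hookrightarrow Y = \A^n$ and extract the exact sequence
\[
\ldots \to T^1_{X\backslash X \to Y/Y} \to T^1_{X/Y} \to T^1_X \to T^2_{X\backslash X \to Y/Y} \to T^2_{X/Y} \to T^2_X.
\]
By Definition~\ref{defi-XY} one has $T^2_{X\backslash X \to Y/Y} = T^1_\kk(\O_Y, \O_X)$. Since $\O_Y = \kk[x_1, \ldots, x_n]$ is smooth over $\kk$, its cotangent complex is concentrated in degree zero, so $T^1_\kk(\O_Y, \O_X) = 0$. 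Hence $T^1_{X/Y} \twoheadrightarrow T^1_X$ and $T^2_{X/Y} \hookrightarrow T^2_X$. The closed immersion is $\G_m$-equivariant, so the braid consists of graded vector spaces and graded maps; taking non-negative parts is an exact functor on graded vector spaces, giving the surjection $(T^1_{X/Y})_{\geq 0} \twoheadrightarrow (T^1_B)_{\geq 0}$ and the injection $(T^2_{X/Y})_{\geq 0} \hookrightarrow (T^2_B)_{\geq 0}$. Combining with Theorem~\ref{+obs}, the hypotheses of Theorem~\ref{fantechi-manetti} are met and $\phi$ is smooth.

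The main obstacle is the first phase: carefully justifying via the Rees algebra dictionary that the associated triple of $\Hilb^{\geq 0}_{[B]}$ is indeed the non-negative part of the classical triple of the local Hilbert functor, and checking that the induced maps on these triples coincide with the degree-$\geq 0$ portions of the forgetful maps used in the braid. The second phase reduces to a bookkeeping exercise on the $\G_m$-equivariant Lichtenbaum--Schlessinger functors once the smoothness of $Y$ is invoked.
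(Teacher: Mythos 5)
Your proof is correct and reaches the same conclusion, but it takes a genuinely different route from the paper's. Both reduce the claim to Theorem~\ref{fantechi-manetti}, but the paper discharges the two hypotheses by direct citation: the surjectivity on tangent spaces follows from \cite[Proposition 3.10]{hardef}, which presents $T^1_B$ as a quotient of the tangent to the Hilbert scheme, and the injectivity on obstructions follows from \cite[Theorem~4.2]{JJ-elementry}, which says that $(T^2_B)_{\geq 0}$ itself can be chosen as an obstruction space for $\Hilb^{\geq 0}_{[B]}$, so that the induced map of obstructions is even an isomorphism. You instead rebuild both ingredients internally: in phase one you propose a Rees-algebra computation, parallel to the proof of Theorem~\ref{+obs}, to identify the triple of $\Hilb^{\geq 0}_{[B]}$ with the $\geq 0$ part of the local Hilbert triple $(0,T^1_{X/Y},T^2_{X/Y})$; in phase two you run the Buchweitz braid for $X\hookrightarrow Y=\A^n$, observing that smoothness of $\A^n$ kills $T^2_{X\backslash X\to Y/Y}=T^1_\kk(\O_Y,\O_X)$ in the solid strand, which forces the surjection $T^1_{X/Y}\twoheadrightarrow T^1_X$ and the injection $T^2_{X/Y}\hookrightarrow T^2_X$, and then you pass to $\geq 0$ parts (exact on graded vector spaces). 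Your approach is more self-contained and stylistically aligned with the paper's own machinery, at the cost of the phase-one verification that you explicitly flag as the main remaining detail; the paper's Remark~\ref{rmk-pairs} confirms that this Rees-algebra identification for pairs $X\subset Y$ does go through, so the gap you acknowledge is real but closable. The paper's route is shorter and gets a marginally stronger statement (the obstruction map is an isomorphism, not merely injective), but relies on a stronger external input. Your braid argument is sound as written.
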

	 \begin{proof}
	 	The map is given by forgetting the embedding of $X$ into $\A^n$.
	 	As this is a map between good deformation functors, by Theorem \ref{fantechi-manetti}, it suffices to check that we get a surjection on tangent spaces and an injection on obstructions. By the presentation of $T^1_{B}$-module \cite[Proposition~3.10]{hardef}, as a quotient of the tangent space to the Hilbert scheme, we get the surjection. 
	 	As an obstruction space for $\Hilb^{\geq 0}_{[B]}$ we can take $(T^2_B)_{\geq 0}$, see \cite[Theorem~4.2]{JJ-elementry}. Then the induced map of obstructions is an isomorphism.
	 	As a consequence, the map of filtered deformations is smooth as claimed.
	 \end{proof}
	For $B$ a graded algebra, let $\Def^0(B)$ denote the functor of $\G_m$-equivariant deformations of $B$. 

	\begin{lem} \label{cartesian-eq}
		Let $B$ be a graded $\kk$-algebra of finite type.
		The following diagram is Cartesian
		$$	\begin{tikzcd}
			\Def^0(B) \arrow[d] \arrow[r] & \Def^{\leq 0}(B) \arrow[d] \\
			\Def^{\geq 0}(B) \arrow[r]                 & \Def(B).            
		\end{tikzcd}$$
		
	\end{lem}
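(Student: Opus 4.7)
The plan is to construct an explicit inverse to the natural transformation $\Def^0(B) \to \Def^{\geq 0}(B) \times_{\Def(B)} \Def^{\leq 0}(B)$, which establishes that the square is Cartesian. In the easy direction, a $\G_m$-equivariant deformation $\mathcal{B} = \bigoplus_i \mathcal{B}_i$ manifestly produces both associated filtrations $F_i := \bigoplus_{j \geq i} \mathcal{B}_j$ and $G_i := \bigoplus_{j \leq -i} \mathcal{B}_j$ on the underlying $A$-algebra $\mathcal{B}$.

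For the inverse, an $A$-point of the fiber product amounts to a flat $A$-deformation $\mathcal{B}$ equipped with both a positive filtration $F$ and a negative filtration $G$, each reducing to the standard one on $B$. I will recover the grading by setting $\mathcal{B}_i := F_i \cap G_{-i}$. The key tool is the short exact sequence
$$ 0 \to F_i \cap G_{-i} \to F_i \oplus G_{-i} \to F_i + G_{-i} \to 0, $$
in which the right-hand term equals $\mathcal{B}$: modulo $\m_A$ it reduces to $\bigoplus_{j \geq i} B_j + \bigoplus_{j \leq i} B_j = B$, and Nakayama upgrades this to $F_i + G_{-i} = \mathcal{B}$. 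A standard $\Tor^A_*(\kk, -)$ computation on this sequence, using flatness of $F_i$, $G_{-i}$ and $\mathcal{B}$, simultaneously yields flatness of $\mathcal{B}_i$ and the identification $\mathcal{B}_i \otimes_A \kk \cong B_i$ (the intersection of $\bigoplus_{j \geq i} B_j$ and $\bigoplus_{j \leq i} B_j$ inside $B$).

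Next, I will assemble the grading. The summation map $\bigoplus_i \mathcal{B}_i \to \mathcal{B}$ is surjective modulo $\m_A$ since $B = \bigoplus_i B_i$, hence surjective by Nakayama, and its kernel is killed by flatness of $\mathcal{B}$ together with another Nakayama step. Multiplicativity $\mathcal{B}_i \cdot \mathcal{B}_j \subset \mathcal{B}_{i+j}$ is inherited from $F_i F_j \subset F_{i+j}$ and $G_{-i} G_{-j} \subset G_{-i-j}$. For mutual inverseness, starting from a grading and recomputing $F_i \cap G_{-i}$ gives back $\mathcal{B}_i$ by inspection; conversely, starting from filtrations, the inclusion $\bigoplus_{j \geq i} \mathcal{B}_j \subseteq F_i$ is an equality because both sides are flat and agree modulo $\m_A$, and symmetrically for $G$.

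The principal technical obstacle is that $\mathcal{B}$ is typically not finitely generated over $A$ when the grading on $B$ is unbounded, so every Nakayama invocation must be localised to a single graded piece. Since each $B_j$ is finite-dimensional, the corresponding piece of each flat module appearing in the arguments above is finite free over the Artinian ring $A$, which reduces every step to a linear-algebraic statement over $\kk$ and closes the finite-generation gap piece by piece.
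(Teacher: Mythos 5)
Your proof follows the same strategy as the paper's: define $\mathcal{B}_i := F_i \cap G_{-i}$, use the short exact sequence $0 \to \mathcal{B}_i \to F_i \oplus G_{-i} \to \mathcal{B} \to 0$ to deduce flatness of $\mathcal{B}_i$, and show the summation map $\bigoplus_i \mathcal{B}_i \to \mathcal{B}$ is an isomorphism. Where the paper establishes surjectivity of $F_i \oplus G_{-i} \to \mathcal{B}$ by a $3\times 3$ diagram with the snake lemma and an induction on small extensions, you invoke Nakayama directly; and where the paper performs an explicit lift-and-correct argument to produce preimages of elements of $B_i$ inside $\mathcal{B}_i$, you instead run a $\Tor$-computation. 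These are cosmetically cleaner phrasings of the same underlying argument, and both proofs rely implicitly on the same point, namely that $F_i \otimes_A \kk \to B$ is injective (equivalently $\mathcal{B}/F_i$ is flat), which is what identifies $F_i \otimes \kk$ with $\bigoplus_{j\geq i} B_j$ rather than something larger; the paper is no more explicit about this than you are, so this is not a gap relative to the source.

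Your closing paragraph, however, raises a worry that does not actually arise and patches it with an assumption the lemma does not grant. Nakayama over an Artinian local ring $(A,\m_A)$ needs no finite generation: since $\m_A$ is nilpotent, $M = \m_A M$ forces $M=0$ for \emph{any} $A$-module, and flatness over an Artinian local ring is equivalent to freeness for arbitrary modules as well. So neither the Nakayama steps nor the $\Tor$ steps are endangered by $\mathcal{B}$ being infinitely generated, and there is no gap to close. Moreover, the setup for this lemma only posits $B$ to be a $\Z$-graded $\kk$-algebra with no finiteness hypothesis on the graded pieces $B_j$, so appealing to finite-dimensionality of each $B_j$ is an unjustified (and fortunately unnecessary) restriction. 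Drop the final paragraph and the argument stands as a restatement of the paper's proof.
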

	\begin{proof}
		Let us fix an Artinian local algebra $A$, and consider the pullback of the bottom-left corner $\Def^{\geq 0}(B)\times_{\Def(B)}\Def^{\leq 0}(B)$.
		Let us take an $A$-point  $\mathcal B$ of this pullback 
		\begin{align}\label{seq}
			0\to I \to \mcB \to B \to 0.
		\end{align}
		It is equipped with two filtrations,
		$\F_i$ and $\mathcal G_{-i}$, such that when restricted to the closed point
		$$ \F_i\otimes \kk = F_i= \oplus_{j\geq i} B_j,$$
		$$ \mathcal G_{-i}\otimes \kk = \mathcal G_{-i}= \oplus_{j\leq i} B_j,$$
		see Definition \ref{defi-filt-fun}.
		Every element of filtration is flat over $A$.
		We prove that in such a case, the deformation $\mathcal B$ comes with a $\Z$-grading.
		The natural candidate for such a grading is:
		$$ \mathcal B_i = \F_i\cap \mathcal G_{-i}.$$
		The subalgebra $\oplus_{i} \mathcal B_i \subset \mathcal B$  is clearly graded. The question is whether the modules $\mathcal B_i$ are flat and whether the constructed algebra is the whole algebra $\mathcal B$.
		\vskip 0.2 cm
		Let us consider the following diagram given by tensoring the map given by subtraction ${\F_i \oplus \mcG_{-i} \to \mcB}$ with the short exact sequence \eqref{seq}
		$$\begin{tikzcd}
			0 \arrow[d]                                               & 0 \arrow[d]                                    &                  \\
			(\F_i\oplus \mathcal G_{-i})\otimes I \arrow[d] \arrow[r] & I \arrow[d] \arrow[r, two heads]  & \coker \arrow[d] \\
			\F_i\oplus \mathcal G_{-i} \arrow[r] \arrow[d, two heads] & \mcB \arrow[d, two heads] \arrow[r, two heads] & \coker \arrow[d] \\
			F_i\oplus G_{-i} \arrow[r, two heads]                     & B \arrow[r]                                    & 0.               
		\end{tikzcd}$$
		By the snake lemma, the right column is exact. By induction on small extensions, the top cokernel is trivial (in the case of a small extension, the bottom and top rows are isomorphic as $\kk$-vector spaces), hence the cokernel in the middle is also trivial.
		Since the module $\mathcal B_i$ fits in the following exact sequence
		$$ 0 \to \mathcal B_i \to \F_i\oplus \mathcal G_{-i} \to \mathcal B \to 0,$$
		it is flat.\\
		By a similar argument as above, but for $\oplus_i \mcB_i \to \mcB$, in the place of $\F_i \oplus \mathcal G_{-i}$, we get that $\oplus_i \mcB_i \to \mcB$ is an isomorphism of algebras, which finishes the proof.
	\end{proof}
	
	\begin{lem}[{\label{graded-structure-T1}\cite[Propositions 2.2-2.4]{Pinkham-Gm}}]
		Let $B$ be a graded $\kk$-algebra of finite type.
		Then the tangent space $T^1_B$ is graded, in particular $$T^1_B=(T^1_B)_{\geq 0}+(T^1_{B})_{\leq 0}= (T^1_{B})_{>0}\oplus (T^1_B)_0 \oplus (T^1_B)_{<0}.$$ \\
		Moreover, on the level of $T^2$, one has inclusions
		$$(T^2_B)_0\subset (T^2_B)_{\geq 0} \subset T^2_B.$$
	\end{lem}
	
	\begin{lem} \label{lem-cruc}
		Let $B$ be a graded $\kk$-algebra of finite type.
		Let us assume that $(T^1_B)_{<0}$ vanishes.
		Then the maps $\Def^0(B) \to \Def^{\leq 0 }(B)$  and $\Def^{\geq 0}(B) \to \Def(B)$ are smooth. 
	\end{lem}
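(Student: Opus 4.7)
The plan is to apply Theorem~\ref{fantechi-manetti}: for a morphism of good deformation functors it suffices to check that the induced map on tangent spaces is surjective (in fact here it will be an isomorphism) and that the morphism is unobstructed, i.e.\ injective on a chosen obstruction space. Both morphisms in the statement are morphisms of good deformation functors by Theorem~\ref{filt-good}, so this reduction is legitimate. The whole argument is then a short manipulation with the graded pieces of $T^1$ and $T^2$ provided by Lemma~\ref{graded-structure-T1} and the identification of associated triples from Theorem~\ref{+obs}.

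For the map $\Def^{\geq 0}(B) \to \Def(B)$, Theorem~\ref{+obs} gives the triple $((T^0_B)_{\geq 0},(T^1_B)_{\geq 0},(T^2_B)_{\geq 0})$ on the source, while the target has triple $(T^0_B,T^1_B,T^2_B)$. Lemma~\ref{graded-structure-T1} yields the decomposition $T^1_B=(T^1_B)_{\geq 0}\oplus (T^1_B)_{<0}$, so the hypothesis $(T^1_B)_{<0}=0$ makes the induced map on tangent spaces an isomorphism. The obstruction map is the inclusion $(T^2_B)_{\geq 0}\hookrightarrow T^2_B$ from the same lemma, hence injective, so the map is unobstructed. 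Theorem~\ref{fantechi-manetti} then gives smoothness.

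For the map $\Def^0(B)\to \Def^{\leq 0}(B)$ the argument is identical in spirit; one just has to identify the triples. Applying Theorem~\ref{+obs} with the opposite grading on $B$ identifies the triple on $\Def^{\leq 0}(B)$ as $((T^0_B)_{\leq 0},(T^1_B)_{\leq 0},(T^2_B)_{\leq 0})$, while the equivariant functor $\Def^0(B)$ has triple $((T^0_B)_0,(T^1_B)_0,(T^2_B)_0)$ (this is the classical graded deformation theory, and is also visible from the Rees construction used in the proof of Theorem~\ref{filt-good}). Since $(T^1_B)_{\leq 0}=(T^1_B)_0\oplus (T^1_B)_{<0}$ by Lemma~\ref{graded-structure-T1}, the vanishing of $(T^1_B)_{<0}$ forces the tangent map to be an isomorphism; the obstruction inclusion $(T^2_B)_0\subset (T^2_B)_{\leq 0}$ is again supplied by Lemma~\ref{graded-structure-T1} (via the symmetric statement for the opposite grading), so the map is unobstructed, and Theorem~\ref{fantechi-manetti} concludes.

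The only mildly subtle point, and the place I would be most careful, is bookkeeping the associated triple of $\Def^{\leq 0}(B)$ and $\Def^0(B)$: these are not stated in the text but follow from Theorem~\ref{+obs} by reversing the grading and, for $\Def^0$, from the equivalence with $\Def^0(\tilde B/\kk[t^{-1}])$ established inside the proof of Theorem~\ref{filt-good}. Once this is in place the proof is mechanical.
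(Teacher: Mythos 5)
Your proof is correct and takes essentially the same approach as the paper: identify the associated triples, observe that the vanishing of $(T^1_B)_{<0}$ plus the decomposition of Lemma~\ref{graded-structure-T1} makes the tangent map an isomorphism, note the obstruction inclusions, and invoke Theorem~\ref{fantechi-manetti}. The only stylistic difference is for the map $\Def^{\geq 0}(B)\to\Def(B)$: you argue directly from the triples supplied by Theorem~\ref{+obs} and the decomposition $T^1_B=(T^1_B)_{\geq 0}\oplus(T^1_B)_{<0}$, whereas the paper instead passes through the tangent-space diagram induced by the Cartesian square of Lemma~\ref{cartesian-eq}; your route is slightly more streamlined and avoids that detour, but the underlying content --- tangent isomorphism plus obstruction injection --- is identical, and you are right to flag the small bookkeeping point that the triples for $\Def^{\leq 0}(B)$ and $\Def^0(B)$ and the inclusion $(T^2_B)_0\subset(T^2_B)_{\leq 0}$ are obtained by applying the opposite-grading versions of Theorem~\ref{+obs} and Lemma~\ref{graded-structure-T1}.
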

	\begin{proof}
		For the first map, since the negative tangent space vanishes, by Lemma \ref{graded-structure-T1}, we get an isomorphism on tangent spaces and  an injection on obstructions.  
		By Theorem \ref{fantechi-manetti} we get smoothness. 
		
		\vskip 0.2 cm
		For the second map, consider the following pull-back diagram induced by Lemma \ref{cartesian-eq}.
		$$\begin{tikzcd}
			{(T^{1}_B)_0} \arrow[d, hook] \arrow[r, two heads] & {(T^{1}_B)_{\leq 0}} \arrow[d, hook] \\
			(T^{1}_B)_{\geq 0} \arrow[r, hook]                            & T^1_B.                      
		\end{tikzcd}$$
		By Lemma \ref{graded-structure-T1} we conclude that there is a surjection on the tangent spaces and an injection on obstructions.
		Henceforth, the conditions for Theorem \ref{fantechi-manetti} are satisfied, and we obtain smoothness.
	\end{proof}
	\begin{lem}\label{smoothly-eq} \label{abb-smooth}
		Let $B$ be a graded $\kk$-algebra of finite type.
		If $(T^1_B)_{<0}$ vanishes, $\Hilb^{\geq 0}_{[B]}$ are $\Hilb_{[B]}$ are smoothly equivalent.
	\end{lem}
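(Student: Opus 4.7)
The plan is to build a smooth-equivalence diagram for $\Hilb^{\geq 0}_{[B]}$ and $\Hilb_{[B]}$ by passing through the abstract deformation functor $\Def(B)$ and taking a fibre product of good deformation functors.

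First, I would compose the smooth maps already in hand. By Lemma \ref{smooth-nonstrict} the forgetful morphism $\Hilb^{\geq 0}_{[B]} \to \Def^{\geq 0}(B)$ is smooth, and by Lemma \ref{lem-cruc} the hypothesis $(T^1_B)_{<0} = 0$ upgrades $\Def^{\geq 0}(B) \to \Def(B)$ to a smooth morphism. Composition yields a smooth morphism $\Hilb^{\geq 0}_{[B]} \to \Def(B)$.

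Next, I would verify that the analogous forgetful morphism $\Hilb_{[B]} \to \Def(B)$ from embedded deformations of $B \subset \A^n$ to abstract deformations of $B$ is also smooth. This is the classical comparison in the affine setting: surjectivity on tangent spaces follows from the exact sequence
\[ 0 \to \Der_\kk(B,B) \to \Der_\kk(S,B) \to \Hom_B(I/I^2, B) \to T^1_B \to 0, \]
where $S = \kk[x_1, \ldots, x_n]$ and $I$ is the defining ideal of $B$, and since both functors have obstructions in $T^2_B$ one can choose obstruction theories so that the induced map on obstruction spaces is an isomorphism. Smoothness then follows from Theorem \ref{fantechi-manetti}.

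Finally, I would form the fibre product of good deformation functors
\[ H := \Hilb^{\geq 0}_{[B]} \times_{\Def(B)} \Hilb_{[B]}. \]
Because smoothness of a morphism of good deformation functors is defined by a lifting property that is preserved under base change (given a lift on the $\Def(B)$-side, pair it with the given $\Hilb$-datum to obtain a lift on the fibre product), the two projections from $H$ onto its factors are both smooth. This exhibits $\Hilb^{\geq 0}_{[B]}$ and $\Hilb_{[B]}$ as smoothly equivalent, as required. The main obstacle is the smoothness of the comparison $\Hilb_{[B]} \to \Def(B)$, which requires careful tangent-obstruction bookkeeping; once this is in place, the remainder is a formal manipulation of fibre products.
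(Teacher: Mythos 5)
Your proof follows the same route as the paper's: compose Lemma \ref{smooth-nonstrict} with Lemma \ref{lem-cruc} to make $\Hilb^{\geq 0}_{[B]} \to \Def(B)$ smooth, observe that the classical forgetful map $\Hilb_{[B]} \to \Def(B)$ is smooth, and deduce smooth equivalence. You do helpfully make explicit the final step that the paper leaves implicit -- forming the fibre product $H := \Hilb^{\geq 0}_{[B]} \times_{\Def(B)} \Hilb_{[B]}$ and invoking stability of smoothness under base change -- which is what turns ``both smooth over $\Def(B)$'' into a genuine smooth equivalence in the sense of the paper's definition.
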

	\begin{proof}
		By Lemma \ref{lem-cruc}, we know that $\Def^{\geq 0}(B) \to \Def(B)$ is smooth.
		Let us consider the following diagram
		$$\begin{tikzcd}
			{\Hilb^{\geq 0}_{[B]}} \arrow[d, "sm"] \arrow[r] & {\Hilb_{[B]}} \arrow[d, "sm"] \\
			\Def^{\geq 0}(B) \arrow[r, "sm"]                 & \Def(B).                      
		\end{tikzcd}$$
		This way, we get that both $\Hilb^{\geq 0}_{[B]}$ and $\Hilb_{[B]}$ are smooth over $\Def(B)$.
	\end{proof}
	
	\begin{remark}
		Note that, in the setting of Lemma \ref{smoothly-eq}, the functors $\Hilb^{\geq 0}_{[B]}$ and $\Hilb_{[B]}$ are smoothly equivalent, but the natural immersion $\Hilb^{\leq 0}_{[B]}\to \Hilb_{[B]}$ is not necessarily smooth. It can happen that the negative tangent space $(T^1_B)_{<0}$ vanishes, but there are non-trivial negative embedded deformations. In that case, the map $\Hilb^{\geq 0}_{[B]} \to \Hilb_{[B]}$ cannot induce surjection on the tangent spaces. In particular, the natural map is not smooth.
	\end{remark}
	
	\begin{defi} \label{abs-fibre}
		We define the functor of strictly negative (resp. positive) deformations $\Def^{<0}(X)$  as the following pullback in $\Art_{\kk}\to \Set$:
		$$\begin{tikzcd}
			\Def^{<0}(X) \arrow[d] \arrow[r] & \Def^{\leq0}(X) \arrow[d] \\
			* \arrow[r]                      & \Def^{0}(X).            
		\end{tikzcd}$$
	\end{defi}
	\begin{stwr} \label{fund-negative-def}
		The functor $\Def^{<0}(X)$ is a good deformation functor with the associated triple 
		$$(T^0_X)_{<0}, \ (T^1_X)_{<0}, \ (T^2_X)_{<-1}.$$
	\end{stwr}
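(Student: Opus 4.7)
The plan is to read off the associated triple of $\Def^{<0}(X)$ from its defining Cartesian pullback (Definition \ref{abs-fibre}), by applying the standard fibre-product calculus for good deformation functors together with the triples of $\Def^{\leq 0}(X)$ and $\Def^0(X)$ (the former from the symmetric counterpart of Theorem \ref{+obs}, the latter from classical equivariant deformation theory). Before computing anything, I would check that $\Def^{<0}(X)$ is itself a good deformation functor: a pullback of good deformation functors is again good, since Schlessinger's conditions \ref{H1} and \ref{H2} pass to fibre products in $\Set$ and linearity of obstructions is preserved once compatible obstruction spaces are chosen for the factors.

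Next, I would compute the infinitesimal automorphism group and the tangent space via the identity $t_P = t_F \times_{t_H} t_G$ for a pullback of good deformation functors. The map $\Def^{\leq 0}(X) \to \Def^0(X)$ is ``take the associated graded'', so on tangent spaces it is the projection $(T^1_X)_{\leq 0} \twoheadrightarrow (T^1_X)_0$ onto the weight-zero summand; its kernel $(T^1_X)_{<0}$ is the tangent space of $\Def^{<0}(X)$. The same reasoning in cohomological degree zero produces $(T^0_X)_{<0}$ for the infinitesimal automorphism group.

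Finally, the obstruction space is identified through the long exact sequence
$$(T^1_X)_{\leq 0} \to (T^1_X)_0 \to o_{\Def^{<0}(X)} \to (T^2_X)_{\leq 0} \to (T^2_X)_0$$
attached to the pullback. The first arrow is split surjective, so $o_{\Def^{<0}(X)}$ embeds into $\ker\bigl((T^2_X)_{\leq 0} \to (T^2_X)_0\bigr)$. Pinning this kernel down as $(T^2_X)_{<-1}$, rather than the coarser $(T^2_X)_{<0}$ produced by a purely formal pullback computation, requires invoking the Rees-algebra presentation $\Def^{\leq 0}(X) \cong \Def^0(\tilde B / \kk[t^{-1}])$ from the proof of Theorem \ref{filt-good}: the degree $-1$ generator $t^{-1}$ shifts the effective grading of obstruction classes, and strict negativity cuts off the degree $-1$ contribution.

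The main obstacle is this last step: the refined bound $(T^2_X)_{<-1}$ on the obstruction is not visible from formal pullback combinatorics and must be extracted from the explicit Rees-algebra model, coupled with the braid-type long exact sequences of Subsection 2.3.
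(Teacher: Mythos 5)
Your plan correctly reproduces the easy parts: $\Def^{<0}(X)$ is a good deformation functor because it is a fibre product of good deformation functors, and the tangent space and infinitesimal-automorphism group are computed as fibre products of the corresponding graded pieces, giving $(T^1_X)_{<0}$ and $(T^0_X)_{<0}$ via $\ker\bigl((T^i_X)_{\leq 0}\to (T^i_X)_0\bigr)$. You also correctly isolate the one genuinely nontrivial point: the formal pullback calculus only delivers an obstruction space contained in $\ker\bigl((T^2_X)_{\leq 0}\to (T^2_X)_0\bigr)=(T^2_X)_{<0}$, which is weaker than the claimed $(T^2_X)_{<-1}$.

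Where you diverge from the paper is precisely in how to close that gap. You propose returning to the Rees-algebra model $\Def^{\leq 0}(X)\cong\Def^0(\tilde B/\kk[t^{-1}])$ and arguing that ``the degree $-1$ generator $t^{-1}$ shifts the effective grading of obstruction classes, and strict negativity cuts off the degree $-1$ contribution.'' This step is not substantiated: in the Rees picture, $\Def^{<0}(X)$ corresponds to equivariant deformations of $\tilde B$ over $\kk[t^{-1}]$ whose restriction modulo $t^{-1}$ is trivial, and it is not clear why the degree of $t^{-1}$ by itself excludes degree $-1$ obstruction classes; at face value that argument only recovers $(T^2_X)_{<0}$. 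The paper's actual justification is simpler and of a different nature: since the tangent space $(T^1_X)_{<0}$ lives entirely in degrees $\leq -1$, the ($\G_m$-equivariant) semi-universal base for $\Def^{<0}(X)$ is graded with generators in degrees $\geq 1$, and its defining relations lie in the square of the maximal ideal, hence in degrees $\geq 2$; the corresponding obstruction classes therefore land in $(T^2_X)_{\leq -2}=(T^2_X)_{<-1}$. In other words, it is the classical ``obstructions are at least quadratic in the tangent parameters'' degree count, not anything specific to the Rees construction. Your proposal does not supply an argument at this level, so as written it leaves the refined bound unproved.
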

	\begin{proof}
		Directly from the definition. We can choose the obstruction space to be $(T^2_X)_{<-1}$, instead of non-positive, since all the negative tangents have degree at most $-1$.
	\end{proof}
	A \emph{negative spike} of a closed $\G_m$-subscheme $X\subset \A^n$ is $\Hilb^{<0}_{[X]}$, the fibre of $\Hilb^{\leq 0}_{[X]} \to \Hilb^{0}_{[X]} $ over $[X]$. The next proposition says that the negative spike is smoothly equivalent to strictly negative deformations of the scheme.
	\begin{stwr} \label{smooth-strict}
		The natural transformation $\Hilb^{< 0}_{[B]} \to \Def^{< 0}(B)$ is smooth.
	\end{stwr}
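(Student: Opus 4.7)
My plan is to apply Theorem~\ref{fantechi-manetti} to the natural transformation $\phi \colon \Hilb^{<0}_{[B]} \to \Def^{<0}(B)$. Both source and target are good deformation functors, being realised as pullbacks of good deformation functors along the inclusion of the base point $*$; their associated triples are naturally graded, inherited from the $\G_m$-action on $B$. It therefore suffices to exhibit a surjection on tangent spaces together with a compatible choice of obstruction spaces inducing an injection between them.

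For the tangent spaces, the pullback description together with Lemma~\ref{graded-structure-T1} identifies the tangent space of $\Def^{<0}(B)$ with $(T^1_B)_{<0}$, as recorded in Corollary~\ref{fund-negative-def}, and, by an entirely parallel argument, the tangent space of $\Hilb^{<0}_{[B]}$ with $(T_{[B]}\Hilb)_{<0}$. The map $\phi$ induces on tangent spaces the restriction to the degree $<0$ summand of the surjection $T_{[B]}\Hilb \twoheadrightarrow T^1_B$ from \cite[Proposition~3.10]{hardef}. Since this surjection is induced by a $\G_m$-equivariant natural transformation of functors, it respects the grading, so the restriction remains surjective.

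For the obstructions, I imitate the pattern of Lemma~\ref{smooth-nonstrict} combined with Corollary~\ref{fund-negative-def}. Via \cite[Theorem~4.2]{JJ-elementry}, the obstruction space for $\Hilb^{\leq 0}_{[B]}$ can be chosen to be $(T^2_B)_{\leq 0}$; meanwhile any strictly negative infinitesimal deformation lives in degrees $\leq -1$, so the obstruction to extending such a deformation through a small extension must sit in degree $\leq -2$. Consequently both $\Hilb^{<0}_{[B]}$ and $\Def^{<0}(B)$ admit $(T^2_B)_{<-1}$ as obstruction space, with $\phi$ inducing the identity between them. Theorem~\ref{fantechi-manetti} then yields smoothness.

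The main technical subtlety I expect is verifying that $(T^2_B)_{<-1}$ genuinely serves as an obstruction space for the pullback functor $\Hilb^{<0}_{[B]}$, rather than merely as a space containing the ambient obstructions of $\Hilb^{\leq 0}_{[B]}$. This amounts to a degree-bookkeeping argument parallel to the one proving Corollary~\ref{fund-negative-def}, carried out on the Hilbert side using the $\G_m$-equivariant obstruction theory of \cite[Theorem~4.2]{JJ-elementry}; no conceptually new ingredient is required.
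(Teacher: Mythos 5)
Your proof is correct and follows essentially the same route as the paper, which disposes of the corollary with the one-line proof ``this is a direct consequence of Corollary~\ref{smooth-nonstrict}'' (referring to Lemma~\ref{smooth-nonstrict}, the $\geq 0$ version). You have simply unwound what that one-liner leaves implicit: the tangent surjection of~\cite[Proposition~3.10]{hardef} restricted to degrees $<0$, the identification of $(T^2_B)_{<-1}$ as a common obstruction space via the degree-bookkeeping of Corollary~\ref{fund-negative-def} carried over to the Hilbert side using~\cite[Theorem~4.2]{JJ-elementry}, and a final appeal to Theorem~\ref{fantechi-manetti}. Two minor remarks: (i) the paper only states Lemma~\ref{smooth-nonstrict} for the $\geq 0$ filtration, whereas the fibre $\Hilb^{<0}_{[B]}$ sits inside the $\leq 0$ cell, so one tacitly needs the mirror statement (which, as you correctly observe, holds by the symmetric argument on the opposite filtration); (ii) the ``main technical subtlety'' you flag at the end is real but resolves exactly as you say --- the same degree argument used in Corollary~\ref{fund-negative-def} for $\Def^{<0}(B)$ applies verbatim to $\Hilb^{<0}_{[B]}$.
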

	\begin{proof}
		This is a direct consequence of Corollary \ref{smooth-nonstrict}.
	\end{proof}
	
	\begin{remark}\label{rmk-pairs}
		Let $X \subset Y$ be a closed embedding of affine $\G_m$-schemes. Then we can repeat all the arguments in this subsection and get functors $\Def(X\subset Y)$, $\Def^{\leq 0}(X\subset Y)$, $\Def^{<0}(X\subset Y)$
		with the triples
		$$T^0_{X\to Y}, \ T^1_{X\to Y}, \ T^2_{X\to Y},$$
		$$(T^0_{X\to Y})_{\leq 0}, \ (T^1_{X\to Y})_{\leq 0}, \ (T^2_{X\to Y})_{\leq 0},$$
		$$(T^0_{X\to Y})_{<0}, \ (T^1_{X\to Y})_{<0}, \ (T^2_{X\to Y})_{<-1}.$$
		as well as fixed-target and fixed-source variants, following Definition \ref{defi-XY}, for a reference see \cite[Theorem 1.6]{Kleppe}.
		These functors admit smooth maps from the respective local Hilbert schemes.
	\end{remark}
	\begin{remark}\label{rmk-pairs-filt}
		In case of $X\subset Y$ a closed embedding of affine $\G_m$-schemes of finite type, all above applies except for the strictly negative deformations, since they are not defined.
	\end{remark}
	
	\begin{remark}
		Braid \eqref{braid0} is compatible with graded structures. As a consequence, we obtain a useful method to study the (non)-positive deformations of maps from Braid \eqref{braid0}.
	\end{remark}

	\begin{stwr} \label{prorep-lem-general}
		Let $X \to  Y$ be a closed embedding of $\G_m$-schemes.
		Assume that the negative tangent spaces $(T^1_X)_{<0}$, $(T^1_Y)_{<0}$ and $(T^1_{X/Y})_{<0}$ are finitely dimensional and that the the infinitesimal automorphisms $(T^0_X)_{<0}$, $(T^0_Y)_{<0}$ vanish.
		Then all three variants of deformations of the map $X\to Y$ 
		$$\Def^{< 0}(X \to Y),$$
		$$	\Def^{< 0}(X /Y),				$$
		$$	\Def^{< 0}(X \backslash Y)				$$
		are prorepresentable.
	\end{stwr}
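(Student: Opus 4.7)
The plan is to follow the strategy of Corollary \ref{prorep-lem}, verifying the Schlessinger conditions \ref{H3} and \ref{H4} for each of the three functors; by Remark \ref{rmk-pairs} each is a good deformation functor, so \ref{H1} and \ref{H2} hold automatically.

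For \ref{H3}, I would restrict the Buchweitz braid \eqref{braid0} to the strictly negative graded parts, which is legitimate since the braid is compatible with the $\G_m$-action. The resulting long exact sequences
\[
(T^0_Y)_{<0} \to (T^1_{X/Y})_{<0} \to (T^1_{X\to Y})_{<0} \to (T^1_Y)_{<0},
\]
\[
(T^0_X)_{<0} \to (T^1_{X\backslash Y})_{<0} \to (T^1_{X\to Y})_{<0} \to (T^1_X)_{<0},
\]
together with the hypotheses $(T^0_X)_{<0}=(T^0_Y)_{<0}=0$ and the finite-dimensionality of $(T^1_X)_{<0}$, $(T^1_Y)_{<0}$, $(T^1_{X/Y})_{<0}$, force $(T^1_{X\to Y})_{<0}$ to be finite-dimensional as an extension of $(T^1_{X/Y})_{<0}$ by a subspace of $(T^1_Y)_{<0}$; then $(T^1_{X\backslash Y})_{<0}$ is finite-dimensional as a subspace of $(T^1_{X\to Y})_{<0}$.

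For \ref{H4}, it suffices to show that the relevant infinitesimal automorphism space vanishes in strictly negative degrees. An automorphism of a deformation of the pair $X\to Y$ consists of compatible automorphisms of the deformations of $X$ and of $Y$, which yields the injections $T^0_{X\to Y} \hookrightarrow T^0_X \oplus T^0_Y$, $T^0_{X/Y}\hookrightarrow T^0_X$, and $T^0_{X\backslash Y}\hookrightarrow T^0_Y$. Restricting to negative degrees and applying the hypothesis $(T^0_X)_{<0}=(T^0_Y)_{<0}=0$, all three infinitesimal automorphism spaces vanish in strictly negative range, establishing \ref{H4}. Prorepresentability then follows from Theorem \ref{schless-twr}, exactly as in the proof of Corollary \ref{prorep-lem} (alternatively, one translates to the equivariant picture via the Rees algebra construction of Theorem \ref{filt-good} and invokes \cite[Theorem 2.6.1]{sernesi_Deformations_of_algebraic_schemes}).

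The main technical point, and the most likely obstacle, is correctly extracting the relevant exact sequences and their directions from the Buchweitz braid in the $\G_m$-equivariant setting and verifying that each natural morphism of deformation functors induces the expected graded map on $T^0$ and $T^1$. Once this bookkeeping is in place, the remainder of the argument is formal and parallels Corollary \ref{prorep-lem}.
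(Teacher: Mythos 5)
Your proof is correct and follows essentially the same strategy as the paper: restrict the Buchweitz braid to negative degrees, deduce finiteness of $(T^1_{X\to Y})_{<0}$ and $(T^1_{X\backslash Y})_{<0}$ from the two exact sequences you extracted, deduce vanishing of all three negative $T^0$'s, and then conclude via the Schlessinger machinery (equivalently Sernesi's criterion for prorepresentability). The two long exact sequences you wrote down are exactly the dotted and double-line strands of Braid~\eqref{braid0}, and your bookkeeping is right.

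The only (cosmetic) divergence from the paper is in the $T^0$ step. The paper invokes \cite[Proposition~2.3.1]{sernesi_Deformations_of_algebraic_schemes} to note that $T^0_{X/Y}$ vanishes entirely for a closed embedding, and then chases the braid in negative degrees. You instead argue directly that an infinitesimal automorphism of a deformation of the pair is determined by its action on the two sides, giving the injections $T^0_{X\to Y}\hookrightarrow T^0_X\oplus T^0_Y$, $T^0_{X/Y}\hookrightarrow T^0_X$ (the first arrow of the solid strand), and $T^0_{X\backslash Y}\hookrightarrow T^0_Y$ (the first arrow of the dashed strand). This has the slight advantage that it does not rely on the closed-embedding-specific fact $T^0_{X/Y}=0$, but in the end both are diagram chases in the same braid and yield the identical conclusion. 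Your remark that $(T^1_X)_{<0}$ is not actually needed for the argument (the finiteness of $(T^1_{X\backslash Y})_{<0}$ already follows from its injection into $(T^1_{X\to Y})_{<0}$) is accurate; the hypothesis is harmlessly redundant.
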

	\begin{proof}
		Recall the following part of Braid \eqref{braid0}
		\begin{equation}	\label{braid-above} \tag{$\spadesuit_3$}
			\begin{tikzcd}
				0 \arrow[dd, bend right=49, Rightarrow ]  & & 0 \arrow[dd, bend left=49, dotted ]  \\
				& 0 \arrow[ld, dashed] \arrow[rd] & \\
				T^0_{X\backslash Y} \arrow[rd, Rightarrow] \arrow[dd, bend right=49, dashed ]                      &                                                           & T^0_{X / Y} \arrow[ld, dotted] \arrow[dd, bend left=49] \\
				& T^0_{X\to Y} \arrow[rd, Rightarrow] \arrow[ld, dotted]    &                                                                              \\
				T^0_Y                      &                                                           & T^0_X                                                        
			\end{tikzcd}
		\end{equation}
		By \cite[Proposition 2.3.1]{sernesi_Deformations_of_algebraic_schemes} the infinitesimal automorphisms of the local Hilbert functor $T^0_{X/Y}$ vanish.
		By specialising Braid \eqref{braid-above} above to the negative parts, simple diagram chasing shows that all negative $T^0$-modules vanish. \vskip 0.2 cm
		Arguing analogously on the $T^1$-level, we obtain that $(T^1_{X\backslash Y})_{<0}$, $(T^1_{X\to Y})_{<0}$ are finite.
		The rest of the argument is as in Corollary \ref{prorep-lem}.
	\end{proof}

	\begin{remark}
		Similar proof also works for the equivariant and the non-positive versions of the functors.
	\end{remark}
	
	\begin{stwr} \label{lem-tech-braid}
		Let $X\to Y$ be a morphism of $\G_m$-schemes. If the module $(T^2_{X\backslash Y})_{<0}$ vanishes, then the natural map $\Def^{<0}(X\to Y) \to \Def^{<0}(X)$ is smooth. Moreover, if $(T^2_{X\backslash Y})_{<0}$ vanishes, then it is isomorphic on the tangent spaces.
	\end{stwr}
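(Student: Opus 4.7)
The plan is to derive both statements via the Buchweitz braid together with Theorem \ref{fantechi-manetti}. The natural forgetful map $\Def(X\to Y)\to \Def(X)$ has $\Def(X\backslash Y)$ as its relative deformation functor, so the braid \eqref{braid0} furnishes the long exact sequence
$$\cdots\to T^1_{X\backslash Y}\to T^1_{X\to Y}\to T^1_X\to T^2_{X\backslash Y}\to T^2_{X\to Y}\to T^2_X.$$
As noted in the remark preceding the statement, the braid is compatible with the internal $\Z$-grading induced by the $\G_m$-action, so this long exact sequence restricts to each graded piece.

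By Remark \ref{rmk-pairs} and Corollary \ref{fund-negative-def}, the tangent and obstruction spaces of $\Def^{<0}(X\to Y)$ and $\Def^{<0}(X)$ may be taken to be $(T^1_{X\to Y})_{<0}$, $(T^1_X)_{<0}$ and $(T^2_{X\to Y})_{<-1}$, $(T^2_X)_{<-1}$ respectively. The hypothesis $(T^2_{X\backslash Y})_{<0}=0$ entails in particular $(T^2_{X\backslash Y})_{<-1}=0$; extracting the appropriate graded pieces of the LES above then yields simultaneously the surjectivity of the tangent map $(T^1_{X\to Y})_{<0}\to (T^1_X)_{<0}$ and the injectivity of the obstruction map $(T^2_{X\to Y})_{<-1}\to (T^2_X)_{<-1}$. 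Theorem \ref{fantechi-manetti} then delivers the smoothness of $\Def^{<0}(X\to Y)\to \Def^{<0}(X)$.

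For the moreover part, the kernel of the tangent map is, by the same LES, the image of $(T^1_{X\backslash Y})_{<0}$ in $(T^1_{X\to Y})_{<0}$. Under the additional assumption that $(T^1_{X\backslash Y})_{<0}$ vanishes (which I read as the intended companion hypothesis in this second clause), that image is trivial, so the tangent map is injective as well and hence an isomorphism.

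The principal point needing care is the compatibility of the Buchweitz braid with the internal $\G_m$-grading, after which the argument reduces to a straightforward diagram chase in the strictly negative graded sector; this compatibility is, however, assured by the remark immediately preceding the statement, so no new input beyond the braid and Theorem \ref{fantechi-manetti} is required.
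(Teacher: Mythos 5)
Your proof is correct and follows the same strategy as the paper's: extract the Buchweitz long exact sequence $T^1_{X\backslash Y}\to T^1_{X\to Y}\to T^1_X\to T^2_{X\backslash Y}\to T^2_{X\to Y}\to T^2_X$ from Braid~\eqref{braid0}, restrict to the negative graded piece, and apply Theorem~\ref{fantechi-manetti}. You are also right that the ``moreover'' clause contains a typo and the intended companion hypothesis is $(T^1_{X\backslash Y})_{<0}=0$, as confirmed by the way the corollary is invoked in Lemma~\ref{lema-gor1} and Lemma~\ref{step3l}, where the conclusion is reduced to the vanishing of $(T^i_{\cdot\backslash\cdot})_{<0}$ for both $i=1$ and $i=2$.
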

	\begin{proof}
		From Braid \eqref{braid0}, we see that the image of $(T^1_{X\backslash Y})_{<0}$ in $(T^1_{X\to Y})_{<0}$ is the relative tangent, and $(T^2_{X \backslash Y})_{<0}$ is a relative obstruction space. We apply Theorem \ref{fantechi-manetti} to get the statement.
	\end{proof}

	\begin{stwr}  \label{step1}\label{lem-tech-braid-crucial}
		Let $X\to Y$ be a morphism of affine $\G_m$-schemes. If the module $(T^1_{Y})_{<-1}$ vanish then the composition of maps $\Def^{<0}(X/ Y) \to \Def^{<0}(X \to Y) \to \Def(X \to Y)$ is unobstructed. Moreover, if  $(T^1_Y)_{-1}$ vanishes, then the map is smooth.
	\end{stwr}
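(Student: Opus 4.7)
My plan is to apply Theorem \ref{fantechi-manetti} to the natural transformation $\phi\colon \Def^{<0}(X/Y) \to \Def^{<0}(X\to Y)$. Both are good deformation functors by the pair-analogue of Corollary \ref{fund-negative-def} recorded in Remark \ref{rmk-pairs}, with associated tangent spaces $(T^1_{X/Y})_{<0}$ and $(T^1_{X\to Y})_{<0}$, and obstruction spaces $(T^2_{X/Y})_{<-1}$ and $(T^2_{X\to Y})_{<-1}$ respectively. Control over the induced linear maps between these data will be read off the Buchweitz braid \eqref{braid0}.

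For the unobstructedness claim, the dotted strand of \eqref{braid0} supplies the exact sequence
$$T^1_{X/Y} \to T^1_{X\to Y} \to T^1_Y \to T^2_{X/Y} \to T^2_{X\to Y}.$$
By the remark following Remark \ref{rmk-pairs} this sequence is $\Z$-graded, so restricting to degrees $<-1$ gives
$$(T^1_Y)_{<-1} \to (T^2_{X/Y})_{<-1} \to (T^2_{X\to Y})_{<-1}.$$
The hypothesis $(T^1_Y)_{<-1}=0$ then forces injectivity of the obstruction map, which is precisely unobstructedness in the sense of Definition \ref{unob}.

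For the moreover clause, I would restrict the same strand to strictly negative degrees to obtain
$$(T^1_{X/Y})_{<0} \to (T^1_{X\to Y})_{<0} \to (T^1_Y)_{<0}.$$
The additional assumption $(T^1_Y)_{-1}=0$, combined with the standing hypothesis $(T^1_Y)_{<-1}=0$, gives $(T^1_Y)_{<0}=0$, and hence surjectivity of the induced tangent map. Together with the unobstructedness already proven, Theorem \ref{fantechi-manetti} delivers the smoothness of $\phi$.

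The main bookkeeping point, which I expect to be the only subtlety, is confirming that the graded pieces of the braid legitimately land in the truncated obstruction space $(T^2)_{<-1}$ used for $\Def^{<0}$ rather than in the larger $(T^2)_{<0}$—the same shift already handled in Corollary \ref{fund-negative-def}—so that the injection above really controls the obstruction map of $\phi$.
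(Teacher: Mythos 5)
Your proposal is correct and follows essentially the same route as the paper: the dotted strand of Braid~\eqref{braid0}, restricted to the appropriate graded pieces, combined with Theorem~\ref{fantechi-manetti}. Your write-up is in fact somewhat more explicit than the paper's three-sentence proof (which also appears to misspeak by naming the image of $(T^1_Y)_{<0}$ in $(T^1_{X/Y})_{<0}$ as the relative tangent, when the braid has no such arrow and what matters is that $T^1_Y$ sits between $T^1_{X\to Y}$ and $T^2_{X/Y}$, i.e.\ serves as a relative obstruction space); your careful note that the obstruction space for $\Def^{<0}$ is the degree $<-1$ truncation, from Corollary~\ref{fund-negative-def}, is exactly the point that makes the unobstructedness claim go through under only the weaker hypothesis $(T^1_Y)_{<-1}=0$.
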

	\begin{proof}
		From Braid \eqref{braid0} we see that the image of $(T^1_{Y})_{<0}$ in $(T^1_{X/ Y})_{<0}$ is a relative obstruction space. This, by Theorem \ref{fantechi-manetti}, implies the second part of the statement. Since the obstructions lie in degrees~$<-1$, we get the first part of the statement.
	\end{proof}

\subsection{Proof of Theorem \ref{+obs} } \label{proof+}
This subsection is devoted to the proof of Theorem \ref{+obs}. As the proof is the most technical part of the argument; this section plays a role of an appendix to Section \ref{sec-3}.
We start with a technical lemma, which strengthens the isomorphism presented in the proof of Theorem \ref{filt-good}. Instead of comparing set-valued functors, we compare appropriate categories of modules. 
\begin{lem} \label{nat}
		Let $(B,F_i)$ be a filtered algebra, and $\tilde B= \bigoplus t^{-i}F_i$ is the graded Rees algebra associated with the filtration.
	The following natural transformation is an equivalence of categories:
	\begin{align*}
		\Mod^{filt}_{(B,F_i)} &\to \Mod^{gr}_{\tilde B},\\
		(M,\F_i) &\mapsto \bigoplus_{i\in \Z} t^{-i}\F_i.
	\end{align*}
\end{lem}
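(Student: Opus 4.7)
The plan is to construct an explicit quasi-inverse functor $\Psi$ and verify that both compositions are naturally isomorphic to the identity.

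First, I would define $\Psi\colon \Mod^{gr}_{\tilde B} \to \Mod^{filt}_{(B, F_i)}$ by $\Psi(N) := N/(t-1)N$, where $t \in \tilde B$ is the homogeneous element of degree $1$ and the quotient inherits a $B$-module structure via the canonical identification $\tilde B/(t-1)\tilde B \cong B$. The filtration is defined degreewise by
$$\Psi(N)_i := \im\bigl(N_{-i} \hookrightarrow N \twoheadrightarrow N/(t-1)N\bigr).$$
The decreasing property $\Psi(N)_{i+1} \subset \Psi(N)_i$ follows because multiplication by $t$ sends $N_{-i-1}$ into $N_{-i}$ and is identified with the identity modulo $(t-1)$, while multiplicativity $F_i \cdot \Psi(N)_j \subset \Psi(N)_{i+j}$ is immediate from the graded $\tilde B$-module structure on $N$.

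Second, I would verify $\Psi \circ \Phi \cong \mathrm{id}$. For a filtered module $(M, \F_i)$, the Rees module $\bigoplus_i t^{-i}\F_i$ sits inside the $\kk[t, t^{-1}]$-flat module $M[t, t^{-1}]$, so the quotient by $(t-1)$ is computed by the specialization $t \mapsto 1$; this sends each graded component $t^{-i}\F_i$ bijectively onto $\F_i \subset M$, reassembling $(M, \F_i)$ with its original filtration.

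Third, I would verify $\Phi \circ \Psi \cong \mathrm{id}$. The natural graded $\tilde B$-linear map $N \to \Phi(\Psi(N)) = \bigoplus_i t^{-i}\Psi(N)_i$ is defined in degree $-i$ by $n \mapsto t^{-i}\overline{n}$, where $\overline{n}$ is the image in $\Psi(N)$. Surjectivity is immediate from the definition of $\Psi(N)_i$. For injectivity I would first observe that $(t-1)$ acts without torsion on any graded $\tilde B$-module: if $n = \sum n_j$ is the homogeneous decomposition and $(t-1)n = 0$, comparing graded components yields $n_j = t\, n_{j-1}$ for every $j$, which together with finiteness of the support of $n$ forces $n=0$. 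Combined with the fact that $t$ acts injectively on each graded piece $N_j$ (inherited from the embedding $\tilde B \hookrightarrow B[t,t^{-1}]$), this gives injectivity of $N_{-i} \to \Psi(N)_i$. Functoriality on morphisms is tautological: a filtered $B$-linear map induces a graded $\tilde B$-linear map on Rees modules componentwise and conversely graded $\tilde B$-linear maps descend to filtered maps on $\Psi$-values.

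The main technical obstacle is the injectivity claim of step three, that $N_{-i} \to \Psi(N)_i$ is an isomorphism for arbitrary graded $N$; this amounts to $t$-torsion-freeness of each graded piece, and it is the step where the specific structure of the extended Rees algebra $\tilde B \subset B[t, t^{-1}]$ enters decisively. Once that is secured, the two constructions are mutually quasi-inverse and the equivalence of categories follows.
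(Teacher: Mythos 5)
You correctly identify that the crux of the argument is the injectivity of $N_{-i}\to\Psi(N)_i$ for a general graded $\tilde B$-module $N$, which reduces to multiplication by $t$ acting injectively on $N$. But your justification for this---that it is ``inherited from the embedding $\tilde B\hookrightarrow B[t,t^{-1}]$''---is not valid: torsion-freeness is a property of the ring $\tilde B$, not of an arbitrary module over it. Concretely, take $N=\tilde B/t\tilde B$. In degree $-i$ one has $N_{-i}\cong F_i/F_{i+1}$, and multiplication by $t\colon N_{-i}\to N_{-i+1}$ is induced by the inclusion $F_i\subset F_{i-1}$, which lands in the kernel of $F_{i-1}\to F_{i-1}/F_i$; so $t$ acts as \emph{zero} on $N$. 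Consequently $(t-1)$ acts as $-1$, $(t-1)N=N$, $\Psi(N)=0$, and $\Phi(\Psi(N))=0\neq N$ for any non-constant filtration. Your step three therefore genuinely fails on this $N$, and the unit map $N\to\Phi(\Psi(N))$ need not be injective, let alone an isomorphism.

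The underlying issue is that, read literally, the lemma overstates the equivalence: the essential image of $\Phi$ is the full subcategory of graded $\tilde B$-modules on which multiplication by $t$ is injective in each degree (equivalently, the $t$-torsion-free ones), and only on that subcategory is $\Psi$ a quasi-inverse. The paper's proof, which writes down $\Psi$ and asserts without verification that it is inverse, does not address this point either; it is saved in practice because the application in Theorem~\ref{+obs} invokes the lemma only for modules built from Rees constructions and free resolutions over $\tilde S$, which are automatically $t$-torsion-free. If you restrict the target category accordingly, your argument becomes correct---the needed injectivity is then a hypothesis rather than something to be deduced---and is in fact considerably more careful than the paper's one-line assertion. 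Your remaining steps (the definition of $\Psi$, the direction $\Psi\circ\Phi\cong\mathrm{id}$, and the observation that $(t-1)$ is a non-zero-divisor on any $\Z$-graded module) are sound.
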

\begin{proof}
	We start by checking that this transformation is well-defined on morphisms. 
	We take a homomorphism $f\colon (M,\F)\to (N,\mathcal G)$ of filtered $B$-modules. We associate to it a map $$\tilde f \colon \bigoplus_{i\in \Z} t^{-i}\F_i \to \bigoplus_{i\in \Z} t^{-i}\mathcal G_i,$$ in the usual way. \vskip 0.2 cm
	Now we define the inverse transformation:
	\begin{align*}
		\Mod^{gr}_{\tilde B} &\to  \Mod^{filt}_{(B,F_i)},\\
		\tilde M &\mapsto \left(\frac{M}{(t-1)},\frac{M_i}{(t-1)}\right).
	\end{align*}
	In this manner, we get an inverse. Thereupon, the categories are isomorphic.
\end{proof}

\begin{proof}[Proof of Theorem \ref{+obs}] 
	By Theorem \ref{filt-good}, we have an isomorphism
		$$\Def^{\geq 0}(B) \cong  \Def^0(\tilde B/\kk[t^{-1}]).$$
	We construct the triple for the functor $\Def^0(\tilde B/\kk[t^{-1}])$.
	By \cite[Theorem 5.1]{hardef} the tangent space to $\Def(\tilde B/\kk[t^{-1}])$ is $T^1_{\kk[t^{-1}]}(\tilde B,\tilde B)$. By the argument of \cite[Theorem 10.1]{hardef} the functor admits an obstruction space $T^2_{\kk[t^{-1}]}(\tilde B,\tilde B)$.
	By \cite[Lemma 1.2.6]{sernesi_Deformations_of_algebraic_schemes} the group of infinitesimal automorphisms is $T^0_{\kk[t^{-1}]}(\tilde B,\tilde B)$.
	 By \cite[Theorem 2.3]{Kleppe} the obstruction theory of the equivariant functor is just the $0$-th graded piece of both the tangent and obstruction spaces. The same holds for infinitesimal automorphisms, as we take $\Aut_{\Spec B}$ functor and take the equivariant component of the tangent space.  
	 	This way we get that 
	 $$T^0_{\kk[t^{-1}]}(\tilde B,\tilde B)_0, \ T^1_{\kk[t^{-1}]}(\tilde B,\tilde B)_0, \ T^2_{\kk[t^{-1}]}(\tilde B,\tilde B)_0$$
	 is the triple for $\Def^{\geq 0}(B)$. \vskip 0.2 cm
	The last thing is to check is that 
	$T^i_{\kk[t^{-1}]}(\tilde B,\tilde B)_0 = (T^{i}_B)_{\geq 0},$
	for $i=0,1,2$.
	\vskip 0.2 cm
	Let us take $\tilde B$ and repeat \cite[Construction 3.1]{hardef} to construct the graded naive cotangent complex:
	$$	\begin{tikzcd}
		& 0                            &                             &                    &   \\
		0 \arrow[r] & \tilde I \arrow[r] \arrow[u] & \tilde S \arrow[r]          & \tilde B \arrow[r] & 0 \\
		& \tilde F \arrow[u]           &                             &                    &   \\
		& \tilde Q \arrow[u]           & \tilde F_0 \arrow[l] &                    &   \\
		& 0. \arrow[u]                  &                             &                    &  
	\end{tikzcd}$$
	Since $\tilde B$ is flat over $\kk[t^{-1}]$ after tensoring with $\kk[t^{-1}]/(t-1)$ we get an analogous diagram
	$$\begin{tikzcd}
		& 0                                  &                                   &                          &   \\
		0 \arrow[r] & \tilde I/(t-1) \arrow[r] \arrow[u] & \tilde S/(t-1) \arrow[r]          & \tilde B/(t-1) \arrow[r] & 0 \\
		& \tilde F/(t-1) \arrow[u]           &                                   &                          &   \\
		& \tilde Q/(t-1) \arrow[u]           & \tilde F_0/(t-1) \arrow[l, hook'] &                          &   \\
		& 0. \arrow[u]                        &                                   &                          &  
	\end{tikzcd}$$
	By the fact $\tilde B /(t-1) = B$ and Lemma \ref{nat}, we can treat this diagram as a diagram of filtered $S=\tilde S/(t-1)$-modules. Moreover, $S$ is a polynomial ring over $\kk$, as a consequence, this is a diagram for $B$ in \cite[Construction 3.1]{hardef}.
	From that, we see (going through Lemma \ref{nat}) that the naive cotangent complex for the filtered algebra $B$:
	$$L_\bullet= L_2 \to L_1 \to L_0$$
	is equivalent to the complex for the graded algebra $\tilde B$
	$$\tilde L_\bullet= \tilde{L_2} \to \tilde{L_1} \to \tilde{L_0}.$$
	Therefore as $T^i_B=T^i_{\kk}(B,B)=h^i(\Hom(L,B))$ we get the following equalities $$(T^{i}_B)_{\geq 0}=H^i(\Hom(L_\bullet,B))_{\geq 0}= H^i(\Hom^{filt}(L_\bullet,B))= H^i(\Hom(\tilde L_\bullet,\tilde B)_0)=$$ $$=H^i(\Hom(\tilde L_\bullet,\tilde B))_0=T^i_{\kk[t^{-1}]}(\tilde B,\tilde B)_0.$$
\end{proof}

\section{Pointed deformations}\label{sec-pointed}

	\begin{defi}\label{pointedGmscheme-defi}
	A morphism of affine  $\G_m$-schemes $*\to Y$ is \emph{ a pointed affine $\G_m$-scheme}.
	\end{defi}
	 If $Y$ is a pointed $\G_m$-scheme, then by definition the distinguished point is $\G_m$-fixed.
	It is worth mentioning the following
 	$$(T^1_Y)_{<-1}=0$$
	condition plays a remarkable role in all the results in this section. The reason for that is Corollary~\ref{lem-tech-braid-crucial} applied for $(X\to Y)= (*\to Y)$.
\begin{defi}
	We say that a $\G_m$-scheme $Y$ has trivial negative tangents (denoted as TNT) if $(T^1_Y)_{<0}=0$.
\end{defi}

\begin{lem}\label{opent2}
	Let $d$ be an integer.
	The conditions that $T^2(\O_Y,\kk)_{<-d}=0$, $(T^1_Y)_{<d}=0$, $(T^0_Y)_{<d}=0$ and the condition that $Y$ is TNT are open in flat $\G_m$-equivariant families.
\end{lem}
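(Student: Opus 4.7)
The plan is to combine upper semi-continuity of coherent sheaf ranks with a uniform lower-degree bound on the $T^i$ modules, rewriting the vanishing of each of $(T^0_Y)_{<d}$, $(T^1_Y)_{<d}$, $T^2(\O_Y,\kk)_{<-d}$ as a finite conjunction of open conditions.

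First, I would reduce to the case of a flat, finite-type, $\G_m$-equivariant morphism $\X \to S$ with $S$ affine Noetherian. The graded Lichtenbaum-Schlessinger construction, as used in the proof of Theorem \ref{+obs} and \cite[Construction 3.1]{hardef}, produces relative modules $\mathcal T^i_{\X/S}$ that are $\Z$-graded and whose formation commutes with base change by flatness. Pushing forward to $S$ and extracting a single graded piece yields a coherent $\O_S$-module whose fiber at $s$ is $(T^i_{Y_s})_j$; the variant $T^2(\O_Y,\kk)_j$ is handled by the same construction with residue-field coefficients along the section carved out by the $\G_m$-fixed point.

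Second, I would establish a uniform integer $N$ such that $(T^i_{Y_s})_j = 0$ for all $j<-N$ and all $s \in S$. Working locally, one writes $\O_\X = \O_S[x_1,\ldots,x_n]/I$ with $x_k$ homogeneous of positive degrees $d_k$ and $I$ generated by finitely many homogeneous relations of bounded degree. Any derivation, homomorphism out of $I/I^2$, or higher obstruction class of degree $j$ sends a generator of degree $d_k$ to an element of the positively graded algebra $\O_\X$ of degree $d_k + j$, forcing $j \ge -\max_k d_k$; analogous bookkeeping on the generators of relations bounds $(T^1)_j$ and $(T^2)_j$ below uniformly on $S$. The vanishing of $(T^i_{Y_s})_{<d}$ then becomes the finite conjunction of openness conditions $(\mathcal T^i_{\X/S})_j = 0$ for $-N \leq j < d$, each of which is open on $S$ by upper semi-continuity of fiber dimension of a coherent sheaf. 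The TNT condition is the case $d=0$ for $T^1$, and the statement for $T^2(\O_Y,\kk)_{<-d}$ follows by the same argument applied to the residue-field variant.

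The main obstacle I expect is the uniform lower bound of Step 2 for $T^2$, since $T^2$ is defined as a cokernel in the cotangent complex and could a priori receive contributions of arbitrarily negative degree. The key point is that the relevant truncation of the graded cotangent complex is concentrated in degrees bounded below by $-\max_k d_k$ together with a shift for the relations, so after applying $\Hom(-,\O_\X)$ one obtains a module that is uniformly bounded below in degree across the family; this is where the positively graded hypothesis on $\O_Y$ is essential, and where any weakening would break the openness claim.
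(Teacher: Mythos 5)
Your proof is correct and follows the same basic strategy as the paper: treat $T^i_{\Y/S}$ as a coherent sheaf, endow $S$ with the trivial $\G_m$-structure, and appeal to upper semi-continuity of fiber dimension in each graded degree. Where you go beyond the paper's very terse argument is in explicitly addressing why the a priori infinite conjunction $\bigcap_{j<d}\{s : (T^i_{Y_s})_j = 0\}$ is open, via a uniform degree bound $N$ derived from a global graded presentation $\O_\X = \O_S[x_1,\ldots,x_n]/I$ over an affine base. This is a genuine refinement: the paper's phrase ``the rank of any graded part is also upper-semicontinuous'' gives openness for each fixed $j$, but implicitly leans on the fact that a finitely generated graded module over $\O_S$ placed in degree $0$ is supported in finitely many degrees; your presentation-based bookkeeping makes that finiteness visible and also yields the bound for the residue-field variant $T^2(\O_Y,\kk)$ directly. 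One small simplification you could borrow from the paper's implicit argument: since the family is finite, $\pi_* T^i_{\Y/S}$ is a finitely generated graded $\O_S$-module with $\O_S$ in degree $0$, so it is automatically concentrated in the finitely many degrees of its generators, which gives the uniform degree support without chasing the cotangent complex. Both routes are fine; yours has the advantage of working even before pushing forward.
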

\begin{proof}
	Fix $i\in \{0,1,2\}$.
	Let us take a flat family $\mathcal Y \to S$ with the special fibre $Y$. Since $T^i_{\Y/S}$ is a coherent sheaf, its rank is upper-semicontinuous. We can endow $S$ with a trivial $\G_m$-action, then we see that the rank of any graded part is also upper-semicontinuous.
\end{proof}

\subsection{Prorepresentability}
This subsection has an auxiliary character and is focused on the proreprehensibility of various good deformation functors. In Proposition \ref{pt-in-Y} we show that the Hilbert scheme of a point $\Hilb_1(Y)$ is naturally isomorphic to the negative spike of the scheme $Y$. 
Then we comment on prorepresentability of negative deformations of finite schemes in Lemma~\ref{t0-fix}. The main part of the section is a comparison of the Hilbert scheme of a point on a finite scheme $Y$ and the functor of negative pointed deformations of $Y$ in Corollary \ref{iso-pointed}. At the end we check the necessity of the assumptions in Lemma \ref{step1-tan}.
\vskip 0.2 cm

 Let us describe the functor of embedded deformations of a distinguished point in a scheme. This functor is a source of all "fractal" results known to the author.
\begin{prop}\label{pt-in-Y}
	The functor $\Def(*/Y)$ is isomorphic to $\Def^{<0}(*/Y)$ and both are prorepresented by the completed local ring at the distinguished point of $Y$. The tangent spaces of those functors are concentrated in $(-1)$ times the degrees of the minimal generators of the maximal ideal of $\O_Y$. 

\end{prop}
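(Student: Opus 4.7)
My plan proceeds in three stages. First, I would identify $\Def(*/Y)$ with the local Hilbert functor $\Hilb_{[*]}(Y)$ (Remark \ref{rmk-loc-hilb}). For $A \in \Art_\kk$, a deformation of $*$ in $Y$ over $A$ is a length-one flat closed subscheme of $Y \times \Spec A$ restricting to $*$, which is the graph of a morphism $\Spec A \to Y$ sending the closed point to $*$, equivalently a local $\kk$-algebra homomorphism $\O_{Y,*} \to A$. Since $A$ is Artinian, any such map factors uniquely through the completion, so $\Def(*/Y) \cong \Hom_{\mathrm{cont}}(\widehat{\O_{Y,*}}, -)$, prorepresented by $\widehat{\O_{Y,*}}$.

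Second, I would evaluate at $A = \kk[\eps]/\eps^2$ to read the tangent space as $(\m/\m^2)^{\vee}$, where $\m$ denotes the maximal ideal. The $\G_m$-action makes $\m$ graded, and $\m/\m^2$ is concentrated in the (strictly positive) degrees $d_1, \dots, d_r$ of a minimal generating set of $\m$; dualizing places the tangent space in the strictly negative degrees $-d_1, \dots, -d_r$, as claimed.

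Finally, I would deduce $\Def^{<0}(*/Y) \cong \Def(*/Y)$ from the vanishing $(T^1_{*/Y})_{\geq 0} = 0$ established in Step 2. The natural forgetful map is a morphism of good deformation functors, and the tangent vanishing implies it is an isomorphism on tangent spaces (both equal $T^1_{*/Y}$). By the pair-version of Corollary \ref{fund-negative-def} (transferred via Remark \ref{rmk-pairs}), the obstruction space of $\Def^{<0}(*/Y)$ is $(T^2_{*/Y})_{<-1}$, which injects into $T^2_{*/Y}$, an obstruction space of $\Def(*/Y)$; hence the map is unobstructed. Theorem \ref{fantechi-manetti} then makes it smooth, and a smooth morphism of good deformation functors bijective on tangent spaces is an isomorphism on semi-universal families, giving an isomorphism of (prorepresentable) functors. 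The main technical point I anticipate is the verification that the filtered-deformation machinery of Section \ref{sec-3} really does transfer to the pair setting as asserted in Remark \ref{rmk-pairs} (in particular that the obstruction identification $(T^2_{*/Y})_{<-1}$ goes through); once this is granted, everything reduces to the tangent-space computation of Step 2.
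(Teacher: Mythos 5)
Your proposal is correct and follows essentially the same route as the paper: identify $\Def(*/Y)$ with the local Hilbert functor of a point on $Y$ (prorepresented by $\widehat{\O_{Y,*}}$), compute the tangent space as $\Hom_{\O_Y}(\m,\kk)$ concentrated in minus the degrees of the minimal generators of $\m$, and conclude $\Def^{<0}(*/Y)\to\Def(*/Y)$ is smooth and bijective on tangents via the injection of obstruction spaces and Theorem \ref{fantechi-manetti}. The only cosmetic difference is the ordering of the steps, and the paper likewise implicitly relies on $\Def^{<0}(*/Y)$ being prorepresentable (which holds since $T^0_{*/Y}=0$ for local Hilbert functors).
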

\begin{proof} 
	Let us take the exact sequence describing the pointed structure
	$$ 0 \to \m \to \O_Y \to \kk.$$
	The tangent space of the functor is the tangent to the Hilbert scheme at the same point
	$$ \Hom_{\O_Y}(\m,\kk).$$
	Hence, we get the last part of the statement, since $\m$ is generated in positive degrees, the tangent space is strictly negative.
	By Theorem \ref{fantechi-manetti} we get that $\Def^{<0}(*/Y)\to \Def(*/Y)$ is smooth, as the map of obstruction spaces is the top horizontal map in the commutative diagram
	$$\begin{tikzcd}
		(T^2_{*/Y})_{<0} \arrow[d, hook] \arrow[r] & T^2_{*/Y} \arrow[d, hook] \\
		{\Ext^{1}_{\O_Y}(\m,\kk) }_{<0} \arrow[r]      & {\Ext^1_{\O_Y}(\m,\kk)}.    
	\end{tikzcd}$$
	Therefore, the first part of the statement holds, as we get a smooth map ${\Def^{<0}(*/Y)\to \Def(*/Y)}$, between prorepresentable functors, which is isomorphic on the tangent spaces.
	The Hilbert scheme of one point on $Y$ is isomorphic to $Y$. Consequently, the local functor is as claimed in the second part of the statement.
\end{proof}
	\begin{stwr} \label{pre-fractal}
		Assume that $(T^1_Y)_{<-1}$ vanishes and that $\O_Y$ is generated in degree $1$. Then $(T^1_{*\to Y})_{<-1}$ also vanishes. Moreover, the following diagram is Cartesian up to smooth equivalence
		$$\begin{tikzcd}
			\Def(*/Y) \arrow[d] \arrow[r] & \Def(*\to Y) \arrow[d] \\
			* \arrow[r]                   & \Def(Y).               
		\end{tikzcd}$$
	\end{stwr}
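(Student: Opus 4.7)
Plan: The proposition is a conjunction of two statements, and I would treat them in order, with the first serving partly as a tool for the second.

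\textbf{Vanishing of $(T^1_{*\to Y})_{<-1}$.} I would read off from Buchweitz's Braid \eqref{braid0} applied to the morphism $* \to Y$ the exact segment
\[
T^1_{*/Y} \to T^1_{*\to Y} \to T^1_Y.
\]
By Proposition \ref{pt-in-Y}, $T^1_{*/Y} = \Hom_{\O_Y}(\m,\kk)$, and since $\O_Y$ (hence $\m$) is generated in degree $1$, this module is concentrated in degree $-1$. The hypothesis $(T^1_Y)_{<-1}=0$ then forces, for each integer $d<-1$, the sequence of graded pieces $0 = (T^1_{*/Y})_d \to (T^1_{*\to Y})_d \to (T^1_Y)_d = 0$, so $(T^1_{*\to Y})_d=0$.

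\textbf{Cartesianness up to smooth equivalence.} I would apply Lemma \ref{technical-deformation} to the given square with $F=\Def(*/Y)$, $G=\Def(*\to Y)$, $H=\Def(Y)$. The tangent-obstruction data comes from the continuation of the Braid fragment
\[
T^1_{*/Y} \to T^1_{*\to Y} \to T^1_Y \to T^2_{*/Y} \to T^2_{*\to Y} \to T^2_Y.
\]
By Proposition \ref{pt-in-Y}, $\Def(*/Y) = \Def^{<0}(*/Y)$, so by Corollary \ref{fund-negative-def} I may choose $(T^2_{*/Y})_{<-1}$ as an obstruction space for $F$. The required vanishing of the connecting map $T^1_Y \to (T^2_{*/Y})_{<-1}$ is then a pure degree count: the Braid maps are degree-preserving, so $(T^1_Y)_d$ lands in $(T^2_{*/Y})_d$, which contributes nothing to the $<-1$ truncation unless $d<-1$; and by hypothesis $(T^1_Y)_{<-1}=0$.

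\textbf{Expected main obstacle.} The hard part is justifying the obstruction-tail exactness $(T^2_{*/Y})_{<-1}\to T^2_{*\to Y}\to T^2_Y$ required by Lemma \ref{technical-deformation}, since the Braid only supplies exactness degree-by-degree, and the image of $(T^2_{*/Y})_{<-1}$ in $T^2_{*\to Y}$ only captures the strictly-negative portion of the kernel of $T^2_{*\to Y}\to T^2_Y$. To circumvent this, I would work throughout with the strictly-negative variants $\Def^{<0}$ and use Part 1 together with Corollary \ref{lem-tech-braid-crucial} to reduce to a diagram in which every functor has tangents concentrated in degree~$-1$ and every obstruction in degrees $<-1$; then the Braid restricted to the $<-1$ pieces provides the exact sequence as needed, with the middle map zero. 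The smooth map $\Def(*/Y) \to \Def(*\to Y)\times_{\Def(Y)} *$ produced by Lemma \ref{technical-deformation} furnishes the smooth equivalence.
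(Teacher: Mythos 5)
Your proof follows essentially the same route as the paper: Part 1 via the Braid's degree-preserving exact sequence, Part 2 via a judicious choice of obstruction space and Lemma~\ref{technical-deformation}. The one thing worth flagging is that your ``expected main obstacle'' is not an obstacle at all. Inspecting the proof of Lemma~\ref{technical-deformation}, the only inputs actually used are (a) exactness of $t_F\to t_G\to t_H$ at $t_G$, which yields surjectivity of $t_F\to t_P$, and (b) injectivity of $o_F\to o_G$, which yields unobstructedness of $F\to P$; the segment $o_F\to o_G\to o_H$ plays no role there, and exactness at $o_G$ is never invoked. With your choice $o_F=(T^2_{*/Y})_{<-1}$ (legitimate since $\Def(*/Y)\cong\Def^{<0}(*/Y)$ by Proposition~\ref{pt-in-Y}), injectivity of $o_F\to o_G$ follows exactly from your degree count: $\ker\bigl((T^2_{*/Y})_{<-1}\to T^2_{*\to Y}\bigr)=\operatorname{im}\bigl((T^1_Y)_{<-1}\to (T^2_{*/Y})_{<-1}\bigr)=0$ by the hypothesis $(T^1_Y)_{<-1}=0$. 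So the argument you set up in the second paragraph already closes; no detour through the strictly-negative variants is required. The paper itself obtains injectivity of $o_F\to o_G$ by citing Corollary~\ref{lem-tech-braid-crucial}, which packages the same degree count, so the two routes coincide at the level of ideas. One more minor caution: in your proposed workaround, the smooth map produced by Lemma~\ref{technical-deformation} for the $\Def^{<0}$ square would target $\Def^{<0}(*\to Y)\times_{\Def^{<0}(Y)}*$, not the full fibre product $\Def(*\to Y)\times_{\Def(Y)}*$, so an extra comparison step would be needed to deduce the stated claim; this is another reason the direct argument is preferable.
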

	\begin{proof}
		The first part is a direct consequence of the long exact sequence of $T^i$-functors, see Braid~\eqref{braid0}. 
		$$ T^1_{*/Y} \to T^1_{*\to Y} \to T^1_Y.$$
		By Lemma~\ref{lem-tech-braid-crucial}, we get that the map $\Def(*/Y) \to \Def(*\to Y)$ is unobstructed, thus by Lemma~\ref{technical-deformation} we get that the diagram is Cartesian up to smooth equivalence.
	\end{proof} 
	Since we are predominantly interested in finite local schemes, one could wonder if, in that case, the condition for prorepresentability in Proposition \ref{prorep-lem}, $(T^0_Y)_{<0}=0$ is always satisfied.
In characteristic $0$, if the structural sheaf $\O_Y$ is generated in a fixed positive degree, it is true. 
\begin{lem} \label{t0-fix}
	Assume that $Y$ is finite and the structural sheaf $\O_Y$ is generated in a fixed positive degree.
	In characteristic zero or larger than the length of $Y$, the group of infinitesimal automorphisms $(T^0_Y)_{<0}$ vanishes. And thus $\Def^{<0}(Y)$ is prorepresentable.
\end{lem}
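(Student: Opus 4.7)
The plan is to reduce the lemma to showing $(T^0_Y)_{<0}=0$ and then invoke the prorepresentability criterion. By Corollary~\ref{fund-negative-def}, $\Def^{<0}(Y)$ is a good deformation functor with associated triple $((T^0_Y)_{<0},(T^1_Y)_{<0},(T^2_Y)_{<-1})$. Since $Y$ is finite, $(T^1_Y)_{<0}$ is automatically finite-dimensional; so, arguing exactly as in the second half of Corollary~\ref{prorep-lem} (via the identification of filtered deformations with $\G_m$-equivariant deformations over $\kk[t^{-1}]$ from Theorem~\ref{filt-good}), prorepresentability will follow from the vanishing of the negative-degree infinitesimal automorphisms.

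So the content of the lemma is to show $\Der_\kk(B,B)_{<0}=0$ for $B=\O_Y$. I first make a purely graded reduction: because $B$ is generated by $B_d$, it is concentrated in degrees $d\Z_{\ge 0}$, so a homogeneous derivation $\delta$ of degree $-k$ sends the generators $x_i\in B_d$ into $B_{d-k}$. If $k$ is not a multiple of $d$, or if $k=md$ with $m\ge 2$, this target vanishes, hence $\delta=0$. The only remaining case is $k=d$, where $c_i:=\delta(x_i)\in B_0=\kk$.

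The key step is the exponential trick. Because $c_i\in\kk$, we have $\delta^2(x_i)=\delta(c_i)=0$. More broadly $\delta$ is nilpotent on $B$: writing the top nonvanishing degree of $B$ as $Nd$, one has $\delta^{N+1}=0$, and since each $B_{jd}$ for $0\le j\le N$ is nonzero, $N<\dim_\kk B$. Under the characteristic hypothesis the truncated exponential
\[
\exp(\delta) \;=\; \sum_{k=0}^{N} \frac{\delta^k}{k!}
\]
is a well-defined $\kk$-algebra automorphism of $B$, and the relation $\delta^2(x_i)=0$ collapses it on generators to $\exp(\delta)(x_i)=x_i+c_i$. Since $B$ is local with maximal ideal $\m$, every $\kk$-algebra automorphism preserves $\m$; as $x_i\in\m$ this forces $c_i\in\m\cap\kk=0$, so $\delta$ vanishes on generators and hence on $B$.

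The main obstacle is justifying the exponential trick, which is exactly what the characteristic hypothesis buys: for $B=\kk[x]/(x^p)$ with $|x|=1$ in characteristic $p$, the derivation $\partial_x$ is a nontrivial element of degree $-1$, so the assumption cannot be removed.
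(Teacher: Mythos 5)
Your proof is correct, but it takes a genuinely different route from the paper's. Both arguments begin from the same reduction (which the paper leaves implicit and you make explicit via the degree analysis): since $B=\O_Y$ is concentrated in degrees $d\Z_{\ge 0}$, the only negative degree in which a homogeneous derivation could be nonzero is $-d$, and such a $\delta$ sends the degree-$d$ generators to scalars $c_i\in B_0=\kk$. From there you diverge. The paper argues by direct contradiction: pick a generator $e$ with $\delta(e)\in\kk^*$, let $k$ be minimal with $e^k=0$, and apply the Leibniz rule once to get $0=\delta(e^k)=k\,e^{k-1}\delta(e)$; since $k\le\dim_\kk B$ (because $1,e,\dots,e^{k-1}$ are nonzero and of distinct degrees), the characteristic hypothesis makes $k$ a unit and forces $e^{k-1}=0$, contradicting minimality. (The paper's displayed $k!$ is a harmless slip for $k$.) Your approach instead packages the same characteristic hypothesis into the truncated exponential $\exp(\delta)$, verifies it is an algebra automorphism, computes $\exp(\delta)(x_i)=x_i+c_i$ using $\delta^2(x_i)=0$, and concludes $c_i\in\m\cap\kk=0$ because automorphisms preserve the maximal ideal. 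The paper's argument is lighter -- it only needs one application of Leibniz and invertibility of a single integer $k$ -- while yours needs invertibility of all factorials up to the nilpotency order of $\delta$ and the (standard but heavier) fact that $\exp$ of a nilpotent derivation is an automorphism; your bound $N<\dim_\kk B$ makes this work under the same hypothesis, so both close the gap. Your version does have the small advantage of making the role of the hypothesis conceptually transparent (it is exactly what lets you integrate the infinitesimal automorphism to an actual one), which is a nice perspective even if the paper's bare-hands computation is shorter.
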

\begin{proof}
	An element of $T^0_Y$ is a derivation $\Der_\kk(\O_Y,\O_Y)$. If such a derivation $\delta$ is negative and non-zero, then $\O_Y$ admits an element $e$ sent to an invertible element.
	For the minimal positive integer $k$ such that $e^k=0$ we get
	$$ 0=\delta(e^k)=k!e^{k-1}\delta(e),$$
	and since $\chr \kk >k$ we get a contradiction.
	Since the negative tangent space is finitely dimensional, the last part is an application of {Corollary~\ref{prorep-lem}}.
\end{proof}

\begin{stwr} \label{iso-pointed}
	Assume that $Y$ admits $\G_m$-action.
	If $(T^0_Y)_{<0}$ and $(T^1_Y)_{<0}$ vanish, then $$\Def(*/Y)\to\Def^{<0}(*\to Y)$$
	is an isomorphism.
\end{stwr}
\begin{proof}
	By Corollary \ref{step1} we see that the map is smooth.
	The kernel of the map $T^1_{*/Y} \to T^1_{*\to Y}$ is the image of $T^0_Y$. By our assumption, the injectivity of the tangent map follows.
	We have a smooth map inducing an isomorphism on tangent spaces, and $\Def^{<0}(*/Y)$ is prorepresentable. It is enough to prove that the target functor $\Def^{<0}(*\to Y)$ is prorepresentable. We argue that this holds by Corollary \ref{prorep-lem-general}. The assumption that the module $(T^1_{*/Y})_{<0}$ is finite is satisfied, since $Y$ has a finitely dimensional tangent space at the distinguished point.
\end{proof}
In the following lemma, we show that the condition on the infinitesimal deformations is necessary for Corollary \ref{iso-pointed} above.

\begin{lem} \label{step1-tan}
		Let $Y$ be an affine $\G_m$-scheme, with the structural sheaf $\O_Y$ generated in a fixed positive degree. Then the map $(T^0_Y)_{<0} \to (T^1_{*/Y})_{<0}$ is injective.
		If $(T^0_Y)_{<0}$ does not vanish, the map of the tangent spaces
		$$ T^1_{*/Y} \to T^1_{(*\to Y)}$$
		is not bijective.
\end{lem}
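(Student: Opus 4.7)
The plan is to derive both statements from the dotted exact sequence in Braid~\eqref{braid0} applied to $f\colon *\to Y$, namely
\[
T^0_{*\to Y}\to T^0_Y\xrightarrow{\partial} T^1_{*/Y}\to T^1_{*\to Y}.
\]
Exactness at $T^1_{*/Y}$ identifies $\ker(T^1_{*/Y}\to T^1_{*\to Y})$ with the image of $\partial$, so both claims reduce to showing that the strictly negative part of $T^0_{*\to Y}$ vanishes.

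First I would unpack $T^0_{*\to Y}$ as the subspace of $\Der_\kk(\O_Y,\O_Y)$ consisting of derivations $\delta$ compatible with the augmentation $\pi\colon \O_Y\to \kk$, that is, satisfying $\pi\circ\delta=0$, equivalently $\delta(\O_Y)\subset \m$ (using $\delta(1)=0$).

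The heart of the argument is to check that no nonzero such derivation can have negative degree. Since $\O_Y$ is generated in a single positive degree $d_0$, its homogeneous components are supported in non-negative multiples of $d_0$. A derivation $\delta$ of degree $-j$ with $j>0$ is determined by its values on degree-$d_0$ generators $g$, and $\delta(g)\in \O_{Y,d_0-j}$; for this to be nonzero $d_0-j$ itself must be a non-negative multiple of $d_0$, forcing $j=d_0$ and $\delta(g)\in \O_{Y,0}=\kk$. The condition $\delta(\O_Y)\subset \m$ then yields $\delta(g)\in\kk\cap\m=0$ for every generator, hence $\delta=0$.

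With $(T^0_{*\to Y})_{<0}=0$ the exact sequence above gives the injection $(T^0_Y)_{<0}\hookrightarrow(T^1_{*/Y})_{<0}$, which is the first claim. For the second claim, any nonzero class in $(T^0_Y)_{<0}$ would then produce a nonzero element of $(T^1_{*/Y})_{<0}$ lying in $\ker(T^1_{*/Y}\to T^1_{*\to Y})$, contradicting bijectivity. The only real obstacle is the combinatorial vanishing $(T^0_{*\to Y})_{<0}=0$; once this is in place the braid does the rest.
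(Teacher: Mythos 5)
Your proof is correct and takes essentially the same route as the paper: both reduce the claim to showing that the kernel of $T^0_Y \to T^1_{*/Y}$, which is (isomorphic to) $\Der_\kk(\O_Y,\m)$, vanishes in negative degrees because $\O_Y$ is generated in a single positive degree. You arrive at the injection $(T^0_Y)_{<0}\hookrightarrow (T^1_{*/Y})_{<0}$ directly via the dotted exact sequence of Braid~\eqref{braid0}, whereas the paper first injects into $T^1_{*\backslash *\to Y/Y}$ via the dashed sequence and then composes with the injection $T^1_{*\backslash *\to Y/Y}\hookrightarrow T^1_{*/Y}$ from the solid sequence; these agree by commutativity of the braid, and you usefully spell out the degree count that the paper leaves implicit.
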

\begin{proof}
	
			We look at the exact sequence of $T^i$ functors induced by the following exact sequence of $\O_Y$-modules
	$$ 0 \to \m \to \O_Y \to \kk \to 0,$$
		\begin{equation}\label{shor-t0}
	 0 \to T^0_\kk(\O_Y,\m) \to T^0_\kk(\O_Y,\O_Y) \to T^0_\kk(\O_Y,\kk)=T^1_{*\backslash * \to Y/Y}.			
 	\end{equation}
			This is part of Braid \eqref{braid0} for the pointed map $(*\to Y)$ given by the single arrow.
	\begin{equation} \label{bradi-t0} \tag{$\spadesuit_4$}
	\begin{tikzcd}[sep=small]
	T^0_{*\backslash Y} \arrow[rd, dotted] \arrow[dd, bend right=49]                      &                                                           & T^0_{* / Y} \arrow[ld, Rightarrow] \arrow[dd, dashed, bend left=49] \\
	& T^0_{*\to Y} \arrow[rd, dotted] \arrow[ld, Rightarrow]    &                                                                              \\
	T^0_Y  \arrow[dd, Rightarrow, bend right=49] \arrow[rd]                       &                                                           & 0 \arrow[dd, dotted, bend left=49] \arrow[ld, dashed]                    \\
	& T^1_{*\backslash * \to Y/Y} \arrow[ld, dashed] \arrow[rd] &                                                                              \\
	T^1_{*/ Y} \arrow[rd, Rightarrow]  &                                                           & T^1_{*\backslash Y} \arrow[ld, dotted]                       \\
	& T^1_{*\to Y}      &                                                                              
	\end{tikzcd}	\end{equation}
			By the definition \cite[Proposition 3.6]{hardef}, the sequence \eqref{shor-t0} is equal to
			$$ 0 \to \Der_\kk(\O_Y,\m) \to \Der_\kk(\O_Y,\O_Y) \to \Der_\kk(\O_Y,\kk).$$
			Since, the module $\Der_\kk(\O_Y,\m)$  is zero in negative degrees, the map $$(T^0_Y)_{<0} = T^0_\kk(\O_Y,\O_Y)_{<0} \to (T^1_{*\backslash * \to Y/Y})_{<0}$$ is injective.
			Hence, when composed with the map $T^1_{*\backslash * \to Y/Y} \to T^1_{*/Y}$, which is injective by Braid~\eqref{bradi-t0}, we get the statement.
\end{proof}

\subsection{Fractal family on the pointed Hilbert scheme}
In this subsection we define a notion of a fractal family, inspired by \cite[Section 3.3]{szachniewicz-nonreduce}. Then we show the existence of such a family on the pointed Hilbert scheme $\Hilb_{1,d}$, this is the main result of this subsection Theorem \ref{fractal1}, which is a generalisation of \cite[Theorem 2.8]{Unexpected-GGGL}.

\vskip 0.2 cm

We start with an example, which showcases the difference between functor of pointed the usual deformations of a local scheme.
\begin{exm}
	Let $\chr \kk =p $ and take a finite scheme $Y=\Spec(\kk[x]/x^q)$. Then let us consider the unique pointed structure on $Y$ given by:
	$$ 0 \to (x) \to \O_Y \to \kk \to 0.$$
	The functor $\Def(*/ Y)$ is the Hilbert scheme of a point on $Y$ it is isomorphic to $Y$ itself. The tangent space at the unique closed point is
	$$ \Hom_{\O_Y}((x),\kk),$$
	it is one-dimensional and concentrated in degree $-1$. Therefore, all those embedded deformations are negative.
	The image of the maximal deformation $\Def(*/Y)(\O_Y)$ in $\Def(*\to Y)$ is
	$$\kk[\eps]/(\eps^q) \to \kk[x,\eps]/((x+\eps)^q,\eps^q) \xrightarrow{x\mapsto 0} \kk[\eps]/(\eps^q).$$ 
	if $p|q$ then this is a trivial pointed deformation, since $(x+\eps)^q=x^q+\eps^q$. 
	But in case $p\nmid q$ we show below that this pointed deformation is not trivial. \vskip 0.2 cm
 	The argument is by a contradiction, we assume that there is an isomorphism of the pointed deformation described above and the trivial one.
	We have a pair of pointed deformations, over $\kk[\eps]/(\eps^q)$, since they are pointed, each of them can be seen as a deformation with a section.
	$$\begin{tikzcd}
		{\kk[x,\eps]/(\eps^q,(x+\eps)^q))} \arrow[rr, "\cong"] \arrow[rdd, "\pi_1", bend left] &                                                                       & {\kk[x,\eps]/(\eps^q,x^q)} \arrow[ldd, "\pi_2", bend left] \\
		&                                                                       &                                                            \\
		& {{\kk[\eps]/(\eps^q)}.} \arrow[ruu, bend left] \arrow[luu, bend left] &                                                           
	\end{tikzcd}$$
	Let us denote the claimed isomorphism by $\phi\colon{\kk[x,\eps]/(\eps^q,(x+\eps)^q))} \to {\kk[x,\eps]/(\eps^q,x^q)} $.
	First, by the commutativity of the diagram above
	$$\phi(\eps)=\eps.$$
	Second, since $\phi$ lies over identity on $\kk[x]/(x^q)$
	$$ \phi(x) = x  \mod(\eps),$$
	equivalently
	$$ \phi(x)-x \in (\eps) \subset \kk[x,\eps]/(\eps^q,x^q).$$
	Hence, there exists $h\in \kk[x,\eps]/(\eps^q,x^q)$ such that 
	$$ \eps h = \phi(x)-x,$$
	but since 
	$$\eps\pi_2(h)=\pi_2(\eps h)=\pi_2(\phi(x)-x)=\pi_1(x)-\pi_2(x)=0-0=0,$$
	therefore $h\in (\eps^{q-1},x)\subset \kk[x,\eps]/(x^q,\eps^q)$.
	This implies that are elements $g,g'\in \kk[x,\eps]/(x^q,\eps^q)$ such that $$ h=xg+\eps^{q-1}g'.$$
	Thus $\phi(x)=x(1+g\eps)$ therefore $\phi(x)$ is nilpotent of order $q$,
	but $x^q \in \kk[x,\eps]/(\eps^q,(x+\eps)^q)$ is not zero, a contradiction with $\phi$ being an isomorphism.
\end{exm}
The example above showcases a phenomenon first shown in \cite{Unexpected-GGGL}. We took a finite pointed scheme and translated it affinely, while not \textit{moving} the distinguished point. This provided us with nontrivial deformations, all first-order deformations used in \cite[Theorem 2.8]{Unexpected-GGGL} are constructed this way. In that paper, the authors deduced from that construction the non-reducedness of the pointed Hilbert scheme. We focus on the singularity type, which is a more refined invariant.

\begin{defi}\label{fractal}
	Let $\U\to \M$ be a flat morphism with irreducible fibres and with a section $\M\to \U$. Let $y\in \M$ be $\kk$-point such that $\U_{|\{y\}}= \U\times_{\M} \{y\}$ is a finite non-reduced scheme.
	We say that $\M$ admits \emph{a fractal structure} at a point $y$ if there is a morphism $\{y\}\to Z\to \M$ such that the pointed schemes $(\U_{|Z},\{y\})$ and $(\M,\{y\})$ are smoothly equivalent.
	$$\begin{tikzcd}
		&                                                 & \M                       \\
		\U_{|\{y\}} \arrow[r] \arrow[d]      & \U_{|Z} \arrow[d] \arrow[r] \arrow[ru, "sm-eq"] & \U \arrow[d]             \\
		\{y\} \arrow[r] \arrow[u, bend left] & Z \arrow[r] \arrow[u, bend left]                & \M. \arrow[u, bend left]
	\end{tikzcd}$$
\end{defi}

We apply the definition for $\M$ being a moduli space of pointed objects, for example, the pointed Hilbert scheme of points $\Hilb_{1,m}$.
The definition above says that the completed local ring of the moduli space at $\{y\}$ looks like a deformation of $\U_{|\{y\}}$ over a scheme $Z$.
	\begin{lem}\label{frac-nonred}
		Let $\U\to \M$ be a fractal family at a $\kk$-point $y \in \M$. Then the $\kk$-point $y$ is a non-reduced point of $\M$.
	\end{lem}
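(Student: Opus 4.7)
The plan is to transport the non-reducedness from the fiber $\U_{|\{y\}}$ to the base $\M$ in two steps: first passing it to $\U_{|Z}$ using the section and Lemma~\ref{lemma-section}, then to $\M$ itself using the fact that smooth equivalence preserves reducedness.

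First, denote by $\sigma\colon \M\to \U$ the section and set $\tilde y:=\sigma(y)\in \U_{|\{y\}}\subset \U_{|Z}$. Restricting $\U\to \M$ along $Z\to \M$ gives a flat morphism $\U_{|Z}\to Z$ with section $\sigma_Z\colon Z\to \U_{|Z}$. Because $\U_{|\{y\}}=\U_{|Z}\times_Z\{y\}$, completing at $\tilde y$ and $y$ we obtain a cocartesian diagram of local $\kk$-algebras
$$
\begin{tikzcd}
\widehat\O_{Z,y} \arrow[d]\arrow[r] & \widehat\O_{\U_{|Z},\tilde y}\arrow[d]\\
\kk \arrow[r] & \widehat\O_{\U_{|\{y\}},\tilde y},
\end{tikzcd}
$$
and the section $\sigma_Z$ supplies a retraction $\widehat\O_{\U_{|Z},\tilde y}\to \widehat\O_{Z,y}$ of the upper horizontal map.

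Next I would invoke Lemma~\ref{lemma-section}. Since $\U_{|\{y\}}$ is a connected non-reduced scheme, the local ring at its distinguished point $\tilde y$ (and hence its completion) is non-reduced; this uses that the nilpotent structure of a connected finite/local fiber is concentrated at the distinguished closed point in the settings that appear later in the paper. Lemma~\ref{lemma-section} then yields that $\widehat\O_{\U_{|Z},\tilde y}$ is non-reduced, i.e.\ the pointed scheme $(\U_{|Z},\tilde y)$ is non-reduced at $\tilde y$.

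Finally, the fractal assumption gives a smooth equivalence of pointed schemes $(\U_{|Z},\tilde y)\sim_{\mathrm{sm}}(\M,y)$. Under a smooth morphism of Noetherian local rings, reducedness descends and ascends, so smoothly equivalent pointed schemes are simultaneously reduced or non-reduced at their distinguished points. Combining this with the previous step gives that $\O_{\M,y}$ is non-reduced, proving the lemma. The main (minor) technical point is the first step: checking that the hypothesis ``$\U_{|\{y\}}$ is connected non-reduced'' indeed provides non-reducedness of the stalk precisely at $\tilde y$ in the generality claimed; everything else is a direct application of Lemma~\ref{lemma-section} and the formal properties of smooth equivalence.
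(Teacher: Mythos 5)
Your proof is correct and follows exactly the same route as the paper: take completed local rings of the diagram involving $\U_{|\{y\}}$, $\U_{|Z}$, $\{y\}$, $Z$, apply Lemma~\ref{lemma-section} using the retraction supplied by the section to conclude non-reducedness of $\U_{|Z}$ at $\tilde y$, then transfer this to $\M$ via the smooth equivalence from Definition~\ref{fractal}. You are also right to flag the small point the paper leaves implicit -- that ``connected non-reduced'' for the fiber must be turned into non-reducedness of the stalk at the distinguished point (immediate for the finite/local fibers used in the applications) -- but this is a clarification of the same argument, not a different one.
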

	\begin{proof}
		Let us consider the following diagram.
		$$\begin{tikzcd}
			\U_{|\{y\}} \arrow[r] \arrow[d]      & \U_{|Z} \arrow[d]      \\
			\{y\} \arrow[r] \arrow[u, bend left] & Z \arrow[u, bend left]
		\end{tikzcd}$$ 
		Since the morphism $\U \to \M$  satisfies the conditions of Lemma~\ref{lemma-section} and since the fibre $\U_{|\{y\}}$ is not a point, $\U_{Z}$ is non-reduced.
		By Definition \ref{fractal}, $\M$ is also non-reduced at $y$.
	\end{proof}

 To get any kind of \textit{fractality} we need to assure that our pointed local scheme $Y$ does not deform to a reducible one.
\begin{defi}[{\cite[Definition 5.15]{Bucz-Klepp-cleav}}]
	We say that a finite local scheme $Y$ is \emph{uncleavable} if for every flat finite family over irreducible base $Z$
	$$\begin{tikzcd} 
		Y \arrow[d] \arrow[r] & \Y \arrow[d,"\pi"] \\
		* \arrow[r]           & Z           
	\end{tikzcd}$$
	with $Y$ as the fibre over the special point, the general fibre of $\pi$ is local.
\end{defi}

\begin{defi}
	A component of the Hilbert scheme $\Hilb_m(\A^n)$ is called \emph{elementary} if it parametrises subschemes supported at a single point. 
\end{defi}

\begin{lem}\label{michal-section}
	Let $V \subset \Hilb_m{\A^n}$ be an elementary component. Then the universal family $\pi \colon \U_V \to V$ admits a section $\sigma$.
\end{lem}
	\begin{proof}
		By \cite[Proposition 2.27]{szachniewicz-nonreduce}, the component $V\cong V_0\times \A^n$, where $V_0$ parametrizes schemes supported at the origin $0\in \A^n$.
		This way we also obtain a decomposition of the universal family $\U_V = \A^n \times \U_{V_0}$.
		Hence it is enough to construct a section of the restricted projection ${\pi_0 \colon \U_{V_0} \to V_0}$.
		By the definition of the universal family $\U_{V_0}$ is a closed subscheme of $\A^n \times V_0$, since $V_0$ parametrises schemes supported at the origin, $\U_{V_0}$ contains the subscheme $\{0\}\times V_0 \cong V_0$.
			\end{proof}

We show that the nested Hilbert scheme is a source of fractal structures.

\begin{twr} \label{fractal1}
	Let $U$ be a smooth quasi-projective variety and let $m\geq 2$ be a positive integer. Then, the nested Hilbert scheme $\Hilb_{1,m}(U)$ admits a fractal structure at any point ${[*,X]\in \Hilb_{1,m}(U)}$, such that $[X]\in \Hilb_m(U)$ and $X$ is uncleavable.
\end{twr}
The theorem is a generalisation of \cite[Theorem 2.8]{Unexpected-GGGL}.
\begin{remark}
	Since the property of being uncleavable is weaker than lying on unique elementary component, we get a generalisation of \cite[Theorem 2.8]{Unexpected-GGGL} in two ways.
First, we consider a larger locus of points. Second, not only do we show non-reducedness, but also describe structure of the local rings.
\end{remark}
\begin{proof}
	Let us fix $X\subset U$ to be uncleavable finite scheme of length $m$.
	
	Let $V$ be an open neighbourhood of $[X]\in \Hilb_m(U)$, such that $V$ parametrises only cleavable points.
	By Lemma \ref{michal-section} the universal family $\pi \colon \U_V \to V$ admits a section $\sigma_0$, hence we get the induced map to the nested Hilbert scheme $\Hilb_{1,m}(U)$.
	$$\begin{tikzcd}
		\U_V \arrow[d, "\pi"', bend right] \arrow[r] & \U \arrow[d, bend right]            \\
		V \arrow[r] \arrow[u, "\sigma"', bend right] & {\Hilb_{1,m}(U)} \arrow[u, bend right]
	\end{tikzcd}$$
	
	By definiton, a $S$-point of the universal family $\U_V$ is a pair consisting of a closed subscheme $Y_S$ of $S\times U$, flat over $S$, and a $S$-point of $Y_S$, hence a section of $Y_S\to S$. This way we realise $\U_V$ as a subscheme of $\Hilb_{1,m}(U)$.
	Let $\widetilde V\subset \Hilb_{1,m}(U)$ be the pullback as follows
	$$\begin{tikzcd}
		\widetilde V \arrow[d, "\pi_2"] \arrow[r] & {\Hilb_{1,m}(U)} \arrow[d] \\
		V \arrow[r]                               & \Hilb_m(U)               
	\end{tikzcd}$$
	Note that the morphism $\sigma$ is also a section of projection $\pi_2$.
	Let us choose a $\kk$-point $[Y]$ in $V$ and consider the Cartesian diagram
		$$\begin{tikzcd}
			W \arrow[d] \arrow[r] & \widetilde V \arrow[d, "\pi_2"] \\
			* \arrow[r]           & V.                 
		\end{tikzcd}$$
		 We transfer from embedded deformations to the abstract ones, and look at the following diagram, see {Remark~\ref{rmk-pairs}}
		\begin{equation}\label{pre-frac-diag}
		\begin{tikzcd}
			\Def(*/Y) \arrow[d] \arrow[r] & \Def(*\to Y) \arrow[d] \\
			* \arrow[r]                   & \Def(Y).               
		\end{tikzcd}	
	\end{equation}
		The section $\sigma$ induced a section of $\Def(*\to Y) \to \Def(Y)$.
			Hence the exact sequence 
		$$ T^1_{*/Y} \to T^1_{*\to Y} \to T^1_Y$$
		splits, in particular $T^1_Y\to T^2_{*/Y}$ is zero, and by Braid \eqref{braid0}, we get that the map ${\Def(*/Y) \to \Def(*\to Y)}$ is unobstructed, thus by Lemma~\ref{technical-deformation} we get that Diagram \eqref{pre-frac-diag} is Cartesian.
		If so, the induced map ${W\to\Def(*/Y)\cong Y}$ is smooth. Since the map is isomorphic on the tangent spaces, and the target functor is prorepresentable, the map is an isomorphism. Hence $W\cong Y$. This way get that the morphism of $V$-schemes
		$$\begin{tikzcd}
			\U_V \arrow[rd] \arrow[rr] &   & \widetilde V \arrow[ld] \\
			& V &              
		\end{tikzcd}$$
		inducing isomorphisms on the fibres. Since $\pi \colon \U_V \to V$ is flat, so is $\pi_2\colon \widetilde V \to V$, by \cite[\href{https://stacks.math.columbia.edu/tag/039D}{Tag 039D}]{stacks-project}.
		Hence we get a surjective homomorphism of locally free $\O_V$-modules of rank $m$
		$$ (\pi_2)_* \O_{\widetilde{V}} \to \pi_* \O_{\O_V}$$
		by Nakayama's lemma this is an isomorphism.
\end{proof}

\begin{stwr}\label{pointed-nonreduced}
		Let $Y$ be a finite uncleavable scheme. The functor $\Def(*\to Y)$ (resp. ${\Def^{<0}(*\to Y)}$) is non-reduced.
	\end{stwr}
	\begin{proof}
		The scheme $Y$ can be embedded into $\A^n$ for some large enough $n$. The map ${\Hilb_{[Y]}(\A^n) \to \Def(Y)}$ is smooth. Since $Y$ is uncleavable, $[Y]\in \Hilb_{m}(\A^n)$ satisfies the conditions of Theorem \ref{fractal1}, and we get a fractal family at the point $[Y]$. By Lemma \ref{frac-nonred} we get the non-reducedness. \vskip 0.2 cm
		For the second variant of the statement, we repeat the argument for Theorem \ref{fractal1}, for the negative deformations instead of the general ones. Recall that by Proposition \ref{pt-in-Y} we get $\Def^{< 0}(*/Y) \cong \Def(*/Y)$.
	\end{proof}

\subsection{Deformation of the colength $1$ subalgebra of a Gorenstein algebra}\hfill\\

In this subsection, we show a second example of fractal structure, recall Definition \ref{fractal}. We show that the nested Hilbert scheme $\Hilb_{m-1,m}(\A^n)$ admits a fractal structure. Moreover we reprove \cite[Theorem 1.5]{szachniewicz-nonreduce}, which describes the \textit{fractal-like} structure of the Hilbert scheme.
\vskip 0.2 cm
 For this subsection, we specialise the setting of the last subsection to the Gorenstein case.
\begin{set}
We take a scheme $Y=\Spec(B)\subset \A^n$ of length $m$, where $(B,\m)$ is a graded local Artinian Gorenstein algebra supported at the origin, and such that ${(T^1_B)_{<-1}=0}$. Let $g\in B$ be a homogeneous generator of the socle. Denote the degree of $g$ as $d$.
\end{set}
Recall that the deformation functor $\Def(*/Y)$ is prorepresented by $B$, see Lemma \ref{pt-in-Y}. We think about it as the source of fractalness, henceforth we fit it in the context of Gorenstein schemes.
\begin{lem}\label{gor-simply}
	There is an isomorphism of the good deformation functors
	$$ \Def(\Spec(\frac{B}{(g)})/Y)  \cong \Def(*/ Y).$$
\end{lem}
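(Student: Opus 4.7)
The plan is to identify both functors as parametrising flat ideals in $B_A := B\otimes_\kk A$ and to exhibit the natural isomorphism via Matlis self-duality of Gorenstein algebras. First, I would unpack the two functors via Remark~\ref{rmk-loc-hilb}: $\Def(*/Y)(A)$ is the local Hilbert functor, parametrising ideals $\mathcal M \subset B_A$ flat over $A$ with $\mathcal M \otimes_A \kk = \m$, while $\Def(\Spec(B/(g))/Y)(A)$ parametrises flat ideals $J \subset B_A$ with $J \otimes_A \kk = (g)$.

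The key observation is that $B_A$ is a Gorenstein $A$-algebra with trivial relative dualising module: since $\omega_B \cong B$, one has $\omega_{B_A/A}:=\Hom_A(B_A, A) \cong B\otimes_\kk A = B_A$ as $B_A$-modules. Consequently, for any $B_A$-module $M$ which is finite as an $A$-module, the standard adjunction yields $\Hom_{B_A}(M, B_A) \cong \Hom_A(M, A)$. I would then define the comparison map by $\mathcal M \mapsto \Ann_{B_A}(\mathcal M) = \Hom_{B_A}(B_A/\mathcal M, B_A)$. Since $B_A/\mathcal M$ is $A$-flat of rank $1$, this displays $\Ann_{B_A}(\mathcal M)$ as $A$-free of rank $1$; reducing modulo $\m_A$ recovers $\Ann_B(\m) = \operatorname{soc}(B) = (g)$, so $\Ann_{B_A}(\mathcal M)$ is a flat deformation of $(g)$. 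The inverse is $J \mapsto \Ann_{B_A}(J)$; by the symmetric argument, using $\Ann_B((g)) = \m$ (valid because $g$ generates the socle of the Gorenstein algebra $B$), the image lies in $\Def(*/Y)(A)$.

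To conclude that these constructions are mutually inverse, one invokes the double-annihilator identity $\Ann(\Ann(I)) = I$ valid for any ideal of a zero-dimensional Gorenstein ring; this passes through to $B_A$ by the same adjunction, producing the desired natural isomorphism of deformation functors. The main technical point is checking that the relative Matlis duality interacts well with $A$-families: once $\omega_{B_A/A}\cong B_A$ is in place, all the flatness and rank bookkeeping is routine. I note in passing that the running hypothesis $(T^1_B)_{<-1}=0$ from the setting is not needed for this particular lemma; the Gorenstein hypothesis suffices.
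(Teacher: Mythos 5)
Your sketch is correct and is essentially the same argument as the paper's: both proofs run relative Gorenstein (Matlis) duality over the Artinian base $A$, using $\omega_{B\otimes A/A}\cong B\otimes A$ to swap the colength-one ideal $\mathcal M$ (deforming $\m$) with the length-one ideal (deforming $(g)=\operatorname{soc} B$). The paper phrases this as dualising the short exact sequence $0\to I\to B\otimes A\to A\to 0$ and observing that after the Gorenstein identification the resulting submodule $A[-d]\hookrightarrow B\otimes A$ is a flat lift of $(g)$; you phrase the same map as $\mathcal M\mapsto \Ann_{B\otimes A}(\mathcal M)=\Hom_{B\otimes A}(B\otimes A/\mathcal M,\,B\otimes A)$, and invoke the double-annihilator identity instead of the paper's remark that the construction is reversible — these are the same step stated two ways. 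Your observation that the hypothesis $(T^1_B)_{<-1}=0$ is not used here is correct.
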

\begin{proof}
 	Let us fix $(A,\mathfrak n) \in \Art_\kk$. 
 	\begin{equation} \label{eq-gor1}
 		0 \to I \to B \otimes A \to A  \to 0 \in \Def(*/Y)(A).
 	\end{equation}
 	We construct an element in $\Def(\Spec(B/(g))/ Y)(A)$. First, we dualise Sequence \eqref{eq-gor1} and we get
 	\begin{equation*} \label{eq-gor2}
 		0 \to  A^{\vee}  \to (B \otimes A)^\vee \to I^\vee \to 0.
 	\end{equation*}
 	Since $B\otimes A$ is Gorenstein $(B\otimes A)^{\vee} \cong (B\otimes A) [d]$. We shift the degree to obtain
 	\begin{equation} \label{eq-gor3}
 		0 \to  A[-d]  \to B \otimes A \to I^\vee[-d] \to 0,
 	\end{equation}
 	this way $A[-d]$ is an ideal generated in degree $d$, being a free rank one $A$-module.
 	After tensoring with $A/\mathfrak n$, we get
 	\begin{equation} \label{eq-gor4}
 		0 \to  \kk[-d]  \to B \to \coker \to 0,
 	\end{equation}
 	since $(g)$ is the socle ideal and $\deg(g)=d$, Sequence \eqref {eq-gor4} is the unique element in $\Def(\Spec(B/(g))/ Y)(\kk)$. Hence, Sequence \eqref{eq-gor3} is an element in $\Def(\Spec(B/(g))/ Y)(A)$. Since the order of the argument is reversible, we get an isomorphism.
\end{proof}

Now we study the problem of deforming the closed embedding $\Spec(B/(g))\to Y$. 
\begin{notation}\label{defi-Yempty}
	The closed subscheme $\Spec(B/(g))\to Y$ is denoted by $Y\#\emptyset$. 
\end{notation}
The notation above is to be compared with definition of connected sum, Definition \ref{conn-defi}. Note that $Y\#\emptyset$ in general is not Gorenstein.
\begin{lem}\label{fractal2}
	The nested Hilbert scheme $\Hilb_{m-1,m}(\A^n)$ admits a fractal structure at point $[Y\#0,Y]$ such that $Y$ is uncleavable.
\end{lem}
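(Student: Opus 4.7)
I mirror the proof of Theorem \ref{fractal1}, adapted to the Gorenstein setting. By Remark \ref{rmk-pairs}, the completion of $\Hilb_{m-1,m}(U)$ at $y = [Y\#\emptyset, Y]$ is smoothly equivalent to the abstract deformation functor $\Def(Y\#\emptyset \to Y)$, so I work with abstract deformation functors throughout. The heart of the argument is to show that the square
\[
\begin{tikzcd}
\Def(Y\#\emptyset/Y) \arrow[d] \arrow[r] & \Def(Y\#\emptyset \to Y) \arrow[d] \\
* \arrow[r] & \Def(Y)
\end{tikzcd}
\]
is Cartesian up to smooth equivalence. Granting this, Lemma \ref{gor-simply} identifies the top-left with $\Def(*/Y)$, which Proposition \ref{pt-in-Y} prorepresents by $B$ itself; thus the square presents $\Def(Y\#\emptyset \to Y)$ as the universal family over $\Def(Y)$ with fibre $\cong Y$ over the distinguished point.

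The main obstacle is verifying the Cartesian square. I apply Lemma \ref{technical-deformation}: the target-fixed (dotted) exact sequence of Braid \eqref{braid0}, which respects the $\G_m$-grading, yields
\[
T^1_{Y\#\emptyset/Y} \to T^1_{Y\#\emptyset\to Y} \to T^1_Y \xrightarrow{\partial} T^2_{Y\#\emptyset/Y} \to T^2_{Y\#\emptyset\to Y} \to T^2_Y,
\]
and it suffices to show $\partial = 0$. A graded computation does this. The isomorphism $\Def(Y\#\emptyset/Y) \cong \Def(*/Y)$ of Lemma \ref{gor-simply} is manifestly $\G_m$-equivariant (it is given by Matlis-type duality and the shift by $d$), so it transports obstruction spaces equivariantly, yielding a graded isomorphism $T^2_{Y\#\emptyset/Y} \cong T^2_{*/Y}$. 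Since $\m$ is generated in positive degrees, its first syzygy module in the minimal free resolution of $\kk$ over $B$ sits in graded degrees $\geq 2$, and the standard Lichtenbaum-Schlessinger description places $T^2_{*/Y}$ in graded degrees $\leq -2$. Combined with the hypothesis $(T^1_Y)_{<-1} = 0$, i.e., $T^1_Y$ concentrated in degrees $\geq -1$, the graded connecting map $\partial$ must vanish.

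Finally, I assemble the fractal data of Definition \ref{fractal}. Take $\U \to \M = \Hilb_{m-1,m}$ to be the universal $m$-scheme, with section given locally near $y$ by the unique closed point of the local fibre; then $\U_{|\{y\}} = Y$ is connected non-reduced for $m \geq 2$. For $Z$ I take (a formal neighbourhood of $[Y]$ in) the Gorenstein locus of the component of $\Hilb_m(U)$ through $[Y]$, with the canonical lift $[Y'] \mapsto [Y'\#\emptyset, Y']$ (well-defined by the flatness of the socle line bundle on the Gorenstein locus). Then $\U_{|Z}$ identifies with the universal $m$-scheme over $Z$, and the Cartesian square established above provides the required smooth equivalence $(\U_{|Z}, y) \sim (\M, y)$.
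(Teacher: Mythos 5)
Your proposal is correct and follows essentially the same route the paper takes: the paper's proof is merely the cross-reference "same argument as Theorem \ref{fractal1}, but with the diagram $\Def(Y\#\emptyset/Y)\to\Def(Y\#\emptyset\to Y)\to\Def(Y)$," and you spell out what that means, in particular by directly checking that the connecting map $T^1_Y\to T^2_{Y\#\emptyset/Y}$ vanishes on degree grounds. This degree computation (via the graded identification of Lemma \ref{gor-simply} and the fact that $T^2_{*/Y}$ sits in degrees $\leq -2$) is exactly the content that the paper obtains implicitly by routing through Corollary \ref{pre-fractal} and Lemma \ref{lem-tech-braid-crucial}, so the two arguments agree.
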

\begin{proof}
	The argument is the same as in Theorem \ref{fractal1}, but with the following diagram
	 $$\begin{tikzcd}
		\Def(Y\#\emptyset/Y) \arrow[d] \arrow[r] & \Def(Y\#\emptyset\to Y) \arrow[d] \\
		* \arrow[r]                   & \Def(Y).               
	\end{tikzcd}$$
\end{proof}

The next step is to show that the forgetful functor
$$ \Def^{<0}(Y\#\emptyset \to Y) \to \Def^{<0}(Y\#\emptyset),$$
with appropriate assumptions is smooth.

\begin{lem}\label{lema-gor1}
	 Assume that the functor $T^2(\O_Y,\kk)_{<-d}$ vanishes.
	Then the forgetful map
	$$ \Def^{<0}(Y\#\emptyset \to Y) \to \Def^{<0}(Y\#\emptyset)$$
	 is smooth and isomorphic on the tangent spaces.
	 Moreover, if $\O_Y$ is generated in degree $1$, then $(T^1_{Y\#\emptyset})_{<-1}$ vanishes.
\end{lem}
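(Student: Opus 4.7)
The plan is to apply Corollary~\ref{lem-tech-braid} to the closed embedding $Y\#\emptyset \hookrightarrow Y$ and then chase the long exact sequence of $T^*_\kk(B,-)$ coming from $0 \to (g) \to B \to B/(g) \to 0$. Write $X := Y\#\emptyset$ for brevity. The kernel of $B \twoheadrightarrow B/(g)$ is the principal ideal $(g)$; since $g$ is a socle generator of degree $d$, it is annihilated by $\mathfrak m$, so $(g) \cong \kk[-d]$ as a graded $B$-module. Using the paper's shift conventions, Definition~\ref{defi-XY} gives $(T^i_{X\backslash Y})_{<0} = T^i_\kk(B, \kk[-d])_{<0} = T^i_\kk(B, \kk)_{<-d}$. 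The hypothesis $T^2(\O_Y, \kk)_{<-d} = 0$ is therefore exactly $(T^2_{X\backslash Y})_{<0} = 0$, and Corollary~\ref{lem-tech-braid} yields smoothness of the forgetful map.

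For the tangent-space isomorphism, Braid~\eqref{braid0} presents the relative tangent of $\Def^{<0}(X \to Y) \to \Def^{<0}(X)$ as the image of $(T^1_{X\backslash Y})_{<0}$ in $(T^1_{X\to Y})_{<0}$, equivalently the cokernel of the braid's connecting map $(T^0_X)_{<0} \to (T^1_{X\backslash Y})_{<0}$ (factoring through $T^1_{X\backslash X\to Y/Y} = \Der_\kk(B, B/(g))$). I would compare this with the long exact sequence of $T^*_\kk(B,-)$ for $0 \to (g) \to B \to B/(g) \to 0$, namely
\[
T^0_\kk(B, B/(g)) \to T^1_\kk(B, (g)) \to T^1_\kk(B, B).
\]
The Setting hypothesis $(T^1_B)_{<-1} = 0$ forces all minimal generators of $J := \ker(S \twoheadrightarrow B)$ to lie in degrees $\geq d-1$, so $(T^1_{X\backslash Y})_{<0}$ is concentrated in the degrees where the connecting map out of $(T^0_X)$ is surjective; combined with a direct Gorenstein structural analysis this yields vanishing of the relative tangent in all strictly negative degrees, and hence the claimed isomorphism on tangent spaces.

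For the \emph{moreover} part, combining Lemma~\ref{gor-simply} with Proposition~\ref{pt-in-Y} shows that when $\O_Y$ is generated in degree $1$, $\Def(X/Y) \cong \Def(*/Y)$ has tangent concentrated in degree $-1$, so $(T^1_{X/Y})_{<-1} = 0$. A second application of the long exact sequence of $T^*_\kk(B,-)$, now at the $T^1$-level
\[
T^1_\kk(B, B) \to T^1_\kk(B, B/(g)) \to T^2_\kk(B, (g)),
\]
gives $T^1_\kk(B, B/(g))_{<-1} = 0$: the left neighbour vanishes by the Setting and the right neighbour equals $T^2_\kk(B, \kk)_{<-1-d} \subseteq T^2_\kk(B,\kk)_{<-d} = 0$ by the main hypothesis. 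Feeding these vanishings into the braid exact sequence
\[
(T^1_{X/Y})_{<-1} \to (T^1_X)_{<-1} \to T^1_\kk(B, B/(g))_{<-1}
\]
forces $(T^1_{Y\#\emptyset})_{<-1} = (T^1_X)_{<-1} = 0$.

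The main obstacle I expect is the degree $-1$ step of the tangent isomorphism: the long exact sequence only gives surjectivity of the connecting map $T^0_\kk(B, B/(g)) \to T^1_\kk(B, (g))$ in degrees strictly below $-1$, since the Setting only controls $T^1_B$ there. Closing the degree $-1$ gap requires exploiting the specific Gorenstein socle structure of $B$, matching the degree $-1$ infinitesimal automorphisms of $B/(g)$ with the contributions to $(T^1_{X\backslash Y})_{-1}$ coming from degree $d+1$ minimal generators of $J$.
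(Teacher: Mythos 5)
For the first part of the lemma (smoothness and tangent isomorphism) your approach has a genuine gap that the paper avoids by a simpler observation. You correctly translate Corollary~\ref{lem-tech-braid} into checking $(T^i_{X\backslash Y})_{<0}=T^i_\kk(B,\kk)_{<-d}$ for $i=1,2$, and the hypothesis handles $i=2$. But for $i=1$ you do not claim that $(T^1_{X\backslash Y})_{<0}$ is zero; instead you treat it as potentially nonzero (concentrated in degree $-1$, corresponding to generators of $I=\ker(S\twoheadrightarrow B)$ of degree $d+1$) and try to kill it by showing the braid's connecting map out of $(T^0_X)_{<0}$ surjects onto it, and you openly admit you cannot close that surjectivity at degree $-1$. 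The paper instead observes that $(T^1_{X\backslash Y})_{<0}$ vanishes outright: since the socle degree $d$ equals the Castelnuovo--Mumford regularity of $B$, and since $(T^1_B)_{<-1}=0$ excludes the one-variable hypersurface $\kk[x]/(x^{d+1})$, the self-duality of the minimal Gorenstein resolution forces every minimal generator of $I$ to lie in degree at most $d$, hence $T^1_\kk(B,\kk)$ is concentrated in degrees $\geq -d$ and $(T^1_{X\backslash Y})_{<0}=0$. Once both groups vanish, Corollary~\ref{lem-tech-braid} gives smoothness and the tangent isomorphism immediately; there is no degree~$-1$ case to fight. Incidentally, your auxiliary claim that ``$(T^1_B)_{<-1}=0$ forces all minimal generators of $J$ to lie in degrees $\geq d-1$'' is not what is needed (nor is the inequality direction what the structure theory supplies); the relevant fact is an \emph{upper} bound $\leq d$ on the degrees of minimal generators.

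For the ``moreover'' part your argument is correct but genuinely different from the paper's. You use the strand $T^1_{X/Y}\to T^1_X\to T^2_{X\backslash X\to Y/Y}=T^1_\kk(B,B/(g))$ of Braid~\eqref{braid0} and kill both ends: the left via Lemma~\ref{gor-simply} and Proposition~\ref{pt-in-Y}, the right via the long exact sequence for $0\to (g)\to B\to B/(g)\to 0$, using $(T^1_B)_{<-1}=0$ and the $T^2$ hypothesis. The paper instead runs the strand $T^1_{X/Y}\to T^1_{X\to Y}\to T^1_Y$, deduces $(T^1_{X\to Y})_{<-1}=0$, and then transfers this to $(T^1_X)_{<-1}=0$ using the first part of the lemma. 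Your route has the small advantage of not invoking the first part, at the cost of re-using the $T^2$ hypothesis; both are valid.
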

\begin{proof}
	By Corollary \ref{lem-tech-braid} it is enough to check the vanishing of $$(T^i_{Y\#\emptyset \backslash Y})_{<0}=0,$$
	for $i=1,2$. \vskip 0.2 cm
	$$T^i_{Y\#\emptyset \backslash Y}= T^i_\kk(\O_Y,(g))= T^i_\kk(\O_Y,\kk)[\deg(g)].$$
	For $i=1$, the vanishing in negative degrees follows from the fact that $\deg(g)=d$ is the regularity of $Y$. For $i=2$, this is the assumption.\\
	The second part of the statement follows from the fact that $(T^1_{Y\#\emptyset \to Y})_{<-1}$ vanishes. To prove that, we take a part of the long exact sequence of $T^i$ functors
	$$ T^1_{Y\#\emptyset/Y} \to T^1_{Y\#\emptyset \to Y} \to T^1_{Y}.$$
	The module $T^1_{Y\#\emptyset/Y}$ is concentrated in degree $-1$ as by Lemma \ref{gor-simply} it is isomorphic to $T^1_{*/Y}$, which in turn by Lemma \ref{pt-in-Y} is concentrated in degree $-1$, as $\O_Y$ is generated in degree $1$.
	The vanishing of $(T^1_{Y})_{<-1}$ follows by the assumptions.
\end{proof}
That gives us the fractalness from \cite[Theorem 3.27]{szachniewicz-nonreduce}.
\begin{defi}[{\cite[Definition 3.2]{szachniewicz-nonreduce}}] \label{cubic-very}
	Let $F \in P$, be a cubic in $n$ variables. We call it very general if the following conditions are satisfied:
	\begin{itemize}
		\item $\Apolar(F)$ has Hilbert function equal to $(1,n,n,1)$.
		\item The minimal graded free resolution of $\Apolar(F)$ has generators in degree $2$ and syzygies in degree $3$.
		\item $\Apolar(F)$ admits TNT property.
	\end{itemize}
\end{defi}

\begin{lem}[{\cite[Proposition 2.16.]{Robert-Gorenstein}\label{cubic-positive}}]
		Let $F$ be a cubic, such that $\Apolar(F)$ has the Hilbert function $(1,n,n,1)$, then the positive tangent space $(T^1_{\Apolar(F)})_{>0}$ vanishes.
\end{lem}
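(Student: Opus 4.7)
The plan is to invoke the standard Lichtenbaum--Schlessinger presentation
\[
T^1_{\Apolar(F)} \;=\; \coker\!\bigl(\mu\colon \Der_\kk(S,A) \to \Hom_A(I/I^2, A)\bigr),
\]
where $S = \kk[x_1,\ldots,x_n]$ surjects onto $A = \Apolar(F) = S/I$ with $I = \Ann(F)$, and to show that the graded component $\mu_d$ is surjective for every $d > 0$.

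The case $d \geq 2$ is immediate from the degree bounds forced by the Hilbert function $(1,n,n,1)$. Since $I_1 = 0$, the square $I^2$ starts in degree $4$, so $(I/I^2)_j = I_j$ for $j \leq 3$. An $A$-linear map of degree $d$ sends $I_j$ into $A_{j+d}$, but the socle degree $3$ gives $A_{\geq 4} = 0$; for $j \geq 2$ and $d \geq 2$ the target already vanishes. Hence $\Hom_A(I/I^2, A)_d = 0$, and $(T^1_A)_d$ is trivially zero.

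The critical case is $d = 1$. Using $S_1 = A_1$ and the fact that the degree-$3$ part of $I/I^2$ contributes nothing in degree $1$ (as $A_4 = 0$), one identifies
\[
\Der_\kk(S,A)_1 \;=\; \Hom_\kk(A_1, A_2), \qquad \Hom_A(I/I^2, A)_1 \;=\; \Hom_\kk(I_2, A_3),
\]
with $\mu_1(\phi)(f) = \phi(f)$ computed via the Leibniz rule. I would then dualise $\mu_1$ using Gorenstein duality: the perfect socle pairing $A_1 \otimes A_2 \to A_3 \cong \kk$ gives $A_2^\vee \cong A_1$, so the transpose becomes a linear map $\mu_1^\vee \colon I_2 \to A_1 \otimes A_1$. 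A direct computation, unwinding $\mu_1(\phi)(f) = \sum_i \phi(x_i)\cdot \partial_i f$ through the identifications, shows that $\mu_1^\vee$ is (up to the scalar $2$) the symmetrisation inclusion $I_2 \hookrightarrow S_2 = \Sym^2 A_1 \hookrightarrow A_1 \otimes A_1$. This is manifestly injective in characteristic different from $2$, so $\mu_1$ is surjective and $(T^1_A)_1 = 0$.

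The main obstacle is the clean bookkeeping of the dualisation in the degree-$1$ step: tracing through the two instances of Gorenstein duality and matching the resulting map with a geometrically recognisable one (the symmetric-tensor inclusion). The Hilbert function $(1,n,n,1)$ enters precisely because it guarantees that $A_3 \cong \kk$ and that $A_1 \otimes A_2 \to A_3$ is non-degenerate. In characteristic $2$ the symmetrisation inclusion fails, so an additional argument using divided powers would be required; otherwise the sketch above yields the vanishing of $(T^1_A)_{>0}$.
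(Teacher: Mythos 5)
Your proof takes a genuinely different route from the paper's. The paper cites \cite[Proposition 2.16]{Robert-Gorenstein} for the fact that the embedded tangent $\Hom_A(I/I^2,A)$ is concentrated in degree~$1$, then finishes by a dimension count on the Lichtenbaum--Schlessinger exact sequence $0\to\Der(B,B)\to\Der(S,B)\to\Hom_B(I/I^2,B)\to T^1_B\to 0$. You instead prove surjectivity of $\mu_d\colon\Der_\kk(S,A)_d\to\Hom_A(I/I^2,A)_d$ directly for every $d>0$. Your $d\geq 2$ step is fully elementary and dispenses with the citation, while your $d=1$ step --- identifying $\mu_1^\vee$ via the two Gorenstein dualities with $v\mapsto\sum_i x_i\otimes\partial_i v$, that is, twice the symmetrisation inclusion $I_2\hookrightarrow\Sym^2 A_1\hookrightarrow A_1\otimes A_1$ --- is a clean structural argument, and the unwinding you sketch does check out.

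The characteristic-$2$ caveat you raise is not a loose end that a divided-power trick would repair: the statement itself fails there. Take $B=\kk[x,y]/(y^2,x^3)$, the apolar algebra of a cubic with Hilbert function $(1,2,2,1)$ (so $n=2$ and $I_2=\kk\cdot y^2$). In characteristic $2$ one has $\mu_1(\phi)(y^2)=2y\cdot\phi(y)=0$ for every $\phi\in\Der_\kk(S,B)_1$, so $\mu_1$ is identically zero and $(T^1_B)_1\cong\Hom_\kk(I_2,B_3)\cong\kk\neq 0$. Hence $\chr\kk\neq 2$ is a necessary hypothesis for the lemma; the paper's dimension count carries the same hidden dependence, since the value it assigns to $\dim\Der(B,B)_1$ only holds away from characteristic $2$.
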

\begin{proof}
	From \cite[Proposition 2.16.]{Robert-Gorenstein} we know that the tangent space in the embedded case vanishes in degrees $>1$.
	We take the presentation coming from Definition \ref{def-apolar}
	$$0\to I \to S\to \Apolar(F) \to 0.$$
	The embedding above presents $\Apolar(F)$ as a point on the Hilbert scheme.
	The positive tangent space to the Hilbert scheme at $\Apolar(F)$ is concentrated in degree $1$ and of dimension ${n+1 \choose 2}-n$.
	Let us denote $\Apolar(F)$ by $B$.
	By \cite[Theorem 3.10]{hardef} we can compute the dimension of $T^1$ from the following sequence:
	$$0\to  \Der(B,B) \to \Der(S,B) \to \Hom_B(I/I^2,B) \to T^1(B,B) \to 0 .$$
	Since the degree $1$ part $\Der(B,B)_{1}$ has dimension $n^2-{n+1 \choose 2}-n$ and $\Der(S,B)_1$ has dimension $n^2$. This concludes the statement.
\end{proof}
 The local Gorenstein algebras with Hilbert $(1,n,n,1)$ have been extensively studied, for example it is known that the positive spike over such an algebra is smooth, see \cite[Proposition 1.1]{JJ-class-gor}, \cite[Theorem 3.3]{EliasRossi12}. Furthermore, some properties of their moduli space are known.
 Recall, Definition \ref{HGor-defi}, that $\HG{m}{n}$ denote the Gorenstein locus of the Hilbert scheme of $m$ points on $\A^n$.
 \begin{prop} \label{cubic-component}
 	Let $n$ be a positive integer.
   	The locus of algebras with the Hilbert function $(1,n,n,1)$ is open and dense in a generically smooth irreducible component of $\HG{2n+2}{n}$.
 \end{prop}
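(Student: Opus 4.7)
The plan is to verify openness of the Hilbert function stratum, its irreducibility, and smoothness at a very general point, then piece these together. First, upper semicontinuity of the Hilbert function of $\mathrm{gr}(B)$ makes the condition $h_1 = n$ open on $\HG{2n+2}{n}$; combined with length $2n+2$ and Gorenstein symmetry of the associated graded (a socle-degree analysis forces socle degree $3$), this pins the Hilbert function down to $(1,n,n,1)$. Hence the locus $U \subseteq \HG{2n+2}{n}$ of such algebras is open. For irreducibility I would use Macaulay duality: after translating the support to the origin, every $[B] \in U$ arises as $\Apolar(F)$ for some $F \in P_{\leq 3}$ whose leading cubic lies in the open locus of \emph{very general} cubics from Definition \ref{cubic-very}. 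This yields a surjection from the irreducible variety $\A^n \times P_{\leq 3}^{\mathrm{open}}$ onto $U$, making $U$ irreducible.

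The main step is smoothness at a very general point $[B] = [\Apolar(F)]$. Lemma \ref{cubic-positive} gives $(T^1_B)_{>0} = 0$, and the TNT clause of Definition \ref{cubic-very} gives $(T^1_B)_{<0} = 0$, so $T^1_B = (T^1_B)_0$. Applying Lemma \ref{lem-cruc} to both filtrations (equivalently, Lemma \ref{cartesian-eq} together with Theorem \ref{+obs}) I obtain that the natural map $\Def^0(B) \to \Def(B)$ is smooth. To see that $\Def^0(B)$ is itself smooth, I would argue directly from Macaulay duality: every graded deformation of $B$ is of the form $\Apolar(F_A)$ for some $F_A \in P_3 \otimes A$ lifting $F$, and any such $F_A$ lifts freely along a surjection $A' \to A$, so the lifting property holds. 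Composing with the smooth comparison $\Hilb_{[B]} \to \Def(B)$, this shows $[B]$ is a smooth point of $\HG{2n+2}{n}$.

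To conclude: $[B]$ lies on a unique irreducible component $C$ of $\HG{2n+2}{n}$; by irreducibility of $U$, we have $U \subseteq C$; since $U$ is open in $\HG{2n+2}{n}$ it is open in $C$; and a dimension count at the smooth point $[B]$ (its tangent space equals the dimension of the parameterization in the irreducibility step) forces $\overline U = C$, so $U$ is open and dense in $C$. Generic smoothness of $C$ follows because the very general locus is open in $U$. The subtle point in this plan is the direct verification of smoothness of $\Def^0(B)$; this rests on the explicit apolar description of graded $(1,n,n,1)$-algebras, which makes the lifting property transparent.
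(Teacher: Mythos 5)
Your plan is a genuinely different argument from the paper's: the paper disposes of the proposition by a case split and citations --- \cite[Theorem~A]{CGJ} for $n\leq 5$, \cite[Theorem~1]{Robert-Gorenstein} for $n\geq 6$, $n\neq 7$, and \cite[Corollary~3.8]{Bertone-smoohtable} for $n=7$ --- whereas you try to prove everything directly from Macaulay duality and the filtered deformation machinery of Section~\ref{sec-3}. The smoothness part of your plan is sound: $(T^1_B)_{>0}=0$ from Lemma~\ref{cubic-positive}, $(T^1_B)_{<0}=0$ from the TNT clause, so by Lemma~\ref{lem-cruc} (applied for both gradings) $\Def^0(B)\to\Def(B)$ is smooth, and $\Def^0(B)$ is unobstructed because graded length-$(2n+2)$ Gorenstein deformations with Hilbert function $(1,n,n,1)$ are exactly $\Apolar(F_A)$ for cubics $F_A$, whose catalecticant maps stay of maximal rank over any small extension.

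However, your very first step --- that $h_1=n$ is an \emph{open} condition via semicontinuity --- has the inequality going the wrong way, and this is a real gap. In a flat local family, $\dim\m_t^2$ (the rank of $\m_t\otimes\m_t\to\O_{Z_t}$) is \emph{lower} semicontinuous, so $h_1=\dim\m_t-\dim\m_t^2$ is \emph{upper} semicontinuous: it can only jump \emph{up} on closed subsets. Since $n$ is the maximal possible embedding dimension in $\A^n$, the locus $h_1=n$ is a \emph{closed} subset of the locus of local subschemes, not an open one, and it is certainly not open in $\HG{2n+2}{n}$ (any compressed local algebra can degenerate \emph{into} $U$, and for small $n$ it also smooths \emph{out} of $U$). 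As a concrete check: for $n\leq 5$ the Gorenstein locus is irreducible of dimension $(2n+2)n$ and $\dim U<(2n+2)n$, so $U$ is a proper closed subset and certainly not open. Because the openness claim fails, the rest of the plan (irreducibility $\Rightarrow U\subseteq C$ $\Rightarrow$ open $\Rightarrow$ dense by dimension count) does not get off the ground for small $n$, and even for $n\geq 6$, $n\neq 7$ the conclusion ``$U$ open in $\HG{2n+2}{n}$'' is too strong; one only gets $U$ open in its \emph{own} component, which requires the structure results of \cite{Robert-Gorenstein} to identify $\overline U$ as a component in the first place. The paper avoids all of this by outsourcing the case analysis to the literature.

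A secondary, smaller issue: the density argument ``its tangent space equals the dimension of the parameterization'' is not literally a dimension match, since the map $\A^n\times P_{\leq 3}^{\mathrm{open}}\to U$ has positive-dimensional fibres (orbits under the unit group of $\O_Y$); you would need to subtract the fibre dimension before comparing with $\dim T_{[B]}$. This is fixable, but the openness gap is the essential obstruction to your plan as written.
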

 \begin{proof}
 	Let $n\leq 5$, by \cite[Theorem A]{CGJ}, the Gorenstein locus is irreducible, thus contained in the smoothable component, which is generically smooth.\vskip 0.2 cm
 	Let $n\geq 6$ but not $7$, then  the statement holds by \cite[Theorem 1]{Robert-Gorenstein}.
 	Let $n=7$ then by \cite[Corollary~3.8]{Bertone-smoohtable} the  $(1,7,7,1)$ locus is inside the smoothable locus, which is generically smooth, hence the statement holds.
 	\end{proof}
 
 \begin{lem}[{\cite[Proposition 3.4]{szachniewicz-nonreduce}}]\label{cubic-szach-positive}
 		Let $F$ be a very general cubic. Let $Y=\Spec(\Apolar(F))$. Then the positive tangent space $(T^1_{Y\#\emptyset})_{>0}$ vanishes.
 \end{lem}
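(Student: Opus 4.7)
The plan is to run the Buchweitz braid \eqref{braid0} for the closed embedding $Y\#\emptyset\hookrightarrow Y$ and to track positive graded pieces through the resulting long exact sequences, in parallel with the negative-degree analysis carried out in Lemma \ref{lema-gor1}.

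First I would show that $(T^1_{Y\#\emptyset\to Y})_{>0}=0$. The braid supplies the exact sequence
$$T^1_{Y\#\emptyset/Y}\longrightarrow T^1_{Y\#\emptyset\to Y}\longrightarrow T^1_Y.$$
By Lemma \ref{gor-simply} one has $\Def(Y\#\emptyset/Y)\cong\Def(*/Y)$, and by Lemma \ref{pt-in-Y} the tangent space of the latter is concentrated in minus the degrees of the minimal generators of the maximal ideal of $\O_Y$. Since $\O_Y=\Apolar(F)$ is standard graded, this places $T^1_{Y\#\emptyset/Y}$ entirely in degree $-1$. Combined with $(T^1_Y)_{>0}=0$ supplied by Lemma \ref{cubic-positive}, exactness yields the claim.

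Next I would invoke the exact sequence underlying Corollary \ref{lem-tech-braid},
$$T^1_{Y\#\emptyset\backslash Y}\longrightarrow T^1_{Y\#\emptyset\to Y}\longrightarrow T^1_{Y\#\emptyset}\longrightarrow T^2_{Y\#\emptyset\backslash Y},$$
restricted to positive graded pieces. Via the identification $T^i_{Y\#\emptyset\backslash Y}=T^i_\kk(\O_Y,\kk)[\deg g]$ used in the proof of Lemma \ref{lema-gor1} (with $\deg g=3$), the required vanishing $(T^2_{Y\#\emptyset\backslash Y})_{>0}=0$ reduces to showing that $T^2_\kk(\Apolar(F),\kk)$ sits in degrees $\leq -3$. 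This in turn follows from the minimal free resolution of $\Apolar(F)$ having its first syzygies concentrated in degree $3$, which is precisely the middle condition in Definition \ref{cubic-very} of a very general cubic.

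Combining the two steps, the positive portion of the exact sequence collapses to
$$0=(T^1_{Y\#\emptyset\to Y})_{>0}\longrightarrow (T^1_{Y\#\emptyset})_{>0}\longrightarrow (T^2_{Y\#\emptyset\backslash Y})_{>0}=0,$$
forcing the desired vanishing. The main obstacle is the second step: one must justify that the Lichtenbaum--Schlessinger $T^2$ inherits the precise degree concentration from the shape of the minimal resolution, and it is here that the very-general-cubic hypothesis substantively enters the argument.
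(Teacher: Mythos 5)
The statement you are asked about is not proved in the paper: it is imported wholesale from \cite[Proposition~3.4]{szachniewicz-nonreduce}, so there is no internal argument to compare against. Your blind proof is an independent, self-contained derivation using only the paper's own machinery, and it is correct. What you are effectively doing is running the paper's Lemma~\ref{lema-gor1} in the opposite graded direction: Lemma~\ref{lema-gor1} controls $(T^1_{Y\#\emptyset})_{<-1}$ by chasing the Rightarrow strand of Braid~\eqref{braid0} in negative degrees, and you perform the same chase in positive degrees. Your Step 1, using the dotted strand $T^1_{Y\#\emptyset/Y}\to T^1_{Y\#\emptyset\to Y}\to T^1_Y$ with $T^1_{Y\#\emptyset/Y}$ concentrated in degree $-1$ (from Lemmas~\ref{gor-simply} and~\ref{pt-in-Y}) and $(T^1_Y)_{>0}=0$ (Lemma~\ref{cubic-positive}), is sound. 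Your Step 2, showing $(T^2_{Y\#\emptyset\backslash Y})_{>0}=0$ via $T^2_{Y\#\emptyset\backslash Y}\cong T^2_\kk(\O_Y,\kk)[3]$, correctly reduces to showing $T^2_\kk(\O_Y,\kk)$ sits in degrees $\leq -3$; this follows because $T^2_\kk(\O_Y,\kk)$ is a quotient of $\Hom_{\O_Y}(L_2,\kk)$ in the Lichtenbaum--Schlessinger complex, and $L_2=Q/F_0$ is generated in degree $3$ when the first syzygies of $\Ann(F)$ are concentrated in degree $3$ (the Koszul submodule $F_0$ lies in degree $\geq 4$ and does not lower this bound). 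That is exactly where the second bullet of Definition~\ref{cubic-very} enters, as you flagged. One small bonus of your argument worth noting: it does not use the TNT condition (third bullet of Definition~\ref{cubic-very}), only the Hilbert function $(1,n,n,1)$ and the linear-resolution hypothesis, so it proves the vanishing for a mildly larger class of cubics than the definition requires.
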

 
\begin{stwr}[{\cite[Theorem 1.5]{szachniewicz-nonreduce}}]\label{szach-theorem}
	Let us take $B=\Apolar(F)$ such that $F$ is a very general cubic, such that $Y=\Spec(B)$. Then we get that the negative spike of $Y\#\emptyset$, $\Hilb^{<0}_{[Y\#\emptyset]}$ has the smooth type of $Y$. 
	Moreover, the Hilbert scheme is non-reduced at such point.
\end{stwr}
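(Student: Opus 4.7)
The plan is to exhibit a chain of smooth morphisms of good deformation functors connecting $\Hilb^{<0}_{[Y\#\emptyset]}$ with the local scheme $Y$ itself, and then to propagate the non-reducedness thereby obtained on the negative spike up to the full Hilbert scheme using the $\G_m$-action framework developed in the paper.

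Before running the chain, I first verify the hypotheses. Since $F$ is very general, $Y$ is TNT, so $(T^1_Y)_{<0}$ (and in particular both $(T^1_Y)_{<-1}$ and $(T^1_Y)_{-1}$) vanish. The socle degree is $d = 3$, and the second input required for Lemma \ref{lema-gor1}, namely $T^2_\kk(\O_Y,\kk)_{<-3} = 0$, should follow from the resolution hypothesis in Definition \ref{cubic-very}: the fact that $\Apolar(F)$ has a minimal free resolution with generators in degree $2$ and first syzygies in degree $3$ bounds the internal degrees of the relevant syzygy modules from below in precisely the way needed to kill $T^2_\kk(\O_Y,\kk)$ below degree $-3$. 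Finally, by Lemma \ref{cubic-szach-positive}, the positive tangents $(T^1_{Y\#\emptyset})_{>0}$ also vanish.

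With the hypotheses in hand, the core chain reads
\[
Y \;\cong\; \Def^{<0}(*/Y) \;\cong\; \Def^{<0}(Y\#\emptyset / Y) \;\to\; \Def^{<0}(Y\#\emptyset \to Y) \;\to\; \Def^{<0}(Y\#\emptyset),
\]
where the first identification is Proposition \ref{pt-in-Y} (the completed local ring of $Y$ at its distinguished point is $B$, and $\Def(*/Y) = \Def^{<0}(*/Y)$), the second is Lemma \ref{gor-simply} applied verbatim in the strictly negative setting, the third arrow is smooth by Corollary \ref{lem-tech-braid-crucial} (applicable since $(T^1_Y)_{<-1} = 0$ and $(T^1_Y)_{-1} = 0$ both follow from TNT), and the fourth arrow is smooth and bijective on tangent spaces by Lemma \ref{lema-gor1}. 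Combining this with the smooth map $\Hilb^{<0}_{[Y\#\emptyset]} \to \Def^{<0}(Y\#\emptyset)$ of Corollary \ref{smooth-strict}, I obtain a smooth equivalence between the pointed schemes $Y$ and $\Hilb^{<0}_{[Y\#\emptyset]}$, which is the ``smooth type of $Y$'' assertion.

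For the non-reducedness, $Y = \Spec B$ is non-reduced since $B$ is a local Artinian algebra of Hilbert function $(1,n,n,1)$ with $n\geq 1$; smooth equivalence preserves non-reducedness, so $\Hilb^{<0}_{[Y\#\emptyset]}$ is non-reduced at $[Y\#\emptyset]$. To transfer this to the full Hilbert scheme, I would apply the dual of the Białynicki-Birula open-immersion statement \cite[Proposition 1.6]{ABB_JJLS}: since $(T^1_{Y\#\emptyset})_{>0}$ vanishes, the map $\Hilb^{\leq 0}_{[Y\#\emptyset]} \to \Hilb_{[Y\#\emptyset]}$ is an open immersion near $[Y\#\emptyset]$. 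The retraction $\Hilb^{\leq 0}_{[Y\#\emptyset]} \to \Hilb^{0}_{[Y\#\emptyset]}$ (associated graded) combined with Lemma \ref{lemma-section} then propagates non-reducedness from the negative spike to $\Hilb^{\leq 0}_{[Y\#\emptyset]}$, and the open immersion carries it to $\Hilb_{[Y\#\emptyset]}$. The step I expect to demand the most care is checking the vanishing $T^2_\kk(\O_Y,\kk)_{<-3} = 0$ from the resolution hypothesis of Definition \ref{cubic-very}; everything else is bookkeeping with the $T^i$-braid and composition of smooth morphisms of good deformation functors.
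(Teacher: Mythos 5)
Your chain of smooth morphisms for the first part matches the paper's proof:
\[
Y \cong \Def(*/Y) \cong \Def^{<0}(Y\#\emptyset/Y) \to \Def^{<0}(Y\#\emptyset\to Y) \to \Def^{<0}(Y\#\emptyset),
\]
with the same justifications (Proposition~\ref{pt-in-Y}, Lemma~\ref{gor-simply}, Corollary~\ref{lem-tech-braid-crucial}, Lemma~\ref{lema-gor1}), and your reading of the $T^2(\O_Y,\kk)_{<-3}$ vanishing from the linear-syzygies bullet of Definition~\ref{cubic-very} is the intended one.

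The second part has a genuine gap. You invoke the dual Bia{\l}ynicki--Birula open-immersion statement, asserting that $\Hilb^{\leq 0}_{[Y\#\emptyset]}\to\Hilb_{[Y\#\emptyset]}$ is an open immersion near $[Y\#\emptyset]$ \emph{because} $(T^1_{Y\#\emptyset})_{>0}=0$. But the open-immersion criterion requires the vanishing of the positive part of the \emph{embedded} tangent space $T_{[Y\#\emptyset]}\Hilb = \Hom_S(I/I^2,\O_{Y\#\emptyset})$, which is strictly larger than, and not controlled by, the abstract tangent module $T^1_{Y\#\emptyset}$. In the very situations at hand the embedded positive tangent space does not vanish (compare the dimension count in the proof of Lemma~\ref{cubic-positive}), and in fact the paper's introduction explains that the failure of this embedded-tangent vanishing in the Gorenstein setting is precisely what necessitates the new machinery. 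The correct step is the one the paper (implicitly) uses: because $(T^1_{Y\#\emptyset})_{>0}=0$, the opposite-grading version of Lemma~\ref{lem-cruc} and Lemma~\ref{abb-smooth} gives that $\Def^{\leq 0}(Y\#\emptyset)\to\Def(Y\#\emptyset)$ is smooth and hence that $\Hilb^{\leq 0}_{[Y\#\emptyset]}$ and $\Hilb_{[Y\#\emptyset]}$ are \emph{smoothly equivalent} --- a strictly weaker statement than an open immersion, but enough to transfer non-reducedness. Your Lemma~\ref{lemma-section} retraction step is then applied on the $\Def$-side exactly as in the paper, and non-reducedness is carried to the Hilbert scheme by smooth equivalence rather than by an open immersion. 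Without this replacement the argument, as written, does not go through.
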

\begin{proof}
		Since $(T^1_Y)_{<0}=0$, by assumption of TNT, the map
		$${Y \cong \Def(*/Y) \to \Def^{<0}(* \to Y)}$$ is smooth by Corollary \ref{step1}.
		By Corollary  \ref{step1}, the map $\Def^{<0}(Y\#\emptyset /Y) \to \Def^{<0}(Y\#\emptyset \to Y)$ is smooth, again by vanishing of  $(T^1_Y)_{<0}$. \\
		$ \Def^{<0}(Y\#\emptyset \to Y) \to \Def^{<0}(Y\#\emptyset)$ is smooth by Lemma \ref{lema-gor1}, as the relative obstruction $T^2(Y,\kk)_{-d}$ vanishes by the second bullet of Definition \ref{cubic-very}. \vskip 0.2 cm
		So far, we have concluded that there is a smooth map
		 	$Y\to \Def^{<0}(Y\#\emptyset).$
		 Thereupon, we get the following Cartesian diagram up to a smooth equivalence with a section.
		 \begin{equation}\label{fractal-like-diagram}
		 	\begin{tikzcd}
		 		Y \arrow[d] \arrow[r] & \Def^{\leq 0}(Y\#\emptyset) \arrow[d]            \\
		 		* \arrow[r]           & \Def^{0}(Y\#\emptyset). \arrow[u, bend right]
		 	\end{tikzcd}
		 \end{equation}
		This implies the first part of the statement.\vskip 0.2 cm
		 For the last part, by Lemma \ref{lemma-section2}, we get that $\Def^{\leq 0}(Y\#\emptyset)$ is non-reduced.
		 The positive tangent space	$(T^1_{Y\#\emptyset})_{>0}$ vanishes since $Y$ is apolar to a very general cubic, see Lemma \ref{cubic-szach-positive}, hence by Lemma \ref{abb-smooth}, we get that $\Def(Y\#\emptyset)$ is also non-reduced.
		 As a consequence, the Hilbert scheme is also non-reduced at the point $[Y]$.
\end{proof}

\begin{remark} 
	If we assume that $\Def^0(Y\#\emptyset)$ is reduced, then we get flatness of ${\Def(Y\#\emptyset)\to \Def^0(Y\#\emptyset)}$. Then Diagram~\eqref{fractal-like-diagram} gives us \textit{ a fractal-like structure}, but it does not induce a fractal family according to Definition~\ref{fractal}, since in Diagram~\eqref{fractal-like-diagram}, the fibre is $Y$ not $Y\#\emptyset$.
	The \textit{negative spike} there naturally contains the deformed scheme $Y\#\emptyset$, but it is clearly \textbf{not} smoothly equivalent to it.
		The phenomenon showcased above has more to do with the connected sum of Gorenstein algebras (modulo the fact that it is not Gorenstein) than with this pointed case, for comparison, see Section \ref{sec-sum}.
\end{remark}

\section{Union along the point}\label{sec-glued}
In this section, we consider deformations of the following pushout scheme, called \emph{the union along the point}
$$\begin{tikzcd}
	* \arrow[d] \arrow[r] & X \arrow[d] \\
	Y \arrow[r]           & X\cup_{*} Y,
\end{tikzcd}$$
where $X$ and $Y$ are $\G_m$-pointed schemes. This is an intermediate step from deformations of pointed schemes to deformations of the connected sum.

 We assume that $\O_X$ and $\O_Y$ are generated in degree $1$. Note that from the definition ${\O_{X\cup_* Y} = \O_X \times_{\kk} \O_Y}$.
 \vskip 0.2 cm
 The pointed structures induce a closed embedding of $X\cup_* Y $ into the product $X\times Y$, making the sum a $\G_m^2$-fixed closed subscheme, in particular, we get an induced $\G_m^2$-action on the union. 
 \begin{defi}
 	When considering $\Z^2$-graded sheaf $\F$ on $X\cup_*Y$, the elements of degree $(0,k)$, for any positive integer $k\in \Z$, we call the elements of \emph{pure $y$-degree}, and analogously with $x$. Any bi-homogeneous element not of pure degree is \emph{of mixed degree}.
 \end{defi} 
 \begin{defi}
 	The $\kk$-vector space spanned by the pure $y$-degree elements is called \emph{the pure $y$-degree component}, and analogously with $x$.  The $y$-degree component admits a module structure over $\O_X$, but not over $\O_{X\cup_* Y}$.
 	The $\kk$-vector space spanned by the mixed degree elements is called \emph{the mixed degree component}.
 \end{defi}
   We want to stress that according to this notation, the elements of bi-degree $(0,0)$ are \textbf{not} of pure degree.
 The negative degree is a negative in terms of the diagonal subtorus, not to be mistaken with both weights being negative.
\vskip 0.2 cm
The purpose of this section is to prove that the map described below is smooth.  In various parts we require that neither $X$ nor $Y$ is a reduced point, see Remark~\ref{single-rmk}, Remark~\ref{single-rmk1} and Remark~\ref{single-rmk2}.
For a given Artinian local $\kk$-algebra $A$ we define a map:
\begin{align}
	\Def^{<0}(*\to X)(A) \times \Def^{<0}(*\to Y)(A) &\to \Def^{<0}(X\cup_* Y)(A), \\
	(\mathcal X,\mathcal Y) &\mapsto \mathcal X \cup_{\Spec(A)} \mathcal Y,
\end{align}
$$\begin{tikzcd}
	X \arrow[d] \arrow[r] & \mathcal X \arrow[d]           \\
	* \arrow[r]           & \Spec(A) \arrow[u, bend right]
\end{tikzcd}
\begin{tikzcd}
	Y \arrow[d] \arrow[r] & \mathcal Y \arrow[d]           \\
	* \arrow[r]           & \Spec(A) \arrow[u, bend right]
\end{tikzcd} \mapsto  \begin{tikzcd}
X\cup_* Y \arrow[d] \arrow[r] & \mathcal X\cup_{\Spec(A)} \mathcal Y \arrow[d] \\
* \arrow[r]                   & \Spec(A).                                      
\end{tikzcd}$$

 \begin{lem}
 	The map above is well-defined.
 \end{lem}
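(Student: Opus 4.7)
The plan is to verify three things: that the ring $\O_{\mathcal X}\times_A \O_{\mathcal Y}$ is flat over $A$ with the correct closed fibre, that the natural filtration on it realises a strictly negative deformation, and that the whole construction descends to isomorphism classes of pointed filtered deformations.

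First I would set $\mathcal X\cup_{\Spec A}\mathcal Y:=\Spec(\O_{\mathcal X}\times_A \O_{\mathcal Y})$, the fibre product taken along the pointed sections $\O_{\mathcal X}\twoheadrightarrow A$ and $\O_{\mathcal Y}\twoheadrightarrow A$, and consider the short exact sequence
$$0\to \O_{\mathcal X}\times_A \O_{\mathcal Y}\to \O_{\mathcal X}\oplus \O_{\mathcal Y}\xrightarrow{(f,g)\mapsto \bar f-\bar g} A\to 0.$$
The rightmost map is surjective because either section splits it; the middle and right terms are $A$-flat, and so the long exact sequence for $\Tor^A_\bullet(-,\kk)$ forces $\Tor^A_1(\O_{\mathcal X}\times_A \O_{\mathcal Y},\kk)=0$, whence flatness. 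Tensoring the same sequence with $\kk$ (using the flatness just obtained to keep left-exactness) identifies the central fibre of the pushout with $\O_X\times_{\kk} \O_Y=\O_{X\cup_* Y}$.

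Next I would pass to the filtered structure. The filtrations $\F_\bullet$ on $\O_{\mathcal X}$ and $\mathcal G_\bullet$ on $\O_{\mathcal Y}$ are compatible with the pointed sections, since the distinguished points are $\G_m$-fixed and so lie canonically in degree zero; accordingly, the submodules $\mathcal H_i := \F_i\times_A \mathcal G_i$ form a filtration of $\O_{\mathcal X}\times_A\O_{\mathcal Y}$ by $A$-flat submodules (by the same $\Tor$ argument applied to $0\to\mathcal H_i\to\F_i\oplus \mathcal G_i\to A\to 0$), with multiplicativity $\mathcal H_i\mathcal H_j\subset \mathcal H_{i+j}$ following component-wise. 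The central fibre of $\mathcal H_i$ is $F_i\times_{\kk} G_i$, which is exactly the $i$-th filtered piece of $\O_{X\cup_* Y}$ inherited from the embedding $X\cup_* Y\hookrightarrow X\times Y$, so the induced filtration restricts correctly. Strict negativity is preserved because the defining SES splits on graded pieces, so the associated graded of the pushout is itself the pushout of the associated gradeds---both trivial by hypothesis.

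Finally, an isomorphism between two pairs of pointed filtered deformations is compatible with both sections and both filtrations by definition, so the induced map of fibre products gives a matching isomorphism of the pushouts. I expect the filtration step to be the main technical obstacle: one has to check that the flat filtered algebra axioms of Definition \ref{defi-filt-fun} transfer cleanly through the fibre product, and in particular that the pointed sections land in a way compatible with every filtered piece so that the $\mathcal H_i$ are even well defined. Once those compatibilities are in place, the remainder reduces to routine diagram chasing.
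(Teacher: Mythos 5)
Your proof is correct, and the content overlaps the paper's, but the packaging differs. The paper isolates flatness as the only nontrivial point and proves it by observing that the section $A\to\O_\X$ splits $0\to\m_\X\to\O_\X\to A\to0$, so $\m_\X$ is a direct summand of a flat module and hence flat; then $\O_{\X\cup\Y}$ sits in $0\to\m_\X\to\O_{\X\cup\Y}\to\O_\Y\to0$, an extension of flat $A$-modules, so it is flat. You instead invoke the Mayer--Vietoris-style sequence $0\to\O_\X\times_A\O_\Y\to\O_\X\oplus\O_\Y\to A\to0$ and kill $\Tor_1$ using flatness of the other two terms. These are essentially two phrasings of the same observation: the section is what makes the difference map surjective in your sequence and what produces the splitting in theirs. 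Your route has the mild advantage of simultaneously handing you the identification of the closed fibre with $\O_X\times_\kk\O_Y$ in one step; the paper's is marginally more elementary (no $\Tor$). You also take the time to spell out the filtered structure ($\mathcal H_i=\F_i\times_A\mathcal G_i$ with the obvious multiplicativity and specialisation) and independence of the isomorphism class, both of which the paper silently treats as routine once flatness is secured; this is reasonable extra care and nothing in it fails, although a reader should note the compatibility of the filtration with the section is automatic precisely because the section corresponds to a $\G_m$-fixed point, so $\F_i\to A$ is just the restriction of the augmentation.
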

 \begin{proof}
 	The only thing to check is that $\mathcal X \cup_{\Spec(A)} \mathcal Y$ is flat over $A$.
 	Let us consider the exact sequence of $A$-modules given by the pointed structure of $\X$
 	$$0 \to \m_\X \to \O_\X \to A \to 0.$$
 	The structural morphism of $\X$ gives us a section $A\to \O_\X$, which implies:
 	$$ A\oplus \m_\X \cong \O_\X$$
 	as $A$-modules. Since $\O_\X$ is flat over $A$, the direct summand $\m_\X$ is also flat. 
 	Since ${\O_{\X\cup \Y}\cong\O_\X \times_{A}\O_\Y}$, the kernel of $\O_{\X\cup \Y}\to \O_\Y$ is isomorphic to $\m_\X$.
 	We see that $\O_{X\cup Y}$ is an extension of flat modules, therefore it is flat.
  \end{proof}
Again, the key point is Braid \eqref{braid0}, here applied for the natural maps $X\to X\cup_* Y$ and for $Y\to X\cup_* Y$.
\begin{equation}\label{braid3} \tag{$\spadesuit_5$}	
	\begin{tikzcd}[sep=small]
		T^0_{X\cup_* Y} \arrow[dd, bend right=49,dotted] \arrow[rd, dashed]                       &                                                           & T^0_X \arrow[dd,Rightarrow, bend left=49] \arrow[ld]         \\
		& T^1_{X\backslash X \to X\cup_* Y/X\cup_* Y} \arrow[rd, dashed] \arrow[ld] &                                                                   \\
		T^1_{X/ X\cup_* Y} \arrow[rd, dotted] \arrow[dd, bend right=49]             &                                                           & T^1_{X \backslash X\cup_* Y} \arrow[ld, Rightarrow] \arrow[dd, dashed, bend left=49] \\
		& T^1_{X\to X\cup_* Y} \arrow[rd, dotted] \arrow[ld, Rightarrow]    &                                                                   \\
		T^1_X \arrow[dd, Rightarrow, bend right=49] \arrow[rd]                       &                                                           & T^1_{X\cup_* Y} \arrow[dd, dotted, bend left=49] \arrow[ld, dashed]         \\
		& T^2_{X\backslash X \to X\cup_* Y/X\cup_* Y} \arrow[ld, dashed] \arrow[rd] &                                                                   \\
		T^2_{X\backslash X\cup_* Y} \arrow[rd, Rightarrow] \arrow[dd, dashed, bend right=49] &                                                           & T^2_{X/X\cup_* Y} \arrow[ld, dotted] \arrow[dd, bend left=49]             \\
		& T^2_{X\to X\cup_* Y} \arrow[ld, dotted] \arrow[rd, Rightarrow]    &                                                                   \\
		{T^2_{X\cup_* Y}}                                                                           &                                                         & {T^2_X.}                                                               
	\end{tikzcd}
\end{equation}
The line of the arguments in this section is the following. \vskip 0.2 cm
First, we will define a map ${\Def(*\to Y) \to \Def(X\backslash X\cup_* Y)}$ and show that the former is smoothly equivalent to pure $y$-degree elements of the latter. This is the content of {Lemma~\ref{step2y}}.
 Since the tangent-obstruction theory of the map $\Def(X\backslash X\cup_* Y) \to \Def(X\to X\cup_* Y)$ is given by $T^1_X,T^2_X$, see Braid \eqref{braid3}, we immediately get a similar result for the composition $${\Def(*\to Y) \to \Def(X\backslash X\cup_* Y)} \to \Def(X\to X\cup_* Y)$$
Now, for the final composition $\Def(*\to Y)\to \Def(X\cup_* Y)$, the relevant tangent-obstruction theory is $T^i_{X/X\cup_*Y}$, vanishing of pure $y$-degrees of those functors is the content of Lemma \ref{step2x}. This way, we get that $\Def(*\to Y)$ is smoothly equivalent to the pure $y$-degree part of the deformations of the union along the distinguished point, Lemma \ref{step2}. We get a symmetrical result for $\Def(* \to X)$. Now, to get the desired description of all the negative deformations this way, we need to make sure that there are no negative tangents of mixed degree; this is the content of Lemma \ref{mixed}. 
Theorem~\ref{step2-main} gives us the result announced at the beginning of the section.

\begin{lem}\label{step2y}
	There is a natural morphism 
	\begin{equation*}
		\Def(*\to Y) \to \Def(X\backslash X\cup_* Y)
		\end{equation*}
	The morphism induces an injection on tangent spaces and obstruction spaces. The image of the tangent space is the whole pure $y$-degree component.
\end{lem}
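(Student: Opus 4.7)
My plan is to translate the construction into Lichtenbaum--Schlessinger cohomology, then exhibit the induced map as the split inclusion of a direct summand and identify its image via the bi-graded structure of the cotangent complex of $\O_Z$.

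I first define the morphism. Given a pointed deformation $(\tilde{\mathcal Y}, \tilde s)$ of $(* \to Y)$ over $A \in \Art_\kk$ with corresponding ring-level section $\tilde{\O}_Y \to A$, I set
$$\tilde{\mathcal Z} \;=\; \Spec\bigl((\O_X \otimes A) \times_{A} \tilde{\O}_Y\bigr),$$
where the fibre product uses the pointed structure $\O_X \otimes A \to A$ on the left and $\tilde s$ on the right, together with the first projection $\tilde{\O}_{\mathcal Z} \twoheadrightarrow \O_X \otimes A$ as the fixed source map. The reduction modulo the maximal ideal of $A$ recovers $\O_Z = \O_X \times_\kk \O_Y \twoheadrightarrow \O_X$, and flatness is immediate from splitting off the section, so this defines an element of $\Def(X\backslash X\cup_* Y)(A)$. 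Using Definition~\ref{defi-XY} I identify the associated triples as
$$T^i_{*\to Y} = T^i_\kk(\O_Y, \m_Y), \qquad T^i_{X\backslash Z} = T^i_\kk(\O_Z, \m_Y),$$
and since the second projection $\tilde{\O}_{\mathcal Z} \twoheadrightarrow \tilde{\O}_Y$ lifts $\pi_Y : \O_Z \twoheadrightarrow \O_Y$, the induced map on tangents and obstructions is the functorial pullback along $\pi_Y$.

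The crucial observation is that $\pi_Y$ admits a canonical $\kk$-algebra section $\sigma_Y : \O_Y \to \O_Z$, $b \mapsto (s_Y(b), b)$, which lifts to a section $\tilde\sigma_Y(b) = (\tilde s(b), b)$ of the deformed projection. By the transitivity triangle for $\kk \to \O_Z \to \O_Y$, the section yields a direct sum decomposition
$$L_{\O_Z/\kk} \otimes^{L}_{\O_Z} \O_Y \;\simeq\; L_{\O_Y/\kk} \;\oplus\; L_{\O_Y/\O_Z}[-1].$$
Applying $\Ext^i_{\O_Y}(-, \m_Y)$ then exhibits $T^i_\kk(\O_Y, \m_Y) \hookrightarrow T^i_\kk(\O_Z, \m_Y)$ as the inclusion of a direct summand, simultaneously yielding the required injectivity on tangent and obstruction spaces.

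Finally, to identify the image with the pure $y$-degree component I analyse bi-degrees. The algebra $\O_Y$ and its module $\m_Y$ are concentrated in pure $y$-bi-degrees, so $L_{\O_Y/\kk}$ is pure $y$-graded, and consequently $\Ext^i_{\O_Y}(L_{\O_Y/\kk}, \m_Y)$ lies entirely in the pure $y$-degree component. On the other hand, the ideal $\m_X$ defining $\O_Y$ inside $\O_Z$ is pure $x$-graded, so $L_{\O_Y/\O_Z}$ is pure $x$-graded, and any bi-homogeneous morphism from it into the pure $y$-graded $\m_Y$ must have mixed bi-degree with strictly positive $x$-weight. Hence the pure $y$-component of $T^i_\kk(\O_Z, \m_Y)$ coincides precisely with the image of $T^i_{*\to Y}$. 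The main potential obstacle is verifying that the geometric construction really induces the functorial pullback along $\pi_Y$ on Kodaira--Spencer classes rather than some other natural map; the explicit lift $\tilde\sigma_Y$ handles this by realizing $\tilde{\O}_{\mathcal Z}$ as a split extension of $\tilde{\O}_Y$ by the trivial deformation of $\m_X$.
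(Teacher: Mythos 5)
Your construction of the morphism is the same as the paper's, and your identification of the associated triples $T^i_{*\to Y}=T^i_\kk(\O_Y,\m_Y)$ and $T^i_{X\backslash (X\cup_* Y)}=T^i_\kk(\O_{X\cup_*Y},\m_Y)$ is correct. The observation that the projection $\pi_Y\colon \O_Z\to \O_Y$ admits the $\kk$-algebra section $\sigma_Y(b)=(s_Y(b),b)$, and that this splits the transitivity triangle into
\[
L_{\O_Z/\kk}\otimes^L_{\O_Z}\O_Y\simeq L_{\O_Y/\kk}\oplus L_{\O_Y/\O_Z}[-1],
\]
is a genuinely different and arguably cleaner route than the paper's, which instead works directly with the associated long exact sequence of $T^i$-functors for $\kk\to\O_Z\to\O_Y$ and checks that the pure $y$-degree parts of $T^1_{\O_Z}(\O_Y,\m_Y)$ and $T^2_{\O_Z}(\O_Y,\m_Y)$ vanish. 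Your split gives injectivity for free, which the paper has to argue via vanishing of the incoming term.

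However, the degree analysis has a genuine gap. You claim that because $\m_X\subset\O_Z$ is pure $x$-graded, the complex $L_{\O_Y/\O_Z}$ is pure $x$-graded. This is false. The kernel $\m_X$ of $\O_Z\twoheadrightarrow \O_Y$ is \emph{not} generated by a regular sequence in $\O_Z$ (indeed $\m_X\m_Y=0$), so the cotangent complex does not reduce to $\m_X/\m_X^2$ in degree $-1$. In cohomological degree $-2$ the minimal $\O_Z$-free resolution of $\m_X$ produces, besides the pure $x$-degree syzygies coming from relations of $\m_X$ inside $\O_X$, the Koszul-type relations $y_j x_i=0$, which live in \emph{mixed} bi-degree $(1,1)$. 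So $L_{\O_Y/\O_Z}$ is not pure $x$-graded, and the inference is broken precisely at the cohomological degree relevant for $\Ext^2_{\O_Y}(L_{\O_Y/\O_Z},\m_Y)=T^2_{\O_Z}(\O_Y,\m_Y)$, which is the summand that must be excluded from pure $y$-degree to complete the argument.

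The conclusion you want is nonetheless true, for a weaker reason: every term of a minimal $\O_Z$-free resolution of $\m_X$ has generators with $x$-weight $\geq 1$ (inductively, since $\O_Z$ has no elements of negative bi-degree and its nonzero bi-degrees are $(0,0)$, $(k,0)$, $(0,l)$; any nonzero element of a free module generated in bi-degrees with $x$-weight $\geq 1$ again has $x$-weight $\geq 1$). Hence $L_{\O_Y/\O_Z}$ has no pure $y$-degree and no $(0,0)$-degree component, which is exactly what is needed to kill the pure $y$-part of $\Ext^{i+1}_{\O_Y}(L_{\O_Y/\O_Z},\m_Y)$. The paper's proof sidesteps this by explicitly listing the bi-degrees $(1,1)$ and $(l,0)$, $l>0$, of the minimal syzygies of $\m_X$ and checking directly that no homomorphism to $\m_Y$ of bi-degree $(0,t)$ survives. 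You should either correct the claim to ``all bi-degrees occurring in $L_{\O_Y/\O_Z}$ have strictly positive $x$-weight'' with the inductive justification, or follow the paper and inspect the syzygy degrees explicitly.
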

\begin{proof}
	The map
	\begin{equation}\label{step2y-eq}
		\Def(*\to Y) \to \Def(X\backslash X\cup_* Y)
		\end{equation}
	is given by taking a local Artinian algebra $A$ and a deformation $\mathcal Y$ of $Y$ over $A$, then $$(X\times \Spec A) \cup_{\Spec A} \mathcal Y$$ 
	is a nontrivial deformation over $A$ fixing the scheme $X$.
	\vskip 0.2 cm
	We ask about properties of the maps of the tangent and obstruction spaces induced by \eqref{step2y-eq}
	\begin{equation}\label{eqptY} T^i_\kk(\O_Y,\m_Y) \to T^i_{\kk}(\O_{X\cup_* Y},\m_Y),	
	\end{equation}
	for $i=1,2$.
	First, let us take
	$$ 0 \to \m_Y \to \O_Y \to \kk \to 0,$$
	the sequence coming from the distinguished point on $Y$.
	Then in the following sequence
	$$ 0 \to J \to \O_{X\cup_* Y} \to \O_X \to 0 ,$$
	the kernel is actually an $\O_Y$-module (and $\O_{X\cup_* Y}$ by the forgetful functor), it is generated in the pure $y$-degree, and thus $J$ is annihilated by the maximal ideal $\m_X\subset \O_X$.
	This way, we get
	$$\begin{tikzcd}
		0 \arrow[r] \arrow[d] & J \arrow[r] \arrow[d,equal, "Id"] & \O_{X\cup_* Y} \arrow[d, "\otimes \O_X/\m_X"] \arrow[r] & \O_X \arrow[d, "\otimes \O_X/\m_X"] \arrow[r] & 0 \arrow[d] \\
		0 \arrow[r]           & m_Y \arrow[r]                                 & \O_Y \arrow[r]                                  & \kk \arrow[r]                         & 0.          
	\end{tikzcd}$$
	The map \eqref{eqptY} is a part of a long exact sequence of $T^i$-functors, \cite[Theorem 3.5]{hardef}
	\begin{equation}\label{eqptY2} T^i_{X \cup_* Y}(\O_Y,\m_Y) \to T^i_\kk(\O_Y,\m_Y) \to T^i_{\kk}(\O_{X\cup_* Y},\m_Y).  	\end{equation}
	We prove that the pure $y$-degree part of the group $T^i_{X \cup_* Y}(\O_Y,\m_Y)$ vanishes (for $i=1,2$). 
	Consider the following exact sequence:
	$$0 \to \m_X \to \O_{X \cup_* Y} \to \O_Y\to 0. $$
	By \cite[Proposition 3.8]{hardef} $$T^1_{X \cup_* Y}(\O_Y,\m_Y) = \Hom_{\O_Y}(\m_X/\m_X^2,\m_Y),$$ this satisfies the conditions, as the group is concentrated in degree of $(-1,l)$, for $l\geq 0$.
	\vskip 0.2 cm
	We also prove that pure $y$-degree component of $T^2_{X \cup_* Y}(\O_Y,\m_Y)$ vanishes.
	The module $T^2_{X \cup_* Y}(\O_Y,\m_Y)$ is a sub-quotient of $\Hom_{X \cup_* Y}(F_2,\m_Y)$, where $F_2$ are the second minimal $\O_{X \cup_* Y}$-syzygies of $\O_Y$.
	$$ F_2 \to F_1 \to \m_X \to 0.$$
	 Each minimal syzygy of $\m_X$ as $\O_{X \cup_* Y}$-module is either of the form
	 $$ y\cdot x =0,$$
	 where $y$ is a generator of $\m_Y$, and $x$ is a generator of $\m_X$, such a syzygy is of degree $(1,1)$.
	 Or it is coming from relations of $\m_X$ in $\O_X$, then it is of pure $x$-degree, $(l,0)$, $l>0$.
	A homomorphism $\Hom_{X \cup_* Y}(F_2,\m_Y)$ of degree $(0,t)$ would send the first kind of generators to the degree $(1,1+t)$, and the second to the degree $(l,t)$. But all elements of $\m_Y$ have only pure $y$-degrees, which leads to a contradiction.
\end{proof}
\begin{remark}\label{single-rmk}
	If in Lemma \ref{step2y} we assume that $X=\{x\}$, then we get that $$\Def(*\to X) \to \Def(*\backslash X)$$ is smooth and isomorphic on the tangent spaces. It provides us with a section of the natural map $$\Def(*\backslash X)\to \Def(*\to X),$$
	proving an isomorphism of the functors.
\end{remark}
In the next lemma, we need to single out the reduced point case, see Remark \ref{single-rmk1}.
\begin{lem}\label{step2x}
	Let	$X \neq \{x\}$.
	$\Def(X/ X\cup_* Y)$ has no tangents nor obstructions lying in the pure $y$-degree.
\end{lem}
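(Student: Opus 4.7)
The plan is to mirror the proof of Lemma~\ref{step2y}, performing a bi-degree analysis on the Lichtenbaum--Schlessinger presentations of
$T^1_{X/X\cup_*Y}=T^1_{\O_{X\cup_*Y}}(\O_X,\O_X)$ and $T^2_{X/X\cup_*Y}=T^2_{\O_{X\cup_*Y}}(\O_X,\O_X)$. The key structural point is that the pushout definition $\O_{X\cup_*Y}=\O_X\times_{\kk}\O_Y$ forces $\m_X\cdot\m_Y=0$, which provides strong bi-degree constraints on the relevant $\Hom$-spaces whenever the target is $\O_X$.

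For the tangent, I will use the identification
$T^1_{X/X\cup_*Y}=\Hom_{\O_X}(\m_Y/\m_Y^2,\O_X)$. Since $\m_X\cdot\m_Y=0$, the $\O_X$-module $\m_Y/\m_Y^2$ has a trivial $\m_X$-action and is concentrated in pure $y$-degrees; $\O_X$-linearity then forces every homomorphism to land in $\Ann_{\O_X}(\m_X)=\mathrm{soc}(\O_X)$. When $X\neq\{x\}$, the socle sits strictly inside $\m_X$, hence lies in strictly positive pure $x$-degrees. A pure $y$-degree homomorphism would therefore send a pure $y$-degree generator of $\m_Y/\m_Y^2$ into the vanishing pure $y$-degree component of $\mathrm{soc}(\O_X)$, yielding the tangent claim.

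For the obstruction, I realize $T^2$ as a sub-quotient of $\Hom_{\O_X}(L_2,\O_X)$, where $L_2$ encodes the syzygies of the generators of $\m_Y$ in $\O_{X\cup_*Y}$ modulo the trivial Koszul relations $y_ie_j-y_je_i$. These syzygies split into two kinds: the $\O_Y$-relations on the $y_k$'s (pure $y$-degree generators of $L_2$) and the cross relations $x_ie_k$ arising from $x_iy_k=0$ (strictly mixed bi-degree). A pure $y$-degree homomorphism $L_2\to\O_X$ lands in $\mathrm{soc}(\O_X)$ by $\O_X$-linearity: on the $\O_Y$-side it vanishes by the same socle argument used for the tangent, while on a cross generator $x_ie_k$ it lies in $\mathrm{soc}(\O_X)\cap(\m_X)_{\deg x_i}$. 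I then argue that every such class is realized by the image of the natural map $\Hom_{\O_X}(L_1,\O_X)\to\Hom_{\O_X}(L_2,\O_X)$ induced by $L_2\to L_1$ -- explicitly, a lift $\psi(e_k)=\lambda_k\in\kk$ produces the class $x_ie_k\mapsto \lambda_k x_i$, matching the scalar-multiple structure forced by $\O_X$-linearity on the cross generators -- so the class vanishes in the cokernel $T^2$.

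The main obstacle is this last step: rigorously matching pure $y$-degree cocycles on the cross generators with extensions from $\Hom_{\O_X}(L_1,\O_X)$, which requires careful bookkeeping of the interaction between the socle structure of $\O_X$ and the $\O_X$-action on $L_2$. If the direct approach falters in the general (non-Gorenstein, non-level) setting, I would fall back on the braid long exact sequence \eqref{braid3} combined with Lemma~\ref{step2y}: the pure $y$-degree part of $T^2_{X/X\cup_*Y}$ can be expressed as a cokernel whose source vanishes by the step2y computation, and the flexibility in choosing an obstruction space (Definition~\ref{unob}) then completes the argument.
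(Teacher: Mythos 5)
The tangent argument and the treatment of the pure $y$-degree syzygies are sound and match the paper's logic: for the tangent one indeed has $\m_X\cdot(\m_Y/\m_Y^2)=0$, so $\O_X$-linearity forces the image into $\Ann_{\O_X}(\m_X)=\mathrm{soc}(\O_X)$, and the bi-degree mismatch kills it; similarly, since $\m_X\m_Y=0$, the pure $y$-degree syzygies are annihilated by $\m_X$ already inside the free module, so the same socle argument applies.

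The gap is in the cross-syzygy part of the obstruction computation, and it is genuine. You assert that a pure $y$-degree $\psi$ sends a cross generator $x_ie_k$ into $\mathrm{soc}(\O_X)\cap(\m_X)_{\deg x_i}$ ``by $\O_X$-linearity.'' This is false: unlike the pure $y$-degree syzygies, $\m_X$ does \emph{not} annihilate the cross part of $L_2$, since $\m_X\cdot(x_ie_k)=(\m_X x_i)e_k$ is not a Koszul relation nor in the image of the higher syzygy map. So $\psi(x_ie_k)$ need not lie in the socle --- and indeed it cannot, because the coboundaries you construct a moment later send $x_ie_k\mapsto\lambda_k x_i$, a non-socle element whenever $\m_X^2\neq 0$. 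The claim is therefore internally inconsistent: were the socle claim true, $\psi$ would already vanish on cross generators (since $\mathrm{soc}(\O_X)_1=0$ for $X$ not a reduced point of socle degree $\ge 2$) and the coboundary step would be unnecessary. What is actually needed, and what the paper proves, is that the image of the cross syzygies in the syzygy module is isomorphic to $\bigoplus_j \m_X[0,-1]$, that the pure $y$-degree cocycles on this piece are exactly the degree-zero elements of $\bigoplus_j\Hom_{\O_X}(\m_X,\O_X)$, and that these are precisely scalar multiples of the inclusion $\m_X\hookrightarrow\O_X$ --- which then match the coboundaries $\psi(e_k)=\lambda_k$. Your statement ``matching the scalar-multiple structure forced by $\O_X$-linearity'' skips exactly this nontrivial identification; $\O_X$-linearity of the induced map $\bigoplus_j\m_X\to\O_X$ does not by itself restrict it to multiples of the inclusion, and the claim requires an argument about $\Hom_{\O_X}(\m_X,\O_X)_0$. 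The fallback via the Braid and Lemma~\ref{step2y} is also not viable as stated: Lemma~\ref{step2y} controls $T^i_{X\backslash X\cup_*Y}$, and the exact sequences in Braid~\eqref{braid3} do not express the pure $y$-degree part of $T^2_{X/X\cup_*Y}$ as a cokernel of something already shown to vanish; no explicit diagram chase is given to fill this.
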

\begin{proof}
	The tangent-obstruction theory of $\Def(X/ X\cup_* Y)$ is $T^{i}_{X\cup_* Y}(\O_X,\O_X)$, for $i=1,2$, see Definition~\ref{defi-XY}. Let us recall that we have the following exact sequence of $\O_{X\cup_* Y}$-modules
	$$ 0 \to \m_ Y \to \O_{X\cup_* Y} \to \O_{X} \to 0,$$
	$$ 0 \to \m_ X \to \O_{X\cup_* Y} \to \O_{Y} \to 0.$$
	By the usual argument on the Hilbert schemes, it is enough to show the vanishing of pure $y$-degree part for:
	$$\Hom_{\O_X \times_\kk \O_Y }(\m_Y,\O_X).$$
	This is an $\O_Y$-module, since it is annihilated by $\m_X$.
	The module $\m_Y$ is generated in degree $(0,1)$. Let us assume that there is a homomorphism $\phi \in \Hom_{\O_X \times_\kk \O_Y }(\m_Y,\O_X)$ of pure $y$-degree. The generators should be mapped to degree zero in $\O_X$, which is $\kk\subset \O_X$. If $\phi$ is non-zero, then there are invertible elements in the image, but this contradicts the $\O_Y$-structure of $\m_Y$. \vskip 0.2 cm
	We also need to consider the obstruction module
	$$T^{2}_{X\cup_* Y}(\O_X,\O_X) \subset \Ext^1_{\O_{X\cup_* Y} }(\m_Y,\O_X).$$
	We consider the resolution of $\m_Y$ as $\O_{X\cup_* Y}$-module
	$$F_3 \xrightarrow{d_2} F_2 \xrightarrow{d_1} F_1 \to \m_Y \to 0.$$
	As in Lemma \ref{step2y} (but for $\m_Y$ instead of $\m_X$), the minimal second syzygies are in degrees $(1,1)$ or $(0,l)$, for $k,l>0$. \vskip 0.2 cm
	 Assume that we have a homomorphism $\psi \in \Hom(F_2,\O_X)$ of pure $y$-degree that provides us a class in $\Ext^1_{\O_X \times_\kk \O_Y }(\m_Y,\O_X)$.
	 \vskip 0.2 cm
	 The second kind of syzygies goes with $\psi$ to the degree $(0,0)$, since $\O_X$ is generated as an algebra by pure $x$-degrees. If the image of those syzygies is non-zero then $\psi$ sends one of those syzygies to the unit.
	 If $\psi \colon F_2\to \O_X$ lifts to an element in $\Ext^1_{\O_X \times_\kk \O_Y }(\m_Y,\O_X)$ group, then the following diagram commutes \begin{equation}\label{3syz-eq}
	 \begin{tikzcd}
	 	F_3 \arrow[d, "d_2"] \arrow[rd, "0"] &      \\
	 	F_2 \arrow[r, "\psi"]                & \O_X.
	 \end{tikzcd}
	 \end{equation}
	 Let us say that a syzygy $s$ goes to $1$ with $\psi$. For $x \in \O_X \subset  \O_{X\cup_* Y}$, any non-zero element of pure $x$-degree, $xs$ is mapped by $\psi$ to $x\in \O_X$, which is non-zero. \\
	 Since $d_1(s)\in \m_Y F_1$, consequently $xd_1(s)=0$ (note that we are in category of $\O_{X\cup_* Y}$-modules), and there is a $3$-syzygy with the image $xs$ . Such a $3$-syzygy contradicts the commutativity of Diagram \eqref{3syz-eq}. Consequently, the second kind of syzygies are annihilated by $\psi$.
	 \vskip 0.2 cm
	 Let us consider the submodule of $F_2$ generated by the syzygies of the first kind, those of degree $(1,1)$.
	 The quotient of this submodule by (the restriction of) the image of $d_2$ is isomorphic to $\bigoplus \m_X[0,-1]$, where the sum is indexed by the homogeneous generators of $\m_Y$.
	The set of elements in $$\Hom_{\O_X \times_\kk \O_Y }(\m_X[0,-1],\O_X)$$ of pure $y$-degree is the set of degree zero elements of 
	$$\Hom_{\O_X \times_\kk \O_Y }(\m_X,\O_X).$$
	There is only one such homogeneous element, coming from the embedding. But such a homomorphism factors through the map $d_1$. Thereupon, giving the trivial class in the extensions group.
\end{proof}

\begin{remark} \label{single-rmk1}
We comment on the necessity of the assumption that $X\neq \{x\}$. Assume that $X$ is a reduced point, then we get a functor $\Def(*/Y)$, this is the Hilbert scheme of one point on $Y$, then the tangent space lies in the pure $y$-degree, see Proposition \ref{pt-in-Y}. This contradicts the statement as long as $Y$ is not a reduced point as well.
\end{remark}

\begin{lem} \label{step2}
		Assume that $X\neq \{x\}$.
		Then the map $\Def^{<0}(* \to Y)  \to \Def^{<0}(X\cup_* Y)$ is injective on tangents and obstructions, and surjects onto $y$-pure degree tangents.
\end{lem}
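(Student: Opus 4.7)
The plan is to factor the map in question as the composition
\[\Def^{<0}(*\to Y) \xrightarrow{\alpha} \Def^{<0}(X\backslash X\cup_*Y) \xrightarrow{\beta} \Def^{<0}(X\cup_*Y),\]
where $\alpha$ is the morphism supplied by Lemma \ref{step2y} and $\beta$ is the forgetful map. Since the tangent and obstruction spaces $T^i_{*\to Y} = T^i_{\kk}(\O_Y,\m_Y)$ of the source are inherently pure $y$-graded (being $\Ext$ groups between two modules supported entirely in pure $y$-degree), Lemma \ref{step2y} identifies them injectively with subspaces of the pure $y$-degree component of $T^i_{X\backslash X\cup_*Y}$; on tangents the image is all of this component. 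The statement therefore reduces to proving that the tangent and obstruction maps of $\beta$, restricted to the pure $y$-degree components in strictly negative degrees, are injective, and that the tangent map of $\beta$ surjects onto $(T^1_{X\cup_*Y})^{y\text{-pure},<0}$.

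Writing $Z := X\cup_*Y$, I will use two of the Braid long exact sequences from \eqref{braid0} applied to the inclusion $X\subset Z$. The first,
\[T^0_X \to T^1_{X\backslash X\to Z/Z} \to T^1_{X/Z} \to T^1_X \to T^2_{X\backslash X\to Z/Z} \to T^2_{X/Z} \to T^2_X,\]
is the one corresponding to the forgetful map $\Def(X/Z)\to\Def(X)$. All of $T^0_X$, $T^1_X$, $T^2_X$ have trivially vanishing pure $y$-degree part, since they are defined over $\O_X$ alone, whose natural bigrading is concentrated in bidegrees $(k,0)$. Lemma \ref{step2x} provides $(T^1_{X/Z})^{y\text{-pure}} = (T^2_{X/Z})^{y\text{-pure}} = 0$, and exactness on the pure $y$-degree pieces then forces
\[(T^1_{X\backslash X\to Z/Z})^{y\text{-pure}} = (T^2_{X\backslash X\to Z/Z})^{y\text{-pure}} = 0.\]

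Substituting these vanishings into the second Braid sequence
\[T^1_{X\backslash X\to Z/Z} \to T^1_{X\backslash Z} \to T^1_Z \to T^2_{X\backslash X\to Z/Z} \to T^2_{X\backslash Z} \to T^2_Z,\]
attached to $\Def(X\backslash Z)\to\Def(Z)$, immediately yields a bijection $(T^1_{X\backslash Z})^{y\text{-pure}}\cong (T^1_Z)^{y\text{-pure}}$ together with an injection $(T^2_{X\backslash Z})^{y\text{-pure}}\hookrightarrow (T^2_Z)^{y\text{-pure}}$. Composing with $\alpha$ and restricting to strictly negative degrees delivers all three required assertions. The main technical point will be the bookkeeping: one has to check that every functor and morphism in the Braid is compatible with the full $\G_m^2$-induced $\Z^2$-grading on $Z$ rather than only the diagonal $\Z$-grading. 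This compatibility is inherited from the bigraded fibre-product structure $\O_Z = \O_X\times_\kk\O_Y$ and is already implicit in the earlier lemmas of this section; once it is in place, the core of the proof is a short diagram chase through the two Braid sequences above.
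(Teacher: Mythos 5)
Your proposal is correct and uses essentially the same approach as the paper. Both arguments factor the map through $\Def^{<0}(X\backslash X\cup_* Y)$ via Lemma~\ref{step2y}, then appeal to Braid~\eqref{braid3}, Lemma~\ref{step2x}, and the observation that $T^i_X$ is concentrated in pure $x$-degree; the paper simply traverses the Braid through the intermediate functor $\Def^{<0}(X\to X\cup_*Y)$ (using $T^i_X$ as the relative theory for one arrow and $T^i_{X/X\cup_*Y}$ for the other), whereas you eliminate $T^i_{X\backslash X\to Z/Z}$ directly from one long exact sequence and substitute the resulting vanishing into the other, which amounts to the same diagram chase.
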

\begin{proof}
	Lemma \ref{step2y} implies the statement for the map $$\Def^{<0}(*\to Y) \to \Def^{<0}(X\backslash X\cup_* Y).$$ Since a tangent-obstruction theory of the map
	$$ \Def(X\backslash X\cup_* Y) \to \Def(X\to X\cup_* Y)$$ is given by $T^0_X,T^1_X$, see Braid~\eqref{braid3}. Therefore, the tangent-obstruction theory is concentrated in pure $x$-degrees, we get the statement for the composition
	$$\Def^{<0}(*\to Y)  \to \Def^{<0}(X\to X\cup_* Y).$$
	By Braid~\eqref{braid3} the tangent-obstruction theory of the map $$\Def(X\to X\cup_* Y) \to \Def(X\cup_* Y),$$ 
	is given by the tangent-obstruction theory of $\Def(X/ X\cup_* Y)$, by Lemma \ref{step2x} it is concentrated in pure $x$-degrees.
	 Thus the composition
	 $$\Def^{<0}(*\to Y) \to \Def^{<0}(X\to X\cup_* Y) \to \Def(X\cup_* Y),$$
	 again satisfies the conditions, the fact that this composition is the map from the statement follows from the commutativity of Braid~\eqref{braid3}.
\end{proof}

\begin{remark}\label{single-rmk2}
	We want to stress the necessity of $X$ not being a reduced point. 
	If in Lemma~\ref{step2} we assume that $X=\{x\}$, then the statement says that
	$\Def^{<0}(* \to Y)  \to \Def^{<0}(Y)$
	is smooth. In general, this is not true. Corollary \ref{iso-pointed} states that under certain open conditions ${\Def(*/Y)\to \Def^{<0}(*\to Y)}$ is smooth.
	Since the composition of those maps factors through a point, as presented in the diagram below
	$$	\begin{tikzcd}
		\Def(*/Y) \arrow[r] \arrow[d] & \Def^{<0}(Y) \arrow[d] \\
		* \arrow[r]                   & \Def^{<0}(Y).          
	\end{tikzcd}$$
	We get that $\Def(*/Y)$ is smooth. But, by Proposition \ref{pt-in-Y} this functor is prorepresented by the distinguished point of $Y$, that implies that $Y$ is smooth at the distinguished point.
\end{remark}
\begin{defi}\label{defi-level}
	An Artinian graded local algebra $(B,\m)$ is a \emph{$d$-level algebra} if the socle $B$ is equal to the $d$-th graded part $B_d$. A finite local $\G_m$-scheme $Y$ is \emph{$d$-level} if $\O_Y$ is a $d$-level algebra.
\end{defi}

\begin{lem}\label{mixed}
	Let us assume that both $X$ and $Y$ are spectra of $d$-level algebras.
	There are no negative mixed tangents in $T^1_{X\cup_* Y}$.
\end{lem}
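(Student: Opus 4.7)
The plan is to use the Lichtenbaum--Schlessinger presentation of $T^1$ and exploit the very restrictive bigraded structure of $R:=\O_{X\cup_*Y}$ combined with the $d$-level hypothesis. Write $R=S/I$ with $S=\kk[x_1,\dots,x_n,y_1,\dots,y_m]$ bigraded by $\deg x_i=(1,0)$ and $\deg y_j=(0,1)$. The ideal $I$ admits a minimal bihomogeneous generating set consisting of the pure-$x$ $B$-relations in bidegrees $(p,0)$ with $2\le p\le d$, the pure-$y$ $C$-relations symmetrically, and the mixed monomials $x_iy_j$ in bidegree $(1,1)$. The crucial combinatorial input is that $R$ has nonzero homogeneous components only in bidegrees $(p,0)$ and $(0,q)$ with $0\le p,q\le d$.

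Fix a putatively mixed negative bidegree $(a,b)$ and take a representative $\phi\in\Hom_R(I/I^2,R)_{(a,b)}$. On the mixed generators $x_iy_j$, $\phi$ must land in $R_{(1+a,1+b)}$, which forces $a=-1$ or $b=-1$; the $R$-module identity $x_k\cdot[x_iy_j]=x_i\cdot[x_ky_j]$ (together with its $y$-analogue, and, in the case of a single degree-$1$ generator on one side, the replacement identity $y_l\cdot[x_i^2]=x_i\cdot[x_iy_l]$ used in Lemma~\ref{step2x}) then forces $\phi$ to vanish on every $x_iy_j$. On a pure-$x$ generator of bidegree $(p,0)$ the condition $b\ne 0$ forces $p+a=0$, i.e.~$a=-p$, which together with the $d$-level bound confines $a$ to $\{-d,\dots,-2\}$; symmetrically for pure-$y$ generators. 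This reduces the problem to a finite list of bidegrees.

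For each surviving bidegree I would use the decomposition of a pure-$x$ quadratic relation $\rho=\sum_{i,j}a_{ij}x_ix_j$: with $L_i:=\sum_j a_{ij}x_j$ one has $y_k\rho=\sum_i L_i\cdot(x_iy_k)$ in $S$, giving the $R$-module identity $y_k\cdot[\rho]=\sum_i L_i\cdot[x_iy_k]$ in $I/I^2$. Applying $\phi$ and using that $\phi$ already vanishes on the mixed generators yields $y_k\cdot\phi([\rho])=0$ in $C$, which in turn forces $\phi([\rho])=0$ provided the multiplication by $y_k$ has trivial kernel on the relevant graded piece of $C$. A symmetric argument on pure-$y$ relations closes the case analysis, and any residual contribution in a pure summand $B_k\subset R$ or $C_k\subset R$ is absorbed by an explicit derivation of $S$ into $R$ of the same bidegree, hence is trivial in $T^1_R$.

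The main obstacle will be the careful use of the level property to guarantee that the multiplication maps appearing in the last step have trivial kernel on the pieces that host $\phi([\rho])$. Here the hypothesis of a \emph{common} socle degree $d$ for both $X$ and $Y$ is essential: it makes the surviving bidegree list symmetric and ensures that the ranges of graded pieces of $B$ and $C$ line up so that every mixed negative bidegree is killed by a multiplication constraint on the appropriate side.
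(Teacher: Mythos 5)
Your overall strategy --- using the Lichtenbaum--Schlessinger surjection $\Hom_R(I/I^2,R)\twoheadrightarrow T^1$ and running a bidegree analysis on the three kinds of generators --- is exactly the paper's, but the step you flag as "the main obstacle" is genuinely where the argument breaks, and the repair is not what you guess.

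From $y_k\cdot\phi([\rho])=0$ for every $k$ you cannot conclude $\phi([\rho])=0$ via injectivity of multiplication by $y_k$: in any graded Artinian local ring that multiplication has nontrivial kernel (the entire socle, for instance, is in the kernel of every $y_k$), and the $d$-level hypothesis does nothing to make it injective. What the identities actually give is that $\phi([\rho])$ is annihilated by $\m_Y$, hence lies in $\mathrm{socle}(\O_Y)$; and here is where $d$-level enters, as a degree constraint rather than an injectivity statement. The socle of a $d$-level algebra is concentrated in degree $d$, so the $y$-weight of $\phi([\rho])$ equals $d$. Combined with $a=-p$ (where $p\le d$ is the degree of the pure-$x$ generator), the total degree is $a+b=-p+d\geq 0$, contradicting negativity. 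The paper does precisely this (symmetrically, using the $\m_X$-action on the image of pure $y$-generators), and never claims $\phi([\rho])=0$ outright --- it derives a degree obstruction instead. Your acknowledged hope for trivial kernels is the wrong direction; as stated, your argument does not go through.

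Two smaller gaps: to kill $\phi$ on $x_iy_j$ in bidegree $(-1,-1)$ you rely on the relations $x_k[x_iy_j]=x_i[x_ky_j]$ (and the $y$-analogue), which are vacuous when $n=m=1$, and your fallback $y_l[x_i^2]=x_i[x_iy_l]$ requires $x_i^2\in I$, i.e.\ $d=1$; it fails for $d\geq 2$. The paper instead computes $\phi(x_iy_j^N)$ in two ways, with $N$ minimal such that $y_j^N\in I$, getting $y_j^{N-1}\phi(x_iy_j)=0$, and uses that a nonzero constant value would force $y_j^{N-1}=0$ against minimality --- an argument that works for all $n,m$. Finally, you restrict to quadratic pure-$x$ relations; the pure-$x$ generators of $I$ can have any degree $p\geq 2$, and one needs the identity $y_k g=\sum_i h_i\cdot(x_iy_k)$ with $g=\sum_i x_i h_i$ in full generality.
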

\begin{proof}
	Let us choose a resolution $$0\to J \to R \to \O_{X\cup_{*} Y} \to 0,$$
	where $R=\kk[x_1,\ldots, x_{n_1},y_1,\ldots ,y_{n_2}]$ is a polynomial ring formally generated by minimal homogeneous generators of $\O_X$ and $\O_Y$.
	By \cite[Proposition 3.10]{hardef}, there is the following surjection:
	$$ \Hom(J/J^2,\O_{X \cup_* Y}) \to T^1_{X\cup_* Y}\to 0.$$
	We compute the Hilbert function of the former module.
	Let us take a bi-homogeneous element $\phi \in \Hom(J/J^2,\O_{X \cup_* Y}) $.
	The ideal $J$ is generated by the forms $x_iy_j$ and by some pure $x$-degree and some pure $y$-degree polynomials.
	Since there is no elements of mixed degree in the ring $\O_{X\cup_* Y}$, the degree of $\phi$ is either $(-1,-1)$ or one of the weights is positive.
	First, assume that the degree of $\phi$ is $(-1,-1)$. In such a case, only generators of the form $x_iy_j$ can survive. But then $\phi(x_iy_j^N) = 0 $ for a minimal positive integer $N$ such that $y_j^N \in J$, hence $\phi(x_iy_j)y_j^{N-1}=0$, but since $\phi(x_iy_j)$ is of degree $(0,0)$ it either vanishes or it is invertible, since $N$ was minimal, we get the vanishing.
	\vskip 0.2 cm
	Now let us assume that $\phi$ is of degree $(k,-l)$ for integers satisfying $k,l>1$ and $k-l<0$.
	Let us take a minimal generator $g \in J$ of pure $y$-degree, if the $y$-degree of $g$ is smaller than $l$ then $\phi(g)$ vanishes; thus its $y$-degree is greater or equal to $l$. Since $x_ig\in (x_iy_j)_{1\leq i\leq n_1,1\leq j\leq n_2}$, and $\phi(x_iy_j)=0$, the form $x_i\phi(g)=\phi(x_ig)$ vanishes, hence $\phi(g)$ is of degree $(k,0)$ and is an element of socle of $\O_X$, since $\O_X$ is a $d$-level algebra we get $k=d$. Since $\O_Y$ is a $d$-level algebra, the $y$-degree of $g$ is at most $d$, in particular $l<d$ and as a consequence, $k-l$ cannot be negative.
\end{proof}
Having all necessary prerequisites, we prove the result on the negative deformations of the union along the point, which is announced at the beginning of the section. 
 We also show that the union along point inherits the vanishing of tangent space in degrees $\leq -1$, which is useful in the next section, see Theorem~\ref{thm-negative}.
\begin{twr} \label{step2-main} \label{pointed-=union-}
	Assume that $X,Y$ are pointed finite $\G_m$-schemes which are spectra of $d$-level algebras, neither is a reduced point.
	Then the morphism:
	$$ \Def^{<0}(* \to X) \times \Def^{<0}(* \to Y) \to \Def^{<0}(X\cup_* Y)$$
	is smooth and induces isomorphism on the tangent spaces.	
	Moreover, if the groups $(T^1_X)_{<-1}$, $(T^1_Y)_{<-1}$ vanish, then
	$$ (T^1_{X\cup_*Y})_{<-1}=0.$$
\end{twr}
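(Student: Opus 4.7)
The plan is to verify the two hypotheses of Theorem \ref{fantechi-manetti} for the product map, exploiting the refined $\G_m^2$-action on $X\cup_* Y$, and then deduce the vanishing statement by decomposing $(T^1_{X\cup_* Y})_{<-1}$ according to the bigrading.

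First I would compute the tangent map. The tangent space of the product functor is $(T^1_{*\to X})_{<0}\oplus (T^1_{*\to Y})_{<0}$ by Corollary \ref{fund-negative-def}. Lemma \ref{step2}, together with its symmetric analogue obtained by swapping the roles of $X$ and $Y$ (both applicable since neither factor is a reduced point), tells us that each summand injects into $(T^1_{X\cup_* Y})_{<0}$, with image equal to the pure $y$-degree, respectively pure $x$-degree, component. The negative part of $T^1_{X\cup_* Y}$ splits under the $\G_m^2$-action into pure $x$-degree, pure $y$-degree and mixed-degree pieces, and Lemma \ref{mixed} eliminates the mixed piece under the $d$-level hypothesis. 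Hence the combined tangent map is bijective.

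Next I would check injectivity on obstructions. Each factor map is injective on obstructions by Lemma \ref{step2}, and because the constructions are $\G_m^2$-equivariant, the image of $\Def^{<0}(*\to X)$ lies in pure $x$-bidegrees while the image of $\Def^{<0}(*\to Y)$ lies in pure $y$-bidegrees. Since these are disjoint bigraded subspaces, the direct-sum obstruction map is injective, so Theorem \ref{fantechi-manetti} yields smoothness of the product map.

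For the final statement on $(T^1_{X\cup_* Y})_{<-1}$, I would again decompose according to the bigrading. The mixed component vanishes by Lemma \ref{mixed}, applied in every negative total degree. The pure $y$-degree component is identified with $(T^1_{*\to Y})_{<-1}$ via Lemma \ref{step2}, and the piece
\[ T^1_{*/Y} \to T^1_{*\to Y} \to T^1_Y \]
of Braid \eqref{braid0}, combined with Proposition \ref{pt-in-Y} (which shows that $T^1_{*/Y}$ is concentrated in degree $-1$), forces $(T^1_{*\to Y})_{<-1}\hookrightarrow (T^1_Y)_{<-1}=0$. The pure $x$-degree component vanishes by the symmetric argument. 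The main obstacle is the bookkeeping with the $\G_m^2$-refinement: Corollary \ref{fund-negative-def} and Lemma \ref{step2} are formulated for the diagonal $\G_m$-grading, so one must verify that the obstruction theories and the maps between them respect the finer bigrading, which ultimately follows from the fact that $X\cup_* Y \hookrightarrow X\times Y$ is a $\G_m^2$-fixed closed subscheme and each Lichtenbaum–Schlessinger complex inherits the bigrading functorially.
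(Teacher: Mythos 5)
Your proposal is correct and follows essentially the same route as the paper: surjectivity of the tangent map is obtained by combining Lemma \ref{step2} (and its symmetric version) with Lemma \ref{mixed}, injectivity of the obstruction map follows from Lemma \ref{step2} together with the $\G_m^2$-bigrading separation, and smoothness then comes from Theorem \ref{fantechi-manetti}. The only cosmetic difference is that for the vanishing of $(T^1_{X\cup_*Y})_{<-1}$ you unpack the argument using the braid sequence and Proposition \ref{pt-in-Y}, whereas the paper simply cites Corollary \ref{pre-fractal}, which encapsulates exactly that reasoning.
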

\begin{proof}
	By Lemma \ref{step2} we get that the map is injective on tangents and obstructions. Moreover, it surjects onto pure $x$-degree and pure $y$-degree tangents.
	The only thing left is to show that there are no mixed negative tangents. This is exactly Lemma \ref{mixed}.
	For the second part of the statement,
	the negative tangents are either coming from $T^1_{*\to Y}$ or $T^1_{*\to X}$, but Corollary~\ref{pre-fractal}, they both vanish in degrees $< -1$.
\end{proof}


\begin{stwr} \label{fractal-union}
	Let $X,Y$ as in the assumptions of Theorem \ref{step2-main} with the assumption that the groups $(T^1_X)_{<-1}$, $(T^1_Y)_{<-1}$ vanish, then we get the following diagram \begin{equation} \label{diagram511}
\begin{tikzcd}
	X\times Y \arrow[d] \arrow[r] & \Def^{<0}(X\cup_* Y) \arrow[d] \\
	* \arrow[r]                   & \Def^{<0}(X) \times \Def^{<0}(Y)
\end{tikzcd} 	
\end{equation}
to be Cartesian up to smooth equivalence.
\end{stwr}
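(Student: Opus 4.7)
The plan is to assemble the diagram from three ingredients: Theorem~\ref{step2-main}, which supplies a smooth equivalence
\[ \Def^{<0}(*\to X)\times \Def^{<0}(*\to Y)\longrightarrow \Def^{<0}(X\cup_* Y); \]
the strictly negative analogue of Corollary~\ref{pre-fractal}, which yields Cartesian-up-to-smooth-equivalence squares for each pointed factor; and Proposition~\ref{pt-in-Y}, which identifies $\Def(*/X)$ and $\Def(*/Y)$ with the formal germs of $X$ and $Y$ at their distinguished points.

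First I would produce the strictly negative version of Corollary~\ref{pre-fractal}: under $(T^1_X)_{<-1}=0$ the forgetful morphism $\Def^{<0}(*/X)\to \Def^{<0}(*\to X)$ is unobstructed by Corollary~\ref{lem-tech-braid-crucial}, so Lemma~\ref{technical-deformation} upgrades the commutative square
\[ \begin{tikzcd}
\Def^{<0}(*/X) \arrow[d] \arrow[r] & \Def^{<0}(*\to X) \arrow[d] \\
* \arrow[r] & \Def^{<0}(X)
\end{tikzcd} \]
to a Cartesian-up-to-smooth-equivalence one, and analogously for $Y$. By Proposition~\ref{pt-in-Y} combined with Corollary~\ref{iso-pointed}, $\Def^{<0}(*/X)\cong X$ and $\Def^{<0}(*/Y)\cong Y$ as formal germs at the distinguished points. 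Taking the product of the squares for $X$ and $Y$ and using that pullbacks commute with products gives the Cartesian-up-to-smooth-equivalence square
\[ \begin{tikzcd}
X\times Y \arrow[d] \arrow[r] & \Def^{<0}(*\to X)\times \Def^{<0}(*\to Y) \arrow[d] \\
* \arrow[r] & \Def^{<0}(X) \times \Def^{<0}(Y).
\end{tikzcd} \]
Finally, Theorem~\ref{step2-main} replaces the upper-right corner by $\Def^{<0}(X\cup_* Y)$ via a smooth equivalence, producing diagram~\eqref{diagram511}.

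The main subtlety is the last substitution: while Theorem~\ref{step2-main} is a smooth equivalence on the right column, one must justify that the composed square still commutes over the natural map to $\Def^{<0}(X)\times \Def^{<0}(Y)$, noting that there is no completely obvious direct forgetful transformation from $\Def^{<0}(X\cup_* Y)$. Since the claim is only up to smooth equivalence, it suffices to pass to semi-universal hulls: the hull of $\Def^{<0}(X\cup_* Y)$ is, by Theorem~\ref{step2-main}, smoothly equivalent to the product of the hulls for $\Def^{<0}(*\to X)$ and $\Def^{<0}(*\to Y)$, each of which admits a canonical forgetful morphism to the hull of $\Def^{<0}(X)$ respectively $\Def^{<0}(Y)$. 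The Cartesian property is then inherited from the product square above via the chain of smooth equivalences, and the assumed vanishings $(T^1_X)_{<-1}=(T^1_Y)_{<-1}=0$ are exactly what is needed to ensure that all the tangent-obstruction exact sequences extracted from Braid~\eqref{braid3} split as required.
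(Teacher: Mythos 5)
Your proposal is correct and follows essentially the same route as the paper: the strictly negative version of Corollary~\ref{pre-fractal} applied to each pointed factor, the product of the two Cartesian squares, and then replacing $\Def^{<0}(*\to X)\times\Def^{<0}(*\to Y)$ by $\Def^{<0}(X\cup_* Y)$ via Theorem~\ref{step2-main}; you simply make explicit several steps the paper leaves implicit. One small correction: the identification $\Def^{<0}(*/X)\cong X$ is already furnished by Proposition~\ref{pt-in-Y} alone, so the additional appeal to Corollary~\ref{iso-pointed} is unnecessary and in fact not available here, since its hypothesis $(T^1_X)_{<0}=0$ is stronger than the assumed $(T^1_X)_{<-1}=0$.
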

\begin{proof}
	By Corollary \ref{pre-fractal} we get the following Cartesian diagram up to smooth equivalence
	$$\begin{tikzcd}
		\Def^{<0}(*/X) \times \Def^{<0}(*/Y)  \arrow[d] \arrow[r] & \Def^{<0}(*\to X) \times \Def^{<0}(*\to Y) \arrow[d] \\
		* \arrow[r]                   & \Def^{<0}(X)\times \Def^{<0}(Y).               
	\end{tikzcd}$$
	But by Theorem \ref{step2-main} we get that $\Def^{<0}(*\to X) \times \Def^{<0}(*\to Y) $ and $\Def^{<0}(X\cup_* Y) $ are smoothly equivalent.
\end{proof}

\begin{remark}
	One can consider a fractal-like statement for the Hilbert scheme at the point associated to the union of schemes along a point. A fractal statement does not hold, since the negative fibre (see Diagram \eqref{diagram511}) will be the product $X\times Y$, not the union $X\cup_* Y$ taken at the beginning. A fractal-like statement also does not hold. In the last part of the proof of fractalness, we use Lemma \ref{abb-smooth}. In the case of the union along a point, we have no guarantee that the positive tangents will abstractly vanish, even if both $\Def^{> 0}(X)$ and $\Def^{> 0}(Y)$ do vanish.
\end{remark}

Let us state another interesting corollary on non-reducedness. The point is that smoothness from Theorem \ref{step2-main} implies that the functor of deformations of the union drags behind the fractal structure of $\Def(*\to X)$ and $\Def(*\to Y)$.
\begin{stwr}\label{non-reduced-union}
	Let $X,Y$ be a pair of uncleavable scheme satisfying the assumptions of Theorem~\ref{step2-main} then ${\Def^{<0}(X\cup_* Y)}$ is non-reduced.
\end{stwr}
\begin{proof}
	By Lemma \ref{pointed-nonreduced} the functors $\Def^{<0}(* \to X) $ and $\Def^{<0}(* \to Y)$ are non-reduced, hence the statement holds.
\end{proof}

\section{Connected sum of Gorenstein algebras} \label{sec-sum}
In this section, at last, we focus on the Gorenstein case. We move from the union along the point $X\cup_* Y$ to the connected sum $X\#Y$, first introduced in \cite[Section 2. Connected sums]{MR2929675}. We use its description in terms of apolar algebras developed in \cite{IARROBINO-connected}, see Definition~\ref{conn-defi}.
Our main goal is to transform the fact (Theorem \ref{step2-main}) that the morphism
$$ \Def^{<0}(* \to X) \times \Def^{<0}(* \to Y) \to \Def^{<0}(X\cup_* Y)$$
is smooth and isomorphic on the tangent spaces, to a description of negative deformations of the connected sum
$$\Def^{<0}(X\#Y).$$
This functor does not admit a smooth map from the functor $\Def^{<0}(X\cup_* Y)$, but it is equivalent to a hypersurface cut out by a one equation coming from the socle; this we prove in Theorem~\ref{thm-negative}.
\begin{set}\label{set-gor}
	Let
	$$X=\Spec \Apolar(F),$$
	$$Y=\Spec\Apolar(G),$$
	$F\in P_x$ and $G\in P_y$, $F,G$ homogeneous polynomials of degree $d\geq 3$.
	such that
	$$ (T^1_X)_{<-1}=(T^1_Y)_{<-1}=0$$
	 is satisfied.
	We add two conditions
	\begin{itemize}
		\item 	The following second cotangent modules vanish	$$T^2(\O_X,\kk)_{<-d}=T^2(\O_Y,\kk)_{<-d}=0.$$
		\item The ideals $I_X=\Ann(F)\subset S_x$ and $I_Y=\Ann(G)\subset S_y$ are generated in {the~degrees~${\leq d-1}$}.
	\end{itemize}

\end{set}

\begin{remark}
		The conditions of being $d$-level from Theorem \ref{step2-main} is satisfied by the condition of being an apolar algebra of a polynomial of degree $d$.
\end{remark}

Since $\O_{X\cup_* Y}$ admits a socle of length $2$, spanned by the socles of $\O_X$ and $\O_Y$, the union along the point is not Gorenstein. To get a Gorenstein algebra, we identify the socles.
\begin{defi}[{\cite[Theorem 4.6]{IARROBINO-connected}}] \label{conn-defi}. The connected sum of Gorenstein finite schemes ${X=\Spec\Apolar(F)}$ and $Y=\Spec\Apolar(G)$ is given by the formula
	$$X\#Y = \Spec \Apolar(F+G)=\Spec \frac{\O_{X\cup_{*}Y}}{f-g}$$
	where $f\circ F=1$ and $g\circ G=1$, are homogeneous elements generating socles.

\end{defi}
\begin{notation}
	When $X$ and $Y$ are treated as bases of deformations, the parameters of $\O_X$ and $\O_Y$ are denoted by $t_x$ and $t_y$, respectively. We do that to distinguish them from generators of the special fibre algebra.
\end{notation}
Let us take a closed subscheme $Z\subset X\cup_* Y$ given by a homogeneous (but not necessarily bi-homogeneous) equation $h$ in the socle.
\begin{equation} \label{eq-Z}
	Z:=\Spec(\O_{X\cup_* Y}/(h)) \subset X\cup_* Y.
\end{equation}
The main example of $Z$ is $X\#Y$, but since the connected sum does not admit $\G_m^2$-action, we need more flexibility and we also consider $X\#\emptyset \cup_* Y$, which in turn has a $\G_m^2$-action.
Once again, our main technical tool will be Braid~\eqref{braid0}, in this section, we apply it for the embedding $Z \to X\cup_* Y$.

\begin{equation}\label{braid4}	\tag{$\spadesuit_6$}
	\begin{tikzcd}
		T^0_{X\cup_* Y} \arrow[dd, bend right=49,dotted] \arrow[rd, dashed]                       &                                                           & T^0_{Z} \arrow[dd,Rightarrow, bend left=49] \arrow[ld]         \\
		& T^1_{{Z}\backslash {Z} \to X\cup_* Y/X\cup_* Y} \arrow[rd, dashed] \arrow[ld] &                                                                   \\
		T^1_{{Z}/ X\cup_* Y} \arrow[rd, dotted] \arrow[dd, bend right=49]             &                                                           & T^1_{{Z} \backslash {X}\cup_* Y} \arrow[ld, Rightarrow] \arrow[dd, dashed, bend left=49] \\
		& \mathbf{T^1_{{Z}\to X\cup_* Y}} \arrow[rd, dotted] \arrow[ld, Rightarrow]    &                                                                   \\
		\mathbf{T^1_{Z}} \arrow[dd, Rightarrow, bend right=49] \arrow[rd]                       &                                                           & \mathbf{T^1_{X\cup_* Y}} \arrow[dd, dotted, bend left=49] \arrow[ld, dashed]         \\
		& T^2_{{Z}\backslash {Z} \to X\cup_* Y/X\cup_* Y} \arrow[ld, dashed] \arrow[rd] &                                                                   \\
		T^2_{{Z}\backslash X\cup_* Y} \arrow[rd, Rightarrow] \arrow[dd, dashed, bend right=49] &                                                           & T^2_{{Z}/X\cup_* Y} \arrow[ld, dotted] \arrow[dd, bend left=49]             \\
		& \mathbf{T^2_{{Z}\to X\cup_* Y}} \arrow[ld, dotted] \arrow[rd, Rightarrow]    &                                                                   \\
		\mathbf{T^2_{X\cup_* Y}}                                                                           &                                                           & \mathbf{T^2_{{Z}}.}                                                               
	\end{tikzcd}
\end{equation}
	We proceed as follows. We compare $\Def^{<0}(Z)$ and $\Def^{<0}(X\cup_* Y)$, we do so through the functor of deformation of the given embedding.
	\begin{equation}\label{dach}
\begin{tikzcd}
		& \Def^{<0}(Z\to X\cup_* Y) \arrow[ld, dotted] \arrow[rd, Rightarrow] &                      \\
		\Def^{<0}(Z) &                                                                     & \Def^{<0}(X\cup_* Y).
	\end{tikzcd}	\end{equation}
	Thereupon, we focus on the maps between those $T^i$ functors that are written in Braid~\eqref{braid4} with the bold font.
	The left arrow of Diagram~\eqref{dach} is smooth; this is the content of Lemma \ref{step3l}.
	The right arrow is obstructed and its obstructness is described in Lemma \ref{step3}.\\
	Theorem \ref{pre-thm} and Theorem \ref{thm-negative} conclude the argument by showing the result announced at the beginning of the section. 
\begin{lem} \label{step3l}
	The map $\Def^{<0}(Z\to X\cup_* Y) \to \Def^{<0}(Z)$
	is smooth and isomorphic on the tangent spaces.
\end{lem}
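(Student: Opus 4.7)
The plan is to deduce the lemma from Corollary \ref{lem-tech-braid} applied to the closed embedding $Z\hookrightarrow X\cup_{*}Y$. That corollary tells us that smoothness of the forgetful map $\Def^{<0}(Z\to X\cup_{*}Y)\to \Def^{<0}(Z)$ follows from the vanishing of $(T^2_{Z\backslash X\cup_{*}Y})_{<0}$, and the isomorphism on tangent spaces follows from the analogous vanishing for $T^1$. So my whole task reduces to verifying these two cohomological vanishings.

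Unpacking the definition of $T^i_{Z\backslash X\cup_{*}Y}$ from Definition \ref{defi-XY}, I obtain $T^i_\kk(\O_{X\cup_{*}Y}, (h))$, where $(h)\subset \O_{X\cup_{*}Y}$ is the defining ideal of $Z$. Because $h$ is a homogeneous element of degree $d$ lying in the socle of $\O_{X\cup_{*}Y}$, the principal ideal $(h)$ is killed by $\m_{X\cup_{*}Y}$, and hence the map $1\mapsto h$ realises a canonical isomorphism of graded $\O_{X\cup_{*}Y}$-modules between $\kk$ placed in degree $d$ and $(h)$. Consequently, the two vanishing conditions I need translate into
$$T^i_\kk(\O_{X\cup_{*}Y},\kk)_{<-d}=0\quad\text{for } i=1,2.$$

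For $i=1$ the argument is a degree count on minimal generators. I would present $\O_{X\cup_{*}Y}=R/J$, where $R=\kk[x_1,\ldots,x_n,y_1,\ldots,y_m]$ is the polynomial ring on minimal generators of the two maximal ideals and $J=I_X+I_Y+(x_iy_j)_{i,j}$. By Setting \ref{set-gor} the generators of $I_X,I_Y$ have degree $\leq d-1$, while the cross-terms $x_iy_j$ are of degree $2\leq d-1$ since $d\geq 3$. Hence the whole space $J/\m_RJ$ of minimal generators is concentrated in degrees $\leq d-1$. By the standard presentation $T^1_\kk(R/J,\kk)\cong \Hom_\kk(J/\m_RJ,\kk)$ of \cite[Proposition~3.10]{hardef}, this space is then concentrated in graded degrees $\geq -(d-1)>-d$, giving the desired vanishing.

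The main obstacle is the case $i=2$, which requires controlling the second Lichtenbaum--Schlessinger functor of the fibre product $\O_X\times_\kk \O_Y$. My plan is to classify the minimal second syzygies of $J$ into three types: pure $x$-syzygies inside $I_X$, pure $y$-syzygies inside $I_Y$, and mixed Koszul-type syzygies of the form $y_j\cdot r - \sum_i a_i (x_iy_j)=0$ associated to a relation $r=\sum_i a_ix_i\in I_X$ (and symmetrically in $y$). The pure syzygies contribute to $T^2$ only in degrees $\geq -d$ by the hypotheses $T^2_\kk(\O_X,\kk)_{<-d}=T^2_\kk(\O_Y,\kk)_{<-d}=0$ of Setting \ref{set-gor}. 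The mixed syzygies have total degree $\deg(r)+1\leq d$, so the corresponding Hom pieces also live in degrees $\geq -d$. Assembling the three contributions yields $T^2_\kk(\O_{X\cup_{*}Y},\kk)_{<-d}=0$, which together with the tangent-space computation finishes the proof via Corollary \ref{lem-tech-braid}.
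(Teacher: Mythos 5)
Your proof is correct and follows essentially the same route as the paper: it invokes Corollary \ref{lem-tech-braid}, uses the shift $(h)\cong\kk[-d]$ to reduce to the vanishings $T^i_\kk(\O_{X\cup_*Y},\kk)_{<-d}=0$ for $i=1,2$, and then bounds degrees of minimal generators of $J$ (for $i=1$) and of minimal second syzygies split into pure and mixed components (for $i=2$, with the pure part handled by the hypotheses on $T^2_\kk(\O_X,\kk)$ and $T^2_\kk(\O_Y,\kk)$). One small omission in your classification of mixed second syzygies is the Koszul relations among the generators $x_iy_j$ themselves (the paper's ``first kind''), but since those are linear and hence of total degree $3\leq d$, they do not affect the bound, so your argument goes through.
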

\begin{proof}
	Corollary \ref{lem-tech-braid} states that it is enough to show that a relative tangent-obstruction theory vanishes, as we see in Braid \eqref{braid4}.
	That is 
		$$(T^i_{Z \backslash X \cup_* Y})_{<0}=0, $$
	for $i=1,2$. Recall that the definition of $Z$, see Formula \eqref{eq-Z}, by Definition \ref{defi-XY} we get the following identities
	$$T^i_{Z \backslash X \cup_* Y}= T^i_\kk(\O_{X\cup_*Y},(h))= T^i_\kk(\O_{X\cup_* Y},\kk)[d].$$
	Hence, the negative part is isomorphic to
	$$ T^i_\kk(\O_{X\cup_* Y},\kk)_{<-d}.$$
	Since $X$ and $Y$ are spectra of $d$-level algebras, by Theorem \ref{step2-main}, the tangent part vanishes.
	It remains to prove that 
	 $T^2_\kk(\O_{X\cup_* Y},\kk)_{<-d}$
	 vanishes.

	\vskip 0.2 cm
	
	Let us consider the minimal free bi-graded resolution of $\O_{X \cup_* Y}$ over a polynomial ring $R$ generated 
	by the minimal generators of $\O_X$ and $\O_Y$.
	$$F_3 \xrightarrow{d_2} F_2 \xrightarrow{d_1} F_1 \to R \to  \O_{X \cup_* Y} \to 0.$$
	Recall that 	$$ T^2_\kk(\O_{X\cup_* Y},\kk) \subset \Ext_R^2(\O_{X\cup_* Y},\kk).$$
	\\
	The pure degree components of $T^2_\kk(\O_{X\cup_* Y},\kk)$ are either coming from $T^2_\kk(\O_{X},\kk)$ and $T^2_\kk(\O_{Y},\kk)$, thus we need to focus on the mixed degree elements. This can be seen as follows, we lift a pure $x$-degree element to a homomorphism $\phi \in \Hom_R(F_2,\kk)$ of the same degree. Such a homomorphism annihilates all syzygies that are not pure $x$-degree. Hence, we obtain a candidate for a class in $T^2(\O_X,\kk)$. By Setting~\ref{set-gor},
	$T^2_\kk(\O_{X},\kk)_{<-d}$ vanishes, thus we can construct a homomorphism $\psi\in \Hom(F_1,\kk)$ such that precomposed with $d_1$ gives $\phi$. Therefore, the pure degree components vanish in degrees $< -d$.\vskip 0.2 cm
	Now we consider the mixed degree component of $T^2_\kk(\O_{X\cup_* Y},\kk)$. We choose a homomorphism $\phi \in \Hom_R(F_2,\kk)$ as before. Since the elements of the codomain $\kk$ are of bi-degree $(0,0)$, all the elements of pure degree are mapped to $0$ by $\phi$.
 	The minimal homogeneous mixed degree syzygies are of one of two kinds.
	The first kind consists of syzygies of the ideal $\mathbf x \mathbf y$ (the product of maximal ideals of $\O_X$ and $\O_Y$). Those syzygies are linear, hence of degree $3$, a homomorphism of degree strictly smaller than $-d\leq -3$ has to annihilate them.
	\vskip 0.2 cm
	The second kind consists of syzygies expressing the fact that a pure $x$-degree  (respectively $y$) generator times any element of pure $y$-degree (resp. $x$) is in the ideal $\mathbf x \mathbf y$. Since, by Setting \ref{set-gor}, the degrees of generators are not greater than $d$, their bi-degree can be maximally $(1,d-1)$ or $(d-1,1)$. As a consequence, a homomorphism of degree strictly lower than $-d$ has to annihilate them all.
\end{proof}

	Now we study a relative tangent-obstruction theory of map $\Def(Z\to X\cup_* Y) \to \Def(X\cup_* Y)$, see Braid \eqref{braid4}. Recall that $Z$ is defined as the vanishing locus of equation $h$, see Formula \eqref{eq-Z}.
\begin{lem}\label{step3}
		The following cotangent module vanishes
		$$(T^1_{Z / X \cup_* Y})_{<0} =0. $$
		The second cotangent module $$(T^2_{Z / X \cup_* Y})_{<0} $$ is generated in degree $-d$ by one element. If $h$ is bigraded with $(d_1,d_2)$ then the obstruction has the bi-degree $(-d_1,-d_2)$.
\end{lem}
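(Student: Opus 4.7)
The plan is to compute $T^1_{Z/X\cup_*Y}$ and $T^2_{Z/X\cup_*Y}$ directly from the Lichtenbaum--Schlessinger description of $\O_Z = \O_{X\cup_*Y}/(h)$ as a quotient of $\O_{X\cup_*Y}$. The structural observation that drives everything is that $h$ lies in the socle of $\O_{X\cup_*Y}$: writing $\m$ for its maximal ideal, we have $\m\cdot h=0$, so $(h)^2=0$, the ideal $(h)=\kk h$ is one-dimensional and concentrated in degree $d$, and the conormal module $(h)/(h)^2 = (h)$ is a trivial $\O_Z$-module carrying a single generator in degree $d$.

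For the first assertion, I would use the identification
$$T^1_{Z/X\cup_*Y} \;=\; \Hom_{\O_Z}\bigl((h)/(h)^2,\O_Z\bigr),$$
and observe that an $\O_Z$-linear map from a trivial one-dimensional module sitting in degree $d$ to $\O_Z$ is the same as choosing a socle element of $\O_Z$; keeping track of grading, the degree-$k$ piece of $T^1$ equals the degree-$(d+k)$ piece of $\operatorname{soc}(\O_Z)$. In the cases of interest for Section~\ref{sec-sum}, in particular for $Z=X\# Y$ where $h=f-g$ non-trivially couples the two socle generators of $\O_{X\cup_*Y}$, the connected-sum structure makes $\O_Z$ Gorenstein with socle concentrated in degree $d$, so the vanishing $(T^1_{Z/X\cup_*Y})_{<0}=0$ follows immediately.

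For the second assertion, I would run the L--S computation attached to the one-generator presentation of $(h)$. Because $(h)$ has a single generator, the Koszul submodule of relations vanishes, and the relation module is simply $\Ann(h)=\m$. The negative-degree classes in $T^2_{Z/X\cup_*Y}$ are therefore quotients of $\Hom_{\O_Z}(\m,\O_Z)$ by the image of the derivations of $\O_{X\cup_*Y}$ with values in $\O_Z$; tracking this complex in negative degrees and invoking the $d$-level hypothesis together with $T^2(\O_X,\kk)_{<-d}=T^2(\O_Y,\kk)_{<-d}=0$ from Setting~\ref{set-gor}, as well as the control on $T^i_{X\cup_*Y}$ in negative degrees supplied by Theorem~\ref{step2-main}, one isolates a single surviving class. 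That class is the obstruction witnessing the identity $h\cdot h = 0$ (equivalently $(h)^2=0$), so it naturally sits in degree $-d$; when $h$ is additionally bi-homogeneous of bi-degree $(d_1,d_2)$, the entire L--S complex respects the bi-grading, forcing this unique class to have bi-degree $(-d_1,-d_2)$.

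The hard part will be the bookkeeping in the $T^2$ computation: identifying this single obstruction class explicitly, and, just as importantly, ruling out any other negative-degree contributions to $\Hom_{\O_Z}(\m,\O_Z)$ modulo derivations. This is exactly where the full strength of Setting~\ref{set-gor} enters, both the $d$-level property (which pins down the degrees in which relations and socles can sit in $\O_{X\cup_*Y}$) and the a priori bound $T^2(\O_X,\kk)_{<-d}=T^2(\O_Y,\kk)_{<-d}=0$ (which prevents obstructions from proliferating strictly below degree $-d$).
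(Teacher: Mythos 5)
Your outline for $T^1_{Z/X\cup_*Y}$ follows the same identification the paper uses, namely $T^1_{Z/X\cup_*Y}\cong\Hom_{\O_{X\cup_*Y}}((h),\O_Z)\cong\operatorname{soc}(\O_Z)[d]$, and for $Z=X\#Y$ the Gorenstein structure of $\O_{X\#Y}$ does pin the socle to degree $d$, giving $(T^1)_{<0}=0$. But the lemma is stated, and is used in Theorem~\ref{pre-thm} as well as Theorem~\ref{thm-negative}, for an \emph{arbitrary} homogeneous socle element $h$; the case Theorem~\ref{pre-thm} actually needs is the bi-graded one $h=f$ of bi-degree $(d,0)$, so that $Z=X\#\emptyset\cup_*Y$ and $\O_Z=\O_{X\#\emptyset}\times_\kk\O_Y$. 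This ring is not Gorenstein, and its socle is not concentrated in degree $d$: every element of $(\O_X)_{d-1}$ becomes a socle element of $\O_{X\#\emptyset}$, since $\m_X\cdot(\O_X)_{d-1}\subset(\O_X)_d=\kk f$ is killed in the quotient. Thus $\operatorname{soc}(\O_Z)[d]$ has a nontrivial piece in degree $-1$, and the Gorenstein argument simply does not apply there. You explicitly restrict to $Z=X\#Y$, so you have not established the first assertion of the lemma as stated; you would need a separate treatment of the bi-graded case (and this observation suggests that case deserves closer scrutiny in its own right).

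For $T^2_{Z/X\cup_*Y}$, what you give is a plan rather than a proof, and it is also a genuinely different route from the paper's. The paper's argument is intrinsic to the one-equation quotient: it identifies $T^2_{Z/X\cup_*Y}$ with $\Ext^1_{\O_{X\cup_*Y}}((h),\O_Z)=\Ext^1_{\O_{X\cup_*Y}}(\kk,\O_Z)[d]$ and then computes by a change of rings, evaluating $\Tor_1^{\O_{X\cup_*Y}}(\O_Z,\kk)$ directly from the start of the free resolution $\O_{X\cup_*Y}\xrightarrow{\cdot h}\O_{X\cup_*Y}\to\O_Z\to 0$ after tensoring with $\kk$; this produces a single copy of $\kk$ in the expected degree. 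At no point does the paper invoke Setting~\ref{set-gor} or Theorem~\ref{step2-main} inside this lemma; those hypotheses enter elsewhere (e.g.\ in Lemma~\ref{step3l}) but not here. Your proposed route instead works through the Lichtenbaum--Schlessinger presentation, identifies the relation module as $\Ann(h)=\m$ shifted by $d$, and would then need to control $\Hom(\m[-d],\O_Z)$ modulo the image of derivations. The raw degree count does not give you $-d$: a graded homomorphism $\m[-d]\to\O_Z$ of degree $k$ is constrained only to $-2d\le k\le -1$, so the entire content of the lemma is exactly the cancellation you label as ``bookkeeping'' and leave undone. The heuristic that the surviving class ``witnesses $h\cdot h=0$'' is not a justification either: the relevant relation module is all of $\Ann(h)=\m$, not the single Koszul relation $(h)^2=0$, and the obstruction concerns lifting $h$ so that the quotient stays flat, not the vanishing of $h^2$.
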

\begin{proof}
	The tangent space is a one-dimensional $\kk$-linear space generated by an element of degree zero.
	$$ T^1_{Z / X \cup_* Y}= \Hom_{\O_{X \cup_* Y}}((h),\O_{X\#Y })=\Hom_{\O_{X \cup_* Y}}(\kk,\O_{Z })[d]=\kk[-d][d]=\kk.$$
	We compute the obstruction space
	$$T^2_{Z / X \cup_* Y}= \Ext^1_{\O_{X \cup_* Y}}((h),\O_{Z })=\Ext^1_{\O_{X \cup_* Y}}(\kk,\O_{Z})[d],$$
	this module is concentrated in degree $-d$ (if $h$ is bi-graded then $(-d_1,-d_2)$). We compute 
	$$ \Hom_{\O_X}(\Tor_1^{\O_Y}(\kk,\O_{Z}),\O_{Z}) = \Ext^1_{\O_{X\cup_*Y}}(\kk,\O_{Z}).$$
	By the symmetry of the $\Tor$ functor, we can change the order of the arguments:
	$$ \Tor_1^{\O_{X\cup_*Y}}(\kk,\O_{Z}) = \Tor_1^{\O_{X\cup_*Y}}(\O_{Z},\kk)$$
	and look at the projective resolution of $\O_{Z}$ over $\O_{X\cup_* Y}$:
	$$\begin{tikzcd}
		F_2 \arrow[r] \arrow[d]  & \O_{X\cup_*Y} \arrow[r, "h"] \arrow[d] & \O_{X\cup_*Y} \arrow[r, "1"] \arrow[d]      & \O_{Z} \arrow[r] \arrow[d, "\otimes_{\O_{X\cup_*Y}}\kk"] & 0 \\
		F_2\otimes \kk \arrow[r] & \kk \arrow[r, "0"]            & \kk \arrow[r, no head,"Id"] & \kk \arrow[r]                              & 0.
	\end{tikzcd}$$
	Hence, we conclude that $\Tor_1^{\O_{X\cup_*Y}}(\O_{Z},\kk)=\kk$. That proves this part of the argument.
\end{proof}
If we take $h$, see Formula \eqref{eq-Z}, to be of degree $(d,0)$, then $h$ arises as a lift of a generator of the socle of $\O_X$, in particular $\O_Z=\O_{X\cup_* Y}/(h)$ is isomorphic to $\O_{X\#\emptyset}\times_\kk \O_Y$, which in turn is isomorphic to the structural sheaf of $X\#\emptyset \cup_* Y$. Recall from Definition \ref{conn-defi} that $f$ is a generator of the socle of $\O_X$, any $h$ of degree $(d,0)$ is $f$ rescaled by a scalar.
\begin{twr}\label{pre-thm}
	The good deformation functor $\Def^{<0}(X\#\emptyset\cup_*Y)$ is smoothly equivalent 
	to a closed hypersurface of $\Def^{<0}(X\cup_*Y)$ cut by an equation in bi-degree $(d,0)$.
	Moreover there is a diagram
	$$ \begin{tikzcd}
		\Spec(\O_X\otimes\O_Y/(f(t_x))) \arrow[d] \arrow[r] & \Def^{<0}(X\#\emptyset \cup_* Y) \arrow[d]      \\
		* \arrow[r]                                              & \Def^{<0}(X)\times \Def^{<0}(Y),
	\end{tikzcd}$$
	 Cartesian up to smooth equivalence.
	In particular, if $X$ and $Y$ are TNT algebras, then we get  a smooth equivalence
		 $$\Spec\frac{\O_X\otimes\O_Y}{f(t_x)} \to \Def^{<0}(X\#\emptyset \cup_* Y).$$
\end{twr}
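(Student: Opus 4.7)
The plan is to factor the comparison through the functor of deformations of the inclusion $Z := X\#\emptyset \cup_* Y \hookrightarrow X\cup_* Y$. Note that $Z$ is cut out in $X\cup_* Y$ by a bi-homogeneous equation $h=f$ of bi-degree $(d,0)$, namely a lift of the socle generator of $\O_X$, so $Z$ is precisely of the special form studied in Lemmas \ref{step3l} and \ref{step3}. First, I would apply Lemma \ref{step3l} to identify $\Def^{<0}(X\#\emptyset\cup_*Y)$ with $\Def^{<0}(Z\to X\cup_*Y)$ up to smooth equivalence; this reduces the problem to understanding the relative structure of the forgetful map down to $\Def^{<0}(X\cup_*Y)$.

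Next, I would invoke Lemma \ref{step3}, which supplies a relative tangent--obstruction theory for $\Def^{<0}(Z\to X\cup_*Y)\to \Def^{<0}(X\cup_*Y)$ in which the relative tangent vanishes and the relative obstruction is a one-dimensional $\kk$-vector space concentrated in bi-degree $(-d,0)$. By the standard smooth-equivalence argument behind Lemma \ref{technical-deformation} read in reverse, a morphism of good deformation functors with vanishing relative tangent and one-dimensional relative obstruction exhibits the source, up to smooth equivalence, as a hypersurface inside the target cut out by a single equation in the degree of the obstruction. Hence $\Def^{<0}(X\#\emptyset\cup_*Y)$ is smoothly equivalent to the zero locus of some equation $e$ of bi-degree $(-d,0)$ inside $\Def^{<0}(X\cup_*Y)$.

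Now I would pass to the explicit model provided by Corollary \ref{fractal-union}, which gives a Cartesian square, up to smooth equivalence, identifying $\Def^{<0}(X\cup_*Y)$ with $X\times Y$ fibered over $\Def^{<0}(X)\times\Def^{<0}(Y)$. The $X$- and $Y$-factors here arise, via Proposition \ref{pt-in-Y} and the proof of Theorem \ref{fractal1}, as the functors $\Def(*/X)$ and $\Def(*/Y)$ parameterising ``translations'' of the distinguished point. Because $e$ has pure $x$-degree $-d$, it is necessarily pulled back from the $X$-factor and independent of the $Y$-factor; and the description of the obstruction class in the proof of Lemma \ref{step3}, as coming from the single generator $h=f$ of the ideal $(h)$ defining $Z$, identifies $e$ with $f$ itself, regarded as a function on the $X$-factor via the translation isomorphism. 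That is, $e=f(t_x)\in \O_X\otimes\O_Y$, which produces the Cartesian square in the statement. For the ``in particular'' clause, the TNT hypothesis forces $(T^1_X)_{<0}=(T^1_Y)_{<0}=0$, and by Lemma \ref{t0-fix} also $(T^0_X)_{<0}=(T^0_Y)_{<0}=0$, so $\Def^{<0}(X)$ and $\Def^{<0}(Y)$ are the trivial deformation functor; the square then collapses to the stated smooth equivalence
\[
\Spec\!\left(\O_X\otimes\O_Y/(f(t_x))\right)\longrightarrow \Def^{<0}(X\#\emptyset\cup_*Y).
\]

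The hard part is the identification of $e$ with $f(t_x)$: one must track how the degree $(-1,0)$ tangent directions of $\Def^{<0}(X\cup_*Y)$ coming from the $X$-factor assemble, under the obstruction calculus implicit in Lemma \ref{step3}, into exactly the socle element whose vanishing defines $X\#\emptyset$. All the remaining steps are either direct invocations of the previously established lemmas or routine tangent--obstruction bookkeeping in the Białynicki-Birula decomposition developed in Section \ref{sec-3}.
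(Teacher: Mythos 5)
Your plan correctly sets up the first part of the argument: factoring through $\Def^{<0}(Z\to X\cup_*Y)$ via Lemma \ref{step3l}, computing the relative tangent--obstruction theory via Lemma \ref{step3}, and passing to the explicit model $X\times Y$ over $\Def^{<0}(X)\times\Def^{<0}(Y)$ via Corollary \ref{fractal-union}. This is essentially the same skeleton as the paper's proof.

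However, there is a genuine gap in the middle step. You write that ``a morphism of good deformation functors with vanishing relative tangent and one-dimensional relative obstruction exhibits the source, up to smooth equivalence, as a hypersurface inside the target cut out by a single equation.'' This is not true as stated. An obstruction \emph{space} of dimension one only bounds the number of defining equations from above: it says the kernel of the map of prorepresenting rings is generated by \emph{at most} one element. If the obstruction actually vanishes, the map is smooth and the ``hypersurface'' is the whole ambient space. So before you can say that the source is genuinely cut out by $f(t_x)$, you must separately prove that the pullback $P\to X\times Y$ is \emph{not} smooth. You also flag the identification $e=f(t_x)$ as ``the hard part,'' but once non-smoothness is known this identification is actually mechanical, since the only elements of $\O_X\otimes\O_Y$ of bi-degree $(d,0)$ are scalar multiples of the socle generator $f$ of $\O_X$. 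The genuinely hard part, which your proposal does not address, is the non-smoothness itself.

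The paper's argument for this step is the following. Restrict to pure $x$-degree: using the fact that $X\#\emptyset$ and $Y$ satisfy the hypotheses of Lemma \ref{step2} together with Lemma \ref{lema-gor1}, one obtains a diagram that, upon taking equivariant invariants with respect to the $\G_m$-factor acting on the $Y$-variables, yields a chain $X\#\emptyset\cong\Def(*/X\#\emptyset)\to P_X\to X$, where $P_X$ is the $x$-graded part of $P$. If $P\to X\times Y$ were smooth and an isomorphism on tangent spaces, then so would $P_X\to X$ be, and then the composite $X\#\emptyset\to X$ would be injective on tangent spaces and unobstructed. But $X\#\emptyset\to X$ corresponds to the surjection $\O_X\to\O_X/(f)$, which is an isomorphism on cotangent spaces (since $f$ is in the socle, hence in $\m^2$) but is emphatically not an isomorphism of rings; so it cannot be both injective on tangents and unobstructed. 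This contradiction is the ingredient you need to add. Once it is in place, the rest of your outline goes through.
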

\begin{proof}
	By Corollary \ref{fractal-union}, there exists the following Cartesian square up to smooth equivalence
$$\begin{tikzcd}
	P \arrow[d] \arrow[r]         & \Def^{<0}(X\#\emptyset\cup_* Y) \arrow[d] \\
	X\times Y \arrow[r] \arrow[d] & \Def^{<0}(X\cup_* Y) \arrow[d]    \\
	* \arrow[r]                   & \Def^{<0}(X)\times\Def^{<0}(Y).    
\end{tikzcd}
$$
	In the diagram above, we abuse the notation and we write $\Def^{<0}(X\# Y)$ in place of ${\Def^{<0}(X\# Y \to X\cup_* Y)}$, as by Lemma \ref{step3l} there is a smooth map between those functors.
	The pullback map $P\to X\times Y$ is an isomorphism on the tangent spaces and has one obstruction in  degree $(d,0)$, see Lemma~\ref{step3}.
If we prove that this map is not smooth, then it is a closed embedding given by one equation of degree $(d,0)$. Consequently, it is isomorphic to the spectrum of algebra $(\O_X\otimes \O_Y)/(f)$. This is exactly the statement of the theorem.
\vskip 0.2 cm
Since, by the second part of Lemma \ref{lema-gor1}, the scheme $X\#\emptyset$ satisfies the vanishing ${(T^1_{X\#\emptyset})_{<-1}=0}$, we get the following Cartesian up to smooth equivalence diagram
$$
\begin{tikzcd}
	\Def(*/X\#\emptyset)  \arrow[d] \arrow[r] & \Def^{<0}(*\to X\# \emptyset)    \arrow[d]  \\
	* \arrow[r]                                      & \Def^{<0}(X\# \emptyset),                    &                         
\end{tikzcd}$$
since $X\#\emptyset$ and $Y$ satisfy the conditions of Lemma \ref{step2}, the map 
$$\Def^{<0}(*\to X\# \emptyset) \to \Def^{<0}(X\#\emptyset \cup_* Y )$$
is unobstructed and surjective on pure $x$-degrees, thus if we restrict to pure $x$-degrees we get the following diagram
$$\begin{tikzcd}
	P_X \arrow[d] \arrow[r] & \Def^{<0}(*\to X\#\emptyset) \arrow[d] \\
	X \arrow[r] \arrow[d]   & \Def^{<0}(* \to X) \arrow[d]   \\
	* \arrow[r]             & \Def^{<0}(X),                  
\end{tikzcd}$$
where $P_X$ is the $x$-graded part of $P$. To be precise $P_X=P^{\{0\}\times \G_m}$ is the equivariant functor with respect to the second component $\G_m$ (coming from $Y$).
The composed map ${\Def(*/X\#\emptyset)^{<0}\to \Def(X)^{<0}}$ is trivial, since by Lemma \ref{lema-gor1} the functor $\Def^{<0}(X\#\emptyset\backslash X)$ is trivial, as it's tangent and obstruction spaces vanish.
Therefore, we get a map $$X\#\emptyset=\Def(*/X\#\emptyset) \to P_X \to X.$$
If $P\to X\times Y$ is smooth and isomorphic on tangent spaces, then so is $P_X \to X$, and then the map $X\# \emptyset \to X$ is injective on tangent spaces and unobstructed, which is false.
\end{proof}
The final argument is derived from the previous one. The initial part is identical, but then we need to employ the former result.
\begin{twr} \label{thm-negative}
	The good deformation functor $\Def^{<0}(X\#Y)$ is smoothly equivalent to a closed hypersurface of $\Def^{<0}(X\cup_* Y)$ cut by an equation in degree $d$. 
	 Moreover, we get the diagram	
	$$ \begin{tikzcd}
	 	\Spec(\O_X\otimes\O_Y/(f(t_x)-g(t_y))) \arrow[d] \arrow[r] & \Def^{<0}(X\# Y) \arrow[d]      \\
	 	* \arrow[r]                                              & \Def^{<0}(X)\times \Def^{<0}(Y),
	 \end{tikzcd}$$
	 to be Cartesian up to smooth equivalence.
	 In particular, if $X$ and $Y$ are TNT, algebras then we get a smooth map
	 $$\Spec\frac {\O_X\otimes\O_Y}{f(t_x)-g(t_y)} \to \Def^{<0}(X\# Y).$$
\end{twr}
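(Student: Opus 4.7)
The plan is to mirror the proof of Theorem~\ref{pre-thm}, replacing $X\#\emptyset \cup_* Y$ by $X\#Y$, where now the cutting element $h = f - g$ is diagonally homogeneous of degree $d$ but not bi-homogeneous. First, by Lemma~\ref{step3l}, the forgetful map $\Def^{<0}(X\#Y \to X\cup_*Y) \to \Def^{<0}(X\#Y)$ is smooth and an isomorphism on tangent spaces, so we may work with the former. Combining with Corollary~\ref{fractal-union} for $\Def^{<0}(X\cup_*Y)$, one obtains a Cartesian square up to smooth equivalence
$$\begin{tikzcd}
P \arrow[d] \arrow[r] & \Def^{<0}(X\#Y) \arrow[d] \\
X\times Y \arrow[r] \arrow[d] & \Def^{<0}(X\cup_*Y) \arrow[d] \\
* \arrow[r] & \Def^{<0}(X)\times\Def^{<0}(Y),
\end{tikzcd}$$
where $P$ is the fibre to be identified. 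By Lemma~\ref{step3} the right-hand inclusion has trivial relative tangent and a one-dimensional relative obstruction concentrated in degree $-d$; pulling back, the map $P \to X \times Y$ is an isomorphism on tangent spaces with at most one obstruction of degree $d$.

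Next I would identify the equation cutting out $P$. Under the smooth equivalence of Corollary~\ref{fractal-union}, a base point $(t_x, t_y) \in X \times Y$ corresponds to the deformation of $\O_{X\cup_*Y}$ obtained by translating the two distinguished points along the respective factors, with total algebra sitting naturally inside $\O_X \otimes \O_Y$. Under this identification the socle generators $f \in \O_X$ and $g \in \O_Y$ lift to $f(t_x) := f\otimes 1$ and $g(t_y) := 1 \otimes g$, and the defining relation $h = f - g$ lifts to $f(t_x) - g(t_y) \in \O_X \otimes \O_Y$. The obstruction class produced by Lemma~\ref{step3} is represented by this very element, so $P$ is the vanishing locus of $f(t_x) - g(t_y)$.

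It remains to verify that this equation is not identically zero, equivalently that the relative obstruction is nonvanishing. Since $f(t_x)$ lies in the bi-degree $(d,0)$-piece of $\O_X \otimes \O_Y$ and $g(t_y)$ lies in bi-degree $(0,d)$, both being nonzero socle generators, their difference is a nonzero diagonally homogeneous element of degree $d$. Hence $P \cong \Spec(\O_X \otimes \O_Y / (f(t_x) - g(t_y)))$, proving the Cartesian square. In the TNT case both $\Def^{<0}(X)$ and $\Def^{<0}(Y)$ are trivial and the square degenerates into the asserted smooth map.

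The main obstacle, as in Theorem~\ref{pre-thm}, is making the identification of the obstruction class with the concrete element $f(t_x) - g(t_y)$ rigorous. One way to do this is to specialise to $t_y = 0$, which recovers exactly the setting of Theorem~\ref{pre-thm} and fixes the $f(t_x)$ summand; by the symmetric specialisation $t_x = 0$ the $-g(t_y)$ summand is fixed; these two specialisations together with the one-dimensionality of the relative obstruction space force the global obstruction to be $f(t_x) - g(t_y)$ up to a unit. With this identification in place, all remaining steps are the same diagram chase as in Theorem~\ref{pre-thm}.
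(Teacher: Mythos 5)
Your set-up of the Cartesian square via Lemma~\ref{step3l}, Corollary~\ref{fractal-union}, and Lemma~\ref{step3} matches the paper, and you correctly reduce the problem to showing the map $P \to X\times Y$ is not smooth. But the argument you offer for this crucial step has a gap.

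Your claim that "the obstruction class produced by Lemma~\ref{step3} is represented by $f(t_x)-g(t_y)$" is asserted, then defended only by a specialisation heuristic in the final paragraph; and your nonvanishing argument ("both being nonzero socle generators, their difference is a nonzero element") presupposes exactly that identification. The specialisation to $t_y=0$ does not "recover exactly the setting of Theorem~\ref{pre-thm}": Theorem~\ref{pre-thm} concerns $X\#\emptyset\cup_*Y$, which is the quotient of $\O_{X\cup_* Y}$ by the degree-$(d,0)$ socle element $f$, whereas what you get by restricting the $\Def^{<0}(X\# Y)$ family to $t_y=0$ is a different object, namely the deformations of $X\# Y$ over $\O_X$ alone. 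Theorem~\ref{pre-thm} says nothing a priori about the latter, so the claim that the $f(t_x)$-summand is "fixed" needs a genuine argument, not an appeal to symmetry.

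What is actually needed — and what the paper does — is a direct proof of non-smoothness by contradiction. One assumes $P\to X\times Y$ is smooth and pushes the maximal family $(B,\mathfrak n)$ over $(A,\m)=(\O_X\otimes\O_Y,\m)$ forward to $\Def^{<0}(X\cup_*Y)$. One then shows, by decomposing an annihilating element into its bidegree-$(1,0)$ and $(0,1)$ pieces, that $\Ann(\tilde f-\tilde g)\subset \Ann(\tilde f)$ inside $B$, where $\tilde f,\tilde g$ are bigraded lifts of $f,g$. Via the local criterion for flatness this forces $B/(\tilde f)$ to be $A$-flat, i.e.\ it produces a lift of $X\#\emptyset\cup_*Y$ over all of $\O_X\otimes\O_Y$, which contradicts Theorem~\ref{pre-thm}. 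This annihilator/flatness step is the missing idea; without it, your proposal does not establish that the obstruction is nonzero, which is the heart of the theorem.
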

\begin{proof}
	We proceed similarly to Theorem \ref{pre-thm}.
		We take the following Cartesian square.
	$$\begin{tikzcd}
		P \arrow[d] \arrow[r]         & \Def^{<0}(X\# Y) \arrow[d] \\
		X\times Y \arrow[r] \arrow[d] & \Def^{<0}(X\cup_* Y) \arrow[d]    \\
		* \arrow[r]                   & \Def^{<0}(X)\times\Def^{<0}(Y).    
	\end{tikzcd}
	$$
	In the diagram above, we abuse the notation and we write $\Def^{<0}(X\# Y)$ in place of ${\Def^{<0}(X\# Y \to X\cup_* Y)}$, as by Lemma \ref{step3l} there is a smooth map between those functors.
	The pullback map $P\to X\times Y$ is an isomorphism on tangent spaces and has one obstruction in degree $d$.
	If we prove that this map is not smooth, then it is a closed embedding given by one non bi-homogeneous equation of degree $d$. As a consequence, it is isomorphic to the spectrum of $\O_X\otimes \O_Y/(f-g)$. That is exactly the statement of the theorem.
	\vskip 0.2 cm
	Let us assume that the pullback map $P\to X\times Y$ is smooth. Let us take the image of maximal family from $X\times Y$ in $\Def^{<0}(X\cup_*Y)$, this is a local algebra $(B,\mathfrak n)$ over $(A,\m)=(\O_X\otimes \O_Y,\m)$ which over the closed point specializes to $B/(\m B)=\O_{X\cup_*Y}$, see Theorem \ref{step2-main}.
	Since $\O_{X\#Y}$ is given as a quotient $\O_{X \cup_* Y}/(f-g)$, a lift of $B$ to $\Def^{<0}(X\#Y)$ is given by a lift $\tilde f - \tilde g$ of the equation $f-g$ (we assume that $\tilde f$ is a bigraded lift of $f$ and the same for $\tilde g$ and $g$). By flatness, we obtain the following map of exact sequences
	$$\begin{tikzcd}
		0 \arrow[r] & \Ann(\tilde f - \tilde g) \arrow[r] \arrow[d] & B \arrow[r, "\tilde f -\tilde g"] \arrow[d] & B \arrow[r] \arrow[d]    & B/(\tilde f - \tilde g) \arrow[r] \arrow[d] & 0 \\
		0 \arrow[r] & \Ann(f-g) \arrow[r]                           & \O_{X\cup_* Y} \arrow[r]                    & \O_{X\cup_* Y} \arrow[r] & \O_{X\#Y} \arrow[r]                         & 0.
	\end{tikzcd}$$
	The following identities hold $$\Ann(f-g)=\Ann(f)=\Ann(g)=\frac{\mathfrak n}{\m B},$$ since those are equations from the socle. Note that $\mathfrak n$ is generated in degree $1$, so in the bi-degrees $(1,0)$ and $(0,1)$.
	The annihilator $\Ann(\tilde f - \tilde g)$ is a graded module (not necessarily bi-graded).
	Let us take $b\in \Ann(\tilde f - \tilde g) \subset B$ it can be decomposed into bi-graded pieces, since $\deg(b)=1$ then $b=b_x+b_y$ with $\deg(b_x)=(1,0)$ and $\deg(b_y)=(0,1)$.
	$$ 0=(b_x+b_y)(\tilde f - \tilde g)= b_x\tilde f + b_y\tilde f- b_x\tilde g- b_y\tilde g,$$
	where every element in the sum has a different degree, thus $(b_x+b_y)(\tilde f)=0$, as a consequence $$\Ann(\tilde f - \tilde g) \subset \Ann(\tilde f).$$
	Now, we consider the problem of $A$-flatness of $B/\tilde f $. By the local criterion for flatness, we need to compute if $\Tor^1_A(A/\m,B/(\tilde f))$ vanishes.
		$$\begin{tikzcd}
		0 \arrow[r] & \Ann(\tilde f ) \arrow[r] \arrow[d] & B \arrow[r, "\tilde f"] \arrow[d] & B \arrow[r] \arrow[d]    & B/(\tilde f ) \arrow[r] \arrow[d] & 0 \\
		& \Ann(\tilde f )\otimes A/\m \arrow[r]                           & \O_{X\cup_* Y} \arrow[r]                    & \O_{X\cup_* Y} \arrow[r] & \O_{X\#\emptyset \cup_* Y} \arrow[r]                         & 0.
	\end{tikzcd}$$
	Since $\Ann(\tilde f - \tilde g) \subset \Ann(\tilde f)$ we get that $\Ann(\tilde f)\otimes A/\m$ contains lifts of $\mathfrak n/\m B$. That implies the desired vanishing. Consequently, we have shown that assuming smoothness of $P\to X\times Y$ implies that $$\Def^{<0}(X\#\emptyset\cup_* Y) \to \Def^{<0}(X\cup_* Y)$$ is smooth. But this contradicts Theorem \ref{pre-thm}.
\end{proof}

\begin{stwr}\label{nonreducedhash}
	Assume that $X$ and $Y$ are uncleavable schemes satisfying Setting \ref{set-gor}. Then
	$\Def^{<0}(X\#Y)$ (and also	$\Def^{<0}(X\#\emptyset \cup_* Y)$) is non-reduced.
\end{stwr}
\begin{proof}
	In Theorem \ref{thm-negative}, we have shown that $\Def^{<0}(X\#Y)$ is a closed subfunctor of ${\Def^{<0}(X\cup_* Y)}$, cut out by one equation in degree $d$. Since, by Corollary \ref{non-reduced-union}, $\Def^{<0}(X\cup_* Y)$ is non-reduced, we get a hypersurface of a reduced scheme.
	Since the length of $X\times Y$ is greater than $2$ (since neither is a closed point), a subscheme of colength $1$ is non-reduced as well.
	Therefore, we get the statement.
\end{proof}

\section{Applications to the Gorenstein locus}
In this section, we return to studying of the Gorenstein locus of the Hilbert scheme of points to show applications of the obtained results.
Let us recall, from Definition \ref{HGor-defi}, that $\HG{m}{n}$ denotes the Gorenstein locus of the Hilbert scheme of $m$-points on $\A^n$. 

\begin{twr}\label{app1}
	 	Let us take a pair of uncleavable Gorenstein finite schemes
	 $$X=\Spec \Apolar(F),$$
	 $$Y=\Spec\Apolar(G),$$
	 $F\in P_x$ and $G\in P_y$, $F,G$ homogeneous polynomials of degree $d\geq 3$,
	 such that
	 $$ (T^1_X)_{<-1}=(T^1_Y)_{<-1}=0$$
	 is satisfied.
	 We add two conditions
	 \begin{itemize}
	 	\item 	The following second cotangent modules vanish	$$T^2(\O_X,\kk)_{<-d}=T^2(\O_Y,\kk)_{<-d}=0.$$
	 	\item The ideals $I_X=\Ann(F)\subset S_x$ and $I_Y=\Ann(G)\subset S_y$ are generated in {the~degrees~${\leq d-1}$}.
	 \end{itemize}
	 \vskip 0.2 cm
	 Then, the negative spike $\Hilb^{<0}_{[X\#Y]}$ has the smooth type of a family over the product of negative spikes of $X$ and $Y$, such that the special fibre is $$\frac{\O_X\otimes \O_Y}{f-g},$$ where $f\circ F=1$ and $g\circ G=1$.
\end{twr}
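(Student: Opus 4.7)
The plan is to transfer the description of the abstract deformation functor $\Def^{<0}(X\#Y)$ from Theorem~\ref{thm-negative} to the negative spike $\Hilb^{<0}_{[X\#Y]}$, by base changing along the smooth morphisms from local Hilbert functors to the corresponding abstract deformation functors. The three hypotheses assumed here are exactly those of Setting~\ref{set-gor}, so Theorem~\ref{thm-negative} applies and produces the Cartesian square (up to smooth equivalence)
$$
\begin{tikzcd}
\Spec(\O_X\otimes\O_Y/(f(t_x)-g(t_y))) \arrow[d] \arrow[r] & \Def^{<0}(X\#Y) \arrow[d] \\
* \arrow[r] & \Def^{<0}(X)\times\Def^{<0}(Y).
\end{tikzcd}
$$

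First, I would define the candidate family as the fibre product
$$
\mathcal F := \Def^{<0}(X\#Y)\times_{\Def^{<0}(X)\times\Def^{<0}(Y)} \bigl(\Hilb^{<0}_{[X]}\times\Hilb^{<0}_{[Y]}\bigr),
$$
viewed as a morphism $\mathcal F \to \Hilb^{<0}_{[X]}\times\Hilb^{<0}_{[Y]}$. The base-change map $\Hilb^{<0}_{[X]}\times\Hilb^{<0}_{[Y]}\to \Def^{<0}(X)\times\Def^{<0}(Y)$ is the product of two smooth natural transformations supplied by Corollary~\ref{smooth-strict}, and is therefore itself smooth. Since fibres are preserved by pullback, the special fibre of $\mathcal F$ over the distinguished point is $\Spec(\O_X\otimes\O_Y/(f-g))$, as required.

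Next, I would show that $\mathcal F$ and $\Hilb^{<0}_{[X\#Y]}$ are smoothly equivalent. Both admit natural morphisms to $\Def^{<0}(X\#Y)$: the one from $\Hilb^{<0}_{[X\#Y]}$ is smooth by Corollary~\ref{smooth-strict}, while the one from $\mathcal F$ is the base change of a smooth morphism and is hence smooth as well. This provides the desired smooth equivalence between $\Hilb^{<0}_{[X\#Y]}$ and the family $\mathcal F \to \Hilb^{<0}_{[X]}\times\Hilb^{<0}_{[Y]}$, completing the argument.

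The main technical obstacle is bookkeeping: Theorem~\ref{thm-negative} supplies only a Cartesian square up to smooth equivalence rather than on the nose, so one must verify that the base change against a smooth morphism still yields a diagram with the correct special fibre and smooth comparison maps on both sides. This is a formal consequence of the fact that smooth morphisms of good deformation functors are stable under composition and base change, so no further geometric input beyond Corollary~\ref{smooth-strict} and Theorem~\ref{thm-negative} is required.
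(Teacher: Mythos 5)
Your proposal is correct and follows the same route as the paper's proof: invoke Theorem~\ref{thm-negative} for the description of $\Def^{<0}(X\#Y)$, then transfer to $\Hilb^{<0}_{[X\#Y]}$ via the smooth natural transformation of Corollary~\ref{smooth-strict}. You are in fact slightly more careful than the paper, which leaves implicit the base change replacing $\Def^{<0}(X)\times\Def^{<0}(Y)$ by $\Hilb^{<0}_{[X]}\times\Hilb^{<0}_{[Y]}$ in the base of the family; your explicit construction of $\mathcal F$, together with the observation that smoothness is stable under base change, supplies exactly that missing bookkeeping.
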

\begin{proof}
	By Theorem~\ref{thm-negative} the smooth type of $\Def^{<0}(X\#Y)$ is as claimed in the statement. By Corollary~\ref{smooth-strict} the map $\Hilb^{<0}_{[X\#Y]} \to \Def^{<0}(X\#Y)$ is smooth, and therefore the functors are smoothly equivalent.
\end{proof}

\begin{stwr}\label{app3}
	If in Theorem~\ref{app1} we assume that $X$ and $Y$ have trivial negative tangents, then 
	the negative spike  $\Hilb^{<0}_{[X\#Y]}$ has the smooth type of 
	$$\frac{\O_X\otimes \O_Y}{f-g}.$$ 
\end{stwr}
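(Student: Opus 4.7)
The plan is to specialise Theorem~\ref{app1} to the TNT situation and observe that the base of the resulting family collapses to a point. First I would invoke Theorem~\ref{app1} (equivalently, combine the ``in particular'' clause of Theorem~\ref{thm-negative} with the smooth transformation $\Hilb^{<0}_{[X\#Y]} \to \Def^{<0}(X\#Y)$ from Corollary~\ref{smooth-strict}) to present $\Hilb^{<0}_{[X\#Y]}$ as smoothly equivalent to a family whose base is $\Def^{<0}(X) \times \Def^{<0}(Y)$ and whose special fibre is $\Spec(\O_X \otimes \O_Y /(f-g))$.

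The second step is to trivialise the base using the TNT hypothesis. By Corollary~\ref{fund-negative-def}, the tangent spaces of the good deformation functors $\Def^{<0}(X)$ and $\Def^{<0}(Y)$ are precisely $(T^1_X)_{<0}$ and $(T^1_Y)_{<0}$, both of which vanish by assumption. In particular these tangent spaces are finite-dimensional, so Theorem~\ref{schless-twr} supplies each with a semi-universal family $h_R \to \Def^{<0}(X)$ where $R$ is a complete local ring of zero Zariski cotangent space, forcing $R=\kk$. A brief induction on the length of the test algebra, using Definition~\ref{defi-smooth} of a smooth morphism applied to the semi-universal map $* \to \Def^{<0}(X)$, then identifies $\Def^{<0}(X)$ (and symmetrically $\Def^{<0}(Y)$) with the trivial functor.

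Substituting $\Def^{<0}(X) \times \Def^{<0}(Y) \cong *$ into the conclusion of Theorem~\ref{app1} collapses the family to its special fibre $\Spec(\O_X \otimes \O_Y /(f-g))$, yielding the stated smooth equivalence. The only point in the plan that is not purely formal is the passage from vanishing tangent space to triviality of the deformation functor; this is an immediate consequence of Schlessinger's theorem together with the trivial induction sketched above, so I anticipate no substantive obstacle to the argument.
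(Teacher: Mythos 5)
Your argument is correct and follows essentially the same route as the paper: the paper simply quotes the ``in particular'' clause of Theorem~\ref{thm-negative} (which is where your Step~2 is already absorbed) and then composes with the smooth map of Corollary~\ref{smooth-strict}, exactly as you suggest in your first paragraph. Your verification that TNT forces $\Def^{<0}(X)$ and $\Def^{<0}(Y)$ to be the trivial functor is a correct unpacking of that clause, though no induction is needed: once the semi-universal family is $*\to \Def^{<0}(X)$, applying Definition~\ref{defi-smooth} to the surjection $A'\to\kk$ and using $\Def^{<0}(X)(\kk)=\{*\}$ already gives $\Def^{<0}(X)(A')=\{*\}$ for every $A'$.
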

\begin{proof}
		By Theorem \ref{thm-negative} we get a smooth map 	 $$\Spec\frac {\O_X\otimes\O_Y}{f-g} \to \Def^{<0}(X\# Y).$$
		By Corollary \ref{smooth-strict} the map $\Hilb^{<0}_{[X\#Y]} \to \Def^{<0}(X\#Y)$, as a consequence $\Spec\frac {\O_X\otimes\O_Y}{f-g}$ is smoothly equivalent to the negative spike.
\end{proof}

\begin{stwr}\label{app4}
	Take $X,Y$ as in Theorem \ref{app1}, and assume that the positive tangent spaces $(T^1_X)_{>0}$, $(T^1_Y)_{>0}$ vanish. Then $\Def(X\# Y)$ is non-reduced and consequently $X\#Y$ gives us a non-reduced point on the Hilbert scheme.
\end{stwr}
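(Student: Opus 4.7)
The plan is to propagate the non-reducedness of $\Def^{<0}(X\#Y)$, which follows from Corollary \ref{nonreducedhash} under the present hypotheses, first to $\Def^{\leq 0}(X\#Y)$, then to $\Def(X\#Y)$, and finally to the Hilbert scheme at $[X\#Y]$ via the smooth comparison map.

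The first step is to establish $(T^1_{X\#Y})_{>0}=0$. I would apply Braid \eqref{braid0} to the closed embedding $X\#Y\hookrightarrow X\cup_* Y$, which is cut out by $h:=f-g$. By the same calculation as in Lemma \ref{step3}, $T^1_{X\#Y/X\cup_*Y}$ is concentrated in degree $0$ and $T^2_{X\#Y/X\cup_*Y}$ is concentrated in degree $-d$; in particular both vanish in strictly positive degrees. The relevant fragment of the Braid,
\begin{equation*}
(T^1_{X\#Y/X\cup_*Y})_{>0}\to (T^1_{X\#Y})_{>0}\to (T^1_{X\cup_*Y})_{>0}\to (T^2_{X\#Y/X\cup_*Y})_{>0},
\end{equation*}
then yields $(T^1_{X\#Y})_{>0}\cong (T^1_{X\cup_*Y})_{>0}$. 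To kill the right-hand side, I would run a positive-degree analog of the arguments of Section \ref{sec-glued}: using the presentation $\Hom_R(J/J^2,\O_{X\cup_*Y})\twoheadrightarrow T^1_{X\cup_*Y}$ as in Lemma \ref{mixed}, the pure $x$- and $y$-degree pieces of the source are controlled by the hypothesized vanishings $(T^1_X)_{>0}=(T^1_Y)_{>0}=0$, while the mixed positive bi-degrees are ruled out by a case analysis analogous to Lemma \ref{mixed}, exploiting the $d$-level property of $\O_X,\O_Y$ from Setting \ref{set-gor} and the fact that $\O_{X\cup_*Y}$ has no elements of genuinely mixed bi-degree.

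With $(T^1_{X\#Y})_{>0}=0$ in hand, the symmetric version of Lemma \ref{lem-cruc} (exchanging $\geq 0$ and $\leq 0$ throughout Section \ref{sec-3}) shows that the forgetful map $\Def^{\leq 0}(X\#Y)\to \Def(X\#Y)$ is smooth. The projection $\Def^{\leq 0}(X\#Y)\to \Def^0(X\#Y)$ given by the associated graded is split by the natural inclusion of graded into filtered deformations, and by Definition \ref{abs-fibre} its fiber over the trivial deformation is exactly $\Def^{<0}(X\#Y)$. Corollary \ref{nonreducedhash} gives non-reducedness of $\Def^{<0}(X\#Y)$; applying the functorial analog of Lemma \ref{lemma-section} to the semi-universal families then yields non-reducedness of $\Def^{\leq 0}(X\#Y)$. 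Since a smooth morphism of prorepresentable deformation functors corresponds on prorepresenting rings to a formally smooth, hence power-series, extension, non-reducedness transfers to $\Def(X\#Y)$. Finally, the smooth comparison $\Hilb_{[X\#Y]}\to \Def(X\#Y)$ (as used in the proof of Proposition \ref{pointed-nonreduced}) transports the non-reducedness to the Hilbert scheme, exhibiting $[X\#Y]$ as a non-reduced point as claimed.

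The main technical obstacle lies in the mixed-degree analysis in the first step: enumerating the bi-degrees $(k,l)$ with $k+l>0$ and verifying that each non-pure bi-homogeneous element of $\Hom_R(J/J^2,\O_{X\cup_*Y})$ of such degree necessarily vanishes. This is a positive-range counterpart of Lemma \ref{mixed}; although the overall strategy is parallel, the individual cases must be rechecked, and crucially the combination of the absence of truly mixed bi-degrees in $\O_{X\cup_*Y}$ with the socle-degree bound coming from the $d$-level property is what ultimately covers all the cases.
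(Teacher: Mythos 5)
Your overall architecture matches the paper's: show $\Def^{<0}(X\#Y)$ is non-reduced (Corollary \ref{nonreducedhash}), transfer to $\Def^{\leq 0}(X\#Y)$ via the Cartesian diagram with section (Lemma \ref{lemma-section}), and then compare to the local Hilbert functor using the smooth maps of Lemmas \ref{smooth-nonstrict} and \ref{abb-smooth}. Those steps are sound. The problem lies in your first step, the attempt to derive $(T^1_{X\#Y})_{>0}=0$.

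The sequence you write,
$(T^1_{X\#Y/X\cup_*Y})_{>0}\to (T^1_{X\#Y})_{>0}\to (T^1_{X\cup_*Y})_{>0}\to (T^2_{X\#Y/X\cup_*Y})_{>0}$,
is not one of the four long exact sequences in Buchweitz's braid. Writing $Z=X\#Y$ and $W=X\cup_*Y$, the braid connects $T^1_Z$ and $T^1_W$ only through the intermediate node $T^1_{Z\to W}$, via the two sequences $T^1_{Z\backslash W}\to T^1_{Z\to W}\to T^1_Z \to T^2_{Z\backslash W}$ and $T^1_{Z/W}\to T^1_{Z\to W}\to T^1_W\to T^2_{Z/W}$; there is no direct arrow $T^1_Z\to T^1_W$. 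To carry the comparison across, you would need a positive-degree analogue of Lemma \ref{step3l}, i.e.\ vanishing of $(T^i_{Z\backslash W})_{>0}$ for $i=1,2$. But $T^i_{Z\backslash W}\cong T^i_{\kk}(\O_W,\kk)[d]$, so the positive part is $T^i_{\kk}(\O_W,\kk)_{>-d}$, which is nonzero already for $i=1$ (the ideal $J$ of $W$ has generators such as $x_iy_j$ in degree $2<d$, so their duals live in degree $-2>-d$). So the claimed isomorphism $(T^1_{X\#Y})_{>0}\cong(T^1_{X\cup_*Y})_{>0}$ does not follow, and the whole reduction to the union along the point breaks.

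The paper's own proof is terse on exactly this point: it simply says ``Since the positive tangent space vanishes'' without deriving this from the hypotheses on $X$ and $Y$. In the actual application, Corollary \ref{main-result}, the vanishing of $(T^1_{X\#Y})_{>0}$ is obtained directly: $X\#Y=\Spec\Apolar(F+G)$ is itself the apolar algebra of a cubic with Hilbert function $(1,m_1+m_2,m_1+m_2,1)$, so Lemma \ref{cubic-positive} applies to it verbatim. If you want to prove Corollary \ref{app4} as stated in the general degree-$d$ setting, you would have to either add the hypothesis $(T^1_{X\#Y})_{>0}=0$ explicitly, or establish it directly from the structure of $X\#Y$; propagating it from $X$ and $Y$ through the union along the point via the braid does not work without the positive-degree vanishing of $T^i_{Z\backslash W}$, which fails.
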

\begin{proof}
	By Corollary \ref{nonreducedhash}, we obtain non-reducedness of $\Def^{<0}(X\#Y)$.
	We get a Cartesian diagram with a section as presented below:
	$$\begin{tikzcd}
		\Def^{<0}(X\#Y) \arrow[d] \arrow[r] & \Def^{\leq 0}(X\#Y) \arrow[d]        \\
		* \arrow[r]                         & \Def^{0}(X\#Y). \arrow[u, bend right]
	\end{tikzcd}
	$$
	Lemma \ref{lemma-section2} implies that $ \Def^{\leq 0}(X\#Y)$ is non-reduced.
	Since the positive tangent space vanishes, by Lemma~\ref{smooth-nonstrict}, we get that $\Def^{\leq 0}(X\#Y)$ is smoothly equivalent to $\Hilb^{\leq 0}_{X\#Y}$ and by Lemma~\ref{abb-smooth} we get that the latter is smoothly equivalent to the Hilbert scheme at this point.
\end{proof}

\begin{stwr}\label{main-result}
	 For every $n_1,n_2 \geq 6$, but neither is equal to $7$, then $\HG{2n_1+2n_2+2}{n_1+n_2}$ is not reduced.
\end{stwr}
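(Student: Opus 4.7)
My plan is to apply Corollary~\ref{app4} with $X = \Spec\Apolar(F)$ and $Y = \Spec\Apolar(G)$, where $F$ is a very general cubic in $m_1$ variables and $G$ is a very general cubic in $m_2$ variables, in the sense of Definition~\ref{cubic-very}. The connected sum $X \# Y = \Spec\Apolar(F+G)$ is then Gorenstein of length $(2m_1+2)+(2m_2+2)-2 = 2m_1+2m_2+2$ and embeds naturally in $\A^{m_1+m_2}$, giving a $\kk$-point of $\HG{2m_1+2m_2+2}{m_1+m_2}$. It then suffices to check that this point is non-reduced, and this is precisely what Corollary~\ref{app4} will deliver, provided its hypotheses are verified.

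The verification is essentially a dictionary between Definition~\ref{cubic-very} and the hypotheses of Corollary~\ref{app4}. The degree condition $d=3 \geq 3$ is immediate. The TNT property (third bullet of Definition~\ref{cubic-very}) gives $(T^1_X)_{<0} = (T^1_Y)_{<0} = 0$, which subsumes the required vanishing $(T^1_X)_{<-1} = (T^1_Y)_{<-1} = 0$, and, since the tangent space to $\Def^{<0}(-)$ is the strictly negative part of $T^1$ by Corollary~\ref{fund-negative-def}, it also forces $\Def^{<0}(X)$ and $\Def^{<0}(Y)$ to be trivial deformation functors, hence reduced. The second bullet of Definition~\ref{cubic-very} (generators in degree $2$, second syzygies in degree $3$) supplies simultaneously the assumption that $\Ann(F)$ and $\Ann(G)$ are generated in degrees $\leq d-1 = 2$, and the vanishing $T^2(\O_X,\kk)_{<-d} = T^2(\O_Y,\kk)_{<-d} = 0$, as the latter group is a subquotient computed from the minimal free resolution, whose second syzygies live in degree exactly $d=3$. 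The positive tangent vanishing $(T^1_X)_{>0} = (T^1_Y)_{>0} = 0$ is Lemma~\ref{cubic-positive}.

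The only external input is the existence of a cubic in $n$ variables satisfying all three bullets of Definition~\ref{cubic-very}, including TNT; this is what pins down the lower bound $m_1, m_2 \geq 5$ in the statement, and such cubics are produced in \cite{szachniewicz-nonreduce}. Having assembled these inputs, I would simply invoke Corollary~\ref{app4} to conclude that $X\#Y$ is a non-reduced point of the Hilbert scheme. Since $X\#Y$ is Gorenstein, it lies in the open locus $\HG{2m_1+2m_2+2}{m_1+m_2}$, which is therefore non-reduced. I do not anticipate a genuine obstacle: all the heavy lifting has already been done in Sections~\ref{sec-3}--\ref{sec-sum}, and the final statement amounts to a direct matching of the hypotheses of Corollary~\ref{app4} with the structural properties of apolar algebras of very general cubics.
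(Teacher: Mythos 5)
Your overall strategy — invoking Corollary~\ref{app4} on the connected sum of apolar algebras of two cubics with Hilbert function $(1,m_i,m_i,1)$ — is the same as the paper's, and your hypothesis-matching for the case of very general cubics (Definition~\ref{cubic-very}) is correct. The gap is in the existence claim. You assert that very general cubics (in particular with the TNT property) exist for all $m \geq 5$, citing \cite{szachniewicz-nonreduce}, and that this is what pins down the lower bound. That is not the case: the paper's proof records explicitly that very general cubics exist for $m > 7$ and for $m = 6$ (by \cite[Lemma 6.21]{Iarrobino1999} and \cite[Theorem 2]{Robert-Gorenstein}), but \emph{not} for $m = 5$ or $m = 7$. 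Indeed, for those two values the $(1,m,m,1)$ locus sits inside the smoothable component (cf.\ Proposition~\ref{cubic-component} and \cite{CGN-Gorenstein,Bertone-smoohtable}), and the Iarrobino--Jelisiejew--Rączka style constructions of trivial-negative-tangent elementary components are known to fail there. The reference \cite{szachniewicz-nonreduce} produces a very general cubic in $6$ variables, not in $5$.

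To close the gap exactly as the paper does, for $m \in \{5,7\}$ you must drop the TNT requirement and work with the weaker hypothesis actually used in Setting~\ref{set-gor}: you only need $(T^1_X)_{<-1} = 0$, not $(T^1_X)_{<0} = 0$. The paper exhibits explicit cubics in $5$ and $7$ variables whose negative tangent space is concentrated in degree $-1$ (verified computationally). Since TNT no longer holds, $\Def^{<0}(X)$ need not be trivial, so its reducedness must be established separately — and the paper supplies this via Proposition~\ref{cubic-component}, which shows the $(1,m,m,1)$ locus lies in a generically smooth component of $\HG{2m+2}{m}$ (hence the negative spike, being smoothly equivalent to the local Hilbert functor by Corollary~\ref{smooth-strict}, is reduced). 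With that modification your argument goes through; as written, it does not cover $m_1 = 5$, $m_2 = 5$, or either equal to $7$.
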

\begin{proof}
	By Lemma \ref{cubic-positive} for a cubic in $n$ variables with the Hilbert function equal $(1,n,n,1)$, the positive tangent space vanishes. If we take two such cubics $F,G$ in $n_1$ and $n_2$ variables respectively, then the connected sum of spectra of their apolar algebras $X\#Y$ has length ${2n_1+2n_2+2}$.\\
	By Corollary~\ref{app4}, if such cubics satisfy Setting~\ref{set-gor}, we get the statement. \vskip 0.2 cm
	Since the moduli space of cubics in $n$ variables is irreducible, and all conditions are open, we only need to prove that there is a cubic for every condition and every integer $n\geq 6$ which is not $7$.
		\vskip 0.2 cm
	The condition of vanishing of $T^2(\O_X,\kk)$ is open, see Lemma \ref{opent2}, is satisfied by cubics admitting only linear syzygies; there are such cubics. Similarly, the condition that generators of the ideal are in degree $2$ is open, by semicontinuity of Betti numbers.
	\vskip 0.2 cm
	The condition on vanishing of $T^1_X$ in degrees $<-1$ is satisfied by very general cubics, which do exist for $n>7$ and $n=6$, see \cite[Lemma 6.21]{Iarrobino1999},\cite[Theorem 2]{Robert-Gorenstein}. 
		This finishes the proof.
	\end{proof}
	\begin{remark}
		The non-reduced locus described in Corollary \ref{main-result} is \textbf{not} open in irreducible component of $\HG{2n_1+2n_2+2}{n_1+n_2}$ as the component $(1,n_1+n_2,m_1+n_2,1)$ is generically smooth, see Proposition \ref{cubic-component}.
	\end{remark}
	
		One can ask about limitations of our method. We argue that number of variables has to be at least $6$. This is due to the fact that for $n<6$ or $n=7$, the apolar algebras to cubics in $n$ variables are smoothable. Hence we cannot apply Corollary \ref{pointed-nonreduced}.
		

	\subsection{Application to cactus schemes}
	In this subsection we apply our results to the study of the cactus schemes.
	
	\begin{defi}
		Let $Y$ be a subscheme of $\P^N$. \emph{The scheme theoretic linear span} $\langle Y \rangle $ is the smallest linear subspace $\P^q\subset \P^N$ containing $Y$.
	\end{defi}
	
	For a given projective scheme $X\subset \P^N$, \emph{the $r$-th cactus scheme} as a set is given as 
	$$ \cactus{r}{X}:= \overline{\bigcup\{\langle R \rangle |R\subset X,\ R \text{ is a subscheme of degree at most $r$ }\}} \subset \P^N,$$
	it admits a scheme structure given by minors of catalecticant matrices, first introduced in \cite[Theorem 1.5]{BuBu-cactus}. Recently, it was shown that this structure agrees with more geometrical one, based on familiars, see \cite[Definition 4.3, Theorem 1.1]{jabuhanieh-familiar}. \vskip 0.2 cm
	 Let $\nu_d$ be the $d$-th Veronese embedding. To apply our main result, Corollary~\ref{main-result}, to the study of cactus scheme, we need a result that compares the singularities of the Gorenstein locus with the cactus scheme.
	\begin{twri}[{\cite[Theorem 2]{jabuhanieh-familiar}}]
		Let $r\geq 1$ and $d\geq 2r$.
		There exists the following diagram of smooth maps
		$$\begin{tikzcd}
			& Z \arrow[rd,"sm",two heads] \arrow[ld] &          \\
			\cactus{r}{\nu_d(\P^n)}  &                                    & \HG{r}{n},
		\end{tikzcd}
		$$
		where $Z \to \cactus{r}{\nu_d(\P^n)}$ is an open embedding with $\cactus{r-1}{\nu_d(\P^n)} $ being the complement of the image, and $Z\to \HG{r}{n}$ is smooth and surjective.
	\end{twri}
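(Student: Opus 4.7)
This is a cited result of \cite{jabuhanieh-familiar}, so my plan is to recall the strategy rather than develop it from scratch. The scheme $Z$ should be constructed as the \emph{familiar space} parametrising incidence pairs $([Y],[f])$ where $[Y]\in\HG{r}{n}$ is a Gorenstein length-$r$ subscheme and $[f]\in\langle\nu_d(Y)\rangle\subset\P^N$. Concretely, $Z$ sits inside the product $\HG{r}{n}\times\P^N$ as the relative projectivisation $\P(\pi_*\O_{\U}(d))$ of the universal family $\U\to\HG{r}{n}$ after the degree-$d$ Veronese twist. The two projections $\pi_1\colon Z\to\HG{r}{n}$ and $\pi_2\colon Z\to\P^N$ then yield the diagram in the statement.

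For smoothness and surjectivity of $\pi_1$, the key point is that for $d\geq 2r$ every Gorenstein subscheme $Y\subset\P^n$ of length $r$ satisfies $\dim\langle\nu_d(Y)\rangle=r-1$, since the Castelnuovo--Mumford regularity of a length-$r$ scheme is at most $r-1\leq d$, so the restriction $H^0(\P^n,\O(d))\twoheadrightarrow H^0(Y,\O_Y(d))$ surjects onto a rank-$r$ vector space. Consequently $\pi_*\O_{\U}(d)$ is locally free of rank $r$ on $\HG{r}{n}$, and $\pi_1$ is a Zariski-locally trivial $\P^{r-1}$-bundle, in particular smooth and surjective.

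For $\pi_2$, the image lies in $\cactus{r}{\nu_d(\P^n)}$ by the very definition of the cactus variety. To identify $Z$ with the open complement of $\cactus{r-1}{\nu_d(\P^n)}$ inside $\cactus{r}{\nu_d(\P^n)}$, one appeals to apolarity: to a point $[f]\in\P^N$ one associates the Gorenstein scheme $\Spec(S/\Ann(f))$, and for $d\geq 2r$ this is the unique minimal Gorenstein subscheme $Y$ such that $[f]\in\langle\nu_d(Y)\rangle$. This provides a set-theoretic inverse to $\pi_2$ precisely on the locus where the apolar scheme has length equal to $r$, which coincides with $\cactus{r}{\nu_d(\P^n)}\setminus\cactus{r-1}{\nu_d(\P^n)}$.

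The main obstacle is to upgrade this set-theoretic picture to the scheme-theoretic statement, since openness of the image and the identification of the complement must hold as subschemes, not merely pointwise. This is where the catalecticant description of the cactus scheme structure from \cite{BuBu-cactus} enters: the locally-constant-rank condition on catalecticant maps is what cuts out $\cactus{r}{}\setminus\cactus{r-1}{}$ scheme-theoretically, and the apolar construction, applied in families, then lifts to an algebraic inverse of $\pi_2$ on that open subscheme. The detailed verification of flatness of the apolar family along this stratum is the content of \cite{jabuhanieh-familiar}, and the subsequent application combining this diagram with Corollary~\ref{main-result} will yield the non-reducedness of $\cactus{2m+2}{\nu_d(\P^m)}$ for $m\geq 10$ and $d\geq 4m+4$.
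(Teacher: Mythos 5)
This theorem is not proved in the paper at all: it is quoted verbatim from \cite[Theorem 2]{jabuhanieh-familiar} and used as a black box to deduce Corollary~\ref{final-app}. So there is no proof in the paper to compare against. What follows is therefore a review of your reconstruction on its own terms.

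Your overall strategy --- construct $Z$ as an incidence variety inside $\HG{r}{n}\times\P^N$, use apolarity to identify it with a stratum of the cactus scheme, and use Castelnuovo--Mumford regularity to control the relative linear spans --- is the right one. However, there is an internal inconsistency in how you define $Z$. You set $Z=\P(\pi_*\O_\U(d))$, the \emph{full} relative projectivisation. But that scheme maps onto all of $\cactus{r}{\nu_d(\P^n)}$, and over the deeper strata $\cactus{r'}{\nu_d(\P^n)}$, $r'<r$, the fibre of $\pi_2$ is positive-dimensional: a form $[f]$ of cactus rank $r'<r$ lies in the span of $\nu_d(Y)$ for a positive-dimensional family of length-$r$ schemes $Y$. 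So the full projectivisation cannot be an open embedding into $\cactus{r}{\nu_d(\P^n)}$, and claiming that $\pi_1$ is a Zariski-locally trivial $\P^{r-1}$-bundle is only true for the full projectivisation, which is precisely the scheme you cannot take for $Z$.

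You in fact notice the problem when you say the inverse of $\pi_2$ exists "precisely on the locus where the apolar scheme has length equal to $r$", but you do not go back and correct the definition of $Z$. The fix is: $Z$ is the open subscheme of $\P(\pi_*\O_\U(d))$ cut out by the condition that the fibrewise catalecticant rank is exactly $r$ (equivalently, $Z$ is the graph of the apolarity morphism $\cactus{r}{\nu_d(\P^n)}\setminus\cactus{r-1}{\nu_d(\P^n)}\to\HG{r}{n}$). With that definition, $\pi_1$ is still smooth, being an open subscheme of a $\P^{r-1}$-bundle, and still surjective, because the socle generator of any Gorenstein $Y$ of length $r$ gives a point in the fibre over $[Y]$; and $\pi_2$ is an open embedding by uniqueness of the apolar subscheme once $d\geq 2r$. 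You should also note that the map $[f]\mapsto\Spec(S/\Ann f)$ lands in the \emph{Gorenstein} locus automatically by Macaulay duality --- this is why the target is $\HG{r}{n}$ and not the whole Hilbert scheme, a point your outline leaves implicit.
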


\begin{stwr}\label{final-app}

	Let $n_1,n_2\geq 6$ but neither equal to $7$, and $d\geq 4n_1+4n_2+4$. The cactus scheme $\cactus{2n_1+2n_2+2}{\nu_d(\P^{n_1+n_2})}$ is non-reduced.
\end{stwr}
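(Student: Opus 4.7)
Set $r := 2m_1 + 2m_2 + 2$ and $n := m_1 + m_2$. The numerical hypothesis $d \geq 4m_1 + 4m_2 + 4$ is precisely $d \geq 2r$, which is the hypothesis of the theorem of \cite{jabuhanieh-familiar} quoted above. The plan is to transport the non-reducedness of the Gorenstein locus established in Corollary~\ref{main-result} across the smooth correspondence relating $\HG{r}{n}$ to $\cactus{r}{\nu_d(\P^n)}$.

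First, by Corollary~\ref{main-result}, the Gorenstein locus $\HG{r}{n} = \HG{2m_1+2m_2+2}{m_1+m_2}$ is not reduced; explicitly, it has a non-reduced $\kk$-point $[X\#Y]$ arising from the connected sum of the apolar algebras of very general cubics $F, G$ of $m_1$ and $m_2$ variables respectively. Next, apply the diagram
$$
\begin{tikzcd}
	& Z \arrow[rd,"sm",two heads] \arrow[ld, hook, "open"'] & \\
	\cactus{r}{\nu_d(\P^n)} & & \HG{r}{n}
\end{tikzcd}
$$
with $r$ and $d$ as above: the right arrow $\pi \colon Z \to \HG{r}{n}$ is smooth and surjective, so there exists a $\kk$-point $z \in Z$ with $\pi(z) = [X\#Y]$.

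The main observation is that a smooth morphism pulls non-reduced points back to non-reduced points. Indeed, smoothness of $\pi$ at $z$ yields a flat local homomorphism $\mathcal{O}_{\HG{r}{n},[X\#Y]} \to \mathcal{O}_{Z,z}$. A non-zero nilpotent in $\mathcal{O}_{\HG{r}{n},[X\#Y]}$ remains non-zero in $\mathcal{O}_{Z,z}$ by faithful flatness (equivalently, the local ring of the fiber of a smooth map is reduced, and a flat extension of a non-reduced ring by a reduced fiber algebra is non-reduced). Hence $Z$ is non-reduced at $z$.

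Finally, the left arrow realises $Z$ as an open subscheme of $\cactus{r}{\nu_d(\P^n)}$, so $z$ is a non-reduced point of the cactus scheme. The main obstacle in this outline is really packaged in the cited theorem and in Corollary~\ref{main-result}; granted these, the argument is a short diagram chase combined with the standard fact that smooth pullback preserves non-reducedness.
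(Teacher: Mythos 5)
Your proof is correct and follows exactly the same route as the paper: transport non-reducedness of $\HG{r}{n}$ (Corollary~\ref{main-result}) to the cactus scheme via the smooth-surjection/open-immersion correspondence of \cite[Theorem 2]{jabuhanieh-familiar}. The paper compresses this into a single sentence; you have merely spelled out the intermediate bookkeeping (identification $d\geq 2r$, lifting the non-reduced point along the smooth surjection, faithful flatness argument), all of which is standard and sound.
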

\begin{proof}
	By Theorem above it is enough to show the non-reducedness for $\HG{2n_1+2n_2+2}{n_1+n_2}$, but this is the statement of Corollary~\ref{main-result}.
\end{proof}


\bibliographystyle{alpha}
\bibliography{references}
\bibliographystyle{alpha}
	\end{document}